\documentclass[11pt,letterpaper]{amsart}
\usepackage[foot]{amsaddr}

\usepackage{mathtools}
\usepackage{amsmath}
\usepackage[T1]{fontenc}
\usepackage[latin9]{inputenc}
\usepackage{geometry}
\usepackage{wrapfig}
\usepackage{multicol}
\usepackage{graphicx}
\usepackage{soul}
\usepackage{xcolor}
\usepackage{amssymb}
\usepackage{placeins}
\usepackage{bbm}
\usepackage{cite}
\usepackage{caption}
\usepackage{enumerate}
\usepackage{afterpage}
\usepackage{enumitem}
\usepackage{bmpsize}
\usepackage{hyperref}
\usepackage{tabu}
\usepackage{enumitem}
\numberwithin{equation}{section}
\usepackage{stmaryrd}
\usepackage{tikz}
\usetikzlibrary{matrix,graphs,arrows,positioning,calc,decorations.markings,shapes.symbols}
\usepackage{ifthen}
\usetikzlibrary{math}
\usetikzlibrary{matrix,graphs,arrows,positioning,calc,decorations.markings,shapes.symbols}

\makeatletter
\renewcommand{\email}[2][]{%
  \ifx\emails\@empty\relax\else{\g@addto@macro\emails{,\space}}\fi%
  \@ifnotempty{#1}{\g@addto@macro\emails{\textrm{(#1)}\space}}%
  \g@addto@macro\emails{#2}%
}
\makeatother

\newtheorem{theorem}{Theorem}[section]
\newtheorem{lemma}[theorem]{Lemma}
\newtheorem{proposition}[theorem]{Proposition}

{ \theoremstyle{definition}
\newtheorem{definition}[theorem]{Definition}}
{ \theoremstyle{remark}
\newtheorem{remark}[theorem]{Remark}}

\newcommand{\N}{\mathbb{N}}

\newcommand{\R}{\mathbb{R}}
\newcommand{\E}{\mathbb{E}}

\renewcommand{\P}{\mathbb{P}}
\newcommand{\pr}{\mathbb{P}}

\newcommand{\weyl}{W^\circ}
\newcommand{\weylc}{\overline{W}}

\newcommand{\im}{\mathsf{i}}
\newcommand{\Real}{\mathrm{Re}\hspace{0.5mm}}
\newcommand{\Imag}{\mathrm{Im}\hspace{0.5mm}}

\newcommand{\cev}[1]{\reflectbox{\ensuremath{\vec{\reflectbox{\ensuremath{#1}}}}}}
\newcommand{\kbmk}{K^{\mathrm{DBM}, \mathsf{m}_k^a}}
\newcommand{\kbmn}{K^{\mathrm{DBM}, n}}

\newcommand{\parF}{\mathcal{P}_{\mathsf{fin}}}
\newcommand{\parP}{\mathcal{P}_{\mathsf{pos}}}

\DeclarePairedDelimiter\abs{\lvert}{\rvert}

\newcommand{\Abs}[1]{\left\lvert #1 \right\rvert}

\newcommand{\DBM}{\mathrm{DBM}}
\newcommand{\Ai}{\mathrm{Airy}}
\newcommand{\diff}{\mathop{}\!\mathrm{d}}

\newcommand{\oparen}[1]{\mathopen{}\left( #1 \right)\mathclose{}}
\newcommand{\paren}[1]{\left( #1 \right)}
\newcommand{\sfm}{\mathsf{m}}
\renewcommand{\Re}{\operatorname{Re}}

\title{Long-time asymptotics for Airy wanderer line ensembles}
\date{\today}
\author{Alexander Clay}
\author{Evgeni Dimitrov}
\author{Rundong Ding}
\author{Alex Fu}

\begin{document}

\begin{abstract}
We investigate the long-time behavior of the Airy wanderer line ensembles, an infinite-parameter family of Brownian Gibbsian line ensembles arising as edge-scaling limits of inhomogeneous models in the Kardar--Parisi--Zhang universality class. These ensembles are governed by sequences of nonnegative parameters that encode the asymptotic slopes of the curves at positive and negative infinity. Our main results characterize the fluctuations around this leading-order behavior and establish functional limit theorems for the ensembles near both ends of the spatial axis.

We show that, at a macroscopic level, an Airy wanderer line ensemble organizes into groups of finitely many curves sharing a common asymptotic slope. After appropriate centering and scaling, each such group converges to a Dyson Brownian motion whose dimension equals the size of the group. In the case where only finitely many slope parameters are positive, we further prove a curve separation phenomenon: the upper curves follow deterministic parabolic trajectories, while the remaining lower curves remain globally flat and converge to the classical Airy line ensemble.
\end{abstract}

\maketitle
\tableofcontents

%
%
\section{Introduction and main results}\label{Section1}

%
%
\subsection{Preface}\label{Section1.1} The {\em Airy line ensemble} $\mathcal{A}$ is a sequence $\{\mathcal{A}_i : \mathbb{R} \to \mathbb{R}\}_{i \geq 1}$ of random continuous functions defined on a common probability space and satisfying an almost sure strict ordering: $\mathcal{A}_i(t) > \mathcal{A}_{i+1}(t)$ for all $i \geq 1$ and all $t \in \mathbb{R}$. This ensemble emerges as the universal edge-scaling limit in a wide range of probabilistic models, including time-dependent Wigner matrices (notably {\em Dyson Brownian motion}) \cite{Sod15}, lozenge tilings \cite{AH21}, and several integrable systems of non-intersecting random walks and last passage percolation (LPP) \cite{DNV19}. A complete construction of the Airy line ensemble was first provided in \cite{CorHamA}, where it was obtained as the weak edge-scaling limit of the {\em Brownian watermelon}, although a number of its finite-dimensional marginals had appeared earlier in the literature. In particular, the top curve $\mathcal{A}_1$ is a stationary process known as the {\em Airy process}, originally identified as the scaling limit of height functions in the {\em polynuclear growth model} \cite{J03, Spohn}. Its one-point distribution is given by the {\em (GUE) Tracy--Widom distribution} from random matrix theory \cite{TWPaper}. Owing to its universality and its central role in the construction of the {\em Airy sheet} \cite{DOV22}, the Airy line ensemble has become a fundamental object in the study of the {\em Kardar--Parisi--Zhang (KPZ)} universality class \cite{CU2}.

In \cite{dimitrov2024airy, dimitrov2024tightness}, the second author introduced infinite-parameter extensions $\mathcal{A}^{a,b,c} = \{\mathcal{A}^{a,b,c}_i : \mathbb{R} \to \mathbb{R}\}_{i \geq 1}$ of the Airy line ensemble, which we refer to as the {\em Airy wanderer line ensembles}. These ensembles arise as weak limits of inhomogeneous {\em Schur processes}, or equivalently of {\em geometric LPP} models; see \cite{DY25b} for a concise discussion of the distributional equivalence between these frameworks. The construction in \cite{dimitrov2024airy, dimitrov2024tightness} extends earlier finite-parameter generalizations of the Airy line ensemble obtained in \cite{CorHamA}, which relied on correlation kernel formulas from \cite{AFM}, and is based on a careful analysis of the kernel representations developed in \cite{BP08}. The Airy wanderer line ensembles are expected to appear as universal scaling limits for KPZ-type models with inhomogeneities or spiked initial data. For example, the one-point marginals of their top curves are described by the {\em Baik--Ben Arous--P{\'e}ch{\'e} (BBP) distributions}, which arise in a variety of contexts, including sample covariance matrices \cite{BBP05}, finite-rank perturbations of random matrices \cite{Peche06}, asymmetric exclusion processes \cite{BFS09, IS07}, directed polymers in random environments \cite{BCD21, TV20}, and exponential LPP \cite{BP08}.

We give a formal definition of $\mathcal{A}^{a,b,c}$ in Section \ref{Section1.2} below, but mention here that up to a horizontal translation the law of $\mathcal{A}^{a,b,c}$ depends on two sequences of non-negative reals $\{a^+_i\}_{i \geq 1}$, $\{b^+_i\}_{i \geq 1}$ and a real parameter $c^+ \in \mathbb{R}$, which satisfy
\begin{equation}\label{Eq.IntroParameters}
a^+_1 \geq a^+_2 \geq \cdots \geq 0, \hspace{2mm} b^+_1 \geq b^+_2 \geq \cdots \geq 0, \mbox{ and } \sum_{i \geq 1}(a^+_i + b^+_i) < \infty.
\end{equation}
Recently, in \cite{D25}, the second author established several structural properties of the Airy wanderer line ensembles; some of these are summarized below.

\vspace{2mm}

{\bf \raggedleft I. Symmetries.} If one vertically translates $\mathcal{A}^{a,b,c}$ by $v$, then the resulting ensemble has the same law as $\mathcal{A}^{a,b,c + v}$. In addition, if one reflects $\mathcal{A}^{a,b,c}$ across the origin, the resulting ensemble has law $\mathcal{A}^{b,a,c}$. See Proposition \ref{Prop.BasicProperties} for the precise statements.
\vspace{2mm}

{\bf \raggedleft II. Continuity.} The laws of $\mathcal{A}^{a,b,c}$ vary continuously with $(a,b,c)$. See \cite[Proposition 1.12]{D25}.
\vspace{2mm}

{\bf \raggedleft III. Monotonicity.} The Airy wanderer line ensembles admit {\em multiple} monotone couplings in their parameters. See Proposition \ref{Prop.MonCoupling}.
\vspace{2mm}

{\bf \raggedleft IV. Asymptotic slopes.} For any sequence $t_N \uparrow \infty$ and $j \in \mathbb{N}$, we have
\begin{equation}\label{Eq.IntroSlopes}
t_N^{-1} \cdot (\mathcal{A}^{a,b,c}_j(t_N) - t_N^2) \Rightarrow -2/a_j^+ \mbox{ if } a_j^+ > 0, \mbox{ and } t_N^{-1} \cdot (\mathcal{A}^{a,b,c}_j(-t_N) - t_N^2) \Rightarrow -2/b_j^+ \mbox{ if } b_j^+ > 0.
\end{equation}
In addition, we have that $\{\mathcal{A}^{a,b,c}_j(t_N) \}_{N \geq 1}$ is tight when $a_j^+ = 0$ and $\{\mathcal{A}^{a,b,c}_j(-t_N) \}_{N \geq 1}$ is tight when $b_j^+ = 0$. See Proposition \ref{Prop.Slopes}.
\vspace{2mm}

{\bf \raggedleft V. Extremality.} The Airy wanderer line ensembles are extreme points in the space of all line ensembles on $\mathbb{R}$ that satisfy the Brownian Gibbs property. See \cite[Theorem 1.19]{D25} for the precise statement.\\

Focusing on property IV above, we see that the physical interpretation of the parameters $\{a^+_i\}_{i \geq 1}$ and $\{b^+_i\}_{i \geq 1}$ is that they play the role of asymptotic slopes for the ensemble $\mathcal{A}^{a,b,c}$ near $+\infty$ and $-\infty$, respectively. In this sense, (\ref{Eq.IntroSlopes}) should be viewed as a weak law of large numbers for the curves $\mathcal{A}^{a,b,c}_j(t)$ as $t \rightarrow \pm \infty$. The main goal of this paper is to describe the fluctuations about this leading-order behavior as $t \rightarrow \pm \infty$, and to establish {\em functional limit theorems} for the curves of the ensemble. While precise statements of our principal results are deferred to Section \ref{Section1.4}, we pause here to provide an informal overview, focusing on the case $t \rightarrow \infty$. 

From (\ref{Eq.IntroParameters}), we see that there are (unique) positive reals $v_1^a > v_2^a > \cdots $, as well as positive integers $\mathsf{m}_i^a$, such that 
$$a_1^+ = a_2^+ = \cdots = a^+_{\mathsf{m}_1^a} = v_1^a, \hspace{2mm} a_{\mathsf{m}_1^a +1}^+ = a_{\mathsf{m}_1^a + 2}^+ = \cdots = a_{\mathsf{m}_1^a + \mathsf{m}_2^a}^+ = v_2^a, \hspace{2mm} \dots,$$
In words, $v_i^a$ are the distinct positive terms that appear in the sequence $\{a^+_i\}_{i \geq 1}$, and $\mathsf{m}_i^a$ are their multiplicities. The sequences $v_i^a, \mathsf{m}_i^a$ could be finite or even empty when $a_j^+ = 0$ for some $j \geq 1$. If we set $\mathsf{M}_k^a = \mathsf{m}_1^a + \cdots + \mathsf{m}_k^a$, then we see from (\ref{Eq.IntroSlopes}) that the curves $\{\mathcal{A}_i^{a,b,c}(t) - t^2\}_{i \geq 1}$ split into groups $\{\mathcal{A}_i^{a,b,c}(t) - t^2\}_{i = \mathsf{M}_{k-1}^a+1}^{\mathsf{M}_k^a}$, where the $k$-th group consists of $\mathsf{m}_k^a$ curves that have the same asymptotic slope $-2/v_k^a$, see the left side of Figure \ref{Fig.Simulations}. In Theorem \ref{Thm.ConvDBM}(a), we show that as $t \rightarrow \infty$ the $k$-th group $\{\mathcal{A}_i^{a,b,c}(t) - t^2\}_{i = \mathsf{M}_{k-1}^a+1}^{\mathsf{M}_k^a}$ (under appropriate scaling) converges to a Dyson Brownian motion with $\mathsf{m}_k^a$ curves started from zero. In view of the reflection symmetry in property I above, an analogous statement holds near negative infinity. In particular, if we define $v_i^b, \mathsf{m}_i^b, \mathsf{M}_k^b$ analogously for the sequence $\{b^+_i\}_{i \geq 1}$, then as $t \rightarrow \infty$, the group $\{\mathcal{A}_i^{a,b,c}(-t) - t^2\}_{i = \mathsf{M}_{k-1}^b+1}^{\mathsf{M}_k^b}$ converges to a Dyson Brownian motion with $\mathsf{m}_k^b$ curves started from zero. The precise result is stated in Theorem \ref{Thm.ConvDBM}(b).

\begin{figure}[ht]
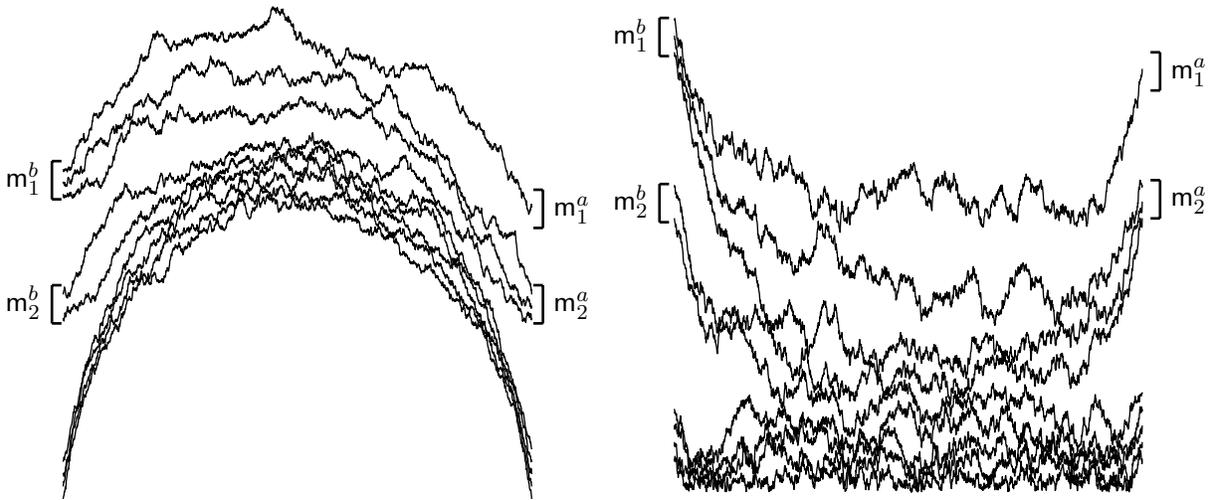

\centering
\begin{tikzpicture}[x=1in,y=1in] 

\path[use as bounding box] (-2in,0.5) rectangle (2in,3.5in);

  \node[anchor=south west, inner sep=0, scale=0.5] at (-3.4,0) {\input{S1.pgf}};
  \node[anchor=south west, inner sep=0, scale=0.5] at (-0.2,0) {\input{S2.pgf}};

   \draw[line width=1pt]
    (-2.9,2.25) -- (-2.85,2.25)
    (-2.9,2.25) -- (-2.9,2.05)
    (-2.9,2.05) -- (-2.85,2.05);
    \draw (-3.05,2.3) node[below = 0pt]{$\mathsf{m}_1^b$};

   \draw[line width=1pt]
    (-2.9,1.6) -- (-2.85,1.6)
    (-2.9,1.6) -- (-2.9,1.4)
    (-2.9,1.4) -- (-2.85,1.4);
    \draw (-3.05,1.65) node[below = 0pt]{$\mathsf{m}_2^b$};

   \draw[line width=1pt]
    (0.28,3) -- (0.33,3)
    (0.28,3) -- (0.28,2.8)
    (0.28,2.8) -- (0.33,2.8);
    \draw (0.13,3.05) node[below = 0pt]{$\mathsf{m}_1^b$};

   \draw[line width=1pt]
    (0.28,2.13) -- (0.33,2.13)
    (0.28,2.13) -- (0.28,1.93)
    (0.28,1.93) -- (0.33,1.93);
    \draw (0.13,2.18) node[below = 0pt]{$\mathsf{m}_2^b$};

   \draw[line width=1pt]
    (-0.33,2.1) -- (-0.38,2.1)
    (-0.33,2.1) -- (-0.33,1.9)
    (-0.33,1.9) -- (-0.38,1.9);
    \draw (-0.18,2.12) node[below = 0pt]{$\mathsf{m}_1^a$};

   \draw[line width=1pt]
    (-0.33,1.6) -- (-0.38,1.6)
    (-0.33,1.6) -- (-0.33,1.4)
    (-0.33,1.4) -- (-0.38,1.4);
    \draw (-0.18,1.62) node[below = 0pt]{$\mathsf{m}_2^a$};

   \draw[line width=1pt]
    (2.9,2.82) -- (2.85,2.82)
    (2.9,2.82) -- (2.9,2.62)
    (2.9,2.62) -- (2.85,2.62);
    \draw (3.05,2.84) node[below = 0pt]{$\mathsf{m}_1^a$};

   \draw[line width=1pt]
    (2.9,2.15) -- (2.85,2.15)
    (2.9,2.15) -- (2.9,1.95)
    (2.9,1.95) -- (2.85,1.95);
    \draw (3.05,2.17) node[below = 0pt]{$\mathsf{m}_2^a$};

\end{tikzpicture}
\caption{The left side depicts the curves $\{\mathcal{A}^{a,b,c}_i(t) - t^2\}_{i \geq 1}$ and the right depicts the curves $\{\mathcal{A}^{a,b,c}_i(t)\}_{i \geq 1}$ when $\mathsf{m}_1^a = 1$, $\mathsf{m}_2^a = 3$, $\mathsf{m}_1^b = 3$, $\mathsf{m}_2^b = 2$. Here, $J_a = \mathsf{m}_1^a + \mathsf{m}_2^a = 4$. }
\label{Fig.Simulations}
\end{figure}

If the number of parameters $a_i^+$ that are positive is finite, i.e. there exists $J_a \in \mathbb{Z}_{\geq 0}$ such that $a_{J_a+1}^+ = a_{J_a+2}^+ = \cdots = 0$, then property IV above suggests that the curves $\mathcal{A}_{J_a+i}^{a,b,c}(t)$ remain flat as $t \rightarrow \infty$, while if $J_a \geq 1$ and $a_{J_a}^+ > 0$, the curve $\mathcal{A}_{J_a}^{a,b,c}(t)$ follows a parabola, see the right side of Figure \ref{Fig.Simulations}. In particular, for large times the curves $\{\mathcal{A}_{i}^{a,b,c}(t)\}_{i = 1}^{J_a}$ {\em separate} from $\{\mathcal{A}_{J_a + i}^{a,b,c}(t)\}_{i \geq 1}$ and the latter converge to the usual Airy line ensemble. The precise statement appears in Theorem \ref{Thm.ConvAiry}(a) when $t \rightarrow \infty$ and Theorem \ref{Thm.ConvAiry}(b) when $t \rightarrow -\infty$. We mention that a similar curve separation phenomenon was recently established in the context of the half-space geometric LPP, see \cite{DZ25}. 

Although this paper is devoted to the Airy wanderer line ensembles, we expect analogous convergence results to hold for a broad class of inhomogeneous models in the KPZ universality class including the aforementioned geometric and exponential LPP, directed polymers, and spiked random matrices. Our proofs of Theorems \ref{Thm.ConvDBM} and \ref{Thm.ConvAiry} rely on the determinantal point process structure of the ensembles $\mathcal{A}^{a,b,c}$ together with their local resampling invariance, commonly referred to as the {\em Brownian Gibbs property}. Geometric and exponential LPP, as well as several families of random matrix models, are known to be determinantal and to satisfy discrete analogues of the Brownian Gibbs property, and we believe our arguments can be readily extended to these settings. For non-determinantal models, like the log-gamma polymer \cite{BCD21, TV20}, different techniques are required; nevertheless, we expect that several components of our approach --- particularly those concerning Gibbsian line ensembles and curve separation --- will remain relevant in that context. \\

The rest of the introduction is structured as follows. In Section \ref{Section1.2} we formally define the Airy wanderer line ensembles, and recall some results from \cite{dimitrov2024airy}. In Section \ref{Section1.3} we summarize the structural properties from \cite{D25} that we require for our arguments later in the paper. The main results of the paper are presented in Section \ref{Section1.4} with the convergence to Dyson Brownian motion established in Theorem \ref{Thm.ConvDBM} and the convergence to the Airy line ensemble in Theorem \ref{Thm.ConvAiry}. Section \ref{Section1.5} contains an outline of the paper, and discusses some of the key ideas behind our arguments.

%
%
\subsection{The Airy wanderer line ensembles}\label{Section1.2} The goal of this section is to give a formal definition of the Airy wanderer line ensembles constructed in \cite{CorHamA,dimitrov2024airy}. Our exposition here closely follows that of \cite[Section 1.2]{dimitrov2024airy}. We begin by fixing the parameters of the model and some notation.
\begin{definition}\label{Def.DLP} 
We assume that we are given four sequences of non-negative real numbers $\{a_i^+\}_{ i \geq 1}$, $\{a_i^-\}_{ i \geq 1}$, $\{b_i^+\}_{ i \geq 1}$, $\{b_i^-\}_{ i \geq 1}$ such that 
\begin{equation}\label{Eq.DLP}
\sum_{i = 1}^{\infty} (a_i^+ + a_i^- + b_i^+ + b_i^-) < \infty \mbox{ and } a_{i}^{\pm} \geq a_{i+1}^{\pm},  b_{i}^{\pm} \geq b_{i+1}^{\pm} \mbox{ for all } i \geq 1,
\end{equation}
as well as two real parameters $c^+, c^-$. We let $J_a^{\pm} = \inf \{ k \geq 1: a_{k}^{\pm} = 0\} - 1$ and $J_b^{\pm} = \inf \{ k \geq 1: b_{k}^{\pm} = 0\} - 1$. In words, $J_a^{\pm}$ is the largest index $k$ such that $a_{k}^{\pm} > 0$, with the convention that $J_a^{\pm} = 0$ if all $a_k^{\pm} = 0$ and $J_a^{\pm} = \infty$ if all $a_k^{\pm} > 0$, and analogously for $J_b^{\pm}$. For future reference, we denote the set of parameters satisfying the above conditions such that $c^- = 0$ and $J_a^- + J_b^- < \infty$ by $\parF$, and the subset of $\parF$ such that $c^+ = J_a^- = J_b^- = 0$ by $\parP$.

Lastly, we define 
$$\underline{a} = \begin{cases} 0  &\hspace{-3.5mm}  \mbox{ if } a_1^- + b_1^- > 0 \mbox{ or } c^- \neq 0, \\   \infty & \hspace{-3.5mm} \mbox{ if }   a_1^- + b_1^- =  c^- = 0\mbox{ and }  a_1^+ = 0, \\ 1/a_1^+ & \hspace{-3.5mm}  \mbox{ if }a_1^- + b_1^- = c^- =  0 \mbox{ and } a_1^+ > 0, \end{cases} \hspace{1mm} \mbox{ and }\hspace{1mm} \underline{b} = \begin{cases} 0 &\hspace{-3.5mm}  \mbox{ if }  a_1^- + b_1^- > 0 \mbox{ or } c^- \neq 0, \\  -\infty & \hspace{-3.5mm} \mbox{ if } a_1^- + b_1^- = c^- =  0 \mbox{ and } b_1^+ = 0, \\ -1/b_1^+ & \hspace{-3.5mm} \mbox{ if } a_1^- + b_1^- = c^- =  0 \mbox{ and } b_1^+ > 0. \end{cases}$$
Observe that $\underline{a} \in [0, \infty]$ and $\underline{b} \in [- \infty, 0]$. 
\end{definition}

For $z \in \mathbb{C} \setminus \{0\} $ we define the function
\begin{equation}\label{Eq.DefPhi}
\Phi_{a,b,c}(z) = e^{c^+z + c^-/ z} \cdot \prod_{i = 1}^{\infty} \frac{(1 + b_i^+ z) (1 + b_i^- /z)}{(1 - a_i^+ z) ( 1 - a_i^{-}/z)}.
\end{equation}
From (\ref{Eq.DLP}) and \cite[Chapter 5, Proposition 3.2]{Stein}, we have that the above defines a meromorphic function on $\mathbb{C} \setminus \{0\}$ whose zeros are at $\{-(b_i^+)^{-1}\}_{i =1}^{J_b^+}$ and $\{- b_i^-\}_{i =1}^{J_b^-}$, while its poles are at $\{(a_i^+)^{-1}\}_{i =1}^{J_a^+}$ and $\{ a_i^-\}_{i =1}^{J_a^-}$. We also observe that $\Phi_{a,b,c}(z) $ is analytic in $\mathbb{C} \setminus [\underline{a}, \infty)$, and its inverse is analytic in $\mathbb{C} \setminus (-\infty, \underline{b}]$, where $\underline{a}, \underline{b}$ are as in Definition \ref{Def.DLP}.

The following definitions present the Airy wanderer kernel, introduced in \cite{BP08}, starting with the contours that appear in it. 
\begin{definition}\label{Def.Contours}  Fix $a \in \mathbb{R}$. We let $\Gamma_a^+$ denote the union of the contours $\{a + y e^{\pi \im /4}\}_{y \in [0,\infty)}$ and $\{a + y e^{-\pi \im/4}\}_{y \in [0,\infty)}$, and $\Gamma_a^-$ the union of the contours $\{a + y e^{ 3\pi \im /4}\}_{y \in [0,\infty)}$ and $\{a + y e^{-3 \pi \im/4}\}_{y \in [0,\infty)}$. Both contours are oriented in the direction of increasing imaginary part.
\end{definition}
\begin{definition}\label{Def.3BPKernelDef} Assume the same notation as in Definition \ref{Def.DLP}. For $t_1, t_2,x_1,x_2 \in \mathbb{R}$ we define 
\begin{equation}\label{Eq.3BPKer}
\begin{split}
&K_{a,b,c} (t_1, x_1; t_2, x_2) = K^1_{a,b,c} (t_1, x_1; t_2, x_2) +  K^2_{a,b,c} (t_1, x_1; t_2, x_2) +  K^3_{a,b,c} (t_1, x_1; t_2, x_2), \mbox{ with } \\
& K^1_{a,b,c} (t_1, x_1; t_2, x_2) = \frac{1}{2\pi \im} \int_{\gamma}dw \cdot  e^{(t_2 - t_1)w^2 + (t_1^2 - t_2^2) w + w (x_2-x_1) + x_1 t_1 - x_2 t_2 - t_1^3/3 + t_2^3/3}    \\
&K^2_{a,b,c} (t_1, x_1; t_2, x_2)  = -  \frac{{\bf 1}\{ t_2 > t_1\} }{\sqrt{4\pi (t_2 - t_1)}} \cdot e^{ - \frac{(x_2 - x_1)^2}{4(t_2 - t_1)} - \frac{(t_2 - t_1)(x_2 + x_1)}{2} + \frac{(t_2 - t_1)^3}{12} }; \\
& K^3_{a,b,c} (t_1, x_1; t_2, x_2) = \frac{1}{(2\pi \im)^2} \int_{\Gamma_{\alpha }^+} d z \int_{\Gamma_{\beta}^-} dw \frac{e^{z^3/3 -x_1z - w^3/3 + x_2w}}{z + t_1 - w - t_2} \cdot \frac{\Phi_{a,b,c}(z + t_1) }{\Phi_{a,b,c}(w + t_2)}.
\end{split}
\end{equation}
In (\ref{Eq.3BPKer}) $\alpha, \beta  \in \mathbb{R}$ are such that $\alpha + t_1 < \underline{a}$ and $\beta + t_2 > \underline{b}$, the function $\Phi_{a,b,c}$ is as in (\ref{Eq.DefPhi}) and the contours of integration in $K^3_{a,b,c}$ are as in Definition \ref{Def.Contours}. If $\Gamma^+_{\alpha + t_1} (=t_1 + \Gamma^+_{\alpha})$ and $\Gamma^-_{\beta + t_2} (= t_2 + \Gamma^-_{\beta} )$ have zero or one intersection points, we take $\gamma = \emptyset$ and then $K^1_{a,b,c} \equiv 0$. Otherwise, $\Gamma^+_{\alpha + t_1} $ and $\Gamma^-_{\beta + t_2}$ have exactly two intersection points, which are complex conjugates, and $\gamma$ is the straight vertical segment that connects them with the orientation of increasing imaginary part. See Figure \ref{Fig.Contours}.
\end{definition}
\begin{figure}[h]
    \centering
     \begin{tikzpicture}[scale=2.7]

        \def\tra{3} 
        \draw[->, thick, gray] (-1.2,0)--(1.2,0) node[right]{$\Real$};
        \draw[->, thick, gray] (0,-1.2)--(0,1.2) node[above]{$\Imag$};

        \draw[-,thick][black] (0.6,0) -- (0.2,-0.4);
        \draw[->,thick][black] (-0.4,-1) -- (0.2,-0.4);
        \draw[black, fill = black] (0.6,0) circle (0.02);
        \draw (0.6,-0.3) node{$\beta + t_2$};
        \draw[->,very thin][black] (0.6,-0.2) -- (0.6, -0.05);
        \draw[->,thick][black] (0.6,0) -- (0.2,0.4);
        \draw[-,thick][black]  (-0.4,1) -- (0.2,0.4);       

        \draw[-,thick][black] (-0.75, 0) -- (-0.25,-0.5);
        \draw[->,thick][black] (0.25, -1) -- (-0.25, -0.5);
        \draw[black, fill = black] (-0.75,0) circle (0.02);
        \draw (-0.75,-0.275) node{$\alpha + t_1$};
        \draw[->,very thin][black] (-0.75,-0.2) -- (-0.75, -0.05);
        \draw[->,thick][black] (-0.75,0) -- (-0.25,0.5);
        \draw[-,thick][black]  (-0.25,0.5) -- (0.25,1);

        \draw[->,thick][black] (-0.075, -0.675) -- (-0.075,0.2);
        \draw[-,thick][black] (-0.075, 0.2) -- (-0.075,0.675);

        \draw[black, fill = black] (-0.075,0.675) circle (0.02);
        \draw[black, fill = black] (-0.075,-0.675) circle (0.02);
        \draw (-0.075,0.8) node{$u_+$};
        \draw (-0.075,-0.825) node{$u_-$};

        \draw (0.35,0.825) node{$\Gamma^+_{\alpha + t_1}$};
        \draw (-0.4,0.825) node{$\Gamma^-_{\beta + t_2}$};
        \draw (-0.15,0.225) node{$\gamma$};

    \end{tikzpicture} 
    \caption{The figure depicts the contours $\Gamma_{\alpha + t_1}^+, \Gamma_{\beta + t_2}^-$ when they have two intersection points, denoted by $u_-$ and $u_+$. The contour $\gamma$ is the segment from $u_-$ to $u_+$.}
    \label{Fig.Contours}
\end{figure}
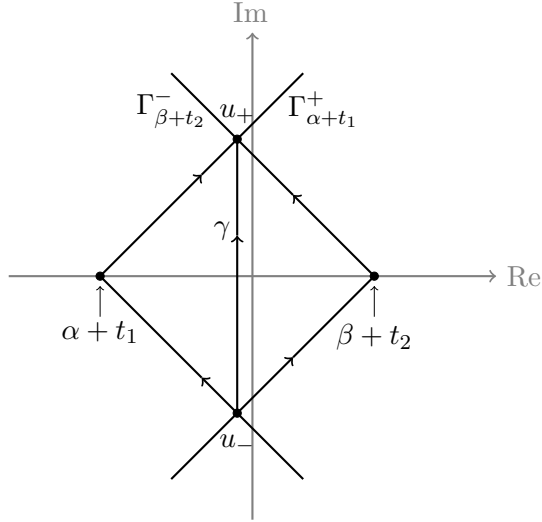

The following definition introduces certain measures that appear throughout the paper.
\begin{definition}\label{Def.Measures} For a finite set $\mathsf{S} = \{s_1, \dots, s_m\} \subset \mathbb{R}$, we let $\mu_{\mathsf{S}}$ denote the counting measure on $\mathbb{R}$, defined by $\mu_{\mathsf{S}}(A) = |A \cap \mathsf{S}|$. We also let $\mathrm{Leb}$ be the usual Lebesgue measure on $\mathbb{R}$ and $\mu_{\mathsf{S}} \times \mathrm{Leb}$ the product measure on $\mathbb{R}^2$.
\end{definition}

With the above notation in place, we can introduce the Airy wanderer line ensembles. We do this in the following statement, which follows from \cite[Theorems 1.8 and 1.10]{dimitrov2024airy}. 
\begin{proposition}\label{Prop.AWLE} Assume the same notation as in Definition \ref{Def.DLP} and fix $(a,b,c) \in \parF$. Then, there exists a unique line ensemble $\mathcal{A}^{a,b,c} = \{\mathcal{A}_i^{a,b,c}\}_{i \geq 1}$ on $\mathbb{R}$ such that the following all hold. Firstly, the ensemble is non-intersecting, meaning that almost surely
\begin{equation}\label{Eq.OrdAWLE}
\mathcal{A}^{a,b,c}_i(t) > \mathcal{A}^{a,b,c}_{i+1}(t) \mbox{ for all } i \in \mathbb{N}, t \in \mathbb{R}.
\end{equation} 
For each $m \in \mathbb{N}$, and $s_1, \dots, s_m \in \mathbb{R}$ with $s_1 < s_2 < \cdots < s_m$ we have that the random measure
\begin{equation}\label{T2E1}
M(\omega, A) = \sum_{i \geq 1} \sum_{j = 1}^m {\bf 1}\left\{ (s_j, \mathcal{A}^{a,b,c}_i(s_j, \omega)) \in A \right\}
\end{equation}
is a determinantal point process on $\mathbb{R}^2$, with correlation kernel $K_{a,b,c}$ as in (\ref{Eq.3BPKer}), and reference measure $\mu_{\mathsf{S}} \times \mathrm{Leb}$ as in Definition \ref{Def.Measures}. In addition, if we define the line ensemble $\mathcal{L}^{a,b,c}$ via
\begin{equation}\label{S1FDE}
\left(\sqrt{2} \cdot \mathcal{L}_i^{a, b, c}(t) + t^2: i \geq 1, t \in \mathbb{R} \right)  = \left(\mathcal{A}^{a,b,c}_i(t): i \geq 1 , t \in \mathbb{R} \right),
\end{equation}
then $\mathcal{L}^{a,b,c}$ satisfies the Brownian Gibbs property from \cite[Definition 2.2]{CorHamA}, see also Definition \ref{Def.BGPVanilla}.
\end{proposition}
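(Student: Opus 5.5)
This proposition is a repackaging of \cite[Theorems 1.8 and 1.10]{dimitrov2024airy}, so the plan is to check that those theorems apply to every $(a,b,c)\in\parF$ and give exactly the objects described: existence, uniqueness, the strict ordering (\ref{Eq.OrdAWLE}), the determinantal structure with kernel $K_{a,b,c}$, and the Brownian Gibbs property of $\mathcal{L}^{a,b,c}$.

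\emph{Existence.} I would get existence as a weak limit of edge-rescaled inhomogeneous Schur processes. The first step is to match conventions. The parameters in \cite{dimitrov2024airy} are taken as in Definition \ref{Def.DLP} with $c^-=0$ and $J_a^-+J_b^-<\infty$, which is precisely the class $\parF$. I would then confirm that the kernel in \cite{dimitrov2024airy} agrees with (\ref{Eq.3BPKer}) for any admissible $\alpha,\beta$, i.e. $\alpha+t_1<\underline{a}$ and $\beta+t_2>\underline{b}$. The key point is that the value of $K_{a,b,c}$ does not depend on this choice. If the contours are deformed so that $\Gamma^+_{\alpha+t_1}$ and $\Gamma^-_{\beta+t_2}$ cross the pole set $\{z+t_1=w+t_2\}$, the residue produced equals the $\gamma$-integral in $K^1_{a,b,c}$. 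Away from the crossings, the factors $e^{z^3/3}$ and $e^{-w^3/3}$ decay along the rays, and $\Phi_{a,b,c}$ and $1/\Phi_{a,b,c}$ are analytic on the relevant regions, $\mathbb{C}\setminus[\underline{a},\infty)$ and $\mathbb{C}\setminus(-\infty,\underline{b}]$ respectively.

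Theorem 1.8 of \cite{dimitrov2024airy} then gives a line ensemble whose finite-dimensional point processes at times $s_1<\cdots<s_m$ are determinantal with kernel $K_{a,b,c}$ and reference measure $\mu_{\mathsf{S}}\times\mathrm{Leb}$. It also gives a.s. strict ordering, inherited from non-intersection of the prelimit together with the Brownian Gibbs property. Theorem 1.10 of \cite{dimitrov2024airy} gives the Brownian Gibbs property after the affine change (\ref{S1FDE}); the factor $\sqrt2$ and the parabola $t^2$ convert the rescaled Brownian bridges to standard ones.

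\emph{Uniqueness.} Uniqueness comes from the finite-dimensional laws. The determinantal description fixes the joint law of the point configurations $\{\mathcal{A}_i(s_j)\}_{i\ge1}$ for each finite $\mathsf{S}$. I would argue that the correlation functions determine the point process because the kernel is locally trace class with suitable exponential decay as $x\to+\infty$. Since the curves are ordered, the $i$-th largest point at time $s_j$ is exactly $\mathcal{A}_i(s_j)$, so the finite-dimensional distributions of the ensemble are determined. Continuity of the curves then pins down the law on $C(\mathbb{N}\times\mathbb{R})$, since its Borel $\sigma$-algebra is generated by the finite-dimensional projections.

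\emph{Main obstacle.} The step I expect to need the most care is the convention matching. The kernel in \cite{dimitrov2024airy} may be written with different contour placements or sign conventions for $c^{\pm}$, so I would verify that it reduces to (\ref{Eq.3BPKer}) for every admissible $\alpha,\beta$, including the cases $\underline{a}=\infty$ and $\underline{b}=-\infty$. Everything else in the statement is quoted directly from \cite{dimitrov2024airy}.
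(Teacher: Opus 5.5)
Your plan of quoting \cite[Theorems 1.8 and 1.10]{dimitrov2024airy} and checking that they deliver the required objects is the paper's plan too. However, the key claim in your existence step is false. The parameter class treated in \cite{dimitrov2024airy} is not all of $\parF$: those theorems only construct $\mathcal{A}^{a,b,c}$ when $c^+ \geq 0$, whereas $\parF$ allows any $c^+ \in \mathbb{R}$. So for $c^+ < 0$ the citation gives you neither existence, nor the determinantal structure with kernel $K_{a,b,c}$, nor the Brownian Gibbs property. Your ``main obstacle'' paragraph worries about sign conventions for $c^{\pm}$, but the actual issue is this restriction on the range of $c^+$, and matching conventions cannot fix it.

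The missing step is a vertical translation, which is how the paper handles it (Remark \ref{Rem.AWLE}, citing \cite[Proposition 1.10(a)]{D25}). Take $\mathcal{A}^{a,b,0}$, which has the same $a_i^{\pm}, b_i^{\pm}$ but $c^+ = 0$ and is covered by the cited theorems. Then set $\mathcal{A}^{a,b,c}_i := \mathcal{A}^{a,b,0}_i + c^+$.

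\emph{Ordering and Gibbs property.} Strict ordering is clearly preserved. The Brownian Gibbs property of the associated $\mathcal{L}$ is also preserved, since it is just shifted by $c^+/\sqrt{2}$ (Lemma \ref{Lem.Affine}).

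\emph{Kernel.} At times $s_1 < \cdots < s_m$, the point process is the pushforward under $(s,x) \mapsto (s, x + c^+)$. It is therefore determinantal with kernel $K_{a,b,0}(s, x - c^+; t, y - c^+)$. Use $\Phi_{a,b,c}(u) = e^{c^+ u}\Phi_{a,b,0}(u)$ in $K^3$ and expand the exponents in $K^1$ and $K^2$. Each piece then satisfies
\[
K^j_{a,b,c}(s,x;t,y) = e^{c^+(s-t)} \, K^j_{a,b,0}(s, x - c^+; t, y - c^+), \qquad j = 1,2,3,
\]
with the same admissible $\alpha, \beta$, because $\underline{a}$ and $\underline{b}$ do not depend on $c^+$. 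The factor $e^{c^+(s-t)}$ is a gauge transformation, so every correlation determinant is unchanged. Hence the shifted ensemble has correlation kernel $K_{a,b,c}$. Your uniqueness argument uses only the determinantal description, the ordering and continuity, so once existence is repaired this way it applies verbatim for every $c^+ \in \mathbb{R}$.
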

\begin{remark}\label{Rem.AWLE} We mention that \cite[Theorems 1.8 and 1.10]{dimitrov2024airy} only define $\mathcal{A}^{a,b,c}$ when $c^+ \geq 0$. For general $c^+ \in \mathbb{R}$, one can construct such an ensemble by starting with $\mathcal{A}^{a,b,0}$ (i.e. an ensemble with the same $a_i^{\pm}, b_i^{\pm}$ parameters but with $c^+ = 0$), and then set $\mathcal{A}_i^{a,b,c}(t):= \mathcal{A}_i^{a,b,0}(t) + c^+$. The fact that the latter satisfies the conditions of the proposition follows by a simple change of variables, see \cite[Proposition 1.10(a)]{D25} and its proof.
\end{remark}
\begin{remark}\label{Rem.AWLE2} When all parameters $a_i^{\pm}, b_i^{\pm}$ and $c^{\pm}$ are equal to zero, the ensemble $\mathcal{A}^{0,0,0}$ is just the usual Airy line ensemble from \cite{CorHamA}. 
\end{remark}

%
%
\subsection{Properties of the Airy wanderer line ensembles}\label{Section1.3} In this section, we summarize various properties of the Airy wanderer line ensembles, which we require later in the text. All of these can be found in \cite[Section 1]{D25}.

The following statement describes the behavior of $\mathcal{A}^{a,b,c}$ under horizontal and vertical shifts, as well as reflections across the origin.
\begin{proposition}\label{Prop.BasicProperties}\cite[Proposition 1.10]{D25} Assume the same notation as in Definition \ref{Def.DLP}, fix $(a,b,c) \in \parF$ and let $\mathcal{A}^{a,b,c}$ be as in Proposition \ref{Prop.AWLE}. Then, the following statements all hold.
\begin{enumerate}
    \item[(a)] (Translation) Suppose that $v \in \mathbb{R}$. Then, the line ensemble 
    $$\bar{\mathcal{A}} :=\left(\mathcal{A}^{a,b,c}_i(t) + v : i \geq 1, t \in \mathbb{R}\right).$$ 
    has the same law as $\mathcal{A}^{\bar{a},\bar{b},\bar{c}}$, where $(\bar{a},\bar{b},\bar{c}) \in \parF$ satisfy $\bar{a}^{\pm}_i = a_i^{\pm}$, $\bar{b}^{\pm}_i = b^{\pm}_i$ for $i \geq 1$ and $\bar{c}^+ = c^+ + v$.
    \item[(b)] (Reflection) The line ensemble $\cev{\mathcal{A}} :=\left(\mathcal{A}^{a,b,c}_i(-t) : i \geq 1, t \in \mathbb{R}\right)$ has the same law as $\mathcal{A}^{b,a,c}$, i.e. the Airy wanderer line ensemble whose ``$a$'' and ``$b$'' parameters have been swapped, but with the same parameter $c^+$. 
    \item[(c)] There exist $(\tilde{a}, \tilde{b}, \tilde{c}) \in \parP$ and $\Delta \in \mathbb{R}$, such that we have the following equality in law
    $$\left( \mathcal{A}^{\tilde{a},\tilde{b},\tilde{c}}_i(t-\Delta) + c^+: i \geq 1, t \in \mathbb{R} \right) =  \left( \mathcal{A}^{a,b,c}_i(t): i \geq 1, t \in \mathbb{R} \right).$$
\end{enumerate}
\end{proposition}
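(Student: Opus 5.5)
Since both sides of each claimed identity are line ensembles with the properties of Proposition \ref{Prop.AWLE}, the plan is to use the uniqueness there. It suffices to show that the transformed ensemble is non-intersecting, that its finite-dimensional point processes are determinantal with the correct kernel $K_{\bar a,\bar b,\bar c}$ (up to a conjugation $f(t_1,x_1)/f(t_2,x_2)$, which leaves correlation functions unchanged), and that its parabolically shifted version satisfies the Brownian Gibbs property. Non-intersection is preserved by translation, reflection and horizontal shifts. The Brownian Gibbs property is also preserved, with one caveat for horizontal shifts: $\mathcal{L}$ changes by an affine function of $t$, and Brownian bridges are invariant under adding affine functions. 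So in each part the real content is a kernel computation.

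For (a), the correlation functions of the shifted point process are $\det[K(t_i,x_i-v;t_j,x_j-v)]$. We then change variables in $K^1,K^2,K^3$. In $K^3$ the factor $e^{-x_1z+x_2w}$ becomes $e^{-x_1 z + x_2 w}\,e^{v(z-w)}$. Writing $e^{v z}=e^{v(z+t_1)}e^{-vt_1}$, this produces $e^{v(z+t_1)}/e^{v(w+t_2)}$, which is absorbed into $\Phi$ via $c^+\mapsto c^++v$, together with a gauge factor $e^{-vt_1+vt_2}$. In $K^1$ and $K^2$ the same shift produces the same gauge factor; for $K^2$ we check it directly from the Gaussian formula. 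Because $c^+$ does not enter $\underline a,\underline b$ when $c^-=0$ and the $a^-,b^-$ terms vanish, the contour constraints are unchanged. This settles (a), and it also accounts for Remark \ref{Rem.AWLE}.

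For (b), we substitute $t\mapsto -t$. In $K^3$ we change variables $z\mapsto -w'$, $w\mapsto -z'$, which swaps the roles of the $\Gamma^+$ and $\Gamma^-$ contours and turns $\Phi_{a,b,c}(z+t_1)/\Phi_{a,b,c}(w+t_2)$ into $\Phi_{b,a,c}(z'+t_2')/\Phi_{b,a,c}(w'+t_1')$ up to the $e^{c^+}$ terms, which cancel symmetrically. The result is the kernel with its arguments transposed. Transposition does not change determinants, so the reflected process is determinantal with kernel $K_{b,a,c}$. The terms $K^1+K^2$ need a similar check, using the identity relating the $\gamma$-segment integral to the heat kernel term when the time order flips. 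I expect this to be the main obstacle, since the time-ordering indicator in $K^2$ and the contour segment $\gamma$ must recombine after the contours are deformed. I would handle it by verifying that $K^1+K^2$ can be rewritten as a single contour integral that is symmetric under the reflection.

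For (c), the goal is to remove the $c^+$ parameter and any finitely many $a^-,b^-$ parameters. The translation by $c^+$ is handled by (a). The remaining step uses the horizontal shift $t\mapsto t-\Delta$. Under $z\mapsto z+\Delta$ in $\Phi$, the factors $(1-a_i^-/z)$, $(1+b_i^-/z)$ and the $e^{c^+z}$ term can be re-expanded around a shifted origin. Each finite product of factors $1/(1-a^-/(z))$ is rewritten as a ratio of factors of $+$-type times a constant and an exponential-linear term $e^{\tilde c^+ z}$. A suitable $\Delta$, chosen from the finitely many $a_i^-,b_i^-$ (finiteness uses $J_a^-+J_b^-<\infty$), converts the kernel into one for $(\tilde a,\tilde b,\tilde c)\in\parP$, again up to gauge factors. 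This is the messiest algebra, and I would rely on the translation from (a) to absorb any leftover constant vertical shift.
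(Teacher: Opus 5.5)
Parts (a) and (b) are fine. In (a) your gauge computation is the change of variables that Remark~\ref{Rem.AWLE} alludes to. In (b) the substitution $z=-w'$, $w=-z'$ works, together with $\Phi_{a,b,c}(-u)=1/\Phi_{b,a,c}(u)$ (which already handles $e^{c^+z}$) and $\underline{a}\leftrightarrow-\underline{b}$ under the swap. The obstacle you anticipate in (b) is not real. $K^2$ depends only on $t_2-t_1$ and symmetrically on $(x_1,x_2)$, so it is literally invariant under $(t_1,x_1;t_2,x_2)\mapsto(-t_2,x_2;-t_1,x_1)$. $K^1$ is carried to itself by $u\mapsto-u$, because the same substitution maps the two wedges, hence their intersection points and the segment $\gamma$, onto those for $(b,a,c)$. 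So $K_{a,b,c}(-t_1,x_1;-t_2,x_2)=K_{b,a,c}(t_2,x_2;t_1,x_1)$ holds term by term, and nothing needs to be recombined.

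The genuine gap is in (c), the only part with real content. Re-expanding the finitely many rational factors $(1-a_i^-/z)^{-1}=z/(z-a_i^-)$ and $1+b_i^-/z=(z+b_i^-)/z$ never produces a factor $e^{\tilde c^+z}$. Were one present, the vertical shift would have to differ from $c^+$, contradicting both $\tilde c^+=0$ in $\parP$ and the exact form of (c).

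The actual mechanism is as follows. The factor $e^{c^+z}$ is removed by (a) with $v=-c^+$. The kernel of $t\mapsto\mathcal{A}^{\tilde a,\tilde b,0}_i(t-\Delta)$ is $K_{\tilde a,\tilde b,0}$ with $\Phi_{\tilde a,\tilde b,0}(\cdot-\Delta)$ in place of $\Phi_{\tilde a,\tilde b,0}$. Here $K^1,K^2$ are unchanged after translating $u$, and no gauge factor appears. So you need $\Phi_{\tilde a,\tilde b,0}(\xi-\Delta)=C\,\Phi_{a,b,0}(\xi)$: every zero and pole of $\Phi_{a,b,0}$ is translated rigidly by $-\Delta$. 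These singularities are:
\begin{itemize}
\item poles at $1/a_i^+$ and $a_i^-$;
\item zeros at $-1/b_i^+$ and $-b_i^-$;
\item from the $1/z$ factors, a zero of order $J_a^--J_b^-$ at the origin if $J_a^->J_b^-$, or a pole of order $J_b^--J_a^-$ if $J_b^->J_a^-$.
\end{itemize}
The translate has $\parP$ form exactly when every pole lands in $(0,\infty)$ and every zero in $(-\infty,0)$. This forces $|\Delta|$ to be smaller than every positive $a_i^-,b_i^-$ and every finite $1/a_i^+,1/b_i^+$. It also fixes the sign: $\Delta>0$ if $J_a^->J_b^-$, $\Delta<0$ if $J_b^->J_a^-$, and any small $\Delta$ if $J_a^-=J_b^-$. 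The new parameters are then:
\begin{itemize}
\item $\tilde a$: the decreasing rearrangement of the $a_i^+/(1-a_i^+\Delta)$, the $1/(a_i^--\Delta)$, and $-1/\Delta$ repeated $J_b^--J_a^-$ times when this is positive;
\item $\tilde b$: the decreasing rearrangement of the $b_i^+/(1+b_i^+\Delta)$, the $1/(b_i^-+\Delta)$, and $1/\Delta$ repeated $J_a^--J_b^-$ times when this is positive.
\end{itemize}
So $\Delta$ is not ``chosen from'' the $a_i^-,b_i^-$; taking $\Delta=a_i^-$ would put a pole at the origin. The wrong sign already fails for a single $a_1^->0$, where the zero of $z/(z-a_1^-)$ must be moved onto the negative axis.

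You also omit a necessary contour argument. When some $a_i^-$ or $b_i^-$ is positive, Definition~\ref{Def.DLP} gives $\underline a=\underline b=0$. So the original kernel has the $z$-wedge vertex (in the variable $z+t_1$) left of $0$ and the $w$-wedge vertex right of $0$. After the shift, the $\parP$ kernel allows the $z$-vertex up to the smallest pole of $\Phi_{a,b,0}$ and the $w$-vertex down to its largest zero. You must show $K^1+K^3$ agrees for both choices. This holds for three reasons:
\begin{itemize}
\item the swept regions meet the real axis only in segments containing no pole (for $z$), respectively no zero (for $w$), and all singularities are real;
\item the deformation at infinity is controlled by the cubic terms;
\item the change of $K^3$ from crossing $z+t_1=w+t_2$ is exactly compensated by the change of the segment $\gamma$ in $K^1$.
\end{itemize}
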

\begin{remark} Part (c) of the proposition states that we can realize any $\mathcal{A}^{a,b,c}$ from Proposition \ref{Prop.AWLE} as an appropriately translated Airy wanderer line ensemble with parameters in $\parP$. In order to simplify our exposition later in the paper, we will state subsequent results for $\mathcal{A}^{a,b,c}$ under the additional assumption that $(a,b,c) \in \parP$. However, in view of Proposition \ref{Prop.BasicProperties}(c), one can transfer properties of such ensembles to all Airy wanderer line ensembles by merely translating them. We mention that the parameters $(\tilde{a}, \tilde{b}, \tilde{c})$, and $\Delta$ depend in a non-trivial way on $(a,b,c)$, and we refer to the proof of \cite[Proposition 1.10]{D25} for the precise relationship.
\end{remark}

The next result shows that there are {\em multiple} ways (indexed by $A, B \in \mathbb{Z}_{\geq 0}$) to couple two Airy wanderer line ensembles with different sets of parameters $(a,b,c)$ and $(\tilde{a}, \tilde{b}, \tilde{c})$, that ensure that their appropriately reindexed curves are stochastically ordered.
\begin{proposition}\label{Prop.MonCoupling}\cite[Theorem 1.13]{D25} Assume the same notation as in Definition \ref{Def.DLP} and Proposition \ref{Prop.AWLE}. Fix $A, B \in \mathbb{Z}_{\geq 0}$ and two sets of parameters $(a,b,c), (\tilde{a}, \tilde{b}, \tilde{c}) \in \parP$, such that $a_{i + A}^+ \leq \tilde{a}_i^+$ and $b_{i+B}^+ \leq \tilde{b}_i^+$ for $i \geq 1$. Then, we can couple $\mathcal{A}^{a,b,c}$ and $\mathcal{A}^{\tilde{a},\tilde{b},\tilde{c}}$ on the same probability space, so that almost surely
\begin{equation}\label{Eq.MonCoupling1}
\mathcal{A}^{a,b,c}_{k + \max(A,B)}(t) \leq \mathcal{A}_k^{\tilde{a},\tilde{b},\tilde{c}}(t) \mbox{ for all } k \geq 1, t \in \mathbb{R}.
\end{equation}
\end{proposition}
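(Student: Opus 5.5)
The plan is to build the coupling on finite windows using the Brownian Gibbs property, and then pass to a limit. The ordering input at the window boundaries comes from the asymptotic slopes (property IV, Proposition \ref{Prop.Slopes}), and continuity of the laws in the parameters (property II, \cite[Proposition 1.12]{D25}) is used to remove degenerate cases. Set $D = \max(A,B)$. Since the set of couplings satisfying (\ref{Eq.MonCoupling1}) is closed under weak limits (the ordering is a closed condition in the topology of uniform convergence on compacts), it suffices to produce couplings in which (\ref{Eq.MonCoupling1}) holds on any fixed compact set, for finitely many indices, with probability tending to one.

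\textbf{Step 1: perturb to strict inequalities.} First replace $(a,b,c)$ by parameters $(a^\epsilon,b^\epsilon,c)$ with $a^{\epsilon,+}_{i+A} < \tilde a^+_i$ whenever $\tilde a^+_i>0$, and similarly for $b$. For example, take $a^{\epsilon,+}_j = (1-\epsilon)a^+_j$, and handle separately the case $a^+_j>0=\tilde a^+_{j-A}$, which cannot occur by hypothesis. By the continuity property it is enough to couple the perturbed ensembles and then let $\epsilon\downarrow 0$, again using tightness of couplings and closedness of the ordering.

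\textbf{Step 2: ordering at the boundary of a window.} Fix $K\in\mathbb N$ and a large $T$. By Proposition \ref{Prop.Slopes} and the strict inequalities, for each $k\le K+1$ the point $\mathcal{A}^{a,b,c}_{k+D}(\pm T)-T^2$ either has slope $-2/a^+_{k+D}$, which is strictly more negative than that of $\mathcal{A}^{\tilde a,\tilde b,\tilde c}_k(T)-T^2$, or else the tilde curve is tight while the other curve has negative linear slope. Here we use $a^+_{k+D}\le a^+_{k+A}<\tilde a^+_k$. The remaining case, in which both curves are tight because both parameters vanish, needs the index shift. In that case I would compare through an auxiliary ensemble, or use that the $D$ extra curves push the lower curves down by an amount growing with $T$. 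I am least sure of this step.

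In all cases the aim is to show that, with probability at least $1-\delta(T)$ with $\delta(T)\to0$, the following hold for independently sampled ensembles:
\begin{equation*}
\mathcal{A}^{a,b,c}_{k+D}(\pm T)\le \mathcal{A}^{\tilde a,\tilde b,\tilde c}_k(\pm T) \mbox{ for } k\le K, \mbox{ and } \mathcal{A}^{a,b,c}_{K+1+D}\le \mathcal{A}^{\tilde a,\tilde b,\tilde c}_{K+1} \mbox{ on } [-T,T].
\end{equation*}
The second condition, an ordering of whole lower curves on the window, is harder. I would obtain it by choosing $K$ large relative to $T$, using that deep curves of both ensembles sit near $-\tfrac{3}{2}\cdot(\text{const})K^{2/3}$ uniformly on the window, together with the extra $D$-curve shift.

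\textbf{Step 3: resampling on the window.} Condition both ensembles on the data outside $\{1,\dots,K+D\}\times(-T,T)$, respectively $\{1,\dots,K\}\times(-T,T)$. For the first ensemble, also condition on its top $D$ curves, which only act as upper barriers and can be removed using the version of the monotone coupling lemma for non-intersecting Brownian bridges with an upper and a lower boundary, \cite[Lemma 2.6]{CorHamA}. By the Brownian Gibbs property of $\mathcal L$ (Proposition \ref{Prop.AWLE}), the curves $k+D$, $k\le K$, of the first ensemble are non-intersecting Brownian bridges, and so are the curves $k\le K$ of the second. On the good event of Step 2 the boundary data are ordered, so the monotone coupling lemma couples the resampled curves with (\ref{Eq.MonCoupling1}) on $[-T,T]$ for $k\le K$. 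On the bad event, use an arbitrary coupling. The result is a coupling in which (\ref{Eq.MonCoupling1}) fails on $[-T,T]\times\{1,\dots,K\}$ with probability at most $\delta(T)$.

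Letting $T,K\to\infty$ and extracting a subsequential limit of the coupled pairs, which are tight since each marginal is fixed, yields the desired coupling. I expect Step 2 to be the main obstacle, particularly the equal-slope and zero-slope cases and the ordering of the lower boundary curve on the whole window. If it fails, my fallback is to prove the statement in a prelimit: inhomogeneous Schur processes or geometric LPP with parameters indexed by rows. There, deleting $A$ (respectively $B$) rows gives an exact interlacing domination, and the coupling then transfers through the convergence results of \cite{dimitrov2024airy}.
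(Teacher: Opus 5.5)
The paper does not prove this statement; it imports it verbatim from \cite[Theorem 1.13]{D25}. Judged on its own terms, your argument has a genuine gap in Step 2, and it is not a technicality. Two \emph{independently} sampled ensembles cannot have ordered boundary data with probability tending to one whenever the compared curves both have vanishing slope parameter.

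\textbf{Identical parameters.} For $(a,b,c)=(\tilde a,\tilde b,\tilde c)$ and $A=B=0$, exchangeability gives $\mathbb{P}(\mathcal{A}_k(T)\le\tilde{\mathcal{A}}_k(T))=1/2$ for every $T$.

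\textbf{A non-trivial instance.} Take $a=(1,0,0,\dots)$, $\tilde a=(2,0,0,\dots)$, $b=\tilde b=0$, $A=B=0$. By Theorem \ref{Thm.ConvAiry}, $\mathcal{A}_2(T)$ and $\tilde{\mathcal{A}}_2(T)$ have the same limit law $\mathcal{A}^{0,0,0}_1(0)$, and so do $\mathcal{A}_1(-T)$ and $\tilde{\mathcal{A}}_1(-T)$. So the required ordering has probability tending to at most $1/2$.

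\textbf{The index shift does not help.} With all parameters zero and $A=B=1$ (where the identity coupling trivially works), stationarity of the Airy line ensemble makes $\mathbb{P}(\mathcal{A}_2(T)\le\tilde{\mathcal{A}}_1(T))$ a constant strictly less than one, independent of $T$. So the $D$ extra curves do not push $\mathcal{A}_{k+D}(\pm T)$ down by an amount growing with $T$. When $a_{k+D}=\tilde a_k=0$, Proposition \ref{Prop.Slopes}(c) gives tightness and nothing more.

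\textbf{The lower boundary on the window.} Your second requirement, $\mathcal{A}_{K+1+D}\le\tilde{\mathcal{A}}_{K+1}$ on all of $[-T,T]$, fails for the same reason and more severely. Two independent deep curves sit at the same location to leading order, and their fluctuations are not small compared with the spacing $\asymp K^{-1/3}$ between consecutive deep curves. So the ordering already fails with non-vanishing probability at a single time, and demanding it on a window of growing length only makes this worse. Taking $K$ large relative to $T$ does not help.

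Hence $\delta(T)\not\to 0$, and Step 3 and the limiting argument have nothing to act on. The other pieces are fine in themselves: Step 1, the Gibbs resampling of Step 3 given ordered boundary data, the positive-slope cases of Step 2, and the closedness/tightness passage to the limit.

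What is missing is a source of ordering that holds \emph{by construction} rather than with high probability. The natural remedy is to produce the coupling at a level where the two systems have deterministic, comparable boundary conditions, so that a monotone coupling holds almost surely. Examples are finite non-intersecting bridge ensembles, or Schur processes/geometric LPP converging to $\mathcal{A}^{a,b,c}$ and $\mathcal{A}^{\tilde a,\tilde b,\tilde c}$. One then transfers the ordering to the limit with your closedness argument. Your fallback is of this type, but as stated it only accounts for the index shift (row deletion gives interlacing). You would still need two further ingredients:
\begin{itemize}
\item an exact prelimit coupling that orders \emph{individual} curves when the parameter values differ, i.e.\ when $a_{i+A}<\tilde a_i$;
\item convergence of the coupled prelimit ensembles to the Airy wanderer ensembles, as line ensembles, with the correct parameters.
\end{itemize}
This is where the real work of the statement lies.
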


The next result describes the global behavior of the points $\{\mathcal{A}_k^{a,b,c}(t)\}_{k \geq 1}$ as $t \rightarrow \pm \infty$.
\begin{proposition}\label{Prop.Slopes}\cite[Theorem 1.15]{D25} Assume the same notation as in Definition \ref{Def.DLP} and Proposition \ref{Prop.AWLE}. Fix $(a,b,c) \in \parP$ and any sequence $t_N > 0$ with $t_N \uparrow \infty$. Then, the following statements all hold.
\begin{enumerate}
\item[(a)] If $k \in \{1, \dots, J_a^+\}$, then $t_N^{-1} \cdot (\mathcal{A}^{a,b,c}_k(t_N) - t_N^2) \Rightarrow -2/a_k^+$.
\item[(b)] If $k \in \{1, \dots, J_b^+\}$, then $t_N^{-1} \cdot (\mathcal{A}^{a,b,c}_k(-t_N) - t_N^2) \Rightarrow -2/b_k^+$.
\item[(c)] If $J_a^+ < k <\infty$, then $\{\mathcal{A}^{a,b,c}_k(t_N) \}_{N \geq 1}$ is tight.
\item[(d)] If $J_b^+ < k <\infty$, then $\{\mathcal{A}^{a,b,c}_k(-t_N) \}_{N \geq 1}$ is tight.
\end{enumerate}
\end{proposition}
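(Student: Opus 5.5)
Parts (b) and (d) follow from parts (a) and (c) applied to the reflected ensemble. By Proposition \ref{Prop.BasicProperties}(b), $\mathcal{A}^{a,b,c}_k(-t_N)$ has the law of $\mathcal{A}^{b,a,c}_k(t_N)$, and the parameters $(b,a,c)$ again lie in $\parP$. So I will only prove (a) and (c). The plan is to sandwich the curves between simpler ensembles using the monotone coupling of Proposition \ref{Prop.MonCoupling}, and then analyze those simpler ensembles through the kernel $K_{a,b,c}$ in (\ref{Eq.3BPKer}).

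\emph{Part (c).} Fix $k > J_a^+$; in particular $J_a^+ < \infty$.
\begin{itemize}
\item For the \emph{lower bound}, apply Proposition \ref{Prop.MonCoupling} with $A = B = 0$, taking $\tilde a, \tilde b$ to be our $a, b$ and taking the smaller ensemble to be $\mathcal{A}^{0,0,0}$. This gives $\mathcal{A}^{0,0,0}_k(t) \leq \mathcal{A}^{a,b,c}_k(t)$. The Airy line ensemble is stationary after removing the parabola in $\mathcal{A}$-coordinates (here $\mathcal{A}_i$ itself is stationary by Remark \ref{Rem.AWLE2}), so the lower tail of $\mathcal{A}^{a,b,c}_k(t_N)$ is tight.
\item For the \emph{upper bound}, apply Proposition \ref{Prop.MonCoupling} with $A = J_a^+$, $B = 0$ and $\tilde a = 0$, $\tilde b = b$. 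The hypothesis $a_{i+J_a^+}^+ = 0 \leq \tilde a_i^+$ holds, so
$$\mathcal{A}^{a,b,c}_k(t) \leq \mathcal{A}^{0,b,0}_{k-J_a^+}(t).$$
It therefore suffices to show that every fixed curve of $\mathcal{A}^{0,b,0}$ is tight at $t_N \to +\infty$. For this I use the determinantal structure at the single time $t_N$. In $K^3$ the ratio is $\Phi(z+t)/\Phi(w+t) = \prod_i (1+b_i^+(z+t))/(1+b_i^+(w+t))$. For $z, w$ in compact sets this ratio tends to $1$ as $t \to \infty$. Summability of $b_i^+$ lets me dominate the product on the contours $\Gamma^\pm$, which can be taken at fixed distance from $\underline b$ because $\underline{a}=\infty$ here.
\item The kernel thus converges, in trace norm on $[x,\infty)$, to the Airy kernel. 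The number of points of $\mathcal{A}^{0,b,0}(t_N)$ above $x$ therefore has uniformly controlled expectation, which decays as $x \to \infty$. This gives upper tightness of each fixed curve.
\end{itemize}

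\emph{Part (a).} Fix $k \leq J_a^+$.
\begin{itemize}
\item For the \emph{upper bound}, apply Proposition \ref{Prop.MonCoupling} with $A = k-1$, $B=0$, $\tilde a_i^+ = a^+_{i+k-1}$ and $\tilde b = b$. This gives $\mathcal{A}_k^{a,b,c} \leq \mathcal{A}_1^{\tilde a,b,0}$, where the top parameter of the comparison ensemble is $\tilde a_1^+ = a_k^+$.
\item For the \emph{lower bound}, apply Proposition \ref{Prop.MonCoupling} with $A=B=0$ and comparison parameters $a'_i = a_i^+$ for $i \leq k$, $a'_i = 0$ otherwise, $b' = 0$. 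This gives $\mathcal{A}^{a',0,0}_k \leq \mathcal{A}^{a,b,c}_k$.
\end{itemize}
Part (a) then reduces to two one-point statements at time $t_N$, with $x = t_N^2 - (2/a_k^+ + \eta)t_N$ for fixed $\eta$:
\begin{enumerate}
\item[(i)] for any parameters with top parameter $a_1^+$, the top curve satisfies $\mathbb{P}\bigl(\mathcal{A}_1(t_N) > t_N^2 - (2/a_1^+ - \varepsilon) t_N\bigr) \to 0$;
\item[(ii)] for the finite-parameter ensemble $\mathcal{A}^{a',0,0}$, the number of points of $\mathcal{A}^{a',0,0}(t_N)$ above $t_N^2 - (2/a_k^+ + \varepsilon)t_N$ is at least $k$ with probability tending to $1$.
\end{enumerate}

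Both statements come from steepest descent on $K^3$ at $x = t^2 - \kappa t$. After shifting $z \mapsto z - t$, the exponent in $K^3$ has critical points near $\pm t$ of the scale $t$. Moving the $z$-contour past the poles $1/a_i^+$ of $\Phi(z+t)$ produces residue terms. Each such term is a rank-one operator whose exponential size is governed by
$$\tfrac{1}{3}(1/a_i^+ - t)^3 - x(1/a_i^+ - t).$$
To leading order in $t$ this expression is positive exactly when $\kappa < 2/a_i^+$ is violated in the right direction. These residues, one for each $a_i^+ \geq a_k^+$, are what produce $k$ points near slope $-2/a_k^+$, while the remaining contour integral behaves like an Airy kernel and contributes points only near $t^2 - 2t/a$-independent levels of order $1$.

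Statement (i) then follows from a trace-norm bound on $[x,\infty)$ for $\kappa < 2/a_1^+$. Statement (ii) follows from a Fredholm determinant or first-and-second-moment count of points in a window. Letting $\varepsilon \downarrow 0$ yields (a).

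I expect the main obstacle to be statement (ii). It requires showing that the rank-$k$ residue part genuinely yields $k$ points, not just a large expectation. My first attempt will be to write the kernel, up to conjugation, as a finite-rank operator plus a small remainder and compare the gap probabilities via the determinant formula. If that proves delicate, a fallback is to use the Brownian Gibbs property of $\mathcal{L}^{a,b,c}$ to spread the information in (i) into lower bounds for the first $k$ curves.
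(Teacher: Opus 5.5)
\textbf{Overall.} Your reductions are sound. The paper itself does not prove this statement; it quotes it from \cite{D25}. Reflection gives (b) and (d), and every use of Proposition~\ref{Prop.MonCoupling} has the correct index shift. Your part (c) is exactly the expected-count argument of Proposition~\ref{Prop.TightnessFlat} with $J_a=0$. Statement (i) works the same way, because for $\kappa<2/a_1^+$ no pole has to be crossed to place the $z$-contour just to the right of the saddle.

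\textbf{The gap: statement (ii).} Statement (ii) carries the entire lower bound in (a), and you do not prove it. Moreover, the mechanism you sketch for it is wrong. Write $x=t^2-\kappa t$, $z=\zeta-t$, $w=\omega-t$. The exponent becomes $\tfrac23t^3-\kappa t^2-t(\zeta^2-\kappa\zeta)+\zeta^3/3$ minus the same expression in $\omega$. So both saddles sit at $\zeta=\omega=\kappa/2$ on scale $t^{-1/2}$, which is the regime of Proposition~\ref{Prop.KernelConvTop}.
\begin{itemize}
\item The quantity $\tfrac13(1/a_i^+-t)^3-x(1/a_i^+-t)=\tfrac23t^3-\kappa t^2+O(t)$ is positive for every $\kappa$, so its sign encodes nothing.
\item Pair it with the $w$-integral of the same rank-one term. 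That integrand is regular at $w=1/a_i^+-t$, since $1/\Phi$ vanishes there, so the $w$-contour can pass through the saddle, where $-w^3/3+xw=-\tfrac23x^{3/2}$. The diagonal contribution at level $x$ is then of order $e^{-t(1/a_i^+-\kappa/2)^2}$, which is never large.
\item Residues produce points not through their size, but because the integral of each rank-one diagonal over $[x,\infty)$ tends to $1$ once $1/a_i^+<\kappa/2$.
\end{itemize}
Your Gibbs fallback cannot work: resampling cannot turn the upper bounds of (i) into lower bounds, since the Airy line ensemble satisfies all of them. The Fredholm route would need trace-norm control of a conjugated finite-rank approximation, because the factors $e^{-xu}$ with $u=1/a_i^+-t<0$ grow in $x$. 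You have not supplied that control.

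\textbf{How to close it.} A first-moment argument suffices.
\begin{enumerate}
\item Couple $\mathcal{A}^{a,b,c}$ from below (Proposition~\ref{Prop.MonCoupling}, $A=B=0$) with $\mathcal{A}^{a'',0,0}$. Take $a''_i=a_i^+-i\epsilon'$ for $i\le k$ and $a''_i=0$ for $i>k$, with $\epsilon'$ small enough that $2/a''_k<\kappa:=2/a_k^++\varepsilon$. The poles $1/a''_1<\cdots<1/a''_k$ are then simple and all lie left of $\kappa/2$.
\item As in Step~1 of the proof of Proposition~\ref{Prop.TightnessSlope}, with $x_N=t_N^2-\kappa t_N$, the sum $\sum_{i\ge1}\mathbb{P}(\mathcal{A}^{a'',0,0}_i(t_N)>x_N)$ is a double contour integral with $(z-w)^{-2}$.
\item Move both contours past all $k$ poles, so that in shifted coordinates they cross the real axis at $\kappa/2+\eta$ and $\kappa/2$. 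Each pole contributes exactly $1$. The leftover single and double integrals are $O(e^{-ct_N})$ by the bounds of Steps~2--3 of that proof. Hence the sum tends to $k$.
\item Proposition~\ref{Prop.MonCoupling} with $A=k$, $B=0$, $\tilde a=\tilde b=0$ gives $\mathcal{A}^{a'',0,0}_{k+j}\le\mathcal{A}^{0,0,0}_j$. By stationarity, $\sum_{i>k}\mathbb{P}(\mathcal{A}^{a'',0,0}_i(t_N)>x_N)\le\int_{x_N}^\infty K_{0,0,0}(0,y;0,y)\,dy\to0$.
\item Therefore $\sum_{i=1}^k\mathbb{P}(\mathcal{A}^{a'',0,0}_i(t_N)>x_N)\to k$. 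Since each summand is at most $1$, $\mathbb{P}(\mathcal{A}^{a'',0,0}_k(t_N)>x_N)\to1$, and the coupling transfers this to $\mathcal{A}^{a,b,c}_k$.
\end{enumerate}
This is the same identity the paper uses in Proposition~\ref{Prop.TightnessSlope}, read in the opposite direction. No second moments or Fredholm determinants are needed.
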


%
%
\subsection{Main results}\label{Section1.4} In this section, we present the main results of the paper. We begin by summarizing some useful notation in the following definition.

\begin{definition}\label{Def.ParMultiplicities} Assume the same notation as in Definition \ref{Def.DLP}, and fix $(a,b,c) \in \parP$. Since $a_i^{-} = b_i^{-} = c^+ = c^- = 0$ here, we drop the ``plus'' signs from the notation and simply write $\{a_i\}_{\geq 1}, \{b_i\}_{i \geq 1}, J_a, J_b$ in place of $\{a^+_i\}_{\geq 1}, \{b^+_i\}_{i \geq 1}, J^+_a, J^+_b$. For such a choice of parameters the function $\Phi_{a,b,c}$ from (\ref{Eq.DefPhi}) becomes
\begin{equation}\label{Eq.DefPhi2}
\Phi_{a,b,c}(z) =  \prod_{i = 1}^{\infty} \frac{1 + b_i z}{1 - a_i z}.
\end{equation}
For the above choice of parameters, we let $\mathsf{V}_a = \{x \in \mathbb{R}: x = a_i \mbox{ for some } i \le J_a\}$, and $\mathsf{V}_b = \{x \in \mathbb{R}: x = b_i \mbox{ for some } i \le J_b\}$. If $J_a > 0$, we can order the elements in $\mathsf{V}_a$ by $v_1^a > v_2^a > \cdots$, and there are unique integers $\mathsf{m}^a_1, \mathsf{m}^a_2, \dots \in \mathbb{N}$, such that 
$$a_{1} = \cdots = a_{\mathsf{m}^a_1} = v_1^a, \hspace{2mm} a_{\mathsf{m}_1+ 1} = \cdots = a_{\mathsf{m}^a_1 + \mathsf{m}^a_2} = v_2^a, \mbox{ and so on.}$$
In other words, $\mathsf{m}_i^a$ is the number of times $v_i^a$ appears in the sequence $\{a_i\}_{i \geq 1}$. We similarly define for $J_b > 0$ the numbers $v_i^b$, and $\mathsf{m}_i^b$. 

Finally, we set $\mathsf{M}_0^a = \mathsf{M}_0^b = 0$ and for $i \geq 1$ put $\mathsf{M}_i^a = \mathsf{m}_1^a + \cdots + \mathsf{m}_i^a$, $\mathsf{M}_i^b = \mathsf{m}_1^b + \cdots + \mathsf{m}_i^b$.
\end{definition}

Before we state our first main result, we need to introduce Dyson Brownian motion, which is done in the following definition.

\begin{definition}\label{Def.DBM}
Fix $ n \in \mathbb{N}$, and define the open and closed {\em Weyl chambers}
\begin{equation}\label{Eq.WeylChamber}
\weyl_n = \{(x_1, \dots, x_n) \in \mathbb{R}^n: x_1 > \cdots > x_n\}, \hspace{2mm}  \weylc_n = \{(x_1, \dots, x_n) \in \mathbb{R}^n: x_1 \geq \cdots \geq x_n\}.
\end{equation}
For $\lambda^n(0) = (\lambda_1^n(0), \dots, \lambda_n^n(0)) \in \weylc_n$, we define $\lambda(t) = (\lambda_1^n(t), \dots, \lambda_n^n(t)) \in C([0,\infty), \weylc_n)$ to be the unique strong solution to the stochastic differential system
\begin{equation}\label{Eq.DysonSDE}
d\lambda_i^n(t) = dB_i(t) + \sum_{j = 1, j \neq i}^n \frac{dt}{\lambda_i^n(t) - \lambda_j^n(t)},
\end{equation}  
where $B_1, \dots, B_n$ are i.i.d. standard Brownian motions. The fact that the above system has a unique strong solution is established in \cite[Proposition 4.3.5]{AGZ}, and it is called {\em Dyson Brownian motion} (with $\beta = 2$), {\em started from} $\lambda^n(0)$.
\end{definition}

With the above notation in place, we are ready to state our first main result.
\begin{theorem}\label{Thm.ConvDBM} Assume the same notation as in Definitions \ref{Def.DLP}, \ref{Def.ParMultiplicities}, \ref{Def.DBM}, and Proposition \ref{Prop.AWLE}. The following statements hold for any sequence $T_N > 0$ with $T_N\uparrow \infty$.
\begin{enumerate}
\item[(a)] Fix $k \in \mathbb{N}$, suppose $(a,b,c) \in \parP$ satisfies $|\mathsf{V}_a| \geq k$, and define the scaled processes
\begin{equation}\label{Eq.ScaledSlopeA}
\mathcal{L}^N_i(t) = (2T_N)^{-1/2} \cdot \left( \mathcal{A}^{a,b,c}_{\mathsf{M}_{k-1}^a + i}(tT_N) + 2tT_N/v_k^a - t^2T_N^2 \right),
\end{equation}
for $t > 0$ and $i = 1, \dots, \mathsf{m}_k^a$. Then,
\begin{equation}\label{Eq.ConvDBMA}
\left(\mathcal{L}^N_i(t): t > 0 \mbox{, }i = 1, \dots, \mathsf{m}_k^a \right) \Rightarrow \left(\lambda^{\mathsf{m}_k^a}_i(t): t > 0 \mbox{, }i = 1, \dots, \mathsf{m}_k^a\right).
\end{equation}
\item[(b)] Fix $k \in \mathbb{N}$, suppose $(a,b,c) \in \parP$ satisfies $|\mathsf{V}_b| \geq k$, and define the scaled processes
\begin{equation}\label{Eq.ScaledSlopeB}
\cev{\mathcal{L}}^N_i(t) = (2T_N)^{-1/2} \cdot \left( \mathcal{A}^{a,b,c}_{\mathsf{M}_{k-1}^b + i}(-tT_N) + 2tT_N/v_k^b - t^2T_N^2 \right),
\end{equation}
for $t > 0$ and $i = 1, \dots, \mathsf{m}_k^b$. Then,
\begin{equation}\label{Eq.ConvDBMB}
\left(\cev{\mathcal{L}}^N_i(t): t > 0 \mbox{, }i = 1, \dots, \mathsf{m}_k^b \right) \Rightarrow \left(\lambda^{\mathsf{m}_k^b}_i(t): t > 0 \mbox{, }i = 1, \dots, \mathsf{m}_k^b\right).
\end{equation}
\end{enumerate}
\end{theorem}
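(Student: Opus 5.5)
By Proposition \ref{Prop.BasicProperties}(b), reflecting $\mathcal{A}^{a,b,c}$ across the origin gives $\mathcal{A}^{b,a,c}$. Applying part (a) to $\mathcal{A}^{b,a,c}$ therefore yields part (b) directly, so I will only prove part (a). The plan has two parts: convergence of finite-dimensional distributions via the determinantal structure of Proposition \ref{Prop.AWLE}, and tightness via the Brownian Gibbs property. Set $m = \mathsf{m}_k^a$ and $v = v_k^a$.

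\textbf{Step 1: finite-dimensional convergence.} Fix $0 < s_1 < \cdots < s_r$. I rescale the kernel $K_{a,b,c}$ in (\ref{Eq.3BPKer}) at times $t_j = s_jT_N$ and positions $x = t_j^2 - 2t_j/v + \sqrt{2T_N}\,y$, and conjugate by suitable gauge factors. The aim is to show that, for fixed $y$, the rescaled kernel converges to the extended kernel of Dyson Brownian motion started from zero, i.e.\ the GUE minor-type extended Hermite kernel. The relevant contribution comes from $K^3$. After the shifts $z \mapsto z + t_1$ and $w \mapsto w + t_2$, the integrand involves $\Phi(z)/\Phi(w)$ together with exponents that are cubic in $z - t_j$. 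Its critical point sits at the pole $1/v$ of $\Phi$, which has order $m$. I therefore zoom in with $z = 1/v + \tilde z/\sqrt{T_N}$ and $w = 1/v + \tilde w/\sqrt{T_N}$. The factor $(1 - vz)^{-m}$ then produces $\tilde z^{-m}$, and the quadratic part of the exponent produces Gaussian terms $e^{s\tilde z^2 - \tilde z y\cdots}$. Together these give the standard double contour formula for the GUE-corners/DBM kernel with $m$ particles.

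The other factors of $\Phi$ behave as follows. Poles $a_i > v$ lie to the left of $1/v$, and poles $a_i < v$ lie to the right, at distance of order one; both are shown to contribute only constant factors that cancel in the ratio. The contours must be deformed so that $z$ passes to the right of the poles $1/v_1, \dots, 1/v_{k-1}$. Residue contributions from these poles correspond to the $\mathsf{M}_{k-1}$ higher curves. A direct way to remove them is to work with the point process restricted to a window $[\,t^2 - 2t/v - C\sqrt{T_N},\ t^2 - 2t/v + C\sqrt{T_N}\,]$. By Proposition \ref{Prop.Slopes}(a), the higher and lower curves exit this window with probability tending to one, since their slopes $-2/a_j$ differ from $-2/v$ at order $T_N$. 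Inside the window, the correlation functions of the determinantal process converge to those of DBM. The kernel $K^2$ rescales to the Brownian transition part, and $K^1$ is treated in the same way.

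Convergence of correlation kernels uniformly on compacts, together with tail bounds that give convergence of gap probabilities (Fredholm determinants), implies that the $m$ points in the window converge jointly at the times $s_j$ to $(\lambda_i^m(s_j))$. Since the ordering (\ref{Eq.OrdAWLE}) identifies these points as curves $\mathsf{M}_{k-1}+1, \dots, \mathsf{M}_k$, this gives finite-dimensional convergence.

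\textbf{Step 2: tightness.} On compact intervals $[\delta, \delta^{-1}]$, I use the Brownian Gibbs property of $\mathcal{L}^{a,b,c}$. After subtracting the parabola and the linear drift, the group is a collection of $m$ non-intersecting Brownian bridges, diffusively rescaled. These bridges are conditioned to stay between the curve with index $\mathsf{M}_{k-1}$ and the curve with index $\mathsf{M}_k + 1$. Under the scaling, both of these boundary curves are at distance of order $T_N/\sqrt{T_N} \to \infty$ by Proposition \ref{Prop.Slopes}(a), or are flat (tight) by Proposition \ref{Prop.Slopes}(c). Because the endpoint values are tight by Step 1, the acceptance probability of the conditioning tends to one. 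A standard modulus-of-continuity argument for non-intersecting Brownian bridges with tight endpoints and far boundaries then gives tightness in $C([\delta,\delta^{-1}])$. Tightness together with finite-dimensional convergence gives (\ref{Eq.ConvDBMA}).

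\textbf{Main obstacle.} I expect the main obstacle to be the steepest-descent analysis in Step 1. The poles of $\Phi$ are infinite in number, and the contours must be chosen uniformly in $N$ so that the tails are controlled while the residues from the higher groups are correctly excluded. I would first try to carry this out with exact contour deformations that pass through $1/v$ at scale $T_N^{-1/2}$, using the bound $|\Phi|$ controlled by $\sum a_i < \infty$ to handle the remaining factors.
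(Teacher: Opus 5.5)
Your overall architecture matches the paper's. Part (b) comes from part (a) via Proposition \ref{Prop.BasicProperties}(b). Finite-dimensional convergence comes from the kernel asymptotics on scale $T_N^{-1/2}$ around the order-$\mathsf{m}_k^a$ pole $1/v_k^a$, with the remaining curves escaping by Proposition \ref{Prop.Slopes}. Tightness comes from the Brownian Gibbs property together with modulus-of-continuity bounds for finitely many avoiding bridges.

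\textbf{The gap is in Step 2.} To compare the group $\llbracket \mathsf{M}_{k-1}^a+1,\mathsf{M}_k^a\rrbracket$ on $[\alpha,\beta]$ with free avoiding bridges, you need, with high probability, $\inf_{t\in[\alpha,\beta]}\tilde{\mathcal{L}}^N_{\mathsf{M}_{k-1}^a}(t)>A$ and $\sup_{t\in[\alpha,\beta]}\tilde{\mathcal{L}}^N_{\mathsf{M}_k^a+1}(t)<-A$ (notation of \eqref{Eq.LNTilde}). That is, you need control of the neighbouring curves uniformly over a time window of length $(\beta-\alpha)T_N$ in the original variable. Proposition \ref{Prop.Slopes} is a one-point statement and says nothing about excursions between fixed times. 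So it does not show that the boundary curves stay far away on the whole interval, and hence does not give the acceptance-probability bound you invoke.

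Closing this is the content of Section \ref{Section5.1}, and it needs genuine Gibbsian arguments.
\begin{itemize}
\item The infimum of the upper neighbour is the easy half: pointwise lower bounds at $\alpha$ and $\beta$ plus Lemma \ref{Lem.NotTooLow} suffice.
\item The supremum of the lower neighbour needs a first-exceedance decomposition. If the curve is too high at some $s\in[\alpha,\beta]$, the Brownian Gibbs property and Lemma \ref{Lem.NotTooLow} force it to be too high at a later fixed time, contradicting the one-point information there. This is Step 2 of Proposition \ref{Prop.CloseToZero}, applied to the group $k+1$.
\item The case $\mathsf{M}_k^a=J_a$ is the delicate one. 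The lower neighbour is $\mathcal{A}^{a,b,c}_{J_a+1}$, and its one-point tightness from Proposition \ref{Prop.Slopes}(c) says nothing about $\sup_{t\in[\alpha,\beta]}\mathcal{A}^{a,b,c}_{J_a+1}(tT_N)$. In the Gibbsian coordinates this curve follows the concave parabola $-2^{-1/2}t^2T_N^2$, whose chords lie below it by order $\rho^2T_N^2$ over a step of length $\rho T_N$. So the chaining only works on a fine partition, as in Proposition \ref{Prop.UnderParabola}. That argument gives $\sup_{t\in[\alpha,\beta]}\mathcal{A}^{a,b,c}_{J_a+1}(tT_N)\le\delta T_N^2$ for any fixed $\delta>0$, which is what is needed.
\end{itemize}

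Step 1 is essentially the paper's route, but three points are glossed over.
\begin{itemize}
\item Working in a window does not remove the residues at $1/v_j^a$, $j<k$, from the kernel. You must show they are negligible for bounded rescaled arguments; they are $O(e^{-cT_N+C\sqrt{T_N}})$. When $\mathsf{m}_j^a>1$ these are higher-order poles. The paper avoids this by proving Proposition \ref{Prop.KernelConvTop} only when $\mathsf{m}_1^a=\cdots=\mathsf{m}_{k-1}^a=1$, and then squeezing general parameters via Proposition \ref{Prop.MonCoupling}.
\item Matching the limiting kernel to Dyson Brownian motion from $0$ is not off the shelf. Proposition \ref{Prop.DBMDPP} derives it from the Katori--Tanemura kernel via the time inversion $t\lambda^n(1/t)$. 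Also, the GUE corners process is a different object.
\item Passing from point-process convergence to joint convergence of the ordered group curves needs one-point tightness of the group. This does follow from $M^N\Rightarrow M^{\mathrm{DBM}}$ once the other curves escape, since both have exactly $\mathsf{m}_k^a$ atoms per time slice, but it must be argued.
\end{itemize}
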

\begin{remark}\label{Rem.ConvergenceDBM} The convergence in (\ref{Eq.ConvDBMA}) and (\ref{Eq.ConvDBMB}) is that of random elements in $C(\{1, \dots, m\} \times (0,\infty))$ (with $m = \mathsf{m}_k^a$ or $m = \mathsf{m}_k^b$), where the latter space is endowed with the topology of uniform convergence over compact sets.
\end{remark}

We next turn to the second main result of the paper.
\begin{theorem}\label{Thm.ConvAiry} Assume the same notation as in Definitions \ref{Def.DLP}, \ref{Def.ParMultiplicities}, and Proposition \ref{Prop.AWLE}. The following statements hold for any sequence $T_N > 0$ with $T_N\uparrow \infty$.
\begin{enumerate}
\item[(a)] Suppose $(a,b,c) \in \parP$ satisfies $J_a < \infty$. Define the processes
\begin{equation}\label{Eq.ScaledFlatA}
\mathcal{A}^N_i(t) =  \mathcal{A}^{a,b,c}_{J_a + i}(t + T_N),
\end{equation}
for $t \in \mathbb{R}$ and $i \in \mathbb{N}$. Then,
\begin{equation}\label{Eq.ConvAiryA}
\left(\mathcal{A}^N_i(t): t \in \mathbb{R} \mbox{, }i \in \mathbb{N} \right) \Rightarrow \left(\mathcal{A}^{0,0,0}_i(t): t \in \mathbb{R} \mbox{, }i  \in \mathbb{N}\right).
\end{equation}
\item[(b)] Suppose $(a,b,c) \in \parP$ satisfies $J_b < \infty$. Define the scaled processes
\begin{equation}\label{Eq.ScaledFlatB}
\cev{\mathcal{A}}^N_i(t) =  \mathcal{A}^{a,b,c}_{J_b + i}(-t - T_N),
\end{equation}
for $t \in \mathbb{R}$ and $i \in \mathbb{N}$. Then,
\begin{equation}\label{Eq.ConvAiryB}
\left(\cev{\mathcal{A}}^N_i(t): t \in \mathbb{R} \mbox{, }i \in \mathbb{N} \right) \Rightarrow \left(\mathcal{A}^{0,0,0}_i(t): t \in \mathbb{R} \mbox{, }i  \in \mathbb{N}\right).
\end{equation}
\end{enumerate}
\end{theorem}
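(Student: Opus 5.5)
By Proposition \ref{Prop.BasicProperties}(b), part (b) follows from part (a) applied to $\mathcal{A}^{b,a,c}$, since reflecting $t \mapsto -t$ swaps the roles of $a$ and $b$. We therefore only prove (a). The standard scheme is: (i) finite-dimensional convergence, (ii) tightness in $C(\mathbb{N}\times\mathbb{R})$, (iii) identification of the limit.

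\textbf{Step (i): finite-dimensional convergence.} Fix times $s_1<\dots<s_m$. By Proposition \ref{Prop.AWLE}, the point process $\{(s_j,\mathcal{A}^{a,b,c}_i(s_j+T_N))\}$ is determinantal with kernel $K_{a,b,c}(s_1+T_N,x_1;s_2+T_N,x_2)$. The plan is to show that this kernel, after conjugation by a harmless factor, converges to the extended Airy kernel $K_{0,0,0}$. Uniform decay estimates for $x_1,x_2$ bounded below then give convergence of the gap probabilities, by Fredholm determinant continuity.

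In $K^3$, substitute $z\mapsto z-T_N$, $w\mapsto w-T_N$. Then $\Phi_{a,b,c}$ is evaluated at $z+s_1$ and $w+s_2$, while the cubic exponentials pick up factors $e^{-T_N z^2+T_N^2 z-\dots}$. The crucial point is the choice of contours. The $z$-contour must pass to the left of $\underline a=1/a_1$, yet the natural steepest-descent point for the shifted exponent sits near $T_N$, which is far to the right of the poles $1/a_i$, $i\le J_a$.

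So we deform $\Gamma^+$ to the right, past the finitely many poles $1/a_1,\dots,1/a_{J_a}$. This is possible because $J_a<\infty$, so for large $N$ nothing else lies in the way, and no zeros of $1/\Phi$ interfere since $\underline b<0$. Each pole crossed contributes a residue term.

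\begin{itemize}
\item[$\bullet$] The residue terms form a finite-rank kernel. We must show that it becomes negligible. This is the \emph{separation} mechanism: those residues correspond exactly to the top $J_a$ curves, which run along parabolas $\approx t^2-2t/a_i$, i.e.\ at height $\approx T_N^2\to\infty$.
\item[$\bullet$] Rather than kill the residues analytically, it is cleaner to work with the whole point process. Its points near height $O(1)$ exclude the top $J_a$ curves, by Proposition \ref{Prop.Slopes}(a), with probability tending to one. Residue terms whose size is $e^{cT_N^2}$-small or -large cancel under the conjugation $x\mapsto$ bounded window, leaving only the deformed integral.
\item[$\bullet$] On the deformed contours, $\Phi_{a,b,c}(z+s_1)/\Phi_{a,b,c}(w+s_2)$ stays bounded and converges to $1$ at the relevant scale. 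After recentering, the contours sit at distance $\sim T_N$, and $\Phi\to$ a constant ratio times $\prod (a_i T_N)^{-1}$ factors that cancel between the $z$ and $w$ contributions.
\end{itemize}

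With these points in place, the kernel converges to $K_{0,0,0}$.

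A more robust alternative is to avoid residue bookkeeping altogether by using the monotone coupling, Proposition \ref{Prop.MonCoupling}, with $A=J_a$ and $B=0$:
\begin{itemize}
\item[$\bullet$] Comparing with $\tilde a=0$, $\tilde b=b$ gives $\mathcal{A}^{a,b,c}_{J_a+k}\le \mathcal{A}^{0,b,c}_k$.
\item[$\bullet$] Comparing in the other direction gives $\mathcal{A}^{0,0,0}_{k}\le\mathcal{A}^{0,b,c}_k$ and related bounds.
\end{itemize}
Combined with a stationarity statement for the $a=0$ ensembles near $+\infty$, obtained by the same kernel argument but now with no poles to cross, this yields stochastic upper bounds. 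I would use the kernel argument for fdd convergence and the coupling for tightness.

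\textbf{Step (ii): tightness.} Work with $\mathcal{L}^N_i(t)=2^{-1/2}(\mathcal{A}^N_i(t)-(t+T_N)^2)$ expressed relative to the shifted parabola. By Proposition \ref{Prop.AWLE}, $\mathcal{L}^{a,b,c}$ has the Brownian Gibbs property. Reindexing from curve $J_a+1$ on, the lower curves $\{\mathcal{A}^N_i\}_{i\ge1}$ have the Brownian Gibbs property conditional on the $J_a$-th curve as an upper boundary. Proposition \ref{Prop.Slopes}(a) shows this boundary is at height $\ge cT_N$ above the window, with probability tending to one, so it is effectively absent.

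One-point tightness of $\mathcal{A}^N_k(t)$ comes from Proposition \ref{Prop.Slopes}(c). Given one-point tightness at every $t$ and the Brownian Gibbs property with a far-away top boundary, the standard Corwin--Hammond argument \cite{CorHamA} yields tightness of each curve on compacts, with the limit again Brownian Gibbs.

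\textbf{Step (iii): identification.} Any subsequential limit is non-intersecting and has the finite-dimensional laws from Step (i). By uniqueness in Proposition \ref{Prop.AWLE}, it equals $\mathcal{A}^{0,0,0}$ (Remark \ref{Rem.AWLE2}).

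The main obstacle is Step (i): controlling the residue and pole contributions uniformly in $x$, so that the Fredholm determinants converge. I would first try to bypass the residues by the probabilistic exclusion argument described above, restricting to $x$ in bounded-below windows where the top $J_a$ curves are absent with high probability.
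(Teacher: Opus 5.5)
Your architecture matches the paper's: reflection for (b), pushing the $z$-contour past the poles $1/a_j$, escape of the top $J_a$ curves via Proposition \ref{Prop.Slopes}(a), one-point tightness via Proposition \ref{Prop.Slopes}(c), Gibbs-based tightness, and identification through finite-dimensional laws. \textbf{However, the mechanism you propose for the finite-dimensional step would fail.}

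The point process $\{(s_j,\mathcal{A}^{a,b,c}_i(s_j+T_N))\}_{i\geq1}$ always contains $J_a$ particles at height $\approx T_N^2$. So no uniform decay of the kernel for $x$ bounded below is possible: $\int_A^\infty K^{T}(t,x;t,x)\,dx=\sum_{i}\mathbb{P}(\mathcal{A}^{a,b,c}_i(T+t)\geq A)\geq J_a-o(1)$ (compare (\ref{Eq.AiryUBDecomp})). The Fredholm determinants on $[x,\infty)$ therefore tend to $0$, not to Airy gap probabilities. ``Controlling the residues uniformly in $x$'' is not just hard but false, because the residue contributions are precisely what carries those particles.

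Nor can the probabilistic exclusion replace the residue analysis. Knowing that the top curves leave every bounded window says nothing about the kernel there, so you still must show the residue terms vanish on compacts. They do, directly: for bounded $x,y$ each carries $\exp\bigl((1/a_j-T-s)^3/3-x(1/a_j-T-s)\bigr)=O(e^{-T^3/3+CT^2})$, while $1/\Phi_{a,b,c}(w+T+t)$ grows at most like $e^{C|w|+CT}$. No ``conjugation'' or cancellation is involved.

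The paper then argues locally, in three moves:
\begin{enumerate}
\item Compactly uniform kernel convergence gives weak convergence of the full point process to the Airy point process by \cite[Proposition 2.18]{dimitrov2024airy}; no tail control is needed.
\item The $J_a$ escaping atoms are subtracted using Proposition \ref{Prop.Slopes}(a).
\item Vague convergence plus one-point tightness of each $\mathcal{A}^{a,b,c}_{J_a+i}(s_j+T_N)$ is converted into convergence of the labeled curves by \cite[Proposition 2.19]{dimitrov2024airy}.
\end{enumerate}

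You also do not address repeated $a_i$, where the poles are not simple. The same super-exponential factor would kill higher-order residues, but the bookkeeping is heavier. The paper instead proves the result for $a_1>\cdots>a_{J_a}$, then squeezes $\mathcal{A}^{a,b,c}$ between two such ensembles using Proposition \ref{Prop.MonCoupling} with $A=B=0$. That is where the paper uses monotonicity.

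\textbf{Tightness.} The far-away top boundary is the easy half. Even it needs a bound uniform over the window, not the pointwise Proposition \ref{Prop.Slopes}(a). The paper uses Proposition \ref{Prop.CloseToZero}; a Gibbs ``not too low'' argument from two endpoint values would also work.

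The real difficulty is that for $\mathcal{A}^N_1,\dots,\mathcal{A}^N_k$ the bottom boundary $\mathcal{A}^N_{k+1}$ lives on the same $O(1)$ scale. So the conditional law is not close to free avoiding bridges. You need two further ingredients:
\begin{enumerate}
\item[(i)] control of $\sup_{[-\beta,\beta]}\mathcal{A}^N_{k+1}$, obtained from one-point tightness at two times plus the Gibbs property (Proposition \ref{Prop.NoBigMax});
\item[(ii)] a lower bound on the acceptance probability for avoiding bridges above a bottom curve that is only bounded above. This gives modulus-of-continuity control only on a strictly smaller interval (Lemma \ref{Lem.MOCInside}, whose proof needs an endpoint-separation argument).
\end{enumerate}
``The standard Corwin--Hammond argument'' is the right template, but it is not a citable statement for this setting. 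This bottom-boundary control is exactly what you have to supply.
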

\begin{remark}\label{Rem.ConvergenceAiry} The convergence in (\ref{Eq.ConvAiryA}) and (\ref{Eq.ConvAiryB}) is that of random elements in $C(\mathbb{N} \times \mathbb{R})$, where the latter space is endowed with the topology of uniform convergence over compact sets.
\end{remark}

%
%
\subsection{Key ideas and paper outline}\label{Section1.5} In this section, we outline several key ideas underlying the proofs of Theorems \ref{Thm.ConvDBM} and \ref{Thm.ConvAiry}, and indicate where these arguments appear in the paper. Our discussion focuses on the parts (a) of both theorems; the corresponding parts (b) follow directly from (a) by reflecting the ensembles across the origin, using Proposition \ref{Prop.BasicProperties}.

The core of the asymptotic analysis is carried out in Section \ref{Section2}, where we study the limit of the kernel $K_{a,b,c}$ from (\ref{Eq.3BPKer}) under the two scaling regimes in Theorems \ref{Thm.ConvDBM}(a) and \ref{Thm.ConvAiry}(a); see Propositions \ref{Prop.KernelConvTop} and \ref{Prop.KernelConvFlat}. The proofs of both propositions proceed by deforming the contours in the definition of $K_{a,b,c}$ across the poles at $1/a^+_i$, showing that the resulting residue contributions are asymptotically negligible, and establishing convergence of the remaining double-contour integral by an appropriate application of the dominated convergence theorem. We note that Propositions \ref{Prop.KernelConvTop} and \ref{Prop.KernelConvFlat} are proved under special parameter assumptions that ensure all poles encountered during the contour deformation are simple. This simplification makes the asymptotic analysis considerably easier, but comes at the expense of only providing us with convergence statements for ``most'' rather than ``all'' choices of parameters in Theorems \ref{Thm.ConvDBM}(a) and \ref{Thm.ConvAiry}(a). The latter limitation is ultimately harmless, as the monotone couplings in Proposition \ref{Prop.MonCoupling} allow us later to ``fill in the gaps''. 

The limiting kernel in Proposition \ref{Prop.KernelConvFlat} is the extended Airy kernel, which coincides with $K_{0,0,0}$ in (\ref{Eq.3BPKer}), while the limiting kernel in Proposition \ref{Prop.KernelConvTop} is denoted by $\kbmn$ and given in (\ref{Eq.KDBM}). Although Dyson Brownian motion is well-known to possess a determinantal structure and several expressions for its correlation kernel exist in the literature, none of the available formulas directly match $\kbmn$. For this reason, Section \ref{Section2.3} is devoted to explaining how $\kbmn$ arises from Dyson Brownian motion; see in particular Proposition \ref{Prop.DBMDPP}. Specifically, we begin with a correlation kernel obtained by Katori and Tanemura in \cite{KM10}, and perform a sequence of transformations --- most notably a time reversal of the Dyson Brownian motion --- that ultimately lead us to $\kbmn$.\\

In Section 3, we establish the finite-dimensional convergence of the line ensembles appearing in Theorems \ref{Thm.ConvDBM}(a) and \ref{Thm.ConvAiry}(a); see Propositions \ref{Prop.FDSlope} and \ref{Prop.FDFlat}. To prove convergence to Dyson Brownian motion, we apply a finite-dimensional convergence criterion from \cite[Lemma 7.1]{DZ25}, which reduces the problem to verifying the following two statements:
\begin{enumerate}
\item[I.] The sequence of random measures $M^N$, defined by
$$M^N(A) = \sum_{i = 1}^{\mathsf{m}_k^a} \sum_{j = 1}^m {\bf 1}\left\{\left(s_j, \mathcal{L}^N_i(s_j)\right) \in A \right\},$$
for fixed times $0 < s_1 < \cdots < s_m$, converges weakly (as a sequence of random measures).
\item[II.] For each $i$ and $j$, the sequence of random variables $\{\mathcal{L}^N_i(s_j)\}_{N \geq 1}$ is tight.
\end{enumerate}

The measures $M^N$ themselves possess little intrinsic structure, since they consist only of atoms at the points
$$\left(s_j, (2T_N)^{-1/2}\left(\mathcal{A}^{a,b,c}_i(s_jT_N)+\frac{2s_jT_N}{v_k^a}-s_j^2T_N^2\right) \right), \quad \mbox{for } i \in \{\mathsf{M}_{k-1}^a+1, \dots, \mathsf{M}_k^a\}, j \in \{1, \dots, m\},$$ 
corresponding to the $k$-th group of curves into which the ensemble organizes in the long-time limit. As a result, establishing statement I directly is somewhat challenging. 

The key idea is instead to consider the auxiliary measures $M^{v_k^a,T_N}$ formed by 
$$\left(s_j, (2T_N)^{-1/2}\left(\mathcal{A}^{a,b,c}_i(s_jT_N)+\frac{2s_jT_N}{v_k^a}-s_j^2T_N^2\right) \right), \quad \mbox{ for all $i \in \mathbb{N}$, $j \in \{1, \dots, m\}$.}$$ 
The measures $M^{v_k^a,T_N}$ are determinantal and their weak convergence can be established using the kernel asymptotics from Proposition \ref{Prop.KernelConvTop}, together with the convergence criterion for determinantal point processes from \cite[Proposition 2.18]{dimitrov2024airy}. Although $M^{v_k^a,T_N}$ contains more atoms than $M^N$, all additional atoms escape to either $+ \infty$ or $-\infty$ as $N \rightarrow \infty$, in view of the asymptotic slopes for $\mathcal{A}^{a,b,c}$ from Proposition \ref{Prop.Slopes}. Consequently, the difference between $M^{v_k^a,T_N}$ and $M^N$ converges to the zero measure. Since the former converges weakly, the same is true for the latter, which completes the proof of statement I.

To establish statement II, we apply the tightness criterion from \cite[Lemma 7.2]{DZ25}. The details of the argument are given in Step 4 of the proof of Proposition \ref{Prop.FDSlope}; here we only note that the essential additional ingredient is the following tightness-from-above estimate: 
$$\lim_{a \rightarrow \infty}\limsup_{N \rightarrow \infty} \mathbb{P}\left( \mathcal{L}_1^N(s_j) \geq a \right) = 0,$$
which is proved as Proposition \ref{Prop.TightnessSlope} in Section \ref{Section3.1} using ideas similar to those in Proposition \ref{Prop.KernelConvTop}. Once statements I and II are established, the finite-dimensional convergence to Dyson Brownian motion follows for the restricted parameter choices appearing in Propositions \ref{Prop.KernelConvTop} and \ref{Prop.TightnessSlope}. As mentioned earlier, the extension to all parameter values is obtained via the monotone couplings from Proposition \ref{Prop.MonCoupling}; see Step 1 of the proof of Proposition \ref{Prop.FDSlope} for the details.

The proof of finite-dimensional convergence to the Airy line ensemble in Proposition \ref{Prop.FDFlat} proceeds in a fully analogous manner. One key difference is that, in this setting, we are dealing with infinitely many curves rather than a finite group. As a result, we apply the finite-dimensional convergence criterion from \cite[Proposition 2.19]{dimitrov2024airy} in place of \cite[Lemma 7.1]{DZ25}, which again reduces the problem to analogues of statements I and II above. The weak convergence required in statement I is established exactly as before with the kernel convergence from Proposition \ref{Prop.KernelConvTop} replaced by that of Proposition \ref{Prop.KernelConvFlat}. Verifying the tightness condition in statement II is even simpler in this case and relies solely on Proposition \ref{Prop.Slopes}(c).\\

Propositions \ref{Prop.FDSlope} and \ref{Prop.FDFlat} establish finite-dimensional convergence of the line ensembles appearing in Theorems \ref{Thm.ConvDBM}(a) and \ref{Thm.ConvAiry}(a). To upgrade this convergence to uniform convergence on compact sets, it remains to prove tightness of the corresponding sequences of ensembles. Given the finite-dimensional convergence already in hand, the main remaining challenge is to control the moduli of continuity of the ensemble curves along subsequences. 

The key input we use for this control is the Brownian Gibbs property satisfied by the Airy wanderer line ensembles; see Definition \ref{Def.BGPVanilla} for a precise formulation. This property enables a strong comparison between the ensembles in Theorems \ref{Thm.ConvDBM}(a) and \ref{Thm.ConvAiry}(a) and ensembles of finitely many avoiding Brownian bridges. Since such Brownian bridge ensembles have well-controlled moduli of continuity, this comparison allows us to transfer analogous regularity estimates to the ensembles under consideration.

In Section \ref{Section4}, we introduce the terminology and objects needed to implement this strategy and establish several technical estimates needed for the proofs of Theorems \ref{Thm.ConvDBM}(a) and \ref{Thm.ConvAiry}(a). These estimates fall into two classes: the first concerns bounds on the maxima and minima of line ensembles consisting of finitely many avoiding Brownian bridges with varying boundary data (see Section \ref{Section4.3}), and the second concerns modulus-of-continuity estimates for such ensembles (see Section \ref{Section4.4}).

The proof of Theorem \ref{Thm.ConvDBM} is given in Section \ref{Section5.2} and relies on the following key observations:
\begin{enumerate}
\item[A.] The curves $\{\mathcal{L}_i^N\}_{i = 1}^{\mathsf{m}_k^a}$ have one-point marginals that are tight. This follows directly from the finite-dimensional convergence established in Proposition \ref{Prop.FDSlope}.
\item[B.] The curves $\{(2T_N)^{-1/2}\left(\mathcal{A}^{a,b,c}_i(tT_N)+\frac{2tT_N}{v_k^a}-t^2T_N^2\right)\}_{i = 1}^{\mathsf{M}_{k-1}^a}$ lie well above $\{\mathcal{L}_i^N\}_{i = 1}^{\mathsf{m}_k^a}$. This follows from Proposition \ref{Prop.CloseToZero}, which is proved using the finite-dimensional convergence in Proposition \ref{Prop.FDSlope} and the min/max bounds developed in Section \ref{Section4.3}.
\item[C.] The curves $\{(2T_N)^{-1/2}\left(\mathcal{A}^{a,b,c}_i(tT_N)+\frac{2tT_N}{v_k^a}-t^2T_N^2\right)\}_{i \geq \mathsf{M}_{k}^a+1}$ are much lower than $\{\mathcal{L}_i^N\}_{i = 1}^{\mathsf{m}_k^a}$. The statement follows from Proposition \ref{Prop.CloseToZero} (if $\mathsf{M}_{k}^a > J_a$) and Proposition \ref{Prop.UnderParabola} (if $\mathsf{M}_{k}^a = J_a$). The latter is established using the asymptotic slopes in Proposition \ref{Prop.Slopes}(c) and the min/max bounds in Section \ref{Section4.3}.
\end{enumerate}
The three statements above imply that $\{\mathcal{L}_i^N\}_{i = 1}^{\mathsf{m}_k^a}$ remains bounded on any compact interval $[a,b]$ and is well separated from all other curves of the appropriately scaled ensemble $\mathcal{A}^{a,b,c}$. Exploiting the Brownian Gibbs property, we show that $\{\mathcal{L}_i^N\}_{i = 1}^{\mathsf{m}_k^a}$ behaves like a finite collection of avoiding Brownian bridges. We may then apply the modulus-of-continuity estimates from Lemma \ref{Lem.MOCUniform} for finite Brownian bridge ensembles, to obtain corresponding regularity estimates for $\{\mathcal{L}_i^N\}_{i = 1}^{\mathsf{m}_k^a}$. The details can be found in Step 3 of the proof of Theorem \ref{Thm.ConvDBM} in Section \ref{Section5.2}. 

The proof of Theorem \ref{Thm.ConvAiry} is given in Section \ref{Section6.2} and relies on similar observations to A and B above. The tightness in observation A follows from the finite-dimensional convergence in Proposition \ref{Prop.FDFlat} instead of Proposition \ref{Prop.FDSlope}. In addition, the separation of the curves above $\{\mathcal{A}_i^N\}_{i \geq 1}$ again follows from Proposition \ref{Prop.CloseToZero}. Once curve separation is established, we can argue that for each $k \in \mathbb{N}$ the ensemble $\{\mathcal{A}_i^N\}_{i = 1}^{k}$ remains bounded on any compact interval $[a,b]$ and is well separated from all higher curves of the appropriately scaled ensemble $\mathcal{A}^{a,b,c}$. We can also show that the curve $\mathcal{A}_{k+1}^N$ is not too high, see Proposition \ref{Prop.NoBigMax}. Exploiting the Brownian Gibbs property, we conclude that $\{\mathcal{A}_i^N\}_{i = 1}^{k}$ behaves like a finite collection of avoiding Brownian bridges whose bottom boundary is not very high. We may then apply the modulus-of-continuity estimates from Lemma \ref{Lem.MOCInside} for finite Brownian bridge ensembles, to obtain corresponding regularity estimates for $\{\mathcal{A}_i^N\}_{i = 1}^{k}$. We emphasize that the presence of a nearby bottom boundary requires that we use Lemma \ref{Lem.MOCInside}, which is stronger than Lemma \ref{Lem.MOCUniform} that is used in the proof of Theorem \ref{Thm.ConvDBM}. The details of the outline in this paragraph can be found in Step 3 of the proof of Theorem \ref{Thm.ConvAiry} in Section \ref{Section6.2}.

%
%
\subsection*{Acknowledgments} E.D. was partially supported by Simons Foundation International through Simons Award TSM-00014004.

%
%
\section{Kernel convergence}\label{Section2} In this section, we investigate the limit of the (appropriately rescaled) kernel $K_{a,b,c}$ from (\ref{Eq.3BPKer}). In the scaling regime in Theorem~\ref{Thm.ConvDBM}(a), the precise limiting statement appears in Proposition \ref{Prop.KernelConvTop}, and for the one in Theorem \ref{Thm.ConvAiry}(a), it appears in Proposition \ref{Prop.KernelConvFlat}. In Section \ref{Section2.3} (see Proposition \ref{Prop.DBMDPP}), we relate the limiting kernel from Proposition \ref{Prop.KernelConvTop} to Dyson Brownian motion. Throughout this section we assume the notation from Section \ref{Section1}. We also use freely the definitions and notations pertaining to determinantal point processes from \cite[Section 2]{dimitrov2024airy}.

%
%
\subsection{Dyson Brownian motion limit}\label{Section2.1} The goal of this section is to analyze the limit of the correlation kernel associated with the scaling in Theorem~\ref{Thm.ConvDBM}(a). We start with some useful notation.

Fix $(a,b,c) \in \parP$. For any $\rho, T > 0$ and finite $\mathsf{S} \subset (0,\infty)$ with $\mathsf{S} = \{s_1, \dots, s_m\}$ and $s_1 < s_2 < \cdots < s_m$, we define the random measures
\begin{equation}\label{Eq.GenSlopedMeasures}
M^{\rho,T}(A) = \sum_{i \geq 1 } \sum_{j = 1}^m {\bf 1}\left\{\left(s_j, (2T)^{-1/2} \cdot \left( \mathcal{A}^{a,b,c}_{i}(s_jT) + 2s_jT/\rho - s_j^2T^2 \right) \right) \in A \right\}.
\end{equation}
If $\phi: \mathbb{R}^2 \rightarrow \mathbb{R}^2$ is the bijection with $\phi^{-1}(s,x) = (sT, \sqrt{2T}x -2sT/\rho + s^2T^2)$, we observe that $M^{\rho,T} = \hat{M}^T \phi^{-1}$, where 
$$\hat{M}^T(A) = \sum_{i \geq 1}\sum_{j = 1}^m {\bf 1}\{(s_jT, \mathcal{A}_{i}^{a,b,c}(s_jT)) \in A\}.$$
From Proposition \ref{Prop.AWLE}, we know that $\hat{M}^T$ is a determinantal point process with correlation kernel $K_{a,b,c}$ and reference measure $\mu_{T \cdot \mathsf{S}} \times \mathrm{Leb}$. By the change of variables formula \cite[Proposition 2.13(5)]{dimitrov2024airy} with $\phi$ as above, \cite[Proposition 2.13(6)]{dimitrov2024airy} with $c = (2T)^{-1/2}$, and the gauge transformation \cite[Proposition 2.13(4)]{dimitrov2024airy} with 
$$f(s,x) = \exp\left(-\sqrt{2}sxT^{3/2} -\frac{2s^3T^3}{3} + \frac{x\sqrt{2T}}{\rho} + \frac{2s^2T^2}{\rho} - \frac{sT}{\rho^2}  \right),$$
we conclude that $M^{\rho,T}$ is a determinantal point process with reference measure $\mu_{\mathsf{S}} \times \mathrm{Leb}$ and correlation kernel
\begin{equation}\label{Eq.NewKernelSlopes}
\begin{split}
&K^{\rho,T}(s,x; t,y) = -  \frac{{\bf 1}\{ t > s\} }{\sqrt{2\pi (t - s)}} \cdot e^{-\frac{(x-y)^2}{2(t-s)}} + K^{3, \rho, T}(s,x; t,y), \mbox{ where } \\
&K^{3,\rho, T}(s,x;t,y) = \frac{\sqrt{2T}}{(2\pi \im)^2} \int_{\Gamma_{\alpha }^+} d z \int_{\Gamma_{\beta}^-} dw \frac{\exp\left(H^{\rho,T}(z,s,x) - H^{\rho,T}(w,t,y)\right)}{z - w } \cdot \frac{\Phi_{a,b,c}(z) }{\Phi_{a,b,c}(w)}.
\end{split}
\end{equation}
In the last equation we have that the contours are as in Definition \ref{Def.Contours} with $a_1^{-1}> \alpha > \beta > 0$, $\Phi_{a,b,c}$ is as in (\ref{Eq.DefPhi2}) and 
\begin{equation}\label{Eq.DefH}
H^{\rho,T}(z,s,x) = \frac{z^3}{3} -z^2sT + \left(-x\sqrt{2T} + \frac{2sT}{\rho} \right)z + \frac{x\sqrt{2T}}{\rho} - \frac{sT}{\rho^2}.
\end{equation}
We mention that in deriving (\ref{Eq.NewKernelSlopes}), we applied the change of variables $z \rightarrow z - sT$, $w \rightarrow w - tT$ in the formula for $K_{a,b,c}^3$ in Definition \ref{Def.3BPKernelDef}.

With the above notation in place, we turn to the main result of the section.

\begin{proposition} \label{Prop.KernelConvTop} Assume the same notation as in Definitions~\ref{Def.DLP}, \ref{Def.ParMultiplicities}, and Proposition~\ref{Prop.AWLE}. Fix $k \in \N$, suppose that $(a, b, c) \in \parP$ satisfies $\abs{\mathsf{V}_a} \geq k$, and that $\mathsf{m}_1 = \cdots = \mathsf{m}_{k-1} = 1$. If $s, t > 0$ and $x, y \in \R$ are fixed, $x_T, y_T$ are bounded with $\lim_{T \to \infty} x_T = x$, and $\lim_{T \to \infty} y_T = y$, then 
\begin{equation}\label{Eq.KernelLimitDBM}
\lim_{T \to \infty} K^{v^a_k,T}(s, x_T; t, y_T) = \kbmk(s,x;t,y).
\end{equation}
In the last equation, $K^{v_k^a,T}$ is as in \eqref{Eq.NewKernelSlopes}, and
\begin{equation}\label{Eq.KDBM}
\begin{aligned}
\kbmn(s, x; t, y)
& = -\frac{\mathbf{1}\{t > s\}}{\sqrt{2\pi(t - s)}} \cdot e^{-\frac{(x-y)^2}{2(t-s)}} \\
&  + \frac{1}{(2\pi\im)^2} \oint_{\mathcal{C}_1} dz \int_{-2+\im\R} d w \frac{\exp(- sz^2/2 - xz + tw^2/2  + yw)}{w - z} \cdot \frac{w^n}{z^n},
\end{aligned}
\end{equation}
where $\mathcal{C}_1$ is the positively oriented zero-centered circle of unit radius.
\end{proposition}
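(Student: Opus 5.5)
The plan is a direct steepest-descent computation in the kernel formula (\ref{Eq.NewKernelSlopes}) with $\rho = v_k^a$. The Gaussian term in $K^{\rho,T}$ already coincides with the first term of $\kbmk$ and does not depend on $T$, so only $K^{3,\rho,T}$ needs to be treated.

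\emph{Locating the critical point.} Completing the square in (\ref{Eq.DefH}) gives
$$H^{\rho,T}(z,s,x) = \frac{z^3}{3} - sT(z - \rho^{-1})^2 - x\sqrt{2T}\,(z-\rho^{-1}).$$
So the relevant point is $z = \rho^{-1} = 1/v_k^a$, which is exactly the pole of $\Phi_{a,b,c}$ of order $\mathsf{m}_k^a$. This suggests the local change of variables
$$z = \rho^{-1} + \frac{u}{\sqrt{2T}}, \qquad w = \rho^{-1} + \frac{u'}{\sqrt{2T}}.$$
Under it:
\begin{itemize}
\item $H(z,s,x) - H(w,t,y)$ becomes $-su^2/2 - xu + tu'^2/2 + yu' + o(1)$, since the $z^3/3$ contributions cancel to leading order.
\item The prefactor $\sqrt{2T}$, the Jacobian $(2T)^{-1}$ and the factor $(z-w)^{-1} = \sqrt{2T}/(u-u')$ combine to exactly $1/(u-u')$, up to the sign coming from orientation.
\item The ratio $\Phi_{a,b,c}(z)/\Phi_{a,b,c}(w)$ is $\bigl((1-v_k^a w)/(1-v_k^a z)\bigr)^{\mathsf{m}_k^a}$ times the remaining factors. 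The remaining factors are analytic near $\rho^{-1}$, so their ratio tends to $1$, and the first factor equals $(u'/u)^{\mathsf{m}_k^a}$.
\end{itemize}
This produces the integrand of (\ref{Eq.KDBM}) with $n = \mathsf{m}_k^a$, with $u$ on a small circle around $0$ and $u'$ on the line $-2 + \im\R$.

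\emph{Contour deformation.} Initially $\Gamma^+_\alpha$ lies to the left of $1/a_1$, and hence to the left of all poles. I would deform the $z$-contour to the right in three steps.
\begin{enumerate}
\item Cross the poles $1/v_1^a < \cdots < 1/v_{k-1}^a$. These are simple by the hypothesis $\mathsf{m}_1 = \cdots = \mathsf{m}_{k-1} = 1$, so each contributes a residue term
$$\frac{\sqrt{2T}}{2\pi\im}\int dw\, \frac{e^{H(1/v_j^a) - H(w)}}{1/v_j^a - w}\cdot \frac{\mathrm{Res}\,\Phi(1/v_j^a)}{\Phi(w)}.$$
\item Enclose $1/v_k^a$ by a circle of radius $(2T)^{-1/2}$.
\item Move the rest of the $z$-contour to $\Gamma^+_{\rho^{-1}+\delta}$ with $\rho^{-1} + \delta < 1/v_{k+1}^a$, or any fixed $\delta>0$ if there is no $(k+1)$-th group.
\end{enumerate}
The $w$-contour is simultaneously taken to be the vertical segment $\rho^{-1} + (-2+\im[-R\sqrt{T},R\sqrt{T}])/\sqrt{2T}$, completed by the rays of $\Gamma^-$ through its endpoints. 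This keeps $w$ to the left of all $z$-contours, so no extra residue at $z = w$ appears. It also keeps $w$ away from the zeros $-1/b_i$ of $\Phi$, and $1/\Phi$ is analytic there.

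\emph{Estimates.}
\begin{itemize}
\item \emph{Residue terms.} At $z = 1/v_j^a$ we have $\Re H = -sT(1/v_j^a - \rho^{-1})^2 + O(\sqrt T)$. Combined with a bound on the $w$-integral (where $\Re(-H(w)) = tT\,\Re(w-\rho^{-1})^2 + \dots$ is nonpositive up to $O(\sqrt T)$ and decays along the contour), each residue term is $O(e^{-cT})$.
\item \emph{Far $z$-contour.} On $\Gamma^+_{\rho^{-1}+\delta}$, the real part of $-sT(z-\rho^{-1})^2$ is at most $-sT\delta^2$, and $\Re(z^3/3)$ decreases along the rays at angles $\pm\pi/4$. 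Hence this piece is also exponentially small.
\item \emph{Main piece.} After the substitution, dominated convergence applies: on $u' \in -2+\im\R$ the integrand is bounded by $C e^{-t|\Im u'|^2/2 + C|u'|}|u'|^{n}$, which is uniform in $T$ on the segment, and the far rays of the $w$-contour are handled as in the previous bullet.
\end{itemize}

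The main obstacle I expect is making these bounds uniform. The $z^3/3$ and $w^3/3$ terms are only negligible for $|u|,|u'| \ll T^{1/2}$, so I would split the $w$-contour at $|w - \rho^{-1}| = \epsilon$. Inside this region the quadratic term dominates the cubic one (using $|w-\rho^{-1}|^3 \le \epsilon|w-\rho^{-1}|^2$). Outside it I would use the global decay of $\Re(-w^3/3)$ along $\Gamma^-$. The boundedness of $x_T, y_T$ enters only through the linear terms, which the Gaussian decay absorbs.
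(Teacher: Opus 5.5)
Your proposal is correct. It follows the paper's overall skeleton: cross the simple poles $1/a_1,\dots,1/a_{k-1}$, show the resulting single integrals are $O(e^{-cT+C\sqrt{T}})$, rescale by $(2T)^{-1/2}$ around $1/v_k^a$, and apply dominated convergence. The contour decomposition, however, differs in a substantive way.

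\textbf{The paper's route.} It never crosses the $k$-th pole. It places the $z$-contour at $1/a_k-(2T)^{-1/2}$ and the $w$-contour at $1/a_k-2(2T)^{-1/2}$. It then opens them into wedges $\mathcal{C}^{\phi_z}$, $\mathcal{C}^{\phi_w}$ with $\phi_z=3\pi/16$ and $\phi_w=2\pi/3$, chosen so that $\cos 2\phi$ and $\cos 3\phi$ have favorable signs. This yields one global bound of the form $\exp(-c|\tilde z|^3-c|\tilde z|^2+C|\tilde z|)$ on the entire unbounded rescaled contours. A single application of dominated convergence then works with no splitting, and these bounds are reused directly in Proposition \ref{Prop.TightnessSlope}. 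The circle $\mathcal{C}_1$ appears only after the limit, by closing $\mathcal{C}^{\phi_z}_{-1}$ around the origin using the Gaussian decay of the limiting integrand.

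\textbf{Your route.} You push the $z$-contour past $1/v_k^a$, so the circle appears before the limit as a compact contour. This makes the $z$-part of dominated convergence trivial and removes the post-limit deformation; the orientation flip $u-u'\to u'-u$ is the same in both approaches. The price is two extra tail estimates.
\begin{itemize}
\item The far piece on $\Gamma^+_{1/v_k^a+\delta}$ is $O(e^{-cT})$. Writing $z=1/v_k^a+\delta+re^{\pm\im\pi/4}$, one has $\Re(z-1/v_k^a)^2=\delta^2+\sqrt{2}\,\delta r\ge\delta^2$. This is where you need $\delta<1/v_{k+1}^a-1/v_k^a$.
\item On the rays of your $w$-contour, $\Re(w-1/v_k^a)^2\le -R^2/2+o(1)$, which again gives $O(e^{-cT})$.
\end{itemize}
Both estimates go through as you indicate.

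Two small remarks.
\begin{itemize}
\item On the vertical segment no $\epsilon$-split is needed. There $-\Re(w^3)/3\le C+(\Im w)^2/v_k^a$, which is beaten by $tT\,\Re(w-1/v_k^a)^2=2t-tT(\Im w)^2$.
\item You should say explicitly why the $w$-deformation is harmless in the residue terms. Since $z$ must be moved first, the $w$-contour then sweeps across $w=1/v_j^a$. That point is only a removable singularity, because $(1/v_j^a-w)^{-1}$ is cancelled by the simple zero of $1/\Phi_{a,b,c}(w)$ there.
\end{itemize}
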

\begin{remark}\label{Rem.KernelConvTop}
We impose the simplifying assumption $\mathsf{m}_1 = \cdots = \mathsf{m}_{k-1} = 1$ in Proposition~\ref{Prop.KernelConvTop} to streamline the proof. We expect that the conclusion holds without this restriction, but do not pursue that extension here. The present formulation of Proposition~\ref{Prop.KernelConvTop} is sufficient for our purposes.
\end{remark}
\begin{proof}
Note that since $\mathsf{m}_1 = \cdots = \mathsf{m}_{k-1} = 1$, we have $K^{v^a_k,T} = K^{a_k,T}$. Comparing the expressions for $K^{a_k,T}$ in \eqref{Eq.NewKernelSlopes} and $\kbmk$ in \eqref{Eq.KDBM}, we see that it suffices to show
\begin{equation}\label{Eq.KernConvTopRed1}
\lim_{T \to \infty} K^{3,a_k,T}(s, x_T; t, y_T) = \frac{1}{(2\pi\im)^2} \oint_{\mathcal{C}_1}d z \int_{-2+\im\R} d w \frac{\exp(-sz^2/2 - xz + tw^2/2 + yw)}{w - z} \cdot \frac{w^{\sfm^a_k}}{z^{\sfm^a_k}}.
\end{equation}

For convenience, we set $\delta = \pi/48$ and fix $A > 0$ sufficiently large so that $|x_T|, |y_T| \leq A$. In what follows $C_i > 0$ denote sufficiently large constants and $c_i > 0$ sufficiently small constants, that depend on the parameters $s,t$, $(a,b,c)$ and $A$. In addition, all inequalities below hold provided that $T$ is sufficiently large, depending on the same parameters. We do not mention this further. For clarity, we split the proof into three steps.\\

{\bf \raggedleft Step 1.} Notice that our assumption $\sfm_1 = \cdots = \sfm_{k-1} = 1$ implies that $1/a_1, \ldots, 1/a_{k-1}$ are \emph{simple} poles of $\Phi_{a,b,c}(z)$ in $K^{3,a_k,T}$ \eqref{Eq.NewKernelSlopes}.
    
We start by deforming $\Gamma^+_\alpha$ to $\Gamma^+_{v_T}$, where $v_T = 1/a_k - 1/\sqrt{2T}$, during which the contour crosses the simple poles at $1/a_1, \ldots, 1/a_{k-1}$. We subsequently deform $\Gamma^-_\beta$ to $\Gamma^-_{u_T}$, where $u_T = 1/a_k - 2/\sqrt{2T}$, during which the contour does not cross any poles. By the residue theorem,
\begin{equation}\label{Eq.K3DBMDecomp}
K^{3,a_k,T}(s, x_T; t, y_T) = V^T + \sum_{j = 1}^{k-1} U^T_j,
\end{equation}
where
\begin{equation}\label{Eq.DBMDefUV}
\begin{aligned}
V^T & = \frac{\sqrt{2T}}{(2\pi\im)^2} \int_{\Gamma^+_{v_T}} d z \int_{\Gamma^-_{u_T}} d w \frac{\exp(H^{a_k,T}(z, s, x_T) - H^{a_k,T}(w, t, y_T))}{z - w} \cdot \frac{\Phi_{a,b,c}(z)}{\Phi_{a,b,c}(w)}, \\
U^T_j& = \frac{\sqrt{2T}}{2\pi\im} \int_{\Gamma^-_{u_T}} dw\frac{\exp(H^{a_k,T}(1/a_j, s, x_T) - H^{a_k,T}(w, t, y_T))}{( 1/a_j - w) \Phi_{a,b,c}(w)} \cdot \frac{\prod_{i=1}^\infty (1 + b_i/a_j)}{a_j \prod_{i=1, i \neq j}^\infty (1 - a_i/a_j)}.
\end{aligned}
\end{equation}
We mention that the contour deformation near infinity is standard, and justified due to the decay ensured by the cubic terms in $H^{a_k,T}$. For example, see the discussion under \cite[(4.9)]{D25}. 

In view of (\ref{Eq.K3DBMDecomp}), we see that to show (\ref{Eq.KernConvTopRed1}) it suffices to prove 
\begin{equation}\label{Eq.DBMUVanish}
\lim_{T \rightarrow \infty} U_j^T = 0 \mbox{ for } j = 1, \dots, k-1,
\end{equation}
\begin{equation}\label{Eq.DBMVConverge}
\lim_{T \rightarrow \infty} V^T = \frac{1}{(2\pi\im)^2} \oint_{\mathcal{C}_1}d z \int_{-2+\im\R} d w \frac{\exp(-sz^2/2 - xz + tw^2/2 + yw)}{w - z} \cdot \frac{w^{\sfm^a_k}}{z^{\sfm^a_k}}.
\end{equation}
We establish (\ref{Eq.DBMUVanish}) and (\ref{Eq.DBMVConverge}) in the next two steps.\\

{\bf \raggedleft Step 2.} In this step, we prove (\ref{Eq.DBMUVanish}). 

By the definition of $H^{a_k,T}$ in \eqref{Eq.DefH}, there exist $c_1, C_1 > 0$ such that
\begin{equation}\label{Eq.DBM-kernel-convergence-inequality-1}
\begin{aligned}
\left|\exp(H^{a_k,T}(1/a_j, s, x_T)) \right|
& = \exp\biggl(-sT \paren{-\frac{1}{a_j} + \frac{1}{a_k}}^2 + x_T \sqrt{2T} \paren{-\frac{1}{a_j} + \frac{1}{a_k}} + \frac{1}{3a_j^3}\biggr) \\
& \leq \exp(-c_1 T + C_1 \sqrt{T}).
\end{aligned}
\end{equation}

For $a \in \mathbb{R}$, $\phi \in (0, \pi)$, we denote by $\mathcal{C}^\phi_a$ the contour 
\begin{equation}\label{Eq.ContAngle}
\mathcal{C}^\phi_a = \{a + re^{-\im\phi}: r \in [0, \infty)\} \cup \{a + re^{\im\phi}: r \in [0, \infty)\},
\end{equation}
oriented in the direction of increasing imaginary part. Note that there exist $c_2, C_2 > 0$, such that for all $\phi \in [\pi/2 + \delta, 3\pi/4 - \delta]$ and $w \in \mathcal{C}^{\phi}_{u_T}$, we have
\begin{equation}\label{Eq.DBM-kernel-convergence-inequality-2}
\begin{aligned}
&\left|\exp(-H^{a_k,T}(w, t, y_T))\right|  = \exp\biggl(tT \Re\biggl[\paren{w - \frac{1}{a_k}}^2\biggr] + y_T \sqrt{2T} \Re\oparen{w - \frac{1}{a_k}} - \frac{\Re(w^3)}{3}\biggr) \\
& = \exp\biggl(tT\Abs{w - \frac{1}{a_k}}^2 \cos(2\phi) + y_T \sqrt{2T} \Abs{w - \frac{1}{a_k}} \cos(\phi) \\
& \hphantom{{} = \exp\biggl(} - \frac{\abs{w - 1/a_k}^3 \cos(3\phi)}{3} - \frac{\abs{w - 1/a_k}^2 \cos(2\phi)}{a_k} - \frac{\abs{w - 1/a_k} \cos(\phi)}{a_k^2} - \frac{1}{3a_k^3}\biggr) \\
& \leq \exp\biggl(-c_2 \Abs{w - \frac{1}{a_k}}^3 - c_2 T \Abs{w - \frac{1}{a_k}}^2 + C_2 \sqrt{2T} \Abs{w - \frac{1}{a_k}} + C_2\biggr).
\end{aligned}
\end{equation}
We point out that in deriving the last inequality we used that $\cos(2\phi) < 0$ and $\cos(3\phi) > 0$ for $\phi \in [\pi/2 + \delta, 3\pi/4 - \delta]$ as $\delta = \pi/48$.

We next note that there exists $C_3 > 0$, such that for all $j \in \{1, \dots, k-1\}$, $\phi \in [\pi/2 + \delta, 3\pi/4 - \delta]$, and $w \in \mathcal{C}^{\phi}_{u_T}$, we have
\begin{equation}\label{Eq.DBM-kernel-convergence-inequality-3}
\Abs{\frac{1}{1/a_j - w}} \leq C_3.
\end{equation}
    
Using the simple inequalities
\begin{equation}\label{Eq.Phi-terms-inequality}
\abs{1 + z} \leq \exp(\abs{z}) \text{ for all } z \in \mathbb{C}, \text{ and }
\Abs{\frac{1}{1 + z}} \leq 1 + \frac{\abs{z}}{\abs{1 + z}} \leq \exp(\abs{z}/d) \text{ for } \abs{1 + z} \geq d > 0,
\end{equation}
taking $d = 1/2$ in particular, we can find $C_4, C_5 > 0$ such that for all $\phi \in [\pi/2 + \delta, 3\pi/4 - \delta]$, and $w \in \mathcal{C}^{\phi}_{u_T}$, we have
\begin{equation} \label{Eq.DBM-kernel-convergence-inequality-4}
\begin{aligned}
\Abs{\frac{1}{\Phi_{a,b,c}(w)}} = \Abs{\prod_{i=1}^\infty \frac{1 - a_i w}{1 + b_i w}} & \leq \exp(C_4 \abs{w}) \leq \exp\biggl(C_4 \Abs{w - \frac{1}{a_k}} + C_5\biggr).
\end{aligned}
\end{equation}

Fix $\phi_w \in [\pi/2 + \delta, 3\pi/4 - \delta]$, say $\phi_w = 2\pi/3$. We deform the contour $\Gamma^-_{u_T}$ in the definition of $U_j^T$ in (\ref{Eq.DBMDefUV}) to $\mathcal{C}^{\phi_w}_{u_T}$ without crossing any poles and hence without changing the value of the integral by Cauchy's theorem. We mention that the deformation of the contour near infinity is justified by the inequalities (\ref{Eq.DBM-kernel-convergence-inequality-1}), (\ref{Eq.DBM-kernel-convergence-inequality-2}), (\ref{Eq.DBM-kernel-convergence-inequality-3}), and (\ref{Eq.DBM-kernel-convergence-inequality-4}). Performing the deformation, and changing variables $\tilde{w} = \sqrt{2T} (w - 1/a_k)$, we conclude from the same four inequalities for some $C_6 > 0$ and all $j \in \{1, \dots, k -1\}$ that
\begin{equation}\label{Eq.BoundUT}
\left|U_j^T\right| \leq e^{-c_1 T + C_1 \sqrt{T}} \int_{\mathcal{C}_{-2}^{\phi_w}} |d\tilde{w}| \exp\left( -c_2 |\tilde{w}|^3 - c_2|\tilde{w}|^2 + C_6|\tilde{w}| + C_6 \right),
\end{equation}
where $|d\tilde{w}|$ denotes integration with respect to arc-length. As the last integral is finite, we see that the last inequality implies (\ref{Eq.DBMUVanish}).\\

{\bf \raggedleft Step 3.} In this step, we prove (\ref{Eq.DBMVConverge}). 

Fix $\phi_z \in [\pi/6 + \delta, \pi/4 - \delta]$, say $\phi_z = 3\pi/16$ and $\phi_w = 2\pi/3$ as in Step 2. We start by deforming the contours $\Gamma_{u_T}^-$ and $\Gamma_{v_T}^+$ to $\mathcal{C}^{\phi_w}_{u_T}$ and $\mathcal{C}^{\phi_z}_{v_T}$, respectively without crossing any poles, and hence without changing the value of the integral by Cauchy's theorem. The justification of the deformations is analogous to the previous step, so we omit it. Performing the deformations, and changing variables $\tilde{z} = \sqrt{2T} (z - 1/a_k)$, $\tilde{w} = \sqrt{2T} (w - 1/a_k)$, we obtain
\begin{equation}\label{Eq.DBMNewVT}
\begin{split}
&V^T  = \frac{1}{(2\pi\im)^2} \int_{\mathcal{C}^{\phi_z}_{-1}} d \tilde{z} \int_{\mathcal{C}^{\phi_w}_{-2}} d \tilde{w}  G_T(\tilde{z}, \tilde{w}), \mbox{ where } G_T(\tilde{z}, \tilde{w}) = \frac{\Phi_{a,b,c}(1/a_k + (2T)^{-1/2}\tilde{z})}{\Phi_{a,b,c}(1/a_k + (2T)^{-1/2}\tilde{w})} \\
&  \times \frac{\exp(H^{a_k,T}(1/a_k + (2T)^{-1/2}\tilde{z}, s, x_T) - H^{a_k,T}(1/a_k + (2T)^{-1/2}\tilde{w}, t, y_T))}{\tilde{z} - \tilde{w}}.
\end{split}
\end{equation}

Using the formula for $H^{a_k,T}$ from (\ref{Eq.DefH}) and $\Phi_{a,b,c}$ from (\ref{Eq.DefPhi2}), we have the pointwise limit
\begin{equation}\label{Eq.DBMGLimit}
\lim_{T\rightarrow \infty} G_T(\tilde{z}, \tilde{w}) =  \frac{\exp(-s\tilde{z}^2/2 - x\tilde{z} + t\tilde{w}^2/2 + y\tilde{w})}{\tilde{z} - \tilde{w}} \cdot \frac{\tilde{w}^{\sfm^a_k}}{\tilde{z}^{\sfm^a_k}}.
\end{equation}
In addition, we have the following bounds for some $C_7, c_7 > 0$ and all $\tilde{z} \in \mathcal{C}^{\phi_z}_{-1}$, and $\tilde{w} \in \mathcal{C}^{\phi_w}_{-2}$:
\begin{equation}\label{Eq.BunchOfBounds}
\begin{split}
&\left|\exp\left(- H^{a_k,T}(1/a_k + (2T)^{-1/2}\tilde{w}, t, y_T)) \right)\right| \leq \exp\left( - c_7|\tilde{w}|^3 - c_7|\tilde{w}|^2 + C_7|\tilde{w}| + C_7  \right) \\
&\left|\exp\left(H^{a_k,T}(1/a_k + (2T)^{-1/2}\tilde{z}, s, x_T)) \right)\right| \leq \exp\left( - c_7|\tilde{z}|^3 - c_7|\tilde{z}|^2 + C_7|\tilde{z}| + C_7  \right) \\
&\left| \frac{1}{\tilde{z} - \tilde{w}} \right| \leq 1 \mbox{ and } \left| \frac{\Phi_{a,b,c}(1/a_k + (2T)^{-1/2}\tilde{z})}{\Phi_{a,b,c}(1/a_k + (2T)^{-1/2}\tilde{w})} \right| \leq  |\tilde{w}|^{\sfm_k^a}\exp\left(C_7|\tilde{z}| + C_7|\tilde{w}| + C_7 \right).
\end{split}
\end{equation}
Indeed, the first line in (\ref{Eq.BunchOfBounds}) is the same (after changing variables and replacing $C_7, c_7, \phi_w$ with $C_2,c_2, \phi$) as (\ref{Eq.DBM-kernel-convergence-inequality-2}), and the second is established analogously using that $\cos(2\phi_z) > 0$ and $\cos(3\phi_z) < 0$, which holds as $\phi_z = 3\pi/16$. The first inequality on the third line follows from the fact that the distance between the contours $\mathcal{C}^{\phi_z}_{-1}$ and $\mathcal{C}^{\phi_w}_{-2}$ is equal to one. To see the last inequality, note that 
\begin{equation*}
\begin{split}
\frac{\Phi_{a,b,c}(z)}{\Phi_{a,b,c}(w)} = I_1 \cdot I_2 \mbox{ with } I_1 = \prod_{i = 1}^{\infty} \frac{1 + b_i z}{1 + b_i w} \cdot \prod_{i = 1}^{k-1} \frac{1 - a_i w}{1 - a_i z} \cdot \prod_{j = k+\sfm_k^a +1}^{\infty} \frac{1 - a_j w}{1 - a_j z} \mbox{ and } I_2 = \frac{\tilde{w}^{\sfm_k^a}}{\tilde{z}^{\sfm_k^a}}.
\end{split}
\end{equation*}
We can bound $|I_1|$ by $\exp(C_7|z| + C_7 |w|)$ using (\ref{Eq.Phi-terms-inequality}), with $d \in (0,1/2]$ satisfying 
$$d <  a_{k-1}/a_k - 1 \mbox{ if $k \geq 1$ and } d \leq (1/2) (v^a_{k+1}/a_k - 1) \mbox{ if } |\mathsf{V}_a| \geq k+1.$$
In addition, $|I_2|$ is bounded by $ (2|\tilde{w}|)^{\sfm_k^a}$ as $|\tilde{z}| \geq 1/2$ for all $\tilde{z} \in \mathcal{C}_{-1}^{\phi_z}$. The last few observations yield the second inequality on the third line of (\ref{Eq.BunchOfBounds}).\\

From (\ref{Eq.BunchOfBounds}), we conclude for some $C_8, c_8 > 0$ and all $\tilde{z} \in \mathcal{C}^{\phi_z}_{-1}$, $\tilde{w} \in \mathcal{C}^{\phi_w}_{-2}$ that
$$\left|G_T(\tilde{z}, \tilde{w})\right| \leq \exp\left(C_8 + C_8|\tilde{z}| + C_8|\tilde{w}| - c_8|\tilde{z}|^2 - c_8|\tilde{z}|^3 - c_8 |\tilde{w}|^2 - c_8 |\tilde{w}|^3\right).$$
As the latter function is integrable, we conclude from (\ref{Eq.DBMNewVT}), (\ref{Eq.DBMGLimit}) and the dominated convergence theorem that 
\begin{equation}\label{Eq.DBMVTLimit}
\begin{split}
&\lim_{T\rightarrow \infty} V^T  = \frac{1}{(2\pi\im)^2} \int_{\mathcal{C}^{\phi_z}_{-1}} d \tilde{z} \int_{\mathcal{C}^{\phi_w}_{-2}} d \tilde{w} \frac{\exp(-s\tilde{z}^2/2 - x\tilde{z} + t\tilde{w}^2/2 + y\tilde{w})}{\tilde{z} - \tilde{w}} \cdot \frac{\tilde{w}^{\sfm^a_k}}{\tilde{z}^{\sfm^a_k}}.
\end{split}
\end{equation}
Equation (\ref{Eq.DBMVTLimit}) implies (\ref{Eq.DBMVConverge}) after deforming $\mathcal{C}^{\phi_w}_{-2}$ to $-2+\im\R$ and $\mathcal{C}^{\phi_z}_{-1}$ to $\mathcal{C}_1$, without crossing any poles. We note that the resulting contour $\mathcal{C}_1$ is {\em negatively} oriented; reversing its orientation makes it positively oriented and changes the factor $\tilde{z} - \tilde{w}$ to $\tilde{w} - \tilde{z}$, which matches (\ref{Eq.DBMVConverge}).
\end{proof}

%
%
\subsection{Airy line ensemble limit}\label{Section2.2}

Fix $(a,b,c) \in \parP$. For any $T > 0$ and finite $\mathsf{S} \subset (0,\infty)$ with $\mathsf{S} = \{s_1, \dots, s_m\}$ and $s_1 < s_2 < \cdots < s_m$, we define the random measures
\begin{equation}\label{Eq.GenFlatMeasures}
M^{T}(A) = \sum_{i \geq 1 } \sum_{j =1}^m {\bf 1}\left\{\left(s_j, \mathcal{A}^{a,b,c}_{i}(s_j + T)  \right) \in A \right\}.
\end{equation}
If $\phi: \mathbb{R}^2 \rightarrow \mathbb{R}^2$ is the bijection with $\phi^{-1}(s,x) = (s+T, x)$, we observe that $M^{T} = \tilde{M}^T \phi^{-1}$, where 
$$\tilde{M}^T(A) = \sum_{i \geq 1}\sum_{j = 1}^m {\bf 1}\{(s_j + T, \mathcal{A}_{i}^{a,b,c}(s_j+T)) \in A\}.$$
From Proposition \ref{Prop.AWLE}, we know that $\tilde{M}^T$ is a determinantal point process with correlation kernel $K_{a,b,c}$ and reference measure $\mu_{T + \mathsf{S}} \times \mathrm{Leb}$. By the change of variables formula \cite[Proposition 2.13(5)]{dimitrov2024airy} with $\phi$ as above, we conclude that $M^{T}$ is a determinantal point process with reference measure $\mu_{\mathsf{S}} \times \mathrm{Leb}$ and correlation kernel
\begin{equation}\label{Eq.NewKernelFlat}
\begin{split}
&K^{T}(s,x; t,y) = K^2_{a,b,c} (s, x; t, y)  + K^{3, T}(s,x; t,y), \mbox{ where } \\
&K^{3,T}(s,x;t,y) = \frac{1}{(2\pi \im)^2} \int_{\Gamma_{\alpha }^+} d z \int_{\Gamma_{\beta}^-} dw \frac{e^{(z-T)^3/3 -(z-T)x - (w-T)^3/3 + (w-T)y}}{z + s - w - t} \cdot \frac{\Phi_{a,b,c}(z + s) }{\Phi_{a,b,c}(w + t)}.
\end{split}
\end{equation}
In the last equation we have that the contours are as in Definition \ref{Def.Contours} with $a_1^{-1}> \alpha +s > \beta + t > 0$, and $\Phi_{a,b,c}$ is as in (\ref{Eq.DefPhi2}). We mention that in deriving (\ref{Eq.NewKernelFlat}), we applied the change of variables $z \rightarrow z - T$, $w \rightarrow w - T$ in the formula for $K_{a,b,c}^3$ in Definition \ref{Def.3BPKernelDef}.

With the above notation in place, we turn to the main result of the section.

\begin{proposition}\label{Prop.KernelConvFlat}
Assume the same notation as in Definitions~\ref{Def.DLP}, \ref{Def.ParMultiplicities}, and in Proposition~\ref{Prop.AWLE}. Fix $(a, b, c) \in \parP$, and suppose that $\abs{\mathsf{V}_a} = J_a < \infty$. If $s, t, x, y \in \R$ are fixed, $x_T, y_T$ are bounded with $\lim_{T \to \infty} x_T = x$, and $\lim_{T \to \infty} y_T = y$, then 
\begin{equation}\label{Eq.KernelLimitAiry}
\lim_{T \to \infty} K^T(s, x_T; t, y_T) = K_{0,0,0}(s, x; t, y).
\end{equation}
In the last equation, $K^T$ is as in \eqref{Eq.NewKernelFlat} and $K_{0,0,0}$ is as in Definition~\ref{Def.3BPKernelDef} with all parameters $a_i^{\pm}, b_i^{\pm}, c^{\pm}$ set to zero.
\end{proposition}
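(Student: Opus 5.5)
The plan is to follow the template of the proof of Proposition \ref{Prop.KernelConvTop}: move the contours in $K^{3,T}$ to $T$-independent positions, show that the residues picked up along the way vanish, and pass to the limit in the remaining double integral by dominated convergence. We first change variables $z \to z+T$, $w \to w+T$ in (\ref{Eq.NewKernelFlat}), undoing the shift made in its derivation. Then
$$K^{3,T}(s,x_T;t,y_T) = \frac{1}{(2\pi\im)^2}\int_{\Gamma^+_{\alpha-T}}dz\int_{\Gamma^-_{\beta-T}}dw\, \frac{e^{z^3/3 - zx_T - w^3/3 + wy_T}}{z+s-w-t}\cdot \frac{\Phi_{a,b,c}(z+T+s)}{\Phi_{a,b,c}(w+T+t)}.$$
The exponential is now exactly the one in $K^3_{0,0,0}$. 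The singularities in $z$ are the finitely many poles (there are $J_a<\infty$ of them, possibly of higher order) at $z = 1/a_i - T - s$, all at distance of order $T$ to the left of the origin. The function $1/\Phi_{a,b,c}(w+T+t)$ is analytic for $\Re(w) > -T-t$, since $\underline{b}\le 0$.

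\textbf{Step 1 (contour deformation).} Fix $T$-independent $\alpha_0,\beta_0$ with $\alpha_0+s<\beta_0+t$ (any admissible choice for $K_{0,0,0}$ works). First move the $w$-contour from $\Gamma^-_{\beta-T}$ to $\Gamma^-_{\beta_0}$; no poles are crossed. Next move the $z$-contour from $\Gamma^+_{\alpha-T}$ to $\Gamma^+_{\alpha_0}$. This crosses:
\begin{itemize}
\item the poles at $z = 1/a_i-T-s$, $i\le J_a$, which produce finitely many residue terms $R^T_i$; I would write each as a small-circle contour integral in $z$ to avoid computing higher-order residues;
\item the pole $z = w+t-s$, but only for those $w\in\Gamma^-_{\beta_0}$ lying to the right of $\Gamma^+_{\alpha_0}$, i.e. on the segment $\gamma$ between the two intersection points.
\end{itemize}
The second residue gives a single integral over $\gamma$ with integrand $e^{(w+t-s)^3/3-(w+t-s)x_T-w^3/3+wy_T}$, the $\Phi$-ratio being equal to $1$ there. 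After the substitution $w\to w-t$ this matches $K^1_{0,0,0}$ with $x_T,y_T$ in place of $x,y$, and it converges to $K^1_{0,0,0}(s,x;t,y)$ trivially, since $\gamma$ is compact. The sign and orientation bookkeeping here is the same as the one producing $K^1$ in Definition \ref{Def.3BPKernelDef}. Near infinity all deformations are justified by the cubic terms, as in the discussion under \cite[(4.9)]{D25}. The remaining double integral $V^T$ has fixed contours.

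\textbf{Step 2 (residues vanish).} On a circle of fixed small radius around $z_i = 1/a_i-T-s$ we have $|e^{z^3/3 - zx_T}|\le e^{-T^3/3 + O(T^2)}$. The factors of $\Phi_{a,b,c}(z+T+s)$ other than the pole factors at $1/a_i$ are bounded by $e^{C|z+T+s|}=e^{O(1)}$ via (\ref{Eq.Phi-terms-inequality}), and the factor $1/(z+s-w-t)$ is bounded by $O(1)$. For the $w$-integral over $\Gamma^-_{\beta_0}$, the term $e^{-w^3/3+wy_T}$ decays cubically while $|1/\Phi_{a,b,c}(w+T+t)|\le e^{C(|w|+T)}$. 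Hence $|R^T_i|\le e^{-T^3/3+O(T^2)}\to0$.

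\textbf{Step 3 (dominated convergence for $V^T$).} This is the main point. Pointwise in $(z,w)$ the ratio of $\Phi$'s tends to $1$:
\begin{itemize}
\item the finite $a$-part equals $\prod_{i\le J_a}\frac{1-a_i(w+T+t)}{1-a_i(z+T+s)}\to1$;
\item the infinite $b$-part equals $\prod_i\bigl(1+\frac{b_i(z+s-w-t)}{1+b_i(w+T+t)}\bigr)$. Its logarithm is bounded by $\sum_i \frac{b_i|z+s-w-t|}{|1+b_i(w+T+t)|}$, which tends to $0$ by dominated convergence in $i$, using $\sum b_i<\infty$ and $|1+b_i u|\ge 1$ for $\Re u\ge0$.
\end{itemize}
For a uniform majorant I would not try to use the $T$-convergence. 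Instead I would bound the ratio crudely by $e^{C(|z|+|w|)}$: the $a$-part is bounded once $|1-a_i(z+T+s)|$ is bounded below, which holds since $\Re(z+T+s)$ is very negative compared with $1/a_i$ on the relevant part of the contour, and the $b$-part is handled by (\ref{Eq.Phi-terms-inequality}). One must then check that $|1+b_i(w+T+t)|$ stays bounded below uniformly on $\Gamma^-_{\beta_0}$ even where $\Re(w+T+t)<0$. This is the obstacle. If it fails near the points $w = -1/b_i - T - t$, I would tilt the $w$-contour to the angle $\phi_w=2\pi/3$ (as in Step 2 of the proof of Proposition \ref{Prop.KernelConvTop}), so that it stays a distance $\gtrsim T$ away from those zeros whenever $|w|\lesssim T$, and use cubic decay when $|w|\gtrsim T$. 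Combined with the cubic decay of $e^{z^3/3}$ and $e^{-w^3/3}$ on the fixed contours and $|z+s-w-t|^{-1}$ bounded there (off the double point, which is integrable), dominated convergence gives $V^T\to K^3_{0,0,0}(s,x;t,y)$. Adding $K^2_{a,b,c}=K^2_{0,0,0}$, which does not depend on the parameters, yields (\ref{Eq.KernelLimitAiry}).
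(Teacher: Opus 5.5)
Your overall route is the paper's. The paper moves $\Gamma^+_\alpha\to\Gamma^+_{\alpha+T}$ and $\Gamma^-_\beta\to\Gamma^-_{\beta+T}$, then shifts $z,w$ by $T$, which amounts to your undoing the shift and moving to $T$-independent contours. The residues at $1/a_j-T-s$ are killed by $e^{-T^3/3+O(T^2)}$, and the remaining double integral is handled by dominated convergence.

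However, Step 1 is wrong as written. With your choice $\alpha_0+s<\beta_0+t$, moving $w$ first (while $z\in\Gamma^+_{\alpha-T}$) \emph{does} cross the diagonal pole. The wedge $\Gamma^+_{\alpha-T+s}$ traced by $z+s$ has its vertex to the left of $\beta_0+t$, so its near-vertex part is swept by $w+t$. Conversely, in the subsequent $z$-move, the points $w$ with $w+t$ to the right of $\Gamma^+_{\alpha_0+s}$ (the piece you identify with $\gamma$) are never swept, because the swept region lies to the left of $\Gamma^+_{\alpha_0+s}$. What is actually crossed are the two pieces of the rays of $\Gamma^-_{\beta_0+t}$ between its intersection points with $\Gamma^+_{\alpha_0+s}$ and with $\Gamma^+_{\alpha-T+s}$. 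Together with the piece picked up in the $w$-move, these form a path from $u_-$ to $u_+$ (the intersection points of $\Gamma^+_{\alpha_0+s}$ and $\Gamma^-_{\beta_0+t}$). Since the residue integrand is entire, the total equals the $\gamma$-integral, so your conclusion survives, but your bookkeeping does not.

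The clean fix is the paper's: take $\alpha_0+s>\beta_0+t$ and move $z$ first. Then the diagonal pole is never crossed, $K^1_{0,0,0}=0$, and $|z+s-w-t|^{-1}$ is uniformly bounded, so the integrable-singularity discussion disappears. Moving $w$ inside the residue terms then crosses nothing, since the apparent pole at $w+T+t=1/a_j$ is cancelled by the zero of $1/\Phi_{a,b,c}$ there. This uses that the poles are simple, which the hypothesis $|\mathsf{V}_a|=J_a$ guarantees ($a_1>\cdots>a_{J_a}$), contrary to your ``possibly of higher order''.

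In Step 3 the $a$-part majorant is justified incorrectly. On $\Gamma^+_{\alpha_0}$ one has $\Re(z+T+s)\ge\alpha_0+T+s$, which is very \emph{positive}, not negative. Moreover, a constant lower bound on $|1-a_i(z+T+s)|$ would not suffice, since the numerator $\prod_{i\le J_a}|1-a_i(w+T+t)|$ grows like $T^{J_a}$. What you need, and have, is $|1-a_i(z+T+s)|\ge a_i(\alpha_0+T+s)-1\gtrsim T$. This gives the $T$-uniform bound $C(1+|w|)^{J_a}$; the paper divides through by $T$ to get $2^{J_a}(2+|w|)^{J_a}$.

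The ``obstacle'' for the $b$-part does not exist. For large $T$, $w+T+t$ ranges over $\Gamma^-_{\beta_0+T+t}$ with positive vertex, so $1+b_i(w+T+t)\in\Gamma^-_{c_i}$ with $c_i=1+b_i(\beta_0+T+t)\ge1$. Every $W\in\Gamma^-_{c}$ satisfies $c=\Re W+|\operatorname{Im}W|\le\sqrt2|W|$, hence $|1+b_i(w+T+t)|\ge1/\sqrt2$ uniformly. Your product form together with (\ref{Eq.Phi-terms-inequality}) then bounds the $b$-part by $e^{C(|z|+|w|+1)}$, and no tilting is needed. This is the same lower bound you already used silently in Step 2 to get $|1/\Phi_{a,b,c}(w+T+t)|\le e^{C(|w|+T)}$. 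With these repairs the dominated convergence argument goes through exactly as in the paper.
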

\begin{proof} Fix $A > 0$ sufficiently large so that $|x_T|, |y_T| \leq A$. In what follows $C_i > 0$ denote sufficiently large constants and $c_i > 0$ sufficiently small constants, that depend on the parameters $s,t,\alpha, \beta$, $(a,b,c)$ and $A$. In addition, all inequalities below hold provided that $T$ is sufficiently large, depending on the same parameters. We do not mention this further. The proof we present is similar to that of Proposition \ref{Prop.KernelConvTop}, so we will be brief.

Comparing the expressions for $K^T$ in \eqref{Eq.NewKernelFlat} and $K_{0,0,0}$ in Definition~\ref{Def.3BPKernelDef}, we see that to prove (\ref{Eq.KernelLimitAiry}) it suffices to show
\begin{equation}\label{Eq.KerConvFlatRed1}
\begin{aligned}
\lim_{T \to \infty} K^{3,T}(s, x_T; t, y_T)
= \frac{1}{(2\pi\im)^2} \int_{\Gamma^+_{\alpha}} dz \int_{\Gamma^-_{\beta}} dw \frac{\exp(z^3/3 - xz - w^3/3 + yw)}{z + s - w - t}.
\end{aligned}
\end{equation}

Notice that the condition $\abs{\mathsf{V}_a} = J_a < \infty$ implies that $a_1 > \cdots > a_{J_a}$, that $a_{J_a+1} = a_{J_a+2} = \cdots = 0$, and that $1/a_1 - s, \ldots, 1/a_{J_a} - s$ are \emph{simple} poles of $\Phi_{a,b,c}(z + s)$ in \eqref{Eq.NewKernelFlat}. We deform $\Gamma^+_\alpha$ to $\Gamma^+_{\alpha + T}$ in \eqref{Eq.NewKernelFlat}, during which the contour crosses the simple poles at $1/a_1 - s, \ldots, 1/a_{J_a} - s$. We subsequently deform $\Gamma^-_\beta$ to $\Gamma^-_{\beta + T}$ and do not cross any poles. After performing the deformations, applying the residue theorem, and changing variables $z \rightarrow z+T$, $w \rightarrow w + T$, we obtain
\begin{equation*}
K^{3,T}(s, x_T; t, y_T) = V^T + \sum_{j=1}^{J_a} U^T_j,
\end{equation*}
where
\begin{equation}\label{Eq.DefUVFlat}
\begin{aligned}
V^T & = \frac{1}{(2\pi\im)^2} \int_{\Gamma^+_{\alpha}} dz \int_{\Gamma^-_{\beta}} dw \frac{\exp(z^3/3 - x_T z - w^3/3 + y_T w)}{z + s - w - t} \cdot \frac{\Phi_{a,b,c}(z + T + s)}{\Phi_{a,b,c}(w + T + t)}, \\
U^T_j & = \frac{1}{2\pi\im} \int_{\Gamma^-_{\beta}} dw \frac{\exp((1/a_j - T - s)^3/3 - x_T (1/a_j - T - s) - w^3/3 + y_T w)}{1/a_j - w - T - t} \\
& \hphantom{{} = \frac{1}{2\pi\im} \int_{\Gamma^-_{-t}} \diff w \biggl[} \times \frac{\prod_{i=1}^\infty (1 + b_i/a_j)}{\Phi_{a,b,c}(w + T + t) a_j \prod_{i=1, i \neq j}^\infty (1 - a_i/a_j)}.
\end{aligned}
\end{equation}
The last two displayed equations show that to prove (\ref{Eq.KerConvFlatRed1}) it suffices to establish 
\begin{equation}\label{Eq.AiryUVanish}
\lim_{T \rightarrow \infty} U_j^T = 0 \mbox{ for } j = 1, \dots, J_a,
\end{equation}
\begin{equation}\label{Eq.AiryVConverge}
\lim_{T \rightarrow \infty} V^T = \frac{1}{(2\pi\im)^2} \int_{\Gamma^+_{\alpha}} dz \int_{\Gamma^-_{\beta}} dw \frac{\exp(z^3/3 - xz - w^3/3 + yw)}{z + s - w - t}.
\end{equation}

We observe that for some $C_1 > 0$, and all $w \in \Gamma_{\beta}^-$ and $j \in \{1, \dots, J_a\}$, we have
$$\left|\frac{1}{1/a_j - w - T - t}  \right| \leq 1, \hspace{2mm} \left|\frac{1}{\Phi_{a,b,c}(w + T + t)}\right| =  \frac{\prod_{i = 1}^{J_a}|1 - a_i (w+T+t)|}{\prod_{i=1}^\infty|1 + b_i (w+T+t)|} \leq \exp(C_1|w| + C_1 T),$$
where in the last inequality we used (\ref{Eq.Phi-terms-inequality}) with $d = \sqrt{2}/2$. In addition, by investigating $\Re[w^3]$, we see that for some $C_2, c_2 > 0$ and all $w \in \Gamma_{\beta}^-$, we have
$$\abs{\exp(-w^3/3 + y_T w)} \leq \exp(- c_2 |w|^3 + C_2|w|^2 + C_2).$$
The last two displayed inequalities show that for some $C_3 > 0$ and all $j \in \{1, \dots, J_a\}$, we have
\begin{equation}\label{Eq.FlatUTUB}
\left|U^T_j\right| \leq e^{-T^3/3 + C_3 T^2} \cdot \int_{\Gamma^-_{\beta}} |dw| \exp\left(- c_2 |w|^3 + C_1|w| + C_2|w|^2\right),
\end{equation}
which implies (\ref{Eq.AiryUVanish}). \\

In the remainder of the proof, we establish (\ref{Eq.AiryVConverge}). We first observe that for $w \in \Gamma_{\beta}^-$ and $z \in \Gamma_{\alpha}^+$
$$\left|\prod_{i = 1}^{J_a} \frac{1 -a_i(w+T+t)}{1 -a_i(z+T+s)}\right| = \prod_{i = 1}^{J_a} \frac{|(a_iT)^{-1} - wT^{-1} - tT^{-1} - 1|}{|(a_iT)^{-1} - zT^{-1} - sT^{-1} - 1|} \leq 2^{J_a} \cdot (2 + |w|)^{J_a},$$
where in the last inequality we used that $T^{-1}\Gamma_{\alpha}^+$ is distance $1 - \alpha T^{-1}$ away from $-1$ for large $T$. In addition, for some $C_4 > 0$ and all $w \in \Gamma_{\beta}^-$ and $z \in \Gamma_{\alpha}^+$, we have
$$\left|\prod_{i \geq 1} \frac{1 +b_i(z+T+s)}{1 + b_i(w+T+t)}\right| = \prod_{i \geq 1} \frac{\left|1 + \frac{b_i(z+s)}{1 + b_iT}\right|}{\left|1 + \frac{b_i(w+t)}{1 + b_iT}\right|} \leq \exp\left(C_4|z+s| + C_4|w+t| \right),$$
where in the last inequality we used (\ref{Eq.Phi-terms-inequality}) with $d = \sqrt{2}/2$ and that $\beta + t > 0$, see the discussion under (\ref{Eq.NewKernelFlat}). By investigating $\Re[w^3]$ and $\Re[z^3]$, we also see that for some $C_5, c_5 > 0$, and all $w \in \Gamma_{\beta}^-$ and $z \in \Gamma_{\alpha}^+$, we have
$$ |\exp(z^3/3 - x_T z - w^3/3 + y_T w)| \leq \exp\left(-c_5|z|^3 - c_5 |w|^3 + C_5 + C_5|z|^2 + C_5|w|^2\right).$$
Lastly, since $\alpha + s > \beta + t$, we know for some $C_6 > 0$ and all $w \in \Gamma_{\beta}^-$ and $z \in \Gamma_{\alpha}^+$, we have
$$\left|\frac{1}{z + s - w -t}\right| \leq C_6.$$
Combining the above four estimates, we conclude for some $C_7 > 0$, and all $w \in \Gamma_{\beta}^-$ and $z \in \Gamma_{\alpha}^+$, that we have
\begin{equation}\label{Eq.BoundAllAiry}
\left|\frac{\exp(z^3/3 - x_T z - w^3/3 + y_T w)}{z + s - w - t} \cdot \frac{\Phi_{a,b,c}(z + T + s)}{\Phi_{a,b,c}(w + T + t)}\right| \leq e^{-c_2|w|^3 - c_2 |z|^3 + C_7|z|^2 + C_7|w|^2 + C_7}.
\end{equation}
As the left side is integrable and 
$$\lim_{T \rightarrow \infty} \frac{\exp(z^3/3 - x_T z - w^3/3 + y_T w)}{z + s - w - t} \cdot \frac{\Phi_{a,b,c}(z + T + s)}{\Phi_{a,b,c}(w + T + t)} = \frac{\exp(z^3/3 - x z - w^3/3 + y w)}{z + s - w - t},$$
we conclude (\ref{Eq.AiryVConverge}) by the dominated convergence theorem.
\end{proof}

%
\subsection{Determinantal structure of Dyson Brownian motion}\label{Section2.3} In this section we explain the relationship between Dyson Brownian motion and the kernel $\kbmn$ from (\ref{Eq.KDBM}). The precise formulation appears in Proposition \ref{Prop.DBMDPP} below. Before stating it, we establish two auxiliary results required for its proof. 

\begin{lemma}\label{Lem.Gauge} Fix $k \in \mathbb{N}$, and suppose that $M$ is a determinantal point process on $\mathbb{R}^k$ with correlation kernel $K$ and reference measure $\lambda$ as in \cite[Definition 2.11]{dimitrov2024airy}. Let $\mu$ be a measure on $\mathbb{R}^k$, and let $f:\mathbb{R}^k \rightarrow [0, \infty)$ be a measurable function such that $\mu(V) < \infty$, $\sup_{x \in V} f(x) < \infty$ for all compact $V \subset \mathbb{R}^k$. Suppose further that $\lambda$ is absolutely continuous with respect to $\mu$, with Radon-Nikodym derivative $f = \frac{d\lambda}{d\mu}$. Then $M$ is also a determinantal point process on $\mathbb{R}^k$ with reference measure $\mu$ and correlation kernel $f(x)^{\alpha}K(x,y) f(y)^{1- \alpha}$ for any $\alpha \in [0,1]$ (using the convention $0^0 =1$). 
\end{lemma}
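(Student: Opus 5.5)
The plan is to work directly from the definition of a determinantal point process in \cite[Definition 2.11]{dimitrov2024airy}: for every $n \geq 1$ and every bounded measurable $h$ with compact support on $(\mathbb{R}^k)^n$,
\[
\mathbb{E}\Bigl[\sum_{x_1,\dots,x_n \text{ distinct}} h(x_1,\dots,x_n)\Bigr] = \int h(x_1,\dots,x_n)\det\bigl[K(x_i,x_j)\bigr]_{i,j=1}^n \lambda(dx_1)\cdots\lambda(dx_n).
\]
Equivalently, the $n$-th factorial moment measure of $M$ is $\det[K(x_i,x_j)]\,\lambda^{\otimes n}(dx)$. We also need to check that the local finiteness conditions on the reference measure required in that definition hold for $\mu$. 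This is exactly why the hypotheses $\mu(V)<\infty$ and $\sup_V f<\infty$ for compact $V$ are imposed.

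The first step is to substitute $\lambda(dx_i) = f(x_i)\mu(dx_i)$. The right side becomes
\[
\int h \,\det\bigl[K(x_i,x_j)\bigr]\prod_{i=1}^n f(x_i)\,\mu^{\otimes n}(dx).
\]
The second step is the algebraic identity
\[
\prod_{i=1}^n f(x_i)\det\bigl[K(x_i,x_j)\bigr] = \det\bigl[f(x_i)^{\alpha}K(x_i,x_j)f(x_j)^{1-\alpha}\bigr].
\]
It holds because multiplying row $i$ by $f(x_i)^\alpha$ and column $j$ by $f(x_j)^{1-\alpha}$ multiplies the determinant by $\prod_i f(x_i)^{\alpha}\prod_j f(x_j)^{1-\alpha} = \prod_i f(x_i)$.

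The only care needed is at points where $f(x_i)=0$. With the convention $0^0=1$, when $\alpha\in\{0,1\}$ one of the two exponents is zero and the corresponding factor equals $1$. The identity $f^\alpha f^{1-\alpha}=f$ still holds, since both sides vanish or both equal $f$. Hence the identity is valid pointwise for all $\alpha\in[0,1]$.

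Combining the two steps shows that the factorial moment measures of $M$ have densities $\det[\tilde K(x_i,x_j)]$ with respect to $\mu^{\otimes n}$, where $\tilde K(x,y) = f(x)^\alpha K(x,y) f(y)^{1-\alpha}$.

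The remaining point, which I expect to be the only mild obstacle, is verifying any regularity requirements that \cite[Definition 2.11]{dimitrov2024airy} places on the kernel and reference measure. These include measurability of $\tilde K$, local integrability or boundedness of the determinants on compacts, and the reference measure being locally finite. Measurability is immediate because $f$ is measurable and powers of nonnegative functions are measurable. Local finiteness of $\mu$ is assumed. Any local boundedness property of $K$ on compact sets transfers to $\tilde K$ because $f$ is bounded on compacts by hypothesis, so $f^\alpha$ and $f^{1-\alpha}$ are as well.
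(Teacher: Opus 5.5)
Your proposal is correct and is essentially the paper's argument. Both rewrite the factorial-moment integral against $\lambda^n$ as one against $\mu^n$ with the extra factor $\prod_{i=1}^n f(x_i)$, and absorb that factor into the determinant by scaling row $i$ by $f(x_i)^{\alpha}$ and column $j$ by $f(x_j)^{1-\alpha}$. The only difference is cosmetic: the paper states the defining identity through factorial moments $\mathbb{E}\bigl[\prod_j M(A_j)!/(M(A_j)-n_j)!\bigr]$ over disjoint bounded Borel sets and concludes via \cite[Lemma 2.8]{dimitrov2024airy}, rather than using test functions $h$.
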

\begin{proof} Fix $m \in \mathbb{N}$, $n_1, \dots, n_m \in \mathbb{N}$, $n = n_1 + \cdots + n_m$ and $m$ pairwise disjoint bounded Borel sets $A_1, \dots, A_m \subset \mathbb{R}^k$. From \cite[(2.13)]{dimitrov2024airy} and \cite[Definition 2.11]{dimitrov2024airy}, we know that 
\begin{equation*} 
\begin{split}
&\mathbb{E}\left[ \prod_{j = 1}^m \frac{M(A_j)!}{(M(A_j)-n_j)!} \right] = \int_{A_1^{n_1} \times \cdots \times A_m^{n_m}} \det\left[K(x_i,x_j)\right]_{i,j = 1}^n \lambda^n(dx) \\
&= \int_{A_1^{n_1} \times \cdots \times A_m^{n_m}} \det\left[K(x_i,x_j)\right]_{i,j = 1}^n \prod_{i = 1}^n f(x_i)\mu^n(dx) \\
&= \int_{A_1^{n_1} \times \cdots \times A_m^{n_m}} \det\left[f(x_i)^{\alpha}K(x_i,x_j) f(x_j)^{1-\alpha}\right]_{i,j = 1}^n \mu^n(dx),
\end{split}
\end{equation*}
where in going from the first to the second line we used $f = \frac{d\lambda}{d\mu}$ and in going from the second to the third we used the linearity of the determinant. The last displayed equation and \cite[Lemma~2.8]{dimitrov2024airy}, imply the statement of the lemma.
\end{proof}

\begin{lemma}\label{Lem.TimeReversal}  Fix $n \in \mathbb{N}$, and let $(\lambda^n(t): t \geq 0)$ be Dyson Brownian motion started from $\lambda^n(0) = 0$ as in Definition \ref{Def.DBM}. For $t > 0$, define $\tilde{\lambda}^n_i(t) = t \lambda^n_i(1/t)$, and set $\tilde{\lambda}^n_i(0) = 0$. Then, almost surely, $\tilde{\lambda}^n \in C([0,\infty), \weylc_n)$, and $\tilde{\lambda}^n$ has the same distribution as $\lambda^n$.
\end{lemma}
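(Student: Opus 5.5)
We will prove the lemma by identifying Dyson Brownian motion started from zero with the eigenvalue process of a Hermitian Brownian motion, and then applying the classical time inversion of Brownian motion entrywise. By \cite[Proposition 4.3.5]{AGZ} and uniqueness of strong solutions, the ordered eigenvalues of $H(t)$ form a Dyson Brownian motion started from $0$ as in Definition \ref{Def.DBM}. Here $H(t)$ is the $n \times n$ Hermitian matrix-valued process with independent entries: $H_{ii}(t) = B_{ii}(t)$ are standard Brownian motions, and for $i<j$ we take $H_{ij}(t) = (B_{ij}(t) + \im \tilde B_{ij}(t))/\sqrt{2}$, with $B_{ij}, \tilde B_{ij}$ independent standard Brownian motions.

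We will check this identification either by matching the generator via It\^o/perturbation formulas, or by citing the standard fact that the eigenvalue process is a strong solution of (\ref{Eq.DysonSDE}) with the normalization producing drift $\sum_{j\neq i} (\lambda_i - \lambda_j)^{-1}$. The normalization is the one place requiring care. We then use that for each $t > 0$ the initial data $H(0)=0$ has all eigenvalues equal. The SDE is singular at that configuration, but \cite[Proposition 4.3.5]{AGZ} covers starts in $\weylc_n$, and the eigenvalue process of $H$ is the solution.

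Next, define $\tilde H(t) = tH(1/t)$ for $t>0$ and $\tilde H(0)=0$. Each real entry process $tB(1/t)$ is a standard Brownian motion by time inversion: it is Gaussian with covariance $ts\min(1/t,1/s) = \min(s,t)$, and it is continuous at $0$ almost surely by the law of large numbers for Brownian motion. Independence of the entries is preserved, so $\tilde H$ has the same law as $H$ as a process in $C([0,\infty))$.

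Since the eigenvalues of $tH(1/t)$ are $t\lambda^n_i(1/t)$ and the ordered-eigenvalue map is continuous from Hermitian matrices to $\weylc_n$, we get $\tilde\lambda^n = \mathrm{eig}(\tilde H)$ for $t > 0$. At $t=0$ both sides equal $0$, and continuity at $0$ follows from continuity of $\tilde H$ at $0$ together with the eigenvalue map being $1$-Lipschitz (Weyl's inequality). Hence $\tilde\lambda^n \in C([0,\infty),\weylc_n)$ almost surely, and $\tilde\lambda^n$ is the image under a fixed measurable map of a process equal in law to $H$. It therefore has the law of $\lambda^n$.

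The main obstacle is the matrix identification, specifically confirming the variance normalization that yields exactly (\ref{Eq.DysonSDE}). If we prefer to avoid matrices, the fallback is a direct argument: show via It\^o's formula that $\tilde\lambda^n$ solves (\ref{Eq.DysonSDE}) driven by Brownian motions $\tilde B_i$, built by time-inverting the martingale parts, and conclude by strong uniqueness. Handling the singular start at $0$ in that route is harder, so the matrix route is preferred.
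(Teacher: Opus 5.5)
Your proposal is correct and follows essentially the same route as the paper: realize Dyson Brownian motion from $0$ as the ordered eigenvalue process of Hermitian Brownian motion with the same entry normalization, apply Brownian time inversion entrywise, and transfer the equality in law through the scaling-equivariant eigenvalue map. The differences are minor: the paper cites \cite[Theorems 3 and 4]{Grabiner99} for the matrix identification where you cite \cite{AGZ}, and you spell out continuity of $\tilde{\lambda}^n$ at $t=0$ via Weyl's inequality, which the paper leaves implicit.
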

\begin{proof}
Let
$$X(t)=[X_{ij}(t)]_{i,j = 1}^n=
\begin{cases}
    B_{ii}(t), & i=j, \\
    2^{-1/2}B_{ij}(t)+\im 2^{-1/2}\cdot \widetilde{B}_{ij}(t), & i<j, \\
    2^{-1/2}B_{ij}(t)-\im 2^{-1/2}\cdot \widetilde{B}_{ij}(t), & i>j,
\end{cases}$$
where the $B_{ij}$ and $\widetilde{B}_{ij}$ are i.i.d. standard Brownian motions. Put $Y(t) = tX(1/t)$ for $t > 0$ and $Y(0) = 0$. Since for a standard Brownian motion $B$, we have $tB(1/t) \overset{d}{=}B(t)$, see \cite[Theorem 7.2.6]{Durrett}, we conclude $X(t) \overset{d}{=}Y(t)$.

Denote the eigenvalue processes of $X(t)$ and $Y(t)$ by
$$\lambda^{n,X}(t) = \left(\lambda_1^{n,X}(t),\lambda_2^{n,X}(t),\ldots,\lambda_n^{n,X}(t)\right) \mbox{ and }\lambda^{n,Y}(t) = \left(\lambda_1^{n,Y}(t),\lambda_2^{n,Y}(t),\ldots,\lambda_n^{n,Y}(t)\right),$$
respectively. Since $X(t) \overset{d}{=}Y(t)$, we conclude $\lambda^{n,X}(t) \overset{d}{=}\lambda^{n,Y}(t)$. From \cite[Theorems 3 and 4]{Grabiner99}, we have $\lambda^{n,X}(t) \overset{d}{=} \lambda^n(t)$. Since $Y(t) \overset{a.s.}{=} tX(1/t)$, we conclude $\lambda^{n,Y}(t) \overset{a.s.}{=} t\lambda^{n,X}(1/t) \overset{d}{=} \tilde{\lambda}^n(t)$. The last three observations give the statement of the lemma. 
\end{proof}

\begin{proposition}\label{Prop.DBMDPP} Fix $n \in \mathbb{N}$, and let $(\lambda^n(t): t \geq 0)$ be Dyson Brownian motion started from $\lambda^n(0) = 0$ as in Definition \ref{Def.DBM}. Fix $m \in \mathbb{N}$, $0 < t_1 < t_2 < \cdots < t_m$, and set $\mathcal{T} = \{t_1, \dots, t_m\}$. Then, the random measure 
$$M(A) = \sum_{i = 1}^n \sum_{j = 1}^m {\bf 1}\{(t_j, \lambda_i^n(t_j)) \in A\}$$
is a determinantal point process on $\mathbb{R}^2$ with correlation kernel $\kbmn$ as in (\ref{Eq.KDBM}) and reference measure $\mu_{\mathcal{T}} \times \mathrm{Leb}$ as in Definition \ref{Def.Measures}.
\end{proposition}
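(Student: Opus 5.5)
The plan is to obtain $\kbmn$ from the Katori--Tanemura kernel \cite{KM10}, using the time reversal of Lemma \ref{Lem.TimeReversal} and the gauge freedom of Lemma \ref{Lem.Gauge} together with \cite[Proposition 2.13]{dimitrov2024airy}.

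\textbf{Step 1: the starting kernel.} I would first recall from \cite{KM10} that Dyson Brownian motion started from the origin, observed at times $0<\tau_1<\cdots<\tau_m$, is determinantal on $\mu_{\{\tau_j\}}\times\mathrm{Leb}$. Its extended kernel has the form
\[
-\mathbf{1}\{\sigma>\tau\}\, p_{\sigma-\tau}(\xi,\eta)+\sum_{k=0}^{n-1}(\text{Hermite-type functions}),
\]
where $p_u(\xi,\eta)=(2\pi u)^{-1/2}e^{-(\xi-\eta)^2/(2u)}$. I would rewrite the finite Hermite sum as a double contour integral. Use the generating function of Hermite polynomials, giving a small circle around $0$ in one variable. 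Use a Gaussian (Fourier) integral representation along a vertical line in the other. Then sum the geometric series in $k$, which produces a factor $(w/z)^n/(w-z)$ up to terms that integrate to zero. This yields a formula of exactly the shape of (\ref{Eq.KDBM}), but with the heat-kernel indicator oriented in the ``forward'' time direction and with Gaussian exponents in the variables $1/\tau$.

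\textbf{Step 2: time reversal and change of variables.} By Lemma \ref{Lem.TimeReversal}, $(\lambda^n(t_j))_j$ has the same law as $(t_j\lambda^n(1/t_j))_j$. So $M$ is the image of the Katori--Tanemura point process at the times $1/t_m<\cdots<1/t_1$ under the bijection $\phi(\tau,\xi)=(1/\tau,\xi/\tau)$. By \cite[Proposition 2.13(5),(6)]{dimitrov2024airy}, or equivalently Lemma \ref{Lem.Gauge} with $f$ the Jacobian $t$, this gives a kernel of the form
\[
t\,K^{KT}(1/s,x/s;1/t,y/t).
\]
I would then rescale the contour variables to absorb the $1/s,1/t$ factors in the exponents; a substitution such as $z\mapsto z/s$ is the natural guess. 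I would also apply a gauge transformation $g(s,x)/g(t,y)$, with $g$ a Gaussian factor $\exp(c\,x^2/s)$, which does not change correlation functions. The target is the integrand $\exp(-sz^2/2-xz+tw^2/2+yw)\,w^n/z^n/(w-z)$.

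\textbf{Step 3: the heat-kernel term (main obstacle).} I expect the main difficulty here. Time reversal flips the order of times, so the transformed $p$-term carries $\mathbf{1}\{s>t\}$ rather than $\mathbf{1}\{t>s\}$. There are two tools to fix this.
\begin{itemize}
\item Pass to the transposed kernel $K(t,y;s,x)$, which has the same determinants.
\item Note that the double integral changes by a residue when the $w$-contour is moved across the $z$-circle. The pole at $w=z$ contributes a single Gaussian integral over a vertical line, which evaluates to $\pm p_{t-s}(x,y)$ for all $s\neq t$. It also contributes nothing when $s=t$.
\end{itemize}
Combining transposition, the residue identity and a possible sign change $z\mapsto -z$, $w\mapsto -w$ (and, if needed, the $\alpha$-freedom in Lemma \ref{Lem.Gauge}), the indicator term should become $-\mathbf{1}\{t>s\}p_{t-s}(x,y)$. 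The bookkeeping of orientations and signs is delicate, and I would verify it on the case $n=1$. There $M$ is just Brownian motion from $0$, and both sides can be computed explicitly.
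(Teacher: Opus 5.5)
Your Steps 1--2 follow the paper's proof. The ingredients are the same: the Katori--Tanemura kernel, the inversion $t\lambda^n(1/t)$ of Lemma \ref{Lem.TimeReversal}, the map $(s,x)\mapsto(1/s,x/s)$, Lemma \ref{Lem.Gauge}, a Gaussian gauge, and finally $z\mapsto -z$, $w\mapsto -w$. (The paper reads the double-contour form directly off \cite{KM10}, so your Hermite-sum derivation is optional.)

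The problem is Step 3, which rests on a miscalculation. The Katori--Tanemura kernel carries $-\mathbf{1}\{\sigma>\tau\}\,p_{\sigma-\tau}$, exactly as you wrote in Step 1 (compare (\ref{Eq.KernelDBMV1})). Under $\sigma=1/s$, $\tau=1/t$ this becomes $\mathbf{1}\{1/s>1/t\}=\mathbf{1}\{t>s\}$, which is already the orientation in (\ref{Eq.KDBM}). The time reversal is precisely what fixes the orientation, and it keeps the circle variable $z$ attached to the first argument $(s,x)$. There is no obstacle.

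Had you carried out Step 3 as planned, you would have ended with the wrong kernel, because both of your tools are flawed.
\begin{itemize}
\item Transposition preserves determinants, but it attaches the closed $z$-contour to $(t,y)$ rather than $(s,x)$, so it cannot produce (\ref{Eq.KDBM}). In the actual situation it would also flip the indicator back to $\mathbf{1}\{s>t\}$.
\item The residue identity is false. Moving the $w$-line across $\mathcal{C}_1$ picks up, for each $z\in\mathcal{C}_1$, a residue at $w=z$ that is entire in $z$ (the factors $w^n/z^n$ cancel). Its integral over $\mathcal{C}_1$ is therefore $0$, not $\pm p_{t-s}(x,y)$. Nor can you open $\mathcal{C}_1$ into a vertical line, since $e^{-sz^2/2}$ grows in vertical directions.
\end{itemize}
With a closed $z$-contour the two orientations are genuinely inequivalent. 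For $n=1$ the double integral in (\ref{Eq.KernelDBMV1}) equals $p_s(0,x)$. The kernel $-\mathbf{1}\{s>t\}p_{s-t}(x,y)+p_s(0,x)$ reproduces the Brownian transition densities, while $-\mathbf{1}\{t>s\}p_{t-s}(x,y)+p_s(0,x)$ does not.

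Two smaller corrections to Step 2.
\begin{itemize}
\item The change-of-variables factor is $t^{-1}$, not $t$, since $\xi=x/s$ gives $d\xi=dx/s$. Taking $\alpha=1/2$ in Lemma \ref{Lem.Gauge}, as the paper does, gives $(st)^{-1/2}$. This exactly cancels the prefactor $1/\sqrt{\sigma\tau}=\sqrt{st}$.
\item No rescaling $z\mapsto z/s$ is needed, and it would break the $1/(w-z)$ structure. Indeed $-s(z-x/s)^2/2=-sz^2/2+xz-x^2/(2s)$. The gauge $e^{x^2/2s}$ removes the last term and the analogous term in $w$. The same gauge turns $(st)^{-1/2}p_{1/s-1/t}(x/s,y/t)$ into $p_{t-s}(x,y)$.
\end{itemize}
After $z\mapsto -z$, $w\mapsto -w$ you obtain $\kbmn$ exactly, so the fix is simply to drop Step 3.
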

\begin{proof} For clarity, we split the proof into two steps. In Step 1, we show that $M$ is a determinantal point process with reference measure $\mu_{\mathcal{T}} \times \mathrm{Leb}$ and correlation kernel $K$ as in (\ref{Eq.KernelDBMV1}). In Step 2, we define an auxiliary random measure $\tilde{M}$, and show it is determinantal using our work in Step 1 and Lemma \ref{Lem.TimeReversal}. We further show that $\tilde{M}\tilde{\phi}^{-1} = M$ for a suitable bijection $\tilde{\phi}$. By applying an appropriate combination of a change of variables, gauge transformation and Lemma \ref{Lem.Gauge}, we conclude that $M$ is also determinantal with correlation kernel $\kbmn$.\\

{\bf \raggedleft Step 1.} Define the random measure on $\mathbb{R}^2$
$$\hat{M}(A) = \sum_{i = 1}^n\sum_{j = 1}^m  {\bf 1}\{ (j, \lambda_i^n(t_j)) \in A \}.$$
From the proof of \cite[Proposition 2.1]{KM10} we have that $\hat{M}$ is a determinantal point process on $\mathbb{R}^2$ with reference measure $\mu_{\{1, \dots, m\}} \times \mathrm{Leb}$ and correlation kernel
$$\hat{K}(u,x; v,y) = S^{u,v}(x,y) - {\bf 1}\{ u > v\} \cdot \frac{e^{-(x-y)^2/2(t_u - t_v)}}{\sqrt{2\pi (t_u - t_v)}},$$
where 
$$S^{u,v}(x,y) = \frac{1}{(2\pi \im)^2} \oint_{\mathcal{C}_1} dz \frac{e^{-(z-x)^2/2t_u}}{\sqrt{t_u}} \int_{\im \mathbb{R}} dw \frac{e^{(w - y)^2/2t_v}}{\sqrt{t_v}} \cdot \frac{w^n - z^n}{z^n (w - z)},$$
and $\mathcal{C}_1$ is the positively-oriented zero-centered circle of unit radius. The above formula can be deduced from the last displayed equation on \cite[page 482]{KM10} upon setting $N = n$, $x_1 = \cdots = x_N = 0$, $m = u$, $n = v$ and changing variables $z \mapsto z/\sqrt{t_u}$, $y' \mapsto -\im w/\sqrt{t_v}$.

Let $\hat{f}: \mathbb{R} \rightarrow \mathbb{R}$ be a piecewise linear increasing bijection such that $\hat{f}(i) = t_i$ for $i \in \{1, \dots, m\}$. Define $\hat{\phi}: \mathbb{R}^2 \rightarrow \mathbb{R}^2$ through $\hat{\phi}(s, x) = (\hat{f}(s),  x),$ and note that $M = \hat{M} \hat{\phi}^{-1}$. Consequently, from \cite[Proposition 2.13(5)]{dimitrov2024airy} we conclude that $M$ is determinantal with reference measure $\mu_{\mathcal{T}} \times \mathrm{Leb}$ and correlation kernel $K(s,x;t,y) = \hat{K}(\hat{f}^{-1}(s),x; \hat{f}^{-1}(t),y)$, which equals
\begin{equation}\label{Eq.KernelDBMV1}
K(s,x;t,y) =   \frac{- {\bf 1}\{ s > t\}e^{-\frac{(x-y)^2}{2(s - t)}}}{\sqrt{2\pi (s - t)}} + \frac{1}{(2\pi \im)^2} \oint_{\mathcal{C}_1} dz \int_{2 + \im \mathbb{R}} dw  \frac{e^{-(z-x)^2/2s} e^{(w - y)^2/2t}}{\sqrt{st}  (w - z)} \cdot \frac{w^n}{z^n}.
\end{equation}
We mention that in deriving the last equation, we deformed the contour $\im \mathbb{R}$ in $S^{u,v}$ to $2 + \im \mathbb{R}$, and then integrated the term $-z^n$ in the last fraction to zero using Cauchy's theorem.\\

{\bf \raggedleft Step 2.} Define $\tilde{\lambda}^n_i(t) = t \lambda^n_i(1/t)$, for $t > 0$. Fix $\mathsf{S} = \{s_1, \dots, s_m\} \subset (0, \infty)$ with $s_1 < \cdots < s_m$, and define the random measure
$$\tilde{M}(A) = \sum_{i = 1}^n\sum_{j = 1}^m  {\bf 1}\{ (s_j, \tilde{\lambda}_i^n(s_j)) \in A \}.$$ From Lemma \ref{Lem.TimeReversal}, we have that $M \overset{d}{=} \tilde{M}$ provided that $\mathsf{S} = \mathcal{T}$. Consequently, $\tilde{M}$ is a determinantal point process on $\mathbb{R}^2$ with reference measure $\mu_{\mathsf{S}} \times \mathrm{Leb}$ and correlation kernel $K$ as in (\ref{Eq.KernelDBMV1}).\\

In the remainder of the proof, we put $s_j = 1/t_{m - j + 1}$ for $j = 1, \dots, m$. Define the bijection $\tilde{\phi}: \mathbb{R}^2 \rightarrow \mathbb{R}^2$ by $\tilde{\phi}(s,x) = (1/s, x/s)$ for $s > 0$ and $\tilde{\phi}(s,x) = (s,x)$ for $s \leq 0$, and $x \in \mathbb{R}$. We observe
\begin{equation*}
\begin{aligned}
&\tilde{M}\tilde{\phi}^{-1}(A) = \sum_{i = 1}^n\sum_{j = 1}^m  {\bf 1}\{ (1/t_{m-j+1}, \tilde{\lambda}_i^n(1/t_{m-j+1})) \in \tilde{\phi}^{-1}(A) \} \\
& = \sum_{i = 1}^n\sum_{j = 1}^m  {\bf 1}\{ \tilde{\phi}(1/t_{m-j+1}, \tilde{\lambda}_i^n(1/t_{m-j+1})) \in A \}  = \sum_{i = 1}^n\sum_{j = 1}^m  {\bf 1}\{ (t_{m-j+1}, t_{m-j+1}\cdot\tilde{\lambda}_i^n(1/t_{m-j+1})) \in A\} \\
& = \sum_{i = 1}^n\sum_{j = 1}^m  {\bf 1}\{ (t_{m-j+1}, \lambda_i^n(t_{m-j+1})) \in A\} = \sum_{i = 1}^n\sum_{j = 1}^m  {\bf 1}\{ (t_{j}, \lambda_i^n(t_{j})) \in A\}= M(A),
\end{aligned}
\end{equation*}
so that $\tilde{M}\tilde{\phi}^{-1} = M$ almost surely.

By \cite[Proposition~2.13(5)]{dimitrov2024airy}, we conclude that $\tilde{M}\tilde{\phi}^{-1}$, and hence $M$, is determinantal with correlation kernel $K(\tilde{\phi}^{-1}(s,x);\tilde{\phi}^{-1}(t,y))$ and reference measure $(\mu_{\mathsf{S}} \times \mathrm{Leb})\tilde{\phi}^{-1}$. Notice that $\mu_{\mathcal{T}} \times \mathrm{Leb}$ is absolutely continuous with respect to $(\mu_{\mathsf{S}} \times \mathrm{Leb})\tilde{\phi}^{-1}$, with density $t^{-1} \cdot {\bf 1}\{t \in \mathcal{T}\}$. By Lemma \ref{Lem.Gauge} with $\alpha=1/2$, we conclude that $M$ is determinantal with reference measure $\mu_{\mathcal{T}} \times \mathrm{Leb}$ and correlation kernel $\bar{K}(s,x; t,y) = (st)^{-1/2}K(\tilde{\phi}^{-1}(s,x);\tilde{\phi}^{-1}(t,y))$, which from (\ref{Eq.KernelDBMV1}) equals
\begin{equation*}
\bar{K}(x,s;t,y) =   \frac{- {\bf 1}\{ t > s\}e^{-\frac{(x/s-y/t)^2}{2(1/s - 1/t)}} }{\sqrt{2\pi (t-s)}} + \frac{1}{(2\pi \im)^2} \oint_{\mathcal{C}_1} dz \int_{2 + \im \mathbb{R}} dw  \frac{e^{-s(z-x/s)^2/2} e^{t(w - y/t)^2/2}}{w - z} \cdot \frac{w^n}{z^n}.
\end{equation*}

Applying a gauge transformation, see \cite[Proposition 2.13(4)]{dimitrov2024airy} with $f(s,x) = e^{x^2/2s}$, we conclude that $M$ is determinantal with reference measure $\mu_{\mathcal{T}} \times \mathrm{Leb}$ and correlation kernel $\frac{f(s,x)}{f(t,y)}\bar{K}(x,s;t,y)$, which with a bit of algebra is seen to equal
$$ \frac{- {\bf 1}\{ t > s\}e^{-\frac{(x-y)^2}{2(t - s)}} }{\sqrt{2\pi (t-s)}} + \frac{1}{(2\pi \im)^2} \oint_{\mathcal{C}_1} dz \int_{2 + \im \mathbb{R}} dw  \frac{e^{-sz^2/2 +xz + tw^2/2 - yw}}{w - z} \cdot \frac{w^n}{z^n}.$$
The last formula now matches with (\ref{Eq.KDBM}) once we change variables $z \rightarrow -z$ and $w \rightarrow -w$.
\end{proof}

%
%
\section{Finite-dimensional convergence}\label{Section3} The goal of this section is to establish the finite-dimensional convergence of the line ensembles appearing in Theorems \ref{Thm.ConvDBM} and \ref{Thm.ConvAiry}. We accomplish this in Sections \ref{Section3.3} and \ref{Section3.4}, see Propositions \ref{Prop.FDSlope} and \ref{Prop.FDFlat}. In order to establish these results, we require two upper-tail estimates. The first is for the scaling in Theorem \ref{Thm.ConvDBM}, see Proposition \ref{Prop.TightnessSlope} in Section \ref{Section3.1}, and the second is for the scaling in Theorem \ref{Thm.ConvAiry}, see Proposition \ref{Prop.TightnessFlat} in Section \ref{Section3.2}. Throughout this section we assume the notation from Section \ref{Section1}. We also use freely the definitions and notations pertaining to determinantal point processes from \cite[Section 2]{dimitrov2024airy}.

%
%
\subsection{Upper tail estimates for the Dyson Brownian motion limit}\label{Section3.1} The goal of this section is to establish the following result, which shows that the one-point marginals of the curve $\mathcal{L}_1^N$ in Theorem \ref{Thm.ConvDBM}(a) are tight from above.

\begin{proposition} \label{Prop.TightnessSlope} Assume the same notation as in Definitions~\ref{Def.DLP}, \ref{Def.ParMultiplicities}, and Proposition~\ref{Prop.AWLE}. Fix $k \in \N$, suppose that $(a, b, c) \in \parP$ satisfies $\abs{\mathsf{V}_a} \geq k$, and suppose that $\mathsf{m}_1 = \cdots = \mathsf{m}_{k-1} = 1$. For a fixed $t \in (0, \infty)$, define
\begin{equation*}
X_i^T = (2T)^{-1/2} \left(\mathcal{A}_i^{a,b,c}(tT) + 2tT/v^a_k - (tT)^2\right) \mbox{ for each } i \geq 1.
\end{equation*}
Then, the following statement holds:
\begin{equation}\label{Eq.TightnessSlope}
    \lim_{A \to \infty} \limsup_{T \to \infty} \P\left(X^T_k \geq A\right) = 0.
\end{equation}
\end{proposition}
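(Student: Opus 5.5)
The plan is a first-moment argument for the determinantal point process $M^{a_k,T}$ from (\ref{Eq.GenSlopedMeasures}), evaluated at the single time $\mathsf{S}=\{t\}$. Since $\mathsf{m}_1=\cdots=\mathsf{m}_{k-1}=1$, we have $\mathsf{M}^a_{k-1}=k-1$ and $v^a_k=a_k$. Let $N_A=\#\{i\ge 1: X_i^T\ge A\}$. By the ordering (\ref{Eq.OrdAWLE}),
$$\mathbb{E}[N_A]=\sum_{i\ge1}\mathbb{P}(X_i^T\ge A)\ \ge\ \sum_{i=1}^{k-1}\mathbb{P}(X_i^T\ge A)+\mathbb{P}(X_k^T\ge A).$$
For $i<k$ we have $a_i>a_k$. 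Proposition \ref{Prop.Slopes}(a) then gives $X_i^T= (2T)^{-1/2}\bigl(tT(2/a_k-2/a_i)+o_P(T)\bigr)\to+\infty$ in probability, so $\mathbb{P}(X_i^T\ge A)\to1$ for each fixed $A$. It therefore suffices to show
$$\limsup_{T\to\infty}\int_A^\infty K^{a_k,T}(t,x;t,x)\,dx\ \le\ k-1+\varepsilon(A),\qquad \varepsilon(A)\to0 \text{ as } A\to\infty .$$
The Gaussian term in (\ref{Eq.NewKernelSlopes}) vanishes on the diagonal, so only $K^{3,a_k,T}$ contributes.

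First I will use the decomposition (\ref{Eq.K3DBMDecomp}) from the proof of Proposition \ref{Prop.KernelConvTop}, now with $s=t$ and $x=y$, and integrate in $x$ over $[A,\infty)$ \emph{before} taking limits. For $V^T$, I rescale as in (\ref{Eq.DBMNewVT}). The $x$-dependence is then $e^{-x(\tilde z-\tilde w)}$, and on $\mathcal{C}^{\phi_z}_{-1}\times\mathcal{C}^{\phi_w}_{-2}$ we have $\Re(\tilde z-\tilde w)\ge1$. Fubini is therefore legitimate, and the $x$-integral produces $e^{-A(\tilde z-\tilde w)}/(\tilde z-\tilde w)$, of modulus at most $e^{-A}$. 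The bounds (\ref{Eq.BunchOfBounds}) then give a dominating function. This yields
$$\limsup_{T\to\infty}\Bigl|\int_A^\infty V^T\,dx\Bigr|\le C e^{-A},$$
or, more precisely, convergence to $\int_A^\infty K^{\mathrm{DBM},\mathsf{m}^a_k}(t,x;t,x)\,dx$, which tends to $0$ as $A\to\infty$.

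The main obstacle is the residue terms $U_j^T$, $j<k$. They are not small after integration over $x$: each should contribute about $1$, accounting for the escaping top particle $X_j^T$. Integrating $U_j^T$ (with $x_T=y_T=x$) over $[A,\infty)$ produces the factor
$$\frac{e^{-A\sqrt{2T}(1/a_j-w)}}{\sqrt{2T}\,(1/a_j-w)}.$$
Since $\Re w<1/a_j$ on $\Gamma^-_{u_T}$, this integral converges, and the integrand in $w$ now has a double pole at $w=1/a_j$. I will then deform the $w$-contour to the right across $1/a_j$, to a contour $\mathcal{C}^{\phi_w}_{1/a_j+\eta}$ with small $\eta>0$ and the other poles $1/a_i$, $i<j$, kept to the right.

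The residue at $w=1/a_j$ should be computable exactly. Because $s=t$, the exponential factors $e^{H(1/a_j)-H(w)}$ cancel at the pole, and the product constants combine with $\Phi_{a,b,c}(w)^{-1}$ to give a residue of the form $1+O(T^{-1/2}(1+A))$. The derivative terms coming from the double pole carry an extra $T^{-1/2}$ relative to the leading term. The remaining integral over the shifted contour is controlled by estimates analogous to (\ref{Eq.DBM-kernel-convergence-inequality-1})--(\ref{Eq.DBM-kernel-convergence-inequality-4}). Relative to $w=1/a_j$, the quadratic part $tT\Re[(w-1/a_k)^2]-tT(1/a_j-1/a_k)^2$ is negative of order $-cT\eta$ along the contour, and the factor $e^{-A\sqrt{2T}(1/a_j-w)}$ is at most $e^{A\sqrt{2T}\eta}$, so the remainder is $O(e^{-cT})$.

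Summing over $j<k$ gives $\int_A^\infty\sum_{j<k}U_j^T\,dx\to k-1$. Combined with the bound on $V^T$, this gives the displayed $\limsup$ estimate with $\varepsilon(A)=\int_A^\infty K^{\mathrm{DBM},\mathsf{m}_k^a}(t,x;t,x)\,dx$. Subtracting $\sum_{i<k}\mathbb{P}(X_i^T\ge A)\to k-1$ then yields (\ref{Eq.TightnessSlope}). If the exact residue bookkeeping proves delicate, a fallback is to integrate $U_j^T$ over all $x\in\mathbb{R}$. That integral is not absolutely convergent, but after a contour shift it should equal $1$ exactly, and one then shows that the portion over $(-\infty,A)$ is negligible.
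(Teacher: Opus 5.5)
Your skeleton matches the paper's proof. It uses the first-moment identity for $M^{a_k,T}$ at $\mathsf{S}=\{t\}$, $k-1$ unit contributions from the poles at $1/a_j$, a remainder bounded by $Ce^{-A}$, and $\mathbb{P}(X_i^T\ge A)\to 1$ for $i<k$ via Proposition \ref{Prop.Slopes}(a). The gap is in the residue terms $U_j^T$, exactly where you locate the difficulty. It comes from integrating in $x$ \emph{after} the $w$-contour has already been moved to $\Gamma^-_{u_T}$.

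You have the ordering of the points reversed. Since $a_1>\cdots>a_{k-1}>a_k$, we have $1/a_1<\cdots<1/a_{k-1}<1/a_k$. So $u_T=1/a_k-2/\sqrt{2T}>1/a_j$ for $j<k$ and large $T$, and the claim ``$\Re w<1/a_j$ on $\Gamma^-_{u_T}$'' is false. Near the vertex of $\Gamma^-_{u_T}$ one has $\Re(1/a_j-w)<0$, so $e^{-x\sqrt{2T}(1/a_j-w)}$ grows as $x\to\infty$. The inner $x$-integral therefore diverges, and your Fubini step fails. Similarly, the points $1/a_i$ with $i<j$ lie to the left of $1/a_j$, and they are zeros of $1/\Phi_{a,b,c}$, not poles of the $w$-integrand. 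The repair is to first move the $w$-contour of $U_j^T$ back to the left of $1/a_j$. This is legitimate because $1/((1/a_j-w)\Phi_{a,b,c}(w))$ is analytic at $1/a_j$. Integrate in $x$ there, and only then move right.

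The second failing step is the remainder bound on $\mathcal{C}^{\phi_w}_{1/a_j+\eta}$. Write $d=1/a_k-1/a_j$; the leading part of the exponent is $tT(\Re[(w-1/a_k)^2]-d^2)$. It is negative at the vertex, but it increases as you move out along either ray: its $r$-derivative at $r=0$ is $2(d-\eta)|\cos\phi_w|>0$ for every $\phi_w\in(\pi/2,\pi)$. For $\phi_w=2\pi/3$ and $w=1/a_j+\eta+re^{\pm\im\phi_w}$ one gets $\Re[(w-1/a_k)^2]=(d-\eta)^2+(d-\eta)r-r^2/2$, which exceeds $d^2$ for $r$ near $d$ when $\eta$ is small. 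So the integrand is exponentially large in $T$ on that contour, and $O(e^{-cT})$ does not follow. Rays at these angles, starting left of the saddle $1/a_k$, are not descent paths.

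After crossing $1/a_j$ you must continue to the contour through $u_T$; no further singularities intervene. There (\ref{Eq.DBM-kernel-convergence-inequality-1})--(\ref{Eq.DBM-kernel-convergence-inequality-4}) give $O(e^{-c_1T+C_1\sqrt{T}})$. This is precisely the paper's order of operations:
\begin{enumerate}
\item Apply Fubini on the original contours, where $\Re(z-w)\ge\alpha-\beta>0$; this produces the $(z-w)^{-2}$ kernel.
\item Deform $z$ to $\Gamma^+_{v_T}$.
\item Deform $w$ to $\Gamma^-_{u_T}$, crossing $w=1/a_j$ in each residue term.
\end{enumerate}

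Note also that this is a \emph{simple} pole: the factor $(1/a_j-w)^{-2}$ is offset by the simple zero of $1/\Phi_{a,b,c}$ at $1/a_j$. Its residue is exactly $1$, because the exponential factors equal $1$ there. Your ``derivative terms'' vanish identically, since they multiply $1/\Phi_{a,b,c}(1/a_j)=0$. Taken at face value they would be of order $T$ and $A\sqrt{T}$, not $T^{-1/2}$.
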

\begin{proof} For clarity, we split the proof into two steps. In the first step, we reduce the proof of the proposition to establishing that certain single and double integrals converge to zero as $T \rightarrow \infty$, see (\ref{Eq.DBMUBUVanish}) and (\ref{Eq.DBMUBVVanish}), and that certain probabilities converge to $1$, see (\ref{Eq.DBMUBProbConv}). In Step 2 we prove (\ref{Eq.DBMUBUVanish}) and (\ref{Eq.DBMUBVVanish}) by utilizing the bounds from Steps 2 and 3 of the proof of Proposition \ref{Prop.KernelConvTop}. We also prove (\ref{Eq.DBMUBProbConv}) using the asymptotic slopes of the Airy wanderer line ensembles from Proposition~\ref{Prop.Slopes}(a).\\

{\bf \raggedleft Step 1.} The argument in this step essentially repeats Steps 1 and 2 in the proof of \cite[Lemma 4.2]{D25}, so we will be brief. Let $M^{v^a_k,T}$ be as in (\ref{Eq.GenSlopedMeasures}) for $\mathsf{S} = \{t\}$. Notice that by the definition of $X_i^T$, we have $M^{v^a_k,T}(\cdot) = \sum_{i=1}^\infty {\bf 1}\{(t, X^T_i) \in {\cdot}\}$. In addition, from our work in Section \ref{Section2.1}, we know that $M^{v^a_k,T}$ is determinantal with correlation kernel $K^{v^a_k,T}$ as in \eqref{Eq.NewKernelSlopes}. From \cite[(2.13)]{dimitrov2024airy}, we conclude
\begin{equation*}
\begin{aligned}
\sum_{i=1}^\infty \P(X^T_i \geq A)
& = \E \left[M^{v^a_k,T}(\{t\} \times [A, \infty))\right] = \int_A^\infty K^{v^a_k,T}(t, x; t, x) dx \\
& = \frac{\sqrt{2T}}{(2\pi\im)^2} \int_{\Gamma^+_\alpha} dz \int_{\Gamma^-_\beta} dw \int_A^\infty dx \frac{\exp(H^{a_k,T}(z, t, x) - H^{a_k,T}(w, t, x))}{z - w} \cdot \frac{\Phi_{a,b,c}(z)}{\Phi_{a,b,c}(w)} \\
& = \frac{1}{(2\pi\im)^2} \int_{\Gamma^+_\alpha} dz \int_{\Gamma^-_\beta} dw \frac{\exp(H^{a_k,T}(z, t, A) - H^{a_k,T}(w, t, A))}{(z - w)^2} \cdot\frac{\Phi_{a,b,c}(z)}{\Phi_{a,b,c}(w)}.
\end{aligned}
\end{equation*}
We recall that $0 < \beta < \alpha < 1/a_1$ and that $H^{a_k,T}$ is as in \eqref{Eq.DefH}. We mention that the exchange of the order of the integrals is justified by Fubini's theorem, whose applicability follows from the same bounds established after \cite[(4.4)]{D25}.

As in the proof of Proposition~\ref{Prop.KernelConvTop}, we deform $\Gamma^+_\alpha$ to $\Gamma^+_{v_T}$, where $v_T = 1/v^a_k - 1/\sqrt{2T}$, during which the contour crosses the simple poles at $1/a_1, \ldots, 1/a_{k-1}$. We subsequently deform $\Gamma^-_\beta$ to $\Gamma^-_{u_T}$, where $u_T = 1/v^a_k - 2/\sqrt{2T}$, during which the contour \emph{does} cross the poles at $1/a_1, \ldots, 1/a_{k-1}$. These poles were not present in Step 1 of the proof of Proposition \ref{Prop.KernelConvTop} and can be traced to the fact that our present denominator is $(z - w)^2$ instead of $z - w$. In addition, we observe that the additional $k-1$ residues that we collect are all equal to $1$. By the residue theorem, we conclude
\begin{equation}\label{Eq.DBMUBDecomp}
\sum_{i=1}^\infty \P(X^T_i \geq A) = k-1 + V^T(A) + \sum_{j = 1}^{k-1} U^T_j(A),
\end{equation}
where 
\begin{equation}\label{Eq.DBMUBVUDef}
\begin{aligned}
V^T(A) & = \frac{1}{(2\pi\im)^2} \int_{\Gamma^+_{v_T}} d z \int_{\Gamma^-_{u_T}} d w \frac{\exp(H^{a_k,T}(z, t, A) - H^{a_k,T}(w, t, A))}{(z - w)^2} \cdot \frac{\Phi_{a,b,c}(z)}{\Phi_{a,b,c}(w)}, \\
U^T_j(A) & = \frac{\sqrt{2T}}{2\pi\im} \int_{\Gamma^-_{u_T}} dw\frac{\exp(H^{a_k,T}(1/a_j, t, A) - H^{a_k,T}(w, t, A))}{( 1/a_j - w)^2 \Phi_{a,b,c}(w)} \cdot \frac{\prod_{i=1}^\infty (1 + b_i/a_j)}{a_j \prod_{i=1, i \neq j}^\infty (1 - a_i/a_j)}.
\end{aligned}
\end{equation}

From (\ref{Eq.DBMUBDecomp}), we would conclude the statement of the proposition, if we can show that 
\begin{equation}\label{Eq.DBMUBUVanish}
\lim_{T \rightarrow \infty} \left|U^T_j(A)\right| = 0 \mbox{ for } j = 1, \dots, k-1 \mbox{ and }A \in \mathbb{R},
\end{equation}
\begin{equation}\label{Eq.DBMUBVVanish}
\lim_{A \rightarrow \infty}\limsup_{T \rightarrow \infty} \left|V^T(A)\right| = 0,
\end{equation}
\begin{equation}\label{Eq.DBMUBProbConv}
\lim_{T \rightarrow \infty} \mathbb{P}(X_j^T \geq A) = 1, \mbox{ for }j = 1, \dots, k-1 \mbox{ and }A \in \mathbb{R}.
\end{equation}
We establish the last three equations in the next step.\\

{\bf \raggedleft Step 2.} From (\ref{Eq.DBMUBVUDef}), we see that $U^T_j(A)$ agrees with $U^T_j$ from (\ref{Eq.DBMDefUV}) for $x_T = y_T = A$, except that the denominator of the integrand has an extra $(1/a_j-w)$. Consequently, we can repeat the same arguments as in Step 2 of the proof of Proposition \ref{Prop.KernelConvTop} to obtain the following analogue of (\ref{Eq.BoundUT}):
\begin{equation*}
\left|U_j^T(A)\right| \leq C_3e^{-c_1 T + C_1 \sqrt{T}} \int_{\mathcal{C}_{-2}^{\phi_w}} |d\tilde{w}| \exp\left( -c_2 |\tilde{w}|^3 - c_2|\tilde{w}|^2 + C_6|\tilde{w}| + C_6 \right),
\end{equation*} 
where $C_i,c_i > 0$ are positive constants that depend on $A$, in addition to $t$, $(a,b,c)$. We mention that the additional factor $C_3$ arises from the extra term $(1/a_j-w)$ and (\ref{Eq.DBM-kernel-convergence-inequality-3}). In addition, we recall that the contour $\mathcal{C}_{-2}^{\phi_w}$ is as in (\ref{Eq.ContAngle}), that $\phi_w = 2\pi/3$ and $|d\tilde{w}|$ denotes integration with respect to arc-length. The last displayed inequality proves (\ref{Eq.DBMUBUVanish}).\\

We can next repeat the argument in Step 3 of the proof of Proposition \ref{Prop.KernelConvTop} to obtain the following analogue of (\ref{Eq.DBMNewVT}):
\begin{equation}\label{Eq.DBMUBNewVT}
\begin{split}
&V^T(A)  = \frac{1}{(2\pi\im)^2} \int_{\mathcal{C}^{\phi_z}_{-1}} d \tilde{z} \int_{\mathcal{C}^{\phi_w}_{-2}} d \tilde{w}  \frac{G_T(\tilde{z}, \tilde{w})}{\tilde{z}-\tilde{w}}, 
\end{split}
\end{equation}
where $G_T(\tilde{z}, \tilde{w})$ is as in (\ref{Eq.DBMNewVT}) and the contours $\mathcal{C}_{-2}^{\phi_w}, \mathcal{C}_{-1}^{\phi_z}$ are as in (\ref{Eq.ContAngle}) with $\phi_w = 2\pi/3, \phi_z = 3\pi/16$.

From the definition of $H^{\rho,T}$ in (\ref{Eq.DefH}), and the first two inequalities in (\ref{Eq.BunchOfBounds}) applied to $x_T = y_T = 0$, we have for some $C_7', c_7' > 0$, depending on $t$, $(a,b,c)$, and all $A \geq 0$, $\tilde{z} \in \mathcal{C}^{\phi_z}_{-1}$, and $\tilde{w} \in \mathcal{C}^{\phi_w}_{-2}$
\begin{equation}\label{Eq.DBMUBHBound}
\begin{split}
&\left|\exp\left(H^{a_k,T}(1/a_k + (2T)^{-1/2}\tilde{z}, t, A)- H^{a_k,T}(1/a_k + (2T)^{-1/2}\tilde{w}, t, A)) \right)\right| \\
&= \left|\exp\left(H^{a_k,T}(1/a_k + (2T)^{-1/2}\tilde{z}, t, 0)- H^{a_k,T}(1/a_k + (2T)^{-1/2}\tilde{w}, t, 0)) \right)\right| \cdot \left|e^{A(\tilde{w}-\tilde{z})}\right| \\
& \leq \exp\left( - c_7'|\tilde{w}|^3 - c_7'|\tilde{w}|^2 + C_7'|\tilde{w}| - c_7'|\tilde{z}|^3 - c_7'|\tilde{z}|^2 + C_7'|\tilde{z}| + C_7' - A  \right) ,
\end{split}
\end{equation}
where we used that $\left|e^{A(\tilde{w}-\tilde{z})}\right| = e^{A\Re(\tilde{w} - \tilde{z})} \leq e^{-A}$ as the horizontal distance from $\mathcal{C}^{\phi_z}_{-1}$ to $\mathcal{C}^{\phi_w}_{-2}$ is at least $1$. Combining (\ref{Eq.DBMUBNewVT}) with the bounds in (\ref{Eq.DBMUBHBound}), and the third line in (\ref{Eq.BunchOfBounds}), we conclude for some $C_8', c_8' > 0$, depending on $t, (a,b,c)$ and all $A \geq 0$
$$\left|V^T(A) \right| \leq e^{-A} \cdot \int_{\mathcal{C}^{\phi_z}_{-1}} |d \tilde{z}| \int_{\mathcal{C}^{\phi_w}_{-2}} |d \tilde{w}| e^{C_8' + C_8'|\tilde{z}| + C_8'|\tilde{w}| - c_8'|\tilde{z}|^2 - c_8'|\tilde{z}|^3 - c_8' |\tilde{w}|^2 - c_8' |\tilde{w}|^3}.$$
The last displayed inequality proves (\ref{Eq.DBMUBVVanish}).\\

We finally focus on (\ref{Eq.DBMUBProbConv}). From Proposition~\ref{Prop.Slopes}(a) and Slutsky's theorem, the following weak convergence holds for each $i \in \{1, \dots, k-1\}$ as $T \to \infty$:
\begin{equation*}
\frac{\sqrt{2T} X^T_i + (-2/a_k + 2/a_i) tT}{tT} = \frac{(\mathcal{A}^{a,b,c}_i(tT) + (2/a_i) tT - t^2 T^2)}{tT} \Rightarrow 0.
\end{equation*}
Since $a_i > a_k$ by assumption, the last statement implies (\ref{Eq.DBMUBProbConv}).
\end{proof}

%
%
\subsection{Upper tails for the Airy line ensemble limit}\label{Section3.2} The goal of this section is to establish the following result, which shows that the one-point marginals of the curve $\mathcal{A}_1^N$ in Theorem \ref{Thm.ConvAiry}(a) are tight from above.

\begin{proposition}\label{Prop.TightnessFlat}
Assume the same notation as in Definitions~\ref{Def.DLP}, \ref{Def.ParMultiplicities}, and Proposition~\ref{Prop.AWLE}. Fix $(a, b, c) \in \parP$, and suppose that $\abs{\mathsf{V}_a} = J_a < \infty$. For a fixed $t \in \R$, it holds that
\begin{equation}\label{Eq.TightnessFlat}
\lim_{A \to \infty} \limsup_{T \to \infty} \P\left(\mathcal{A}^{a,b,c}_{1+J_a}(T + t) \geq A\right) = 0.
\end{equation}
\end{proposition}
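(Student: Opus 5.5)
We follow the template of the proof of Proposition \ref{Prop.TightnessSlope}, now in the flat scaling of Section \ref{Section2.2}. Let $M^T$ be as in (\ref{Eq.GenFlatMeasures}) with $\mathsf{S} = \{t\}$. Then $M^T(\cdot) = \sum_{i \geq 1} {\bf 1}\{(t, \mathcal{A}^{a,b,c}_i(T+t)) \in \cdot\}$, and this measure is determinantal with kernel $K^T$ from (\ref{Eq.NewKernelFlat}). Since $K^2_{a,b,c}$ vanishes on the diagonal, \cite[(2.13)]{dimitrov2024airy} gives
$$\sum_{i \geq 1} \P\left(\mathcal{A}^{a,b,c}_i(T+t) \geq A\right) = \int_A^\infty K^{3,T}(t,x;t,x)\,dx.$$
Next we integrate in $x$ inside the double contour integral; Fubini is justified by the bounds after \cite[(4.4)]{D25}. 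After the shift $z \to z + T$, $w \to w+T$, this produces an integrand with $e^{-(z-w)A}$ and $(z-w)^{-2}$ in place of $(z-w)^{-1}$.

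We then deform the contours exactly as in the proof of Proposition \ref{Prop.KernelConvFlat}. First $\Gamma^+_\alpha$ is moved to $\Gamma^+_{\alpha+T}$, crossing the simple poles $1/a_j - t$ for $j=1,\dots,J_a$. Then $\Gamma^-_\beta$ is moved to $\Gamma^-_{\beta+T}$. Because of the squared denominator, this second deformation now crosses the poles at $w = z$ for each residue point $z = 1/a_j - t$. As in Step 1 of Proposition \ref{Prop.TightnessSlope}, each such residue should equal $1$, since the exponential and $\Phi$ factors cancel at $w=z$. This yields the decomposition
$$\sum_{i\ge1}\P(\mathcal{A}^{a,b,c}_i(T+t)\ge A) = J_a + V^T(A) + \sum_{j=1}^{J_a} U^T_j(A).$$
Here $U^T_j(A)$ is the analogue of $U^T_j$ in (\ref{Eq.DefUVFlat}) with $x_T=y_T=A$ and the extra factor $(1/a_j - w - T - t)^{-1}$. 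The term $V^T(A)$ is the shifted double integral with $(z+s-w-t)^{-2}$, here with $s=t$.

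The bound (\ref{Eq.FlatUTUB}) applies verbatim to $U^T_j(A)$, because the extra factor has modulus at most $1$. Hence $U^T_j(A) \to 0$ for each fixed $A$, due to the $e^{-T^3/3}$ factor. For $V^T(A)$ we use (\ref{Eq.BoundAllAiry}) with $x_T=y_T=0$ and pull out $|e^{A(w-z)}| = e^{A \Re(w-z)} \le e^{-A(\alpha-\beta)}$. Here we choose contours with $\alpha > \beta$, keeping $\beta + t > 0$, which is allowed after the shift. This gives $\limsup_T |V^T(A)| \le C e^{-cA}$, and that bound tends to $0$ as $A\to\infty$.

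Finally, by Proposition \ref{Prop.Slopes}(a), each of the first $J_a$ curves satisfies $\P(\mathcal{A}^{a,b,c}_i(T+t) \ge A) \to 1$ for every fixed $A$, since it is close to the parabola $(T+t)^2$. Subtracting these $J_a$ terms from both sides, and using that all summands are nonnegative, we get
$$\limsup_T \P\left(\mathcal{A}^{a,b,c}_{J_a+1}(T+t) \ge A\right) \le \limsup_T |V^T(A)|,$$
which gives (\ref{Eq.TightnessFlat}).

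The main point needing care is the residue bookkeeping in the second deformation. We must check that the $w$-contour, when moved to $\Gamma^-_{\beta+T}$, really crosses each point $1/a_j - t$ exactly once, so that it crosses every pole picked up by the $z$-contour and none of the poles of $1/\Phi$, which are zeros of $\Phi$ lying on the negative axis. We must also check that the residues are exactly $1$. Since everything else is covered by estimates already proved, this verification is where I would spend the effort.
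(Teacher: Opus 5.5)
Your proposal is correct and follows the paper's proof essentially step for step. You integrate $K^{3,T}(t,x;t,x)$ over $x\ge A$ to get the $(z-w)^{-2}$ double integral, then shift both contours by $T$, which collects $J_a$ extra residues equal to $1$; the step you flagged does check out, because $1/\Phi(w+t)$ has a simple zero at $w=1/a_j-t$ that exactly cancels the residue of $\Phi(z+t)$ while the exponentials cancel. The remaining steps also match: $U^T_j(A)$ is killed by the $e^{-T^3/3}$ bound, $|V^T(A)|$ is bounded by $Ce^{-A(\alpha-\beta)}$ via (\ref{Eq.BoundAllAiry}) at $x_T=y_T=0$ plus $|z-w|^{-1}\le(\alpha-\beta)^{-1}$, and the top $J_a$ curves are removed using Proposition \ref{Prop.Slopes}(a).
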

\begin{proof} For clarity, we split the proof into two steps. In the first step, we reduce the proof of the proposition to establishing that certain single and double integrals converge to zero as $T \rightarrow \infty$, see (\ref{Eq.AiryUBUVanish}) and (\ref{Eq.AiryUBVVanish}), and that certain probabilities converge to $1$, see (\ref{Eq.AiryUBProbConv}). In Step 2 we prove (\ref{Eq.AiryUBUVanish}) and (\ref{Eq.AiryUBVVanish}) by utilizing the bounds from the proof of Proposition \ref{Prop.KernelConvFlat}. We also prove (\ref{Eq.AiryUBProbConv}) using the asymptotic slopes of the Airy wanderer line ensembles from Proposition~\ref{Prop.Slopes}(a).\\

{\bf \raggedleft Step 1.} Let $M^{T}$ be as in (\ref{Eq.GenFlatMeasures}) for $\mathsf{S} = \{t\}$. From our work in Section \ref{Section2.2}, we know that $M^{T}$ is determinantal with correlation kernel $K^{T}$ as in \eqref{Eq.NewKernelFlat}. From \cite[(2.13)]{dimitrov2024airy} and Fubini's theorem, we conclude that
\begin{equation*}
\begin{aligned}
&\sum_{i=1}^\infty \P(\mathcal{A}^{a,b,c}_i(T + t) \geq A) = \E\left[M^T(\{t\} \times [A, \infty))\right] = \int_A^\infty K^T(t, x; t, x) dx \\
& = \frac{1}{(2\pi\im)^2} \int_{\Gamma^+_\alpha} dz \int_{\Gamma^-_\beta} dw \int_A^\infty dx \frac{\exp((z - T)^3/3 - zx - (w - T)^3/3 + wx)}{z - w} \cdot \frac{\Phi_{a,b,c}(z + t)}{\Phi_{a,b,c}(w + t)} \\
& = \frac{1}{(2\pi\im)^2} \int_{\Gamma^+_\alpha} dz \int_{\Gamma^-_\beta} dw \frac{\exp((z - T)^3/3 - zA - (w - T)^3/3 + wA)}{(z - w)^2} \cdot \frac{\Phi_{a,b,c}(z + t)}{\Phi_{a,b,c}(w + t)}.
\end{aligned}
\end{equation*}
We recall from \eqref{Eq.NewKernelFlat} that $0 < \beta + t < \alpha + t < 1/a_1$.

As in the proof of Proposition~\ref{Prop.KernelConvFlat}, we deform $\Gamma^+_\alpha$ to $\Gamma^+_{\alpha + T}$, during which the contour crosses the simple poles at $1/a_1-t, \ldots, 1/a_{J_a} - t$. We subsequently deform $\Gamma^-_\beta$ to $\Gamma^-_{\beta + T}$, during which the contour \emph{does} cross the poles at $1/a_1-t, \ldots, 1/a_{J_a} -t$. These poles were not present in the proof of Proposition \ref{Prop.KernelConvFlat} and can be traced to the fact that our present denominator is $(z - w)^2$ instead of $z - w$. In addition, we observe that the additional $J_a$ residues that we collect are all equal to $1$. By the residue theorem, and changing variables $z \rightarrow z+ T$, $w \rightarrow w + T$, we conclude
\begin{equation}\label{Eq.AiryUBDecomp}
\sum_{i=1}^\infty \P(\mathcal{A}^{a,b,c}_i(T + t) \geq A) = J_a + V^T(A) + \sum_{j = 1}^{J_a} U^T_j(A),
\end{equation}
where
\begin{equation}\label{Eq.AiryUBVUDef}
\begin{aligned}
V^T(A) & =  \frac{1}{(2\pi\im)^2} \int_{\Gamma^+_\alpha} dz \int_{\Gamma^-_{\beta}} dw \frac{\exp(z^3/3 - zA - w^3/3 + wA)}{(z - w)^2} \cdot \frac{\Phi_{a,b,c}(z + T + t)}{\Phi_{a,b,c}(w + T + t)}, \\
U^T_j(A) &= \frac{1}{2\pi\im} \int_{\Gamma^-_{\beta}} dw \frac{\exp((1/a_j - T - t)^3/3 - A (1/a_j - T - t) - w^3/3 + A w)}{(1/a_j - w - T - t)^2} \\
& \hphantom{{} = \frac{1}{2\pi\im} \int_{\Gamma^-_{-t}} \diff w \biggl[} \times \frac{\prod_{i=1}^\infty (1 + b_i/a_j)}{\Phi_{a,b,c}(w + T + t) a_j \prod_{i=1, i \neq j}^\infty (1 - a_i/a_j)}.
\end{aligned}
\end{equation}

From (\ref{Eq.AiryUBDecomp}), we would conclude the statement of the proposition, if we can show that 
\begin{equation}\label{Eq.AiryUBUVanish}
\lim_{T \rightarrow \infty} \left|U^T_j(A)\right| = 0 \mbox{ for } j = 1, \dots, J_a \mbox{ and }A \in \mathbb{R},
\end{equation}
\begin{equation}\label{Eq.AiryUBVVanish}
\lim_{A \rightarrow \infty}\limsup_{T \rightarrow \infty} \left|V^T(A)\right| = 0,
\end{equation}
\begin{equation}\label{Eq.AiryUBProbConv}
\lim_{T \rightarrow \infty} \mathbb{P}(\mathcal{A}^{a,b,c}_j(T + t) \geq A) = 1, \mbox{ for }j = 1, \dots, J_a \mbox{ and }A \in \mathbb{R}.
\end{equation}
We establish the last three equations in the next step.\\

{\bf \raggedleft Step 2.} From (\ref{Eq.AiryUBVUDef}), we see that $U^T_j(A)$ agrees with $U^T_j$ from (\ref{Eq.DefUVFlat}) for $x_T = y_T = A$, except that the denominator of the integrand has an extra $(1/a_j-w-T-t)$. Consequently, we can repeat the same arguments as in the proof of Proposition \ref{Prop.KernelConvFlat} to obtain the following analogue of (\ref{Eq.FlatUTUB}):
\begin{equation*}
\left|U_j^T(A)\right| \leq e^{-T^3/3 + C_3 T^2} \cdot \int_{\Gamma^-_{\beta}} |dw| \exp\left(- c_2 |w|^3 + C_1|w| + C_2|w|^2\right),
\end{equation*} 
where $C_i,c_i > 0$ are positive constants that depend on $A$, in addition to $t$, $(a,b,c)$. The last displayed inequality proves (\ref{Eq.AiryUBUVanish}).\\

From (\ref{Eq.BoundAllAiry}) applied to $x_T = y_T = 0$ and the trivial bounds 
$$|e^{-zA + wA}| \leq e^{-A(\alpha - \beta)} \mbox{ and } \left|\frac{1}{z-w}\right|\leq \frac{1}{\alpha - \beta},$$
which hold for all $A \geq 0$, $z \in \Gamma_{\alpha}^+$, $w \in \Gamma_{\beta}^-$ as the horizontal distance between $\Gamma_{\alpha}^+$ and $\Gamma_{\beta}^-$ is equal to $\alpha - \beta$, we conclude
$$\left|V^T(A)\right| \leq e^{-A(\alpha -\beta)} \cdot \int_{\Gamma^+_\alpha} |dz| \int_{\Gamma^-_{\beta}} |dw| \frac{e^{-c_2|w|^3 - c_2 |z|^3 + C_7|z|^2 + C_7|w|^2 + C_7}}{\alpha - \beta}.$$
The last displayed inequality proves (\ref{Eq.AiryUBVVanish}).\\

We finally focus on (\ref{Eq.AiryUBProbConv}). From Proposition~\ref{Prop.Slopes}(a) and Slutsky's theorem, the following weak convergence holds for each $i \in \{1, \dots, J_a\}$ as $T \to \infty$:
\begin{equation*}
\frac{\mathcal{A}^{a,b,c}_i(T + t)}{T + t} - (T + t) + \frac{2}{a_i} \Rightarrow 0.
\end{equation*}
The last statement implies (\ref{Eq.AiryUBProbConv}).
\end{proof}

%
%
\subsection{Finite-dimensional convergence to Dyson Brownian motion}\label{Section3.3} The goal of this section is to establish the finite-dimensional convergence of the line ensembles in Theorem \ref{Thm.ConvDBM}(a).
\begin{proposition}\label{Prop.FDSlope} Assume the same notation as in Theorem \ref{Thm.ConvDBM}(a). Then,
$$\left(\mathcal{L}^N_i(t): t > 0 \mbox{, }i = 1, \dots, \mathsf{m}_k^a \right) \overset{f.d.}{\rightarrow}\left(\lambda^{\mathsf{m}_k^a}_i(t): t > 0 \mbox{, }i = 1, \dots, \mathsf{m}_k^a\right).$$
\end{proposition}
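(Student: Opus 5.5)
The plan follows the outline in Section \ref{Section1.5}. I first prove the statement under the extra assumption $\mathsf{m}^a_1 = \cdots = \mathsf{m}^a_{k-1} = 1$, and then remove this assumption with the monotone couplings of Proposition \ref{Prop.MonCoupling}.

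\textbf{Restricted case: convergence of the full measures.} Assume $\mathsf{m}^a_1=\cdots=\mathsf{m}^a_{k-1}=1$, so that $v_k^a = a_k$. Fix $0<s_1<\cdots<s_m$ and apply the criterion \cite[Lemma 7.1]{DZ25}, which requires (I) weak convergence of the random measures $M^N$ built from the $\mathsf{m}_k^a$ curves $\mathcal{L}^N_i$ at the times $s_j$, and (II) tightness of each $\mathcal{L}^N_i(s_j)$.

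For (I), consider the full measure $M^{v_k^a,T_N}$ from (\ref{Eq.GenSlopedMeasures}). It is determinantal with kernel $K^{v_k^a,T_N}$. By Proposition \ref{Prop.KernelConvTop}, this kernel converges pointwise, and locally uniformly along convergent $x_T,y_T$, to $\kbmk$. By Proposition \ref{Prop.DBMDPP}, $\kbmk$ is the correlation kernel of the Dyson Brownian motion point process $M^\infty$. The convergence criterion \cite[Proposition 2.18]{dimitrov2024airy} then gives $M^{v_k^a,T_N}\Rightarrow M^\infty$. The local uniform bounds on compacts that this criterion needs should follow from the dominating functions in Steps 2--3 of that proof, which are uniform in bounded $x_T,y_T$.

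\textbf{Restricted case: removing the extra atoms and tightness.} Next I show that $M^{v_k^a,T_N}-M^N\Rightarrow 0$, i.e. every atom not coming from indices in $\{\mathsf{M}^a_{k-1}+1,\dots,\mathsf{M}^a_k\}$ leaves every compact set with probability tending to one.

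\begin{itemize}
\item For indices $i\le \mathsf{M}^a_{k-1}$, Proposition \ref{Prop.Slopes}(a) gives $(2T)^{-1/2}(\cdots)\approx \sqrt{T}\,s_j(2/v_k^a-2/a_i)/\sqrt2\to+\infty$, exactly as in (\ref{Eq.DBMUBProbConv}).
\item For indices $i>\mathsf{M}^a_k$ it suffices, by ordering, to treat $i=\mathsf{M}^a_k+1$. If $a_i>0$, Proposition \ref{Prop.Slopes}(a) gives divergence to $-\infty$ at speed $\sqrt T$. If $a_i=0$, Proposition \ref{Prop.Slopes}(c) gives tightness of $\mathcal{A}_i(s_jT)$, so the scaled value is about $-s_j^2T^{3/2}/\sqrt2\to-\infty$.
\end{itemize}

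Hence $M^N\Rightarrow M^\infty$. For (II), the lower bound comes from the same divergence to $-\infty$ of curve $\mathsf{M}_k^a+1$, since every group curve lies above it. The upper bound comes from Proposition \ref{Prop.TightnessSlope}, since the top group curve is $\mathcal{A}_k$ here. Combining the two with \cite[Lemma 7.2]{DZ25} (or simply the ordering) gives tightness, and \cite[Lemma 7.1]{DZ25} yields f.d. convergence.

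\textbf{General parameters (main obstacle).} Here I use Proposition \ref{Prop.MonCoupling}. Given general $(a,b,c)$, I build parameters $\tilde a^{\pm}$ with $\tilde a_i=a_i$ for $i>\mathsf{M}^a_{k-1}$. The first $\mathsf{M}^a_{k-1}$ entries are perturbed to distinct values, chosen so that the first $k'=\mathsf{M}_{k-1}^a$ groups are singletons. Two choices are needed:
\begin{itemize}
\item an upper family with the perturbed values above the originals, giving $a_i\le\tilde a_i$ with $A=0$, so the ensembles are ordered and dominate;
\item a lower family obtained by shifting indices by $A$, which produces the reverse inequality for the group curves.
\end{itemize}
The difficulty is that the re-indexing in (\ref{Eq.MonCoupling1}) moves curve labels. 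I would take the top $\mathsf{M}_{k-1}^a$ parameters of the comparison ensemble equal to large distinct values. Then, for the comparison ensemble, the $k$-th group under the same centering is exactly the index block $\mathsf{M}^a_{k-1}+1,\dots,\mathsf{M}^a_k$, and both comparison ensembles converge to $\lambda^{\mathsf{m}^a_k}$ by the restricted case. Sandwiching each $\mathcal{L}^N_i(s_j)$ between two sequences with the same f.d. limit gives tightness of the joint vector. Identifying the limit should follow because stochastic domination in both directions with equal limits forces equality of the joint laws; one checks this via increasing test functions, or via the $M^N$ convergence from (I) once (II) is in hand, using the extremes. Choosing the coupling parameters so that the inequalities hold in both directions with the correct index shifts is where I expect the main work.
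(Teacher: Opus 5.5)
Your treatment of the restricted case $\mathsf{m}^a_1=\cdots=\mathsf{m}^a_{k-1}=1$ follows the paper's Steps 2--4. However, the reduction to that case, which you flag as the main obstacle, is not carried out, and the mechanism you sketch for the lower comparison ensemble cannot work.

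To place a comparison ensemble $\mathcal{A}^{\tilde a,b,c}$ below $\mathcal{A}^{a,b,c}$, you must apply Proposition \ref{Prop.MonCoupling} with $\tilde a$ in the role of $a$. That requires $\tilde a_{i+A}\le a_i$ and yields $\mathcal{A}^{\tilde a,b,c}_{j+A}\le\mathcal{A}^{a,b,c}_j$. If you raise any of the top entries above the original ones, $A=0$ fails, so $A\ge 1$. Under your normalization $\tilde a_i=a_i$ for $i>K:=\mathsf{M}^a_{k-1}$, the slope-$v^a_k$ block of $\tilde a$ is $\llbracket K+1,\mathsf{M}^a_k\rrbracket$. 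So for any $A\ge1$, the lower bound $\mathcal{A}^{\tilde a,b,c}_{\mathsf{M}^a_k+A}$ for the bottom group curve lies below that block. After the centering in (\ref{Eq.ScaledSlopeA}) it tends to $-\infty$, and the two sides of the sandwich have different limits.

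The missing observation is that no index shift is needed. Because $v^a_{k-1}>v^a_k$ strictly, the top $K$ entries can be perturbed \emph{downward} while staying above $v^a_k$. The paper takes
$a^-_i=a_i-i\varepsilon$ and $a^+_i=a_i+(K-i+1)$ for $i\le K$, and $a^\pm_i=a_i$ for $i>K$, with $K\varepsilon<v^a_{k-1}-v^a_k$.
Both parameter sets have $K$ distinct entries exceeding $v^a_k$, followed by the same $\mathsf{m}^a_k$ copies of $v^a_k$. So the restricted case applies to them with the same index block and the same centering. Moreover $a^-_i\le a_i\le a^+_i$, so Proposition \ref{Prop.MonCoupling} applies with $A=B=0$ in both directions, giving $\mathcal{A}^{a^-,b,c}_j\le\mathcal{A}^{a,b,c}_j\le\mathcal{A}^{a^+,b,c}_j$ for all $j$. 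Squeezing the joint lower-orthant probabilities of $(\mathcal{L}^N_i(s_j))_{i,j}$ between those of the two comparison ensembles then identifies the limit, as you anticipated.

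There is also a smaller error in (II). The divergence of curve $\mathsf{M}^a_k+1$ to $-\infty$ gives no lower bound on the group curves above it, so ``simply the ordering'' does not yield lower tightness. Lower tightness must come from (I): the limit has exactly $\mathsf{m}^a_k$ atoms at each time $s_j$, so the $\mathsf{m}^a_k$ atoms of $M^N$ on that slice cannot escape to $-\infty$. This is what \cite[Lemma 7.2]{DZ25} formalizes. The paper applies it to the slice measures $M^{j,N}\Rightarrow M^{j,\mathrm{DBM}}$ together with the upper tail from Proposition \ref{Prop.TightnessSlope}. Citing that lemma is therefore the right tool, provided you verify its hypotheses this way rather than through the curve below the group.
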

\begin{proof} Fix $\mathsf{S} = \{s_1, \dots, s_m\} \subset (0,\infty)$ with $s_1 < \cdots < s_m$ and define for $s \in \mathsf{S}$
\begin{equation}\label{eq:Xin-def}
X^N_i(s)
:=(2T_N)^{-1/2}\left(\mathcal{A}^{a,b,c}_i(sT_N)+\frac{2sT_N}{v_k^a}-s^2T_N^2\right).
\end{equation}
From the definition of $\mathcal{L}^N$ in (\ref{Eq.ScaledSlopeA}), we see that it suffices to prove
\begin{equation}\label{Eq.FDSlopeR1}
\begin{split}
&\left(X^N_{i+K}(s_j): i \in \{ 1, \dots, \mathsf{m}_k^a \} \mbox{, } j \in \{1, \dots, m\} \right) \\
& \overset{f.d.}{\rightarrow}\left(\lambda^{\mathsf{m}_k^a}_i(s_j): i \in \{ 1, \dots, \mathsf{m}_k^a \} \mbox{, } j \in \{1, \dots, m\} \right),
\end{split}
\end{equation}
where we have set $K = \mathsf{M}^a_{k-1}$.

For clarity, we split the proof of (\ref{Eq.FDSlopeR1}) into four steps. In Step 1, we reduce the proof of the proposition to the case when $a_1 > a_2 > \cdots > a_{K+1}$. Once we know the result for such parameters, we can use the monotone coupling from Proposition \ref{Prop.MonCoupling} to ``squeeze'' $\mathcal{A}^{a,b,c}$ between two ensembles $\mathcal{A}^{a^-,b^-,c^-}$ and $\mathcal{A}^{a^+,b^+,c^+}$ whose parameters satisfy $a^{\pm}_1 > a^{\pm}_2 > \cdots > a^{\pm}_{K+1}$ and for which (\ref{Eq.FDSlopeR1}) holds. This allows us to conclude the result for all parameters as in Theorem \ref{Thm.ConvDBM}(a). In the rest of the proof we assume $a_1 > a_2 > \cdots > a_{K+1}$, which allows us to use Propositions \ref{Prop.KernelConvTop} and \ref{Prop.TightnessSlope}.  

In Step 2, we use \cite[Lemma 7.1]{DZ25} to reduce the proof of the proposition to two statements, see (\ref{Eq.DBMVagueConv}) and (\ref{Eq.DBMTightParticles}). The first of these statements, (\ref{Eq.DBMVagueConv}), implies that the point processes on $\mathbb{R}^2$ formed by $(s, \mathcal{L}_i^N(s))$ converge weakly to those formed by $(s,\lambda^{\mathsf{m}_k^a}_i(s))$. We establish this in Step 3, by combining our kernel convergence from Proposition \ref{Prop.KernelConvTop}, the asymptotic slopes of the Airy wanderer line ensemble in Proposition \ref{Prop.Slopes}, and the weak convergence of determinantal point processes criterion in \cite[Proposition 2.18]{dimitrov2024airy}. The second of these statements, (\ref{Eq.DBMTightParticles}), implies that the sequence $\{\mathcal{L}_i^N(s_j)\}_{N \geq 1}$ is tight. We establish this in Step 4, by combining Propositions \ref{Prop.KernelConvTop} and \ref{Prop.Slopes} (as in Step 3) with the tightness criterion in \cite[Lemma 7.2]{DZ25}.\\

{\bf \raggedleft Step 1.} In this step, we assume that (\ref{Eq.FDSlopeR1}) holds when $a_1 > a_2 > \cdots > a_{K+1}$, where $K = \mathsf{M}^a_{k-1}$ and proceed to prove it for general parameters $k, (a,b,c)$ as in the statement of Theorem \ref{Thm.ConvDBM}(a).

Fix $\varepsilon > 0$ sufficiently small so that $v_{k-1}^a > v_{k}^a + K\varepsilon$ with the convention $v_0^a = \infty$. We define $(a^{+},b^{+},c^{+}), (a^{-},b^{-},c^-) \in \parP$ to be the same $(a,b,c)$, except that 
$$a_i^{+} = a_i + (K-i+1), \mbox{ and } a_i^- = a_i - i \varepsilon \mbox{ for } i = 1, \dots, K,$$
and observe that by our choice of $\varepsilon$ we have
\begin{equation}\label{Eq.DBMSqueezePar}
\begin{split}
&a_1^+ > a_2^+ > \cdots > a_{K+1}^+ = a_{K+1}, \hspace{2mm} a_1^- > a_2^- > \cdots > a_{K+1}^- = a_{K+1}\\
&a_i^- \leq a_i \leq a_i^+ \mbox{ for } i \in \mathbb{N}, \hspace{2mm} c^+ = c = c^- = 0, \hspace{2mm} b_i^- = b_i = b_i^+ \mbox{ for }i\in\mathbb{N}.
\end{split}
\end{equation}

From the second line in (\ref{Eq.DBMSqueezePar}) and Proposition \ref{Prop.MonCoupling} applied to $A = B = 0$, we can couple $\mathcal{A}^{a^-,b^-,c^-}$ with $\mathcal{A}^{a,b,c}$, so that $\mathcal{A}_i^{a^-,b^-,c^-}(t) \leq \mathcal{A}^{a,b,c}_i(t)$ for all $(i,t) \in \mathbb{N} \times \mathbb{R}$. Similarly, we can couple $\mathcal{A}^{a^+,b^+,c^+}$ with $\mathcal{A}^{a,b,c}$, so that $\mathcal{A}_i^{a,b,c}(t) \leq \mathcal{A}^{a^+,b^+,c^+}_i(t)$ for all $(i,t) \in \mathbb{N} \times \mathbb{R}$. Let $X_i^{\pm,N}$ be as in (\ref{eq:Xin-def}) with $\mathcal{A}^{a,b,c}$ replaced with $\mathcal{A}^{a^{\pm},b^{\pm},c^{\pm}}$ and observe that our two monotone couplings imply for all $r_{i,j} \in \mathbb{R}$ that 
\begin{equation}\label{Eq.DBMSqueezeX}
\begin{split}
&P^{+,N} \leq \mathbb{P}\left(X^N_{i+K}(s_j) \leq r_{i,j}:  i \in \{ 1, \dots, \mathsf{m}_k^a \} \mbox{, } j \in \{1, \dots, m\} \right)  \leq P^{-,N} \mbox{, where }\\
&P^{\pm,N} = \mathbb{P}\left(X^{\pm, N}_{i+K}(s_j) \leq r_{i,j}: i \in \{1, \dots, \mathsf{m}_k^a \} \mbox{, } j \in \{1, \dots, m\} \right).
\end{split}
\end{equation}

The first line in (\ref{Eq.DBMSqueezePar}) and our assumption in the beginning of the step show that 
$$\lim_{N \rightarrow \infty}P^{\pm,N} = \mathbb{P}\left(\lambda^{\mathsf{m}_k^a}_i(s_j) \leq r_{i,j}:  i \in \{ 1, \dots, \mathsf{m}_k^a \} \mbox{, } j \in \{1, \dots, m\} \right).$$
Combining the latter with (\ref{Eq.DBMSqueezeX}), we conclude (\ref{Eq.FDSlopeR1}).\\

{\bf \raggedleft Step 2.} In the remainder of the proof, we assume $a_1 > a_2 > \cdots > a_{K+1}$, where $K = \mathsf{M}^a_{k-1}$, so that in particular $K = k-1$, $\mathsf{m}_1^a = \cdots = \mathsf{m}_{k-1}^a = 1$ and $v_{i}^a = a_i$ for $i = 1, \dots, K+1$. We proceed to prove (\ref{Eq.FDSlopeR1}) and for notational convenience, we switch back from $X_{i+K}^N$ to $\mathcal{L}_i^N$. Specifically, from (\ref{Eq.ScaledSlopeA}) and (\ref{eq:Xin-def}), we have that (\ref{Eq.FDSlopeR1}) is equivalent to
\begin{equation}\label{Eq.FDSlopeR2}
\left(\mathcal{L}^N_i(s_j): i \in \{ 1, \dots, \mathsf{m}_k^a \} \mbox{, } j \in \{1, \dots, m\} \right) \overset{f.d.}{\rightarrow}\left(\lambda^{\mathsf{m}_k^a}_i(s_j): i \in \{ 1, \dots, \mathsf{m}_k^a \} \mbox{, } j \in \{1, \dots, m\} \right).
\end{equation}
 
Our strategy is to use \cite[Lemma 7.1]{DZ25}, and we next list the conditions required to apply this result. Firstly, we define the measure
\begin{equation}\label{Eq.DBMMeasure}
M^{\DBM}(A) = \sum_{i = 1}^{\mathsf{m}_k^a} \sum_{j = 1}^m {\bf 1}\left\{\left(s_j, \lambda_i^{\mathsf{m}_k^a} (s_j)\right) \in A \right\},
\end{equation}
which from Proposition \ref{Prop.DBMDPP} is a determinantal point process on $\mathbb{R}^2$ with correlation kernel $\kbmk$ and reference measure $\mu_{\mathsf{S}} \times \mathrm{Leb}$. We also define the measures
\begin{equation}\label{Eq.DBMMeasureLN}
M^N(A) = \sum_{i = 1}^{\mathsf{m}_k^a} \sum_{j = 1}^m {\bf 1}\left\{\left(s_j, \mathcal{L}^N_i(s_j)\right) \in A \right\},
\end{equation}
We claim that 
\begin{equation}\label{Eq.DBMVagueConv}
M^N \Rightarrow M^{\DBM}, 
\end{equation}
\begin{equation}\label{Eq.DBMTightParticles}
\mbox{ the sequence $\{\mathcal{L}_i^N(s_j) \}_{N \geq 1}$ is tight for each $i \in \{1, \dots, \mathsf{m}_k^a\}$ and $j \in \{1, \dots, m\}$}.
\end{equation}

Combining (\ref{Eq.DBMVagueConv}) and (\ref{Eq.DBMTightParticles}), with the fact that 
$\mathcal{L}_1^N(s) > \cdots > \mathcal{L}_{\mathsf{m}_k^a}^N(s)$, which follows from the non-intersecting property of $\mathcal{A}^{a,b,c}$, see (\ref{Eq.OrdAWLE}), we see that the conditions of \cite[Lemma 7.1]{DZ25} are satisfied, and the lemma then implies (\ref{Eq.FDSlopeR2}). We have thus reduced the proof of the proposition to establishing (\ref{Eq.DBMVagueConv}) and (\ref{Eq.DBMTightParticles}). We prove these statements in the next two steps.\\

{\bf \raggedleft Step 3.} In this step, we prove (\ref{Eq.DBMVagueConv}). We first observe that if $M^{v_k^a,T_N}$ is as in (\ref{Eq.GenSlopedMeasures}), then for any $A \subseteq \mathbb{R}^2$, we have
\begin{equation}\label{Eq.DBMMeasInX}
 M^{v_k^a,T_N}(A) = \sum_{i \geq 1 } \sum_{j = 1}^m {\bf 1}\left\{\left(s_j, X_i^{N}(s_j)\right) \in A \right\}.
 \end{equation}
From our work in Section \ref{Section2.1}, we know that $M^{v_k^a,T_N}$ is a determinantal point process with correlation kernel $K^{v_k^a,T}$ as in (\ref{Eq.NewKernelSlopes}) and reference measure $\mu_{\mathsf{S}} \times \mathrm{Leb}$. From Proposition \ref{Prop.KernelConvTop}, we know that for each $s,t \in \mathsf{S}$ 
$$\lim_{N \rightarrow \infty} K^{v^a_k,T_N}(s, x; t, y) = \kbmk(s,x;t,y)$$
uniformly as $(x,y)$ vary in a compact set in $\mathbb{R}^2$. The latter verifies the conditions of \cite[Proposition 2.18]{dimitrov2024airy} from which we conclude that $M^{v_k^a,T_N}$ converges weakly to a determinantal point process on $\mathbb{R}^2$ with correlation kernel $\kbmk$ and reference measure $\mu_{\mathsf{S}} \times \mathrm{Leb}$. As the law of a determinantal point process is completely characterized by its correlation kernel and reference measure, see \cite[Proposition 2.13(3)]{dimitrov2024airy}, we conclude that
\begin{equation}\label{Eq.WeakConvPointProcessDBM}
M^{v_k^a,T_N} \Rightarrow M^{\DBM}.
\end{equation}

We next claim that for each fixed $s\in \mathsf{S}$ and each index $i \not\in \{\mathsf{M}^a_{k-1} + 1, \ldots, \mathsf{M}^a_k\}$,
\begin{equation}\label{Eq.escape}
X^N_i(s)\xrightarrow{\pr}
\begin{cases}
+\infty, & i\le k-1,\\
-\infty, & i\ge \mathsf{M}_k^a+1.
\end{cases}
\end{equation}
Indeed, by (\ref{eq:Xin-def}), we have for each $i \in \{1, \dots, J_a\}$, and $s \in \mathsf{S}$ that 
\begin{equation}\label{Eq.DBMSplitX}
X^N_i(s) = \sqrt{2T_N} \left( \xi^N_i(s) + \frac{s}{a_k} - \frac{s}{a_i} \right), \mbox{ where } \xi^N_i(s) = \frac{\mathcal{A}^{a,b,c}_i(sT_N)}{2T_N}+\frac{s}{a_i}-s^2T_N/2.
\end{equation}
From Proposition \ref{Prop.Slopes}(a), we know that $\xi^N_i(s)$ converges weakly to zero, and so (\ref{Eq.DBMSplitX}) implies (\ref{Eq.escape}) when $i \leq J_a$ as $a_i > a_k$ for $i \leq k-1$ and $a_i < a_k$ for $i \in \{\mathsf{M}_k^a+1, \dots, J_a\}$. If $J_a < \infty$, then we also have from (\ref{eq:Xin-def}) for all $i \geq 1$ that
\begin{equation}\label{Eq.DBMSplitX2}
X^N_{i+J_a}(s) = \xi^N_{i+J_a}(s) + \frac{s\sqrt{2T_N}}{a_k}-2^{-1/2}s^2T_N^{3/2} , \mbox{ where } \xi^N_{i+J_a}(s) = (2T_N)^{-1/2}\mathcal{A}^{a,b,c}_{i+J_a}(sT_N).
\end{equation}
From Proposition \ref{Prop.Slopes}(c), we know that $\xi^N_{i+J_a}(s)$ weakly converges to zero, and so (\ref{Eq.DBMSplitX2}) implies (\ref{Eq.escape}) for $i \geq J_a + 1$. \\

From (\ref{Eq.DBMMeasInX}), we know that 
\begin{equation}\label{Eq.DBMDecompMeas}
M^{v_k^a,T_N} = M^N + \hat{M}^N \mbox{, where }
\end{equation}
$$\hat{M}^N(A) = \sum_{i = 1}^{k-1} \sum_{j = 1}^m {\bf 1}\left\{\left(s_j, X_i^{N}(s_j)\right) \in A \right\} + \sum_{i \geq \mathsf{M}_k^a+1} \sum_{j = 1}^m {\bf 1}\left\{\left(s_j, X_i^{N}(s_j)\right) \in A \right\}.$$
The last equation, (\ref{Eq.escape}) and the fact that $X_1^N(s) > X_2^N(s) > \cdots$ together imply $\hat{M}^N \Rightarrow 0$ (the zero measure). Combining this with (\ref{Eq.WeakConvPointProcessDBM}) and (\ref{Eq.DBMDecompMeas}), we conclude (\ref{Eq.DBMVagueConv}).\\

{\bf \raggedleft Step 4.} In this step, we prove (\ref{Eq.DBMTightParticles}). In what follows, we fix $j \in \{1, \dots, m\}$. Our strategy is to use \cite[Lemma 7.2]{DZ25}, and we next list the conditions required to apply this result. Firstly, we define the following random measures on $\mathbb{R}$
\begin{equation}\label{Eq.DBMMeasureSlice}
M^{j, \DBM}(A) = \sum_{i = 1}^{\mathsf{m}_k^a}  {\bf 1}\left\{\lambda_i^{\mathsf{m}_k^a} (s_j) \in A \right\}, \hspace{2mm} M^{j,N}(A) = \sum_{i = 1}^{\mathsf{m}_k^a}  {\bf 1}\left\{ \mathcal{L}^N_i(s_j) \in A \right\},
\end{equation}
We claim that 
\begin{equation}\label{Eq.DBMVagueConvSlice}
M^{j,N} \Rightarrow M^{j,\DBM},
\end{equation}
\begin{equation}\label{Eq.DBMProbAtoms}
\mathbb{P}\left(M^{j,\DBM}(\mathbb{R}) \geq \mathsf{m}_k^a\right) = 1,
\end{equation}
\begin{equation}\label{Eq.DBMUpperTailMeas}
\lim_{a \rightarrow \infty}\limsup_{N \rightarrow \infty} \mathbb{P}\left( \mathcal{L}_1^N(s_j) \geq a \right) = 0.
\end{equation}
Combining (\ref{Eq.DBMVagueConvSlice}), (\ref{Eq.DBMProbAtoms}) and (\ref{Eq.DBMUpperTailMeas}) with the fact that $\mathcal{L}_1^N(s_j) > \cdots > \mathcal{L}_{\mathsf{m}_k^a}^N(s_j)$, which follows from the non-intersecting property of $\mathcal{A}^{a,b,c}$, see (\ref{Eq.OrdAWLE}), we see that the conditions of \cite[Lemma 7.2]{DZ25} are satisfied, and the lemma then implies (\ref{Eq.DBMTightParticles}). We have thus reduced the proof of the proposition to establishing (\ref{Eq.DBMVagueConvSlice}), (\ref{Eq.DBMProbAtoms}) and (\ref{Eq.DBMUpperTailMeas}).\\

Since $\lambda_i^{\mathsf{m}_k^a} (s_j) \in \mathbb{R}$ almost surely, we see that $M^{j,\DBM}(\mathbb{R}) = \mathsf{m}_k^a$ almost surely, which implies (\ref{Eq.DBMProbAtoms}). In addition, we have that (\ref{Eq.DBMUpperTailMeas}) follows from the definition of $\mathcal{L}_1^N$ in (\ref{Eq.ScaledSlopeA}) and Proposition \ref{Prop.TightnessSlope}. Consequently, we only need to show (\ref{Eq.DBMVagueConvSlice}).

Define the measures
\begin{equation}\label{Eq.DBMMeasureXSlice}
M^{j, v_k^a,T_N}(A) = \sum_{i \geq 1}{\bf 1}\left\{X^N_i(s_j) \in A \right\}.
\end{equation}
From our work in Section \ref{Section2.1} and \cite[Lemma 2.17]{dimitrov2024airy}, we know that $M^{j, v_k^a,T_N}$ is a determinantal point process on $\mathbb{R}$ with reference measure $\mathrm{Leb}$ and correlation kernel
$$K^{j, v_k^a,T_N}(x,y) = K^{v_k^a,T_N}(s_j,x; s_j,y),$$
where $K^{\rho, T}$ is as in (\ref{Eq.NewKernelSlopes}). From Proposition \ref{Prop.KernelConvTop}, we know that 
$$\lim_{N \rightarrow \infty} K^{j, v_k^a,T_N}(x,y) = K^{\DBM, \mathsf{m}_k^a}(s_j,x; s_j,y),$$
and the convergence is uniform over compact subsets of $\mathbb{R}^2$. From \cite[Proposition 2.15]{dimitrov2024airy}, we conclude that $M^{j, v_k^a,T_N}$ converges weakly to a determinantal point process on $\mathbb{R}$ with reference measure $\mathrm{Leb}$ and correlation kernel $K^{\DBM, \mathsf{m}_k^a}(s_j,x; s_j,y)$. However, by \cite[Lemma 2.17]{dimitrov2024airy} and Proposition \ref{Prop.DBMDPP}, we know that $M^{j,\DBM}$ is also a determinantal point process on $\mathbb{R}$ with reference measure $\mathrm{Leb}$ and correlation kernel $K^{\DBM, \mathsf{m}_k^a}(s_j,x; s_j,y)$. As the law of a determinantal point process is completely characterized by its correlation kernel and reference measure, see \cite[Proposition 2.13(3)]{dimitrov2024airy}, we conclude that
\begin{equation}\label{Eq.DBMMeasureXSliceConv}
M^{j, v_k^a,T_N} \Rightarrow M^{j,\DBM}.
\end{equation}

From (\ref{Eq.DBMMeasureXSlice}), we know that 
\begin{equation}\label{Eq.DBMDecompMeasSlice}
M^{j,v_k^a,T_N} = M^{j,N} + \hat{M}^{j,N} \mbox{, where }
\end{equation}
$$\hat{M}^{j,N}(A) = \sum_{i = 1}^{k-1}  {\bf 1}\left\{X_i^{N}(s_j) \in A \right\} + \sum_{i \geq \mathsf{M}_k^a+1} {\bf 1}\left\{X_i^{N}(s_j) \in A \right\}.$$
The last equation, (\ref{Eq.escape}) and the fact that $X_1^N(s_j) > X_2^N(s_j) > \cdots$ together imply $\hat{M}^{j,N} \Rightarrow 0$ (the zero measure). Combining this with (\ref{Eq.DBMMeasureXSliceConv}) and (\ref{Eq.DBMDecompMeasSlice}), we conclude (\ref{Eq.DBMVagueConvSlice}). This concludes the proof of the proposition.
\end{proof}

%
%
\subsection{Finite-dimensional convergence to the Airy line ensemble}\label{Section3.4} The goal of this section is to establish the finite-dimensional convergence of the line ensembles in Theorem \ref{Thm.ConvAiry}(a).
\begin{proposition}\label{Prop.FDFlat} Assume the same notation as in Theorem \ref{Thm.ConvAiry}(a). Then,
$$\left(\mathcal{A}^N_i(t): t \in \mathbb{R} \mbox{, }i \in \mathbb{N} \right) \overset{f.d.}{\rightarrow}\left( \mathcal{A}^{0,0,0}_i(t): t \in \mathbb{R} \mbox{, }i \in \mathbb{N} \right).$$
\end{proposition}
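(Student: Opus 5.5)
The plan mirrors the proof of Proposition \ref{Prop.FDSlope}, with \cite[Proposition 2.19]{dimitrov2024airy} in place of \cite[Lemma 7.1]{DZ25} since there are infinitely many curves.

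\textbf{Step 1 (reduction to distinct slopes).} First we reduce to the case $a_1 > a_2 > \cdots > a_{J_a}$, i.e.\ $|\mathsf{V}_a| = J_a$, which is the hypothesis of Propositions \ref{Prop.KernelConvFlat} and \ref{Prop.TightnessFlat}. Given general $(a,b,c) \in \parP$ with $J_a < \infty$, fix small $\varepsilon > 0$. Define $(a^\pm,b,c)$ by
$$a_i^+ = a_i + (J_a - i + 1)\varepsilon, \qquad a_i^- = a_i - i\varepsilon \quad \mbox{for } i \le J_a,$$
and $a_i^\pm = 0$ for $i > J_a$. Then $a^\pm$ have strictly decreasing positive entries, $a^- \le a \le a^+$ coordinatewise, and the same $J_a$.

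Proposition \ref{Prop.MonCoupling} with $A = B = 0$ gives couplings with $\mathcal{A}^{a^-,b,c}_i \le \mathcal{A}^{a,b,c}_i \le \mathcal{A}^{a^+,b,c}_i$ for all $i$ and $t$. Hence the joint distribution functions of $\{\mathcal{A}^N_i(t_j)\}$ (finitely many $i$ and $j$) are sandwiched between those of the $\pm$ ensembles. Both of these converge to the Airy line ensemble's distribution function, so the claim follows. Continuity of the limiting distribution functions is used here; the Airy line ensemble has continuous finite-dimensional marginals because they are determinantal with locally integrable density.

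\textbf{Step 2 (criterion).} Assume distinct slopes and fix $s_1 < \cdots < s_m$. Let $M^{T_N}$ be as in (\ref{Eq.GenFlatMeasures}), and let
$$M^N = \sum_{i \ge 1} \sum_{j=1}^m \mathbf{1}\{(s_j, \mathcal{A}^N_i(s_j)) \in \cdot\}.$$
By \cite[Proposition 2.19]{dimitrov2024airy}, together with the ordering (\ref{Eq.OrdAWLE}), it suffices to show two things:
\begin{enumerate}
\item[(i)] $M^N \Rightarrow M^{\Ai}$, the point process of $\mathcal{A}^{0,0,0}$ at times $\mathsf{S}$. This is determinantal with kernel $K_{0,0,0}$ by Proposition \ref{Prop.AWLE} and Remark \ref{Rem.AWLE2}.
\item[(ii)] $\{\mathcal{A}^N_i(s_j)\}_{N \ge 1}$ is tight for each $i$ and $j$.
\end{enumerate}

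\textbf{Step 3 (weak convergence).} Proposition \ref{Prop.KernelConvFlat} applied with $x_T, y_T$ converging gives uniform convergence of $K^{T_N}$ to $K_{0,0,0}$ on compacts. Then \cite[Proposition 2.18]{dimitrov2024airy} yields $M^{T_N} \Rightarrow M^{\Ai}$.

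Write $M^{T_N} = M^N + \hat{M}^N$, where $\hat{M}^N$ collects the atoms of the top $J_a$ curves. By Proposition \ref{Prop.Slopes}(a), $\mathcal{A}^{a,b,c}_i(s_j + T_N) \to +\infty$ in probability for $i \le J_a$, since these curves grow like $T_N^2$. Hence $\hat{M}^N \Rightarrow 0$, and therefore $M^N \Rightarrow M^{\Ai}$.

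\textbf{Step 4 (tightness).} Apply \cite[Lemma 7.2]{DZ25}, or its analogue for infinitely many atoms, slice by slice. Repeating the argument of Step 3 at a single time $s_j$ gives $M^{j,N} \Rightarrow M^{j,\Ai}$ by \cite[Proposition 2.15, Lemma 2.17]{dimitrov2024airy}. The limit has infinitely many atoms almost surely, so it has at least $i$ atoms for every $i$. Upper tightness of the top curve $\mathcal{A}^N_1(s_j)$ is exactly Proposition \ref{Prop.TightnessFlat}. These inputs give tightness of every $\mathcal{A}^N_i(s_j)$.

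Alternatively, tightness follows directly from Proposition \ref{Prop.Slopes}(c), since $J_a + i$ is a flat index. For this route, note that Proposition \ref{Prop.Slopes}(c) is stated for $\mathcal{A}_k(t_N)$ with $t_N = s_j + T_N \uparrow \infty$, which applies as written.

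\textbf{Main obstacle.} The main obstacle is matching the hypotheses of \cite[Proposition 2.19]{dimitrov2024airy} precisely, in particular the lower-tail behaviour with infinitely many particles. I expect this to be handled by tightness of each fixed index, which is what Proposition \ref{Prop.Slopes}(c) supplies.
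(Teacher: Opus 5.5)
Your proposal is correct and follows the paper's proof essentially step by step: the monotone-coupling squeeze reducing to distinct slopes, the criterion \cite[Proposition 2.19]{dimitrov2024airy}, kernel convergence via Proposition \ref{Prop.KernelConvFlat} and \cite[Proposition 2.18]{dimitrov2024airy}, removal of the top $J_a$ atoms (which escape to $+\infty$ by Proposition \ref{Prop.Slopes}(a)), and tightness. Of your two tightness routes, the paper uses only the direct one via Proposition \ref{Prop.Slopes}(c). That route makes the \cite[Lemma 7.2]{DZ25} detour, and Proposition \ref{Prop.TightnessFlat}, unnecessary here.
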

\begin{proof} The proof we present is similar to that of Proposition \ref{Prop.FDSlope}, so we omit some of the details. Fix $\mathsf{S} = \{s_1, \dots, s_m\} \subset \mathbb{R}$ with $s_1 < \cdots < s_m$.
From the definition of $\mathcal{A}^N$ in (\ref{Eq.ScaledFlatA}), we see that it suffices to prove
\begin{equation}\label{Eq.FDFlatR1}
\left(\mathcal{A}^{a,b,c}_{i + J_a} (s_j + T_N): i \in \mathbb{N} \mbox{, } j \in \{1, \dots, m\} \right) \overset{f.d.}{\rightarrow}\left(\mathcal{A}^{0,0,0}_i(s_j): i \in \mathbb{N} \mbox{, } j \in \{1, \dots, m\} \right).
\end{equation}

For clarity, we split the proof of (\ref{Eq.FDFlatR1}) into three steps. In Step 1, we reduce the proof of the proposition to the case when $a_1 > a_2 > \cdots > a_{J_a}$ using the monotone coupling from Proposition \ref{Prop.MonCoupling}. In Step 2, we use \cite[Proposition 2.19]{dimitrov2024airy} to reduce the proof of the proposition to two statements, see (\ref{Eq.AiryVagueConv}) and (\ref{Eq.AiryTightParticles}), which are established in Step 3. The first of these statements, (\ref{Eq.AiryVagueConv}), implies that the point processes on $\mathbb{R}^2$ formed by $(s_j, \mathcal{A}^{a,b,c}_{i + J_a} (s_j + T_N))$ converge weakly to those formed by $(s,\mathcal{A}^{0,0,0}_i(s))$. We establish this by combining our kernel convergence from Proposition \ref{Prop.KernelConvFlat}, the asymptotic slopes of the Airy wanderer line ensemble in Proposition \ref{Prop.Slopes}(a), and the weak convergence of determinantal point processes criterion in \cite[Proposition 2.18]{dimitrov2024airy}. The second of these statements, (\ref{Eq.AiryTightParticles}), implies that the sequence $\{\mathcal{A}^{a,b,c}_{i + J_a} (s_j + T_N)\}_{N \geq 1}$ is tight, and follows from Proposition \ref{Prop.Slopes}(c).\\

{\bf \raggedleft Step 1.} In this step, we assume that (\ref{Eq.FDFlatR1}) holds when $a_1 > a_2 > \cdots > a_{J_a}$ and proceed to prove it for general parameters $(a,b,c)$ as in the statement of Theorem \ref{Thm.ConvAiry}(a).

Fix $\varepsilon > 0$ sufficiently small so that $v_{J_a}^a >  J_a\varepsilon$ with the convention $v_0^a = \infty$. We define $(a^{+},b^{+},c^{+}), (a^{-},b^{-},c^-) \in \parP$ to be the same $(a,b,c)$, except that 
$$a_i^{+} = a_i + (J_a-i+1), \mbox{ and } a_i^- = a_i - i \varepsilon \mbox{ for } i = 1, \dots, J_a,$$
and observe that by our choice of $\varepsilon$ we have
\begin{equation}\label{Eq.AirySqueezePar}
\begin{split}
&a_1^+ > a_2^+ > \cdots > a_{J_a}^+ > 0 = a_{J_a+1}^+ , \hspace{2mm} a_1^- > a_2^- > \cdots > a_{J_a}^- > 0 = a_{J_a+1}^- \\
&a_i^- \leq a_i \leq a_i^+ \mbox{ for } i \in \mathbb{N}, \hspace{2mm} c^+ = c = c^- = 0, \hspace{2mm} b_i^- = b_i = b_i^+ \mbox{ for }i\in\mathbb{N}.
\end{split}
\end{equation}

From the second line in (\ref{Eq.AirySqueezePar}) and Proposition \ref{Prop.MonCoupling} applied to $A = B = 0$, we can couple $\mathcal{A}^{a^-,b^-,c^-}$ with $\mathcal{A}^{a,b,c}$, so that $\mathcal{A}_i^{a^-,b^-,c^-}(t) \leq \mathcal{A}^{a,b,c}_i(t)$ for all $(i,t) \in \mathbb{N} \times \mathbb{R}$. Similarly, we can couple $\mathcal{A}^{a^+,b^+,c^+}$ with $\mathcal{A}^{a,b,c}$, so that $\mathcal{A}_i^{a,b,c}(t) \leq \mathcal{A}^{a^+,b^+,c^+}_i(t)$ for all $(i,t) \in \mathbb{N} \times \mathbb{R}$. Our two monotone couplings imply for all $k \in \mathbb{N}$, and $r_{i,j} \in \mathbb{R}$ that 
\begin{equation}\label{Eq.AirySqueezeX}
\begin{split}
&P^{+,N} \leq \mathbb{P}\left(\mathcal{A}^{a,b,c}_{i + J_a} (s_j + T_N) \leq r_{i,j}:  i \in \{1, \dots, k\} \mbox{, } j \in \{1, \dots, m\} \right)  \leq P^{-,N} \mbox{, where }\\
&P^{\pm,N} = \mathbb{P}\left(\mathcal{A}^{a^{\pm},b^{\pm},c^{\pm}}_{i + J_a} (s_j + T_N) \leq r_{i,j}: i \in \{1, \dots, k\} \mbox{, } j \in \{1, \dots, m\} \right).
\end{split}
\end{equation}

The first line in (\ref{Eq.AirySqueezePar}) and our assumption in the beginning of the step show that
$$\lim_{N \rightarrow \infty}P^{\pm,N} = \mathbb{P}\left(\mathcal{A}^{0,0,0}_i(s_j) \leq r_{i,j}:  i \in \{ 1, \dots, k \} \mbox{, } j \in \{1, \dots, m\} \right).$$
Combining the latter with (\ref{Eq.AirySqueezeX}), we conclude (\ref{Eq.FDFlatR1}).\\

{\bf \raggedleft Step 2.} In the remainder of the proof, we assume $a_1 > a_2 > \cdots > a_{J_a}$, so that in particular $\mathsf{m}_1^a = \cdots = \mathsf{m}_{J_a}^a = 1$ and $v_{i}^a = a_i$ for $i = 1, \dots, J_a$. 
 
We proceed to prove (\ref{Eq.FDFlatR1}) and our strategy is to use \cite[Proposition 2.19]{dimitrov2024airy}. We next list the conditions required to apply this result. Firstly, we define the measure
\begin{equation}\label{Eq.AiryMeasure}
M^{\Ai}(A) = \sum_{i \geq 1} \sum_{j = 1}^m {\bf 1}\left\{\left(s_j, \mathcal{A}_i^{0,0,0} (s_j)\right) \in A \right\},
\end{equation}
which from Proposition \ref{Prop.AWLE} is a determinantal point process on $\mathbb{R}^2$ with correlation kernel $K_{0,0,0}$ as in (\ref{Eq.3BPKer}) and reference measure $\mu_{\mathsf{S}} \times \mathrm{Leb}$. We also define the measures
\begin{equation}\label{Eq.AiryMeasureN}
M^N(A) = \sum_{i \geq 1} \sum_{j = 1}^m {\bf 1}\left\{\left(s_j, \mathcal{A}^{a,b,c}_{i + J_a}(s_j + T_N)\right) \in A \right\},
\end{equation}
We claim that 
\begin{equation}\label{Eq.AiryVagueConv}
M^N \Rightarrow M^{\Ai}, 
\end{equation}
\begin{equation}\label{Eq.AiryTightParticles}
\mbox{ the sequence $\{\mathcal{A}_{i + J_a}^{a,b,c}(s_j + T_N) \}_{N \geq 1}$ is tight for each $i \in \mathbb{N}$ and $j \in \{1, \dots, m\}$}.
\end{equation}

Combining (\ref{Eq.AiryVagueConv}) and (\ref{Eq.AiryTightParticles}), with the non-intersecting property of $\mathcal{A}^{a,b,c}$, see (\ref{Eq.OrdAWLE}), we see that the conditions of \cite[Proposition 2.19]{dimitrov2024airy} are satisfied, and the proposition then implies (\ref{Eq.FDFlatR1}). We have thus reduced the proof of the proposition to establishing (\ref{Eq.AiryVagueConv}) and (\ref{Eq.AiryTightParticles}).\\

{\bf \raggedleft Step 3.} We observe that (\ref{Eq.AiryTightParticles}) follows from Proposition \ref{Prop.Slopes}(c). In the remainder, we prove (\ref{Eq.AiryVagueConv}). Define the measure
\begin{equation}\label{Eq.AiryMeasAllN}
 \tilde{M}^{N}(A) = \sum_{i \geq 1 } \sum_{j = 1}^m {\bf 1}\left\{\left(s_j, \mathcal{A}_i^{a,b,c}(s_j + T_N)\right) \in A \right\},
\end{equation}
and note that 
\begin{equation}\label{Eq.AiryMeasAllNDecomp}
 \tilde{M}^{N} = M^N + \hat{M}^N, \mbox{ where } \hat{M}^N(A) = \sum_{i = 1 }^{J_a} \sum_{j = 1}^m {\bf 1}\left\{\left(s_j, \mathcal{A}_i^{a,b,c}(s_j + T_N)\right) \in A \right\},
\end{equation}
From our work in Section \ref{Section2.2}, we know that $\tilde{M}^N$ is a determinantal point process with correlation kernel $K^{T_N}$ as in (\ref{Eq.NewKernelFlat}) and reference measure $\mu_{\mathsf{S}} \times \mathrm{Leb}$. From Proposition \ref{Prop.KernelConvFlat}, we know that for each $s,t \in \mathsf{S}$ 
$$\lim_{N \rightarrow \infty} K^{T_N}(s, x; t, y) = K_{0,0,0}(s,x;t,y)$$
uniformly as $(x,y)$ vary in a compact set in $\mathbb{R}^2$. The latter verifies the conditions of \cite[Proposition 2.18]{dimitrov2024airy} from which we conclude that 
\begin{equation}\label{Eq.WeakConvPointProcessAiry}
\tilde{M}^{N} \Rightarrow M^{\Ai}.
\end{equation}

We next claim that for each fixed $i \in \{1, \dots, J_a\}$ and $j \in \{1, \dots, m\}$, we have
\begin{equation}\label{Eq.escapeAiry}
\mathcal{A}_i^{a,b,c}(s_j + T_N) \xrightarrow{\pr} \infty.
\end{equation}
Indeed, if we set $\xi^N_i(s_j) = \frac{\mathcal{A}^{a,b,c}_i(s_j + T_N)}{s_j+T_N}+\frac{2}{a_i}-(s_j + T_N)$, we have
\begin{equation*}
\mathcal{A}_i^{a,b,c}(s_j + T_N) = (s_j +T_N)\left( \xi^N_i(s_j) - \frac{2}{a_i} + (s_j + T_N) \right),
\end{equation*}
and $\xi^N_i(s_j)\Rightarrow 0$ by Proposition \ref{Prop.Slopes}(a), which implies (\ref{Eq.escapeAiry}).

Combining (\ref{Eq.AiryMeasAllNDecomp}), (\ref{Eq.WeakConvPointProcessAiry}), and (\ref{Eq.escapeAiry}), we conclude (\ref{Eq.AiryVagueConv}).
\end{proof}

%
%
\section{Gibbsian line ensembles}\label{Section4} Propositions \ref{Prop.FDSlope} and \ref{Prop.FDFlat} establish finite-dimensional convergence of the line ensembles appearing in Theorems \ref{Thm.ConvDBM} and \ref{Thm.ConvAiry}. To strengthen this convergence to uniform convergence over compact sets, it remains to prove tightness of the corresponding sequences of ensembles. 

The main difficulty in establishing tightness lies in controlling the moduli of continuity of the ensemble curves along subsequences. As noted in Proposition \ref{Prop.AWLE}, the parabolic Airy wanderer line ensembles satisfy the Brownian Gibbs property. This property enables a strong comparison between the ensembles in Theorems \ref{Thm.ConvDBM} and \ref{Thm.ConvAiry} and ensembles of finitely many avoiding Brownian bridges. Since such Brownian bridge ensembles have well-controlled moduli of continuity, this comparison allows us to transfer analogous regularity estimates to the ensembles under consideration.

In Section \ref{Section4.1}, we introduce the terminology and objects required to carry out this strategy. Section \ref{Section4.2} summarizes several relevant results from the literature on Brownian Gibbsian ensembles. In Sections \ref{Section4.3} and \ref{Section4.4}, we establish the technical estimates needed for the proofs of Theorems \ref{Thm.ConvDBM} and \ref{Thm.ConvAiry}, which are carried out in Sections \ref{Section5} and \ref{Section6}, respectively.

%
%
\subsection{Definitions and notation for line ensembles}\label{Section4.1} In this section we recall some basic definitions and notation regarding line ensembles, mostly following \cite[Section 2]{DEA21}.

Given two integers $a \leq b$, we let $\llbracket a, b \rrbracket$ denote the set $\{a, a+1, \dots, b\}$. We also set $\llbracket a,b \rrbracket = \emptyset$ when $a > b$, $\llbracket a, \infty \rrbracket = \{a, a+1, a+2 , \dots \}$, $\llbracket - \infty, b\rrbracket = \{b, b-1, b-2, \dots\}$ and $\llbracket - \infty, \infty \rrbracket = \mathbb{Z}$. Given an interval $\Lambda \subseteq \mathbb{R}$, we endow it with the subspace topology of the usual topology on $\mathbb{R}$. We let $(C(\Lambda), \mathcal{C})$ denote the space of continuous functions $f: \Lambda \rightarrow \mathbb{R}$ with the topology of uniform convergence over compact sets, see \cite[Chapter 7, Section 46]{Munkres}, and Borel $\sigma$-algebra $\mathcal{C}$. Given a set $\Sigma \subseteq \mathbb{Z}$, we endow it with the discrete topology and denote by $\Sigma \times \Lambda$ the set of all pairs $(i,x)$ with $i \in \Sigma$ and $x \in \Lambda$ with the product topology. We also denote by $\left(C (\Sigma \times \Lambda), \mathcal{C}_{\Sigma}\right)$ the space of real-valued continuous functions on $\Sigma \times \Lambda$ with the topology of uniform convergence over compact sets and Borel $\sigma$-algebra $\mathcal{C}_{\Sigma}$. We typically take $\Sigma = \llbracket 1, N \rrbracket$ with $N \in \mathbb{N} \cup \{\infty\}$. We now define the notion of a line ensemble.
\begin{definition}\label{Def.LineEnsembles}
Let $\Sigma \subseteq \mathbb{Z}$ and $\Lambda \subseteq \mathbb{R}$ be an interval. A {\em $\Sigma$-indexed line ensemble $\mathcal{L}$} is a random variable defined on a probability space $(\Omega, \mathcal{F}, \mathbb{P})$ that takes values in $\left(C (\Sigma \times \Lambda), \mathcal{C}_{\Sigma}\right)$. Intuitively, $\mathcal{L}$ is a collection of random continuous curves (sometimes referred to as {\em lines}), indexed by $\Sigma$, each of which maps $\Lambda$ into $\mathbb{R}$. We will often slightly abuse notation and write $\mathcal{L}: \Sigma \times \Lambda \rightarrow \mathbb{R}$, even though it is not $\mathcal{L}$ which is such a function, but $\mathcal{L}(\omega)$ for every $\omega \in \Omega$. For $i \in \Sigma$ we write $\mathcal{L}_i(\omega) = (\mathcal{L}(\omega))(i, \cdot)$ for the curve of index $i$ and note that the latter is a map $\mathcal{L}_i: \Omega \rightarrow C(\Lambda)$, which is $\mathcal{F}/\mathcal{C}$ measurable. If $a,b \in \Lambda$ satisfy $a \leq b$, we let $\mathcal{L}_i[a,b]$ denote the restriction of $\mathcal{L}_i$ to $[a,b]$. We also say that a line ensemble $\mathcal{L}$ is {\em non-intersecting} if $\mathbb{P}$-almost surely $\mathcal{L}_i(t) > \mathcal{L}_j(t)$ for all $i < j$ and $t \in \Lambda$.
\end{definition}
\begin{remark}\label{Rem.Polish} As shown in \cite[Lemma 2.2]{DEA21}, we have that $C(\Sigma \times \Lambda)$ is a Polish space, and so a line ensemble $\mathcal{L}$ is just a random element in $C(\Sigma \times \Lambda)$ in the sense of \cite[Section 3]{Billing}.
\end{remark}

If $W_t$ denotes a standard one-dimensional Brownian motion, then the process
$$\tilde{B}(t) =  W_t - t W_1, \hspace{5mm} 0 \leq t \leq 1,$$
is called a {\em Brownian bridge (from $\tilde{B}(0) = 0$ to $\tilde{B}(1) = 0$).} Given $a,b,x,y \in \mathbb{R}$ with $a < b$, we define a random variable on $(C([a,b]), \mathcal{C})$ through
\begin{equation}\label{Eq.BBDef}
B(t) = (b-a)^{1/2} \cdot \tilde{B} \left( \frac{t - a}{b-a} \right) + \left(\frac{b-t}{b-a} \right) \cdot x + \left( \frac{t- a}{b-a}\right) \cdot y, 
\end{equation}
and refer to the law of this random variable as a {\em Brownian bridge (from $B(a) = x$ to $B(b) = y$)}. If $\vec{x}, \vec{y} \in \mathbb{R}^k$, we let $\mathbb{P}_{\mathrm{free}}^{a,b,\vec{x}, \vec{y}}$ denote the law of $k$ independent Brownian bridges $\{B_i:[a,b] \rightarrow \mathbb{R}\}_{i = 1}^k$ from $B_i(a) = x_i$ to $B_i(b) = y_i$. We write $\mathbb{E}_{\mathrm{free}}^{a,b,\vec{x}, \vec{y}}$ for the expectation with respect to this measure.

We end this section by recalling the Brownian Gibbs property from \cite[Definition 2.2]{CorHamA}, see also \cite[Definition 2.8]{DEA21}. To state it, we require the notion of an $(f,g)$-avoiding Brownian bridge ensemble from \cite[Definition 2.7]{DEA21}. We recall that $\weyl_k$ and $\weylc_k$ are the open and closed Weyl chambers in $\mathbb{R}^{k}$, see (\ref{Eq.WeylChamber}).
\begin{definition}\label{Def.fgAvoidingBE}
Fix $k \in \mathbb{N}$, $\vec{x}, \vec{y} \in \weyl_k$, $a,b \in \mathbb{R}$ with $a < b$, and two continuous functions $f: [a,b] \rightarrow (-\infty, \infty]$ and $g: [a,b] \rightarrow [-\infty,\infty)$. This means that either $f \in C([a,b])$ or $f \equiv \infty$, and similarly for $g$. In addition, we assume that $f(t) > g(t)$ for $t \in[a,b]$, $f(a) > x_1$, $f(b) > y_1$, $g(a) < x_k$, $g(b) < y_k$. With this data we let $\mathbb{P}_{\mathrm{avoid}}^{a,b,\vec{x},\vec{y}, f,g}$ denote the law of $k$ independent Brownian bridges $\{B_i\}_{i =1}^k$ with law $\mathbb{P}_{\mathrm{free}}^{a,b,\vec{x}, \vec{y}}$, conditioned on the event 
\begin{equation}\label{Eq.DefEavoid}
E^{f,g}_{\mathrm{avoid}} = \{f(t) > B_1(t) > B_2(t) > \cdots > B_k(t) > g(t) \mbox{ for all } t\in [a,b]\}.
\end{equation}
As explained in \cite[Definition 2.7]{DEA21}, we have that $E^{f,g}_{\mathrm{avoid}}$ is measurable and $\mathbb{P}_{\mathrm{free}}^{a,b,\vec{x}, \vec{y}}(E^{f,g}_{\mathrm{avoid}}) > 0$, so that the law $\mathbb{P}_{\mathrm{avoid}}^{a,b,\vec{x},\vec{y}, f,g}$ on $C(\llbracket 1, k \rrbracket \times [a,b])$ is well-defined. The expectation with respect to $\mathbb{P}_{\mathrm{avoid}}^{a,b,\vec{x},\vec{y}, f,g}$ is denoted by $\mathbb{E}_{\mathrm{avoid}}^{a,b,\vec{x},\vec{y}, f,g}$. When $f = \infty$ and $g = -\infty$, we drop them from the notation and simply write $\mathbb{P}_{\mathrm{avoid}}^{a,b,\vec{x},\vec{y}}$ and $\mathbb{E}_{\mathrm{avoid}}^{a,b,\vec{x},\vec{y}}$.
\end{definition}

We will require one further class of measures for our arguments later, which we still denote by $\mathbb{P}_{\mathrm{avoid}}^{a,b,\vec{x},\vec{y}, \infty,g}$. This class corresponds to the above definition with $f \equiv \infty$ and 
\begin{equation}\label{Eq.Spike}
g(t) = \begin{cases} g_0 &\mbox{ if } t = t_0, \\ -\infty &\mbox{ for } t \in [a,b] \setminus \{t_0\}, \end{cases}
\end{equation}
where $g_0 \in \mathbb{R}$ and $t_0 \in (a,b)$. Note that in this case the measure $\mathbb{P}_{\mathrm{avoid}}^{a,b,\vec{x},\vec{y}, \infty ,g}$ is well-defined. Indeed, the set $E^{\infty,g}_{\mathrm{avoid}}$ is measurable, since it is the inverse image of an open set in $C(\llbracket 1, k \rrbracket \times [a,b])$, and has positive probability. The latter follows from the inclusion $E^{\infty,\tilde{g}}_{\mathrm{avoid}} \subseteq E^{\infty,g}_{\mathrm{avoid}}$, where $\tilde{g}$ is defined by
$$\tilde{g}(a) = x_k - 1, \hspace{2mm} \tilde{g}(b) = y_k - 1, \hspace{2mm} \tilde{g}(t_0) = g_0,$$
and $\tilde{g}$ is linear on $[a, t_0]$ and $[t_0, b]$.

\begin{definition}\label{Def.BGPVanilla}
An $\mathbb{N}$-indexed line ensemble $\mathcal{L} = \{\mathcal{L}_i\}_{i \geq 1}$ on an interval $\Lambda \subseteq \mathbb{R}$ is said to have the {\em Brownian Gibbs property}, if it is non-intersecting, and the following holds for all $[a,b] \subseteq \Lambda$ and $1 \leq k_1 \leq k_2$. If we set $K = \llbracket k_1, k_2 \rrbracket$, then for any bounded Borel-measurable function $F: C(K \times [a,b]) \rightarrow \mathbb{R}$, we have $\mathbb{P}$-almost surely
\begin{equation}\label{BGPTower}
\mathbb{E} \left[ F\left(\mathcal{L}|_{K \times [a,b]} \right)  {\big \vert} \mathcal{F}_{\mathrm{ext}} (K \times (a,b))  \right] =\mathbb{E}_{\mathrm{avoid}}^{a,b, \vec{x}, \vec{y}, f, g} \bigl[ F(\tilde{\mathcal{Q}}) \bigr].
\end{equation}
On the left side of (\ref{BGPTower}), we have that
$$\mathcal{F}_{\mathrm{ext}} (K \times (a,b)) = \sigma \left \{ \mathcal{L}_i(s): (i,s) \in (\mathbb{N} \times \Lambda) \setminus (K \times (a,b)) \right\},$$
and $ \mathcal{L}|_{K \times [a,b]}$ is the restriction of $\mathcal{L}$ to the set $K \times [a,b]$. On the right side of (\ref{BGPTower}), we have $\vec{x} = (\mathcal{L}_{k_1}(a), \dots, \mathcal{L}_{k_2}(a))$, $\vec{y} = (\mathcal{L}_{k_1}(b), \dots, \mathcal{L}_{k_2}(b))$, $f = \mathcal{L}_{k_1 - 1}[a,b]$ with the convention that $f = \infty$ if $k_1 = 1$, and $g = \mathcal{L}_{k_2 +1}[a,b]$. In addition, $\mathcal{Q} = \{\mathcal{Q}_i\}_{i = 1}^{k_2 - k_1 + 1}$ has law $\mathbb{P}_{\mathrm{avoid}}^{a,b, \vec{x}, \vec{y}, f, g}$ as in Definition \ref{Def.fgAvoidingBE}, and $\tilde{\mathcal{Q}} = \{\tilde{\mathcal{Q}}_{i}\}_{i = k_1}^{k_2}$ satisfies $\tilde{\mathcal{Q}}_i = \mathcal{Q}_{i - k_1 + 1}$.
\end{definition}

%
%
\subsection{Results from previous papers}\label{Section4.2} The goal of this section is to recall a few known results about line ensembles with laws $\mathbb{P}_{\mathrm{avoid}}^{a,b, \vec{x}, \vec{y}, f, g}$ as in Definition \ref{Def.fgAvoidingBE}.

The following result gives a monotone coupling of the measures $\mathbb{P}_{\mathrm{avoid}}^{a,b, \vec{x}, \vec{y}, f, g}$ in their boundary data.  
\begin{lemma}\label{Lem.MonCoup} Fix $k \in \mathbb{N}$, $a < b$, and two continuous functions $g^{\mathsf{t}}, g^{\mathsf{b}}: [a,b] \rightarrow [-\infty, \infty)$ such that $g^{\mathsf{t}}(t) \geq g^{\mathsf{b}}(t)$ for all $t \in [a,b]$. We also fix $\vec{x}, \vec{y}, \vec{x}', \vec{y}' \in \weyl_k$ such that $g^{\mathsf{b}}(a) < x_k$, $g^{\mathsf{b}}(b) < y_k$, $g^{\mathsf{t}}(a) < x_k'$, $g^{\mathsf{t}}(b) < y_k'$, and $x_i \leq x_i'$, $y_i \leq y_i'$ for $i \in \llbracket 1, k \rrbracket$. Then, there exists a probability space $(\Omega, \mathcal{F}, \mathbb{P})$, which supports two $\llbracket 1, k \rrbracket$-indexed line ensembles $\mathcal{L}^{\mathsf{t}}$ and $\mathcal{L}^{\mathsf{b}}$ on $[a,b]$, such that the law of $\mathcal{L}^{\mathsf{t}}$ {\big (}resp. $\mathcal{L}^{\mathsf{b}}${\big )} under $\mathbb{P}$ is $\mathbb{P}_{\mathrm{avoid}}^{a,b, \vec{x}', \vec{y}', \infty, g^{\mathsf{t}}}$ {\big (}resp. $\mathbb{P}_{\mathrm{avoid}}^{a,b, \vec{x}, \vec{y}, \infty, g^{\mathsf{b}}}${\big )}, and such that $\mathbb{P}$-almost surely we have $\mathcal{L}_i^{\mathsf{t}}(t) \geq \mathcal{L}^{\mathsf{b}}_i(t)$ for all $(i,t) \in \llbracket 1, k \rrbracket \times [a,b]$.

In addition, the conclusion of the lemma continues to hold if $g^{\mathsf{b}}$ is not continuous, but instead is of the form (\ref{Eq.Spike}), provided that $g^{\mathsf{t}}(t_0) \geq g^{\mathsf{b}}(t_0)$.
\end{lemma}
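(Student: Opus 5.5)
The first part of Lemma \ref{Lem.MonCoup} (continuous $g^{\mathsf{t}}, g^{\mathsf{b}}$ with $f \equiv \infty$) is a standard monotone coupling. I would invoke it directly from the literature, \cite[Lemma 2.6]{DEA21} (or \cite[Lemmas 2.6--2.7]{CorHamA}), checking only that our hypotheses ($g^{\mathsf{t}} \geq g^{\mathsf{b}}$, $x_i \leq x_i'$, $y_i \leq y_i'$, and endpoint conditions ensuring both measures are well defined) match those there. If a self-contained argument were wanted, I would use the usual route. First discretize: run a Glauber/heat-bath Markov chain on $k$ random-walk bridges with avoidance constraints, driven by common randomness. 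Show that the update map is monotone in the boundary data, which gives ordered stationary measures. Then pass to Brownian bridges via Donsker-type convergence of avoiding walk ensembles, and use Skorohod's representation to keep the ordering almost surely in the limit.

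The new content is the extension to $g^{\mathsf{b}}$ of the spike form (\ref{Eq.Spike}), with $g^{\mathsf{t}}(t_0) \geq g_0$. My plan is approximation. Choose continuous functions $g^{\mathsf{b}}_n: [a,b] \rightarrow [-\infty,\infty)$ with $g^{\mathsf{b}}_n \leq g^{\mathsf{t}}$. Take $g^{\mathsf{b}}_n(t_0) = g_0 - 1/n$, let it decay linearly to $-\infty$ (or to a very negative value) outside $[t_0 - 1/n, t_0 + 1/n]$, and cap it by $g^{\mathsf{t}}$ so that $g_n^{\mathsf{b}} \leq \min(g^{\mathsf{t}}, \cdot)$. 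Since $g^{\mathsf{t}}(t_0) \geq g_0 > g_0 - 1/n$, continuity of $g^{\mathsf{t}}$ allows $g_n^{\mathsf{b}}$ to be a tent below $g^{\mathsf{t}}$ near $t_0$ and $-\infty$ away from it; if $-\infty$ values are awkward, use a large negative constant $-n$ there. The first part of the lemma then gives couplings with $\mathcal{L}^{\mathsf{t}} \geq \mathcal{L}^{\mathsf{b},n}$, where $\mathcal{L}^{\mathsf{b},n} \sim \mathbb{P}_{\mathrm{avoid}}^{a,b,\vec{x},\vec{y},\infty,g_n^{\mathsf{b}}}$.

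The key step, and the main obstacle, is to show $\mathbb{P}_{\mathrm{avoid}}^{a,b,\vec{x},\vec{y},\infty,g_n^{\mathsf{b}}} \Rightarrow \mathbb{P}_{\mathrm{avoid}}^{a,b,\vec{x},\vec{y},\infty,g}$. I would prove it from the conditioning definition. For bounded continuous $F$, we have $\mathbb{E}_{\mathrm{avoid}}^{g_n}[F] = \mathbb{E}_{\mathrm{free}}[F \mathbf{1}_{E_n}] / \mathbb{P}_{\mathrm{free}}(E_n)$ with $E_n = E^{\infty,g^{\mathsf{b}}_n}_{\mathrm{avoid}}$, so it suffices that $\mathbf{1}_{E_n} \to \mathbf{1}_{E}$ almost surely under $\mathbb{P}_{\mathrm{free}}$, where $E = E^{\infty,g}_{\mathrm{avoid}}$, together with $\mathbb{P}_{\mathrm{free}}(E) > 0$; the latter was noted after (\ref{Eq.Spike}). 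Now $E_n$ is increasing in $n$ and contained in $E$ up to the boundary event $\{B_k(t_0) = g_0\}$, which is null. Conversely, on $E$, strict separation and continuity of the paths imply $B_k > g_n^{\mathsf{b}}$ for large $n$, provided the tent width shrinks and its height stays below $g_0$. So $\mathbf{1}_{E_n} \uparrow \mathbf{1}_E$ almost surely, and dominated convergence gives the weak convergence. Choosing the bottom of the tent as $-n$ outside the window keeps $g_n$ real-valued and still converges correctly, because the $B_i$ are bounded almost surely.

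Finally, I would pass the ordering to the limit. The sequence of coupled pairs $(\mathcal{L}^{\mathsf{t}}, \mathcal{L}^{\mathsf{b},n})$ is tight in $C(\llbracket 1,k\rrbracket\times[a,b])^2$ since both marginals converge. Extract a weakly convergent subsequence; the closed set $\{(\ell,\ell'): \ell_i(t) \geq \ell'_i(t)\ \forall i,t\}$ has probability one along the sequence, hence in the limit by Portmanteau. The limit's marginals are $\mathbb{P}_{\mathrm{avoid}}^{a,b,\vec{x}',\vec{y}',\infty,g^{\mathsf{t}}}$ and $\mathbb{P}_{\mathrm{avoid}}^{a,b,\vec{x},\vec{y},\infty,g}$, which yields the desired coupling.
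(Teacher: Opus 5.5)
Your proposal is correct, but for the spike case it takes a different route from the paper. Both arguments take the continuous case as known; the paper uses \cite[Lemma A.6]{DimMat} for it. For the spike, the paper does not approximate $g^{\mathsf{b}}$ at all. It reopens the proof of \cite[Lemma A.5]{DimMat} with $f^n = \infty$, $g^n = g^{\mathsf{b}}$. It notes that the only step that changes is the identity $\mathbb{P}(E_1 \cup E_2) = 1$, which now also needs $\mathbb{P}(\mathcal{Q}_k(t_0) = g_0) = 0$; this holds because $\mathcal{Q}_k(t_0)$ is a nondegenerate Gaussian. It then reruns \cite[Lemma A.6]{DimMat} verbatim.

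You instead use the continuous case purely as a black box and reach the spike by a soft limit, in three steps:
\begin{enumerate}
\item[(i)] approximate the spike by capped tents lying below $g^{\mathsf{t}}$;
\item[(ii)] show the conditioned laws converge, via convergence of the acceptance indicators under the fixed free law (this even gives convergence of $\mathbb{E}[F]$ for every bounded measurable $F$);
\item[(iii)] pass the order relation to the limit using that it is a closed set, via tightness of the pairs and Portmanteau.
\end{enumerate}
Your argument is self-contained and can be checked without knowing the internal structure of the cited proof. The paper's argument is shorter on the page, but it can only be verified by opening the appendix of \cite{DimMat}.

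One justification in your convergence step is wrong as written. With $g^{\mathsf{b}}_n(t_0) = g_0 - 1/n$, the events $E_n$ are not increasing, since the peak rises while the support shrinks. They are also not contained in $E$ up to a null set: $E_n$ contains paths with $B_k(t_0) \in (g_0 - 1/n, g_0]$, which have positive free probability.

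What you need is still true. On $E$ you already showed eventual membership in $E_n$. On $\{B_k(t_0) < g_0\}$ one has $B_k(t_0) < g^{\mathsf{b}}_n(t_0)$ for large $n$. Hence $\mathbf{1}_{E_n} \to \mathbf{1}_E$ off the null set $\{B_k(t_0) = g_0\}$, and dominated (not monotone) convergence finishes.

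Alternatively, put the peak exactly at $g_0$ and make the tents decreasing in $n$, for example $g^{\mathsf{b}}_n(t) = \min\bigl(g^{\mathsf{t}}(t), \max(g_0 - n(n+g_0)|t-t_0|, -n)\bigr)$ for $n > |g_0|$. Then $E_n \uparrow E$ pathwise, and no null-set argument is needed in this step.
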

\begin{proof} When $g^{\mathsf{t}}, g^{\mathsf{b}}$ are both continuous, the result can be found in \cite[Lemma A.6]{DimMat}, and its proof is based on a Markov chain Monte Carlo argument originating in \cite[Lemmas 2.6 and 2.7]{CorHamA}. 

When $g^{\mathsf{b}}$ is not continuous, but instead is of the form (\ref{Eq.Spike}), one may establish \cite[Lemma A.5]{DimMat} for $f^n = \infty$ and $g^n = g^{\mathsf{b}}$ using exactly the same arguments. The only difference is that the identity $\mathbb{P}(E_1 \cup E_2) = 1$ near the end of the proof of \cite[Lemma A.5]{DimMat} no longer follows solely from \cite[Lemma 2.2]{DimMat}, but instead from a combination of that lemma with the fact that $\mathbb{P}(\mathcal{L}_{k}(t_0) = g^{\mathsf{b}}(t_0)) = 0$. In the present notation, this statement is equivalent to
$$\mathbb{P}_{\mathrm{free}}^{a',b',\vec{x}, \vec{y}}\left(\mathcal{Q}_k(t_0) = g^{\mathsf{b}}(t_0) \right) = 0,$$
which holds as $\mathcal{Q}_k(t_0)$ is a nondegenerate normal variable and hence has a diffuse distribution. 

Once \cite[Lemma A.5]{DimMat} is established for $f^n = \infty$ and $g^n = g^{\mathsf{b}}$, the proof of \cite[Lemma A.6]{DimMat} can be repeated verbatim to obtain the present lemma when $g^{\mathsf{b}}$ is of the form (\ref{Eq.Spike}).
\end{proof}

The second result we require shows that a sequence of $(\infty,-\infty)$-avoiding Brownian bridge ensembles converges weakly if their boundary data converge. 
\begin{lemma}\label{Lem.BridgeEnsemblesCty} Fix $k \in \mathbb{N}$, $\vec{x}, \vec{y} \in \weylc_k$, and $a,b \in \mathbb{R}$ with $a < b$. Suppose that $\vec{x}\,^n, \vec{y}\,^n \in \weyl_k$ satisfy $\lim_{n \rightarrow \infty} \vec{x}\,^n = \vec{x}$, $\lim_{n \rightarrow \infty} \vec{y}\,^n = \vec{y}$. Then, as $n \rightarrow \infty$, the measures $\mathbb{P}_{\mathrm{avoid}}^{a,b,\vec{x}\,^n,\vec{y}\,^n}$ converge weakly to a probability measure on $C(\llbracket 1, k \rrbracket \times [a,b])$, which we denote by $\mathbb{P}_{\mathrm{avoid}}^{a,b,\vec{x},\vec{y}}$.
\end{lemma}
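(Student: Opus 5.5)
The plan is to write the non-intersecting bridge law as a Radon--Nikodym tilt of independent bridges through the Karlin--McGregor formula, and to pass to the limit in the tilt; only collisions of boundary coordinates in $\weylc_k$ need special care. Since the boundary data $\vec x^{\,n},\vec y^{\,n}$ lie in the open chamber while the limits $\vec x,\vec y$ may only lie in the closed chamber $\weylc_k$, the conditioning event may degenerate in the limit. I would therefore split $[a,b]$ into $[a,a+\delta]\cup[a+\delta,b-\delta]\cup[b-\delta,b]$ and use the Markov property of $\mathbb{P}_{\mathrm{avoid}}^{a,b,\vec{x}\,^n,\vec{y}\,^n}$.

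First I would compute the finite-dimensional marginals explicitly. For times $a<t_1<\cdots<t_m<b$, the Karlin--McGregor formula expresses the joint density of $(B(t_1),\dots,B(t_m))$ under $\mathbb{P}_{\mathrm{avoid}}^{a,b,\vec{x}\,^n,\vec{y}\,^n}$ as
\[
\frac{\det[p_{t_1-a}(x^n_i,z^1_j)]\prod_{\ell}\det[p_{t_{\ell+1}-t_\ell}(z^\ell_i,z^{\ell+1}_j)]\det[p_{b-t_m}(z^m_i,y^n_j)]}{\det[p_{b-a}(x^n_i,y^n_j)]},
\]
where $p_t$ is the heat kernel and $z^\ell\in\weyl_k$.

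When $\vec x$ or $\vec y$ has coinciding coordinates, both the numerator and the denominator vanish in the limit. I would handle this with the standard expansion: write $\det[p_t(x_i,z_j)]=\det[e^{x_iz_j/t}]\cdot(\text{Gaussian factors})$, and divide numerator and denominator by the Vandermonde determinant $\Delta(\vec x^{\,n})$ (and likewise $\Delta(\vec y^{\,n})$). The ratio $\det[e^{x_iz_j/t}]/\Delta(\vec x)$ extends continuously, and is positive, on all of $\weylc_k\times\weyl_k$. It is a confluent (Wronskian-type) determinant, obtained for instance from the Harish-Chandra/HCIZ-type symmetric-function expansion. Positivity of the limiting denominator follows from this expansion; equivalently, it is the positive density of the corresponding Dyson-type ensemble. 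This gives pointwise convergence of the finite-dimensional densities.

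Next I would upgrade pointwise convergence to convergence in $L^1$ via Scheffé's lemma. Since the limits are again probability densities (their total mass is $1$ by continuity and a uniform Gaussian domination), this yields convergence of the finite-dimensional distributions to a consistent family defining $\mathbb{P}_{\mathrm{avoid}}^{a,b,\vec{x},\vec{y}}$.

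For tightness in $C(\llbracket 1,k\rrbracket\times[a,b])$, I would argue separately on the inner and outer pieces:
\begin{enumerate}
\item On the inner interval $[a+\delta,b-\delta]$, conditioned on the values at $a+\delta$ and $b-\delta$, the curves are avoiding bridges with boundary data in $\weyl_k$ almost surely. The law of these boundary values converges by the previous step, and the modulus of continuity is controlled by comparison with free bridges on a high-probability event where the avoidance probability is bounded below.
\item Near the endpoints, I would use the monotone coupling of Lemma \ref{Lem.MonCoup} together with the fact that the $k$ curves are sandwiched between shifted versions of the top and bottom curves. Alternatively, Gaussian bounds on the densities from the first step show that $\max_i\sup_{t\in[a,a+\delta]}|B_i(t)-x_i^n|$ is small with high probability, uniformly in $n$.
\end{enumerate}

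The main obstacle is precisely this boundary behaviour: obtaining bounds that are uniform in $n$ when $\vec x$ lies on the boundary of the chamber, since there the Radon--Nikodym derivative with respect to free bridges blows up. I would first try to control these endpoint pieces with explicit Karlin--McGregor-based estimates, such as a Brownian-meander-type tail bound.
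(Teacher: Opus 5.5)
Your finite-dimensional step is sound. Dividing by $\Delta(\vec{x}\,^n)\Delta(\vec{y}\,^n)$ and using the HCIZ representation makes the Karlin--McGregor ratios continuous, gives a positive limiting normalizer, and yields a uniform Gaussian bound for Scheff\'e. The interior piece is also fine: the limiting law of $(\mathcal{Q}(a+\delta),\mathcal{Q}(b-\delta))$ has a density on $\weyl_k\times\weyl_k$. By Portmanteau, for all large $n$ these values lie with high probability in a fixed compact subset of the open chamber, where the acceptance probability is bounded below. (The paper does not argue this directly; it cites \cite[Lemma 4.17]{DSY26}.)

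The gap is the endpoint step. It is the only place where the degeneracy of $\vec x,\vec y$ matters for tightness, and you list options without an argument. Your second option fails as stated. Density bounds at finitely many fixed times do not control $\sup_{t\in[a,a+\delta]}|\mathcal{Q}_i(t)-x_i^n|$. That would require increment moment bounds uniform in $n$, i.e.\ two-time confluent Karlin--McGregor estimates near the chamber boundary, which you do not supply. Conditioning on $\mathcal{Q}(a+\delta)$ only reproduces the same degenerate problem on a shorter interval. Your first option points the right way, but ``shifted versions of the top and bottom curves'' is not the mechanism. What you need are comparison ensembles whose boundary data are \emph{uniformly separated}.

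Concretely, for $\eta>0$ put $u^+_i=x^n_i+(k-i+1)\eta$ and $u^-_i=x^n_i-i\eta$, and define $v^\pm_i$ from $\vec{y}\,^n$ the same way. These data lie in $\weyl_k$ with all gaps at least $\eta$, satisfy $u^-_i\le x^n_i\le u^+_i$ and $|u^\pm_i-x^n_i|\le k\eta$, and are bounded uniformly in $n$. Apply Lemma \ref{Lem.MonCoup} with $g^{\mathsf{t}}=g^{\mathsf{b}}=-\infty$, once with $\mathbb{P}_{\mathrm{avoid}}^{a,b,\vec{x}\,^n,\vec{y}\,^n}$ below $\mathbb{P}_{\mathrm{avoid}}^{a,b,\vec u^+,\vec v^+}$ and once with it above $\mathbb{P}_{\mathrm{avoid}}^{a,b,\vec u^-,\vec v^-}$. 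For the comparison ensembles, the linear interpolations of the boundary data are $\eta$-separated. Hence $\mathbb{P}_{\mathrm{free}}^{a,b,\vec u^\pm,\vec v^\pm}(E_{\mathrm{avoid}})\ge p_\eta^k>0$, where $p_\eta$ is the probability that a bridge from $0$ to $0$ on $[a,b]$ stays within $\eta/3$ of zero. Therefore
\[
\mathbb{P}_{\mathrm{avoid}}^{a,b,\vec u^\pm,\vec v^\pm}\Bigl(\sup_{t\in[a,a+\delta]}|\mathcal{Q}_i(t)-u^\pm_i|>\eta\Bigr)\le p_\eta^{-k}\,\mathbb{P}_{\mathrm{free}}^{a,b,\vec u^\pm,\vec v^\pm}\Bigl(\sup_{t\in[a,a+\delta]}|B_i(t)-u^\pm_i|>\eta\Bigr),
\]
and the right side tends to $0$ as $\delta\to0$, uniformly in $n$. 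By the two pathwise orderings and a union bound, $\sup_{t\in[a,a+\delta]}|\mathcal{Q}_i(t)-x^n_i|\le(k+1)\eta$ with high probability uniformly in $n$, and similarly near $b$. Combined with your interior estimate, this gives the uniform modulus-of-continuity bound and hence tightness. Your finite-dimensional limits then identify the limit.
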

\begin{proof} The statement is a special case of \cite[Lemma 4.17]{DSY26}, corresponding to $f_n = f = \infty$ and $g_n = g = -\infty$.
\end{proof}

We end this section with a lemma, which shows that the Brownian Gibbs property is preserved under affine shifts and Brownian scaling.
\begin{lemma}\label{Lem.Affine} Let $\mathcal{L}$ be an $\mathbb{N}$-indexed line ensemble on $\mathbb{R}$ satisfying the Brownian Gibbs property. Then, for any $\lambda > 0$ and any $a, h, v \in \mathbb{R}$, the line ensemble $\tilde{\mathcal{L}} = \{\tilde{\mathcal{L}}_i\}_{i \geq 1}$ defined by
$$\tilde{\mathcal{L}}_i(t) = \lambda^{-1}\mathcal{L}_i(\lambda^2 t + h) + v + at, \mbox{ for } i \geq 1, t \in \mathbb{R},$$
also satisfies the Brownian Gibbs property.
\end{lemma}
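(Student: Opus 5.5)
The plan is to verify the Brownian Gibbs property for $\tilde{\mathcal{L}}$ directly from Definition \ref{Def.BGPVanilla}, by moving the resampling statement back to $\mathcal{L}$ through the map $t \mapsto \lambda^2 t + h$. Non-intersection is preserved immediately, because the transformation applies the same increasing affine map to every curve at each time: $x \mapsto \lambda^{-1}x + v + at$. Fix $[a',b']$ (written with primes to avoid a clash with the slope $a$) and $K = \llbracket k_1, k_2 \rrbracket$. Put $\alpha = \lambda^2 a' + h$ and $\beta = \lambda^2 b' + h$.

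First, the external $\sigma$-algebras are equal. We have $\tilde{\mathcal{L}}_i(s) = T_s(\mathcal{L}_i(\lambda^2 s + h))$ with $T_s$ a deterministic bijection, and $s \in (a',b')$ if and only if $\lambda^2 s + h \in (\alpha,\beta)$. Hence $\mathcal{F}_{\mathrm{ext}}(K\times(a',b'))$ for $\tilde{\mathcal{L}}$ coincides with $\mathcal{F}_{\mathrm{ext}}(K\times(\alpha,\beta))$ for $\mathcal{L}$.

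Next, define the measurable map $\Psi: C(K\times[\alpha,\beta]) \to C(K\times[a',b'])$ by $\Psi(\mathcal{Q})_i(s) = \lambda^{-1}\mathcal{Q}_i(\lambda^2 s+h) + v + as$. Then $\tilde{\mathcal{L}}|_{K\times[a',b']} = \Psi(\mathcal{L}|_{K\times[\alpha,\beta]})$. Given a bounded measurable $F$, apply (\ref{BGPTower}) for $\mathcal{L}$ to $F\circ\Psi$. This gives
$$\mathbb{E}[F(\tilde{\mathcal{L}}|_{K\times[a',b']}) \mid \mathcal{F}_{\mathrm{ext}}] = \mathbb{E}_{\mathrm{avoid}}^{\alpha,\beta,\vec{x},\vec{y},f,g}[F(\Psi(\tilde{\mathcal{Q}}))],$$
where the boundary data come from $\mathcal{L}$.

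The remaining step, which is the only one with real content, is the key lemma: the pushforward of $\mathbb{P}_{\mathrm{avoid}}^{\alpha,\beta,\vec{x},\vec{y},f,g}$ under $\Psi$ equals $\mathbb{P}_{\mathrm{avoid}}^{a',b',\Psi\vec{x},\Psi\vec{y},\Psi f,\Psi g}$. Here $\Psi$ acts on the boundary values and on $f,g$ in the same way, and $\pm\infty$ is kept fixed. Once this holds, the transformed data are exactly $\tilde{\mathcal{L}}$'s entry and exit values and its neighbouring curves $\tilde{\mathcal{L}}_{k_1-1}, \tilde{\mathcal{L}}_{k_2+1}$ on $[a',b']$, so (\ref{BGPTower}) holds for $\tilde{\mathcal{L}}$.

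To prove the key lemma, first treat the free measure. Brownian scaling together with (\ref{Eq.BBDef}) shows that if $B$ is a Brownian bridge on $[\alpha,\beta]$ from $x$ to $y$, then $s\mapsto\lambda^{-1}B(\lambda^2 s+h)$ is a Brownian bridge on $[a',b']$ from $\lambda^{-1}x$ to $\lambda^{-1}y$. Indeed, it is a Gaussian process whose mean is linear and whose covariance is $\lambda^{-2}\cdot\lambda^2\,\mathrm{Cov}$ of the rescaled standard bridge. Adding the deterministic linear function $v+as$ turns a bridge into a bridge with endpoints shifted by $v+aa'$ and $v+ab'$, since a bridge is its linear interpolation plus an independent centred bridge. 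Independence across the $k$ curves is preserved, so $\Psi$ pushes $\mathbb{P}_{\mathrm{free}}$ forward to $\mathbb{P}_{\mathrm{free}}$ with the transformed endpoints. Second, because the map $T_s$ is strictly increasing at each time $s$, $\Psi$ maps the avoidance event $E^{f,g}_{\mathrm{avoid}}$ exactly onto $E^{\Psi f,\Psi g}_{\mathrm{avoid}}$. Conditioning commutes with the bijection $\Psi$, which gives the lemma. I expect no serious obstacle. The only care needed is the bookkeeping of the variable names and of the conventions $f=\infty$ when $k_1=1$ and $g=-\infty$.
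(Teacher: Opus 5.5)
Your argument is correct, but it takes a different route from the paper. The paper gives no direct verification. It handles the case $a=0$ (scaling and shifts only) by citing \cite[Lemma 2.5]{DS25} with $N=\infty$ and $A_i=0$, and then adds the linear term $at$ by citing \cite[Lemma 2.8]{DS25} with $N=\infty$ and $A=0$.

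You instead do everything in one pass:
\begin{enumerate}
\item You identify the two external $\sigma$-algebras through the time change $s\mapsto\lambda^2 s+h$.
\item You pull $F$ back through the map $\Psi$, which is a homeomorphism.
\item You show that $\Psi$ pushes $\mathbb{P}_{\mathrm{avoid}}^{\alpha,\beta,\vec{x},\vec{y},f,g}$ forward onto the avoiding measure with the transformed boundary data.
\end{enumerate}
Step 3 rests on three facts. Substituting $t=\lambda^2 s+h$ into (\ref{Eq.BBDef}) gives an exact pathwise identity with the same standard bridge, since $(\beta-\alpha)^{1/2}=\lambda(b'-a')^{1/2}$ absorbs the factor $\lambda^{-1}$. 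The function $v+as$ equals its own linear interpolation on $[a',b']$. Each map $x\mapsto\lambda^{-1}x+v+as$ is strictly increasing, so the avoidance events correspond exactly.

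The paper's approach is shorter and relies on lemmas proved in \cite{DS25} in greater generality (finite $N$ and extra parameters). Your proof is self-contained and makes clear that only Brownian scaling of the bridge and the monotonicity of the affine maps are needed.
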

\begin{proof} When $a = 0$, the result follows from \cite[Lemma 2.5]{DS25} with $N = \infty$ and $A_i = 0$ for $i \geq 1$. The general case $a \in \mathbb{R}$ then follows from \cite[Lemma 2.8]{DS25} with $N = \infty$ and $A = 0$.
\end{proof}

%
%
\subsection{Maximum and minimum estimates for Brownian line ensembles}\label{Section4.3} The goal of this section is to establish a few results about the maxima and minima of line ensembles with laws $\mathbb{P}_{\mathrm{avoid}}^{a,b, \vec{x}, \vec{y}, f,g}$ as in Definition \ref{Def.fgAvoidingBE}.

The following result shows that a line ensemble with law $\mathbb{P}_{\mathrm{avoid}}^{a,b, \vec{x}, \vec{y}}$ is likely to stay close to zero, if the boundary data $\vec{x}, \vec{y}$ are close to zero.
\begin{lemma}\label{Lem.StayInCorridor} Fix $k \in \mathbb{N}$, $a,b \in \mathbb{R}$ with $a < b$, and $M^{\mathrm{side}} > 0$. For any $\epsilon > 0$, there exists $A > 0$, depending on $k$, $a$, $b$, $M^{\mathrm{side}}$, and $\epsilon$, such that the following holds. If $\vec{x}, \vec{y} \in \weyl_k$, and $|x_i|, |y_i| \leq M^{\mathrm{side}}$ for $i \in \llbracket 1, k \rrbracket$, then 
\begin{equation}\label{Eq.StayInCorridor}
\mathbb{P}_{\mathrm{avoid}}^{a,b, \vec{x}, \vec{y}}\left( |\mathcal{Q}_i(t)| \geq A \mbox{ for some } (i,t) \in \llbracket 1,k \rrbracket \times [a,b]  \right) < \epsilon.
\end{equation}
\end{lemma}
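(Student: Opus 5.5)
The plan is to reduce, by monotone coupling, to two extremal boundary configurations, and then to handle those by a compactness argument based on Lemma \ref{Lem.BridgeEnsemblesCty}. It suffices to bound separately the probabilities of the events $\{\max_{t\in[a,b]} \mathcal{Q}_1(t) \geq A\}$ and $\{\min_{t \in [a,b]} \mathcal{Q}_k(t) \leq -A\}$, each by $\epsilon/2$. The ordering $\mathcal{Q}_1 > \cdots > \mathcal{Q}_k$ shows that these two events cover the event in (\ref{Eq.StayInCorridor}).

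For the upper event, we use Lemma \ref{Lem.MonCoup} with $g^{\mathsf{t}} = g^{\mathsf{b}} = -\infty$. It lets us dominate $\mathbb{P}_{\mathrm{avoid}}^{a,b,\vec{x},\vec{y}}$ by $\mathbb{P}_{\mathrm{avoid}}^{a,b,\vec{x}\,',\vec{y}\,'}$ with the fixed data
$$x_i' = y_i' = M^{\mathrm{side}} + (k-i+1), \quad i \in \llbracket 1,k\rrbracket,$$
which satisfy $x_i \leq x_i'$ and $y_i \leq y_i'$. Under the coupling $\mathcal{Q}_1 \leq \mathcal{Q}_1'$, so the probability of the upper event is at most $\mathbb{P}_{\mathrm{avoid}}^{a,b,\vec{x}\,',\vec{y}\,'}(\max \mathcal{Q}_1' \geq A)$. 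This is a single fixed law on $C(\llbracket 1,k\rrbracket \times [a,b])$, and continuous curves on a compact interval are bounded, so the probability tends to $0$ as $A \to \infty$. We can therefore choose $A$ making it smaller than $\epsilon/2$, and $A$ depends only on $k,a,b,M^{\mathrm{side}},\epsilon$.

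For the lower event we argue symmetrically. Either we use the lower boundary data $x_i'' = y_i'' = -M^{\mathrm{side}} - i$ with the coupling in the other direction, or we reflect $\mathcal{Q} \mapsto -\mathcal{Q}$, which maps $\mathbb{P}_{\mathrm{avoid}}^{a,b,\vec{x},\vec{y}}$ to $\mathbb{P}_{\mathrm{avoid}}$ with reversed and negated data, since Brownian bridges are symmetric. Lemma \ref{Lem.MonCoup} as stated only involves a lower boundary $g$ and $f=\infty$, and with $g = -\infty$ it applies in both orderings, so $\mathcal{Q}_k \geq \mathcal{Q}_k''$ under the coupling. The same fixed-law argument then gives the bound $\epsilon/2$.

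I expect no serious obstacle. The only subtlety is that the comparison data must be uniform in $\vec{x},\vec{y}$ and must lie in the open chamber $\weyl_k$, which is why I choose strictly ordered shifts of $\pm M^{\mathrm{side}}$ instead of using Lemma \ref{Lem.BridgeEnsemblesCty}. That lemma would give an alternative proof by contradiction: take a sequence of data violating the bound, extract a convergent subsequence in the compact set $[-M^{\mathrm{side}},M^{\mathrm{side}}]^{2k}$, and use weak convergence to reach a contradiction.
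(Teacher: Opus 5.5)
Your proof is correct, but it takes a different route from the paper's. The paper argues by contradiction, and this is the alternative you sketch in your last paragraph. It takes boundary data $\vec{x}\,^n,\vec{y}\,^n$ that violate the bound with $A_n = n$, and extracts a subsequence converging to some $\vec{x},\vec{y} \in \weylc_k$. It then uses Lemma \ref{Lem.BridgeEnsemblesCty} and the Portmanteau theorem (the event $\{\max_{i,t}|\mathcal{Q}_i(t)| \geq A_0\}$ is closed) to show that the limiting law $\mathbb{P}_{\mathrm{avoid}}^{a,b,\vec{x},\vec{y}}$ gives mass at least $\epsilon$ to that event for every $A_0$, which contradicts continuity of the paths. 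You instead apply Lemma \ref{Lem.MonCoup} with $g^{\mathsf{t}} = g^{\mathsf{b}} = -\infty$ to sandwich $\mathcal{Q}_1$ from above and $\mathcal{Q}_k$ from below by the extreme curves of two fixed ensembles. This is valid because your comparison data lie in $\weyl_k$ and dominate, respectively are dominated by, every admissible $\vec{x},\vec{y}$, and because the two events you bound are monotone in the curves. Your route is more direct: $A$ comes explicitly from the tails of two fixed laws, with no subsequence extraction and no need for Lemma \ref{Lem.BridgeEnsemblesCty}, i.e.\ no need to extend $\mathbb{P}_{\mathrm{avoid}}$ to degenerate data in $\weylc_k \setminus \weyl_k$. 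The cost is reliance on the monotone coupling. The paper's compactness argument uses no monotonicity, so the same template proves Lemma \ref{Lem.MOCUniform} verbatim. Your sandwiching cannot do that, since $\{\max_i w(\mathcal{Q}_i,\delta) \geq \eta\}$ is not monotone in the curves. The paper does use your style of coupling reduction elsewhere, for instance in Lemma \ref{Lem.NotTooLow}.
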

\begin{proof} Suppose for the sake of contradiction that no such $A > 0$ exists. Then, for each $A_n = n$, there exist $\vec{x}\,^n, \vec{y}\,^n \in \weyl_k$ with $|x_i^n|, |y_i^n| \leq M^{\mathrm{side}}$, such that 
\begin{equation}\label{Eq.StayInCorridorR2}
\mathbb{P}_{\mathrm{avoid}}^{a,b, \vec{x}\,^n, \vec{y}\,^n}\left( |\mathcal{Q}_i(t)  | \geq A_n \mbox{ for some } (i,t) \in \llbracket 1,k \rrbracket \times [a,b]  \right)  \geq \epsilon.
\end{equation}

By possibly passing to a subsequence, we may assume that $\vec{x}\,^n \rightarrow \vec{x}$ and $\vec{y}\,^n \rightarrow \vec{y}$ for some $\vec{x}, \vec{y} \in \weylc_k$. Consequently, for any $A_0 > 0$
\begin{equation}\label{Eq.StayInCorridorR3}
\begin{split}
&\mathbb{P}_{\mathrm{avoid}}^{a,b, \vec{x}, \vec{y}}\left( |\mathcal{Q}_i(t)  | \geq A_0 \mbox{ for some } (i,t) \in \llbracket 1,k \rrbracket \times [a,b]  \right) \\
&\geq \limsup_{n \rightarrow \infty} \mathbb{P}_{\mathrm{avoid}}^{a,b, \vec{x}\,^n, \vec{y}\,^n}\left( |\mathcal{Q}_i(t)  | \geq A_0 \mbox{ for some } (i,t) \in \llbracket 1,k \rrbracket \times [a,b]   \right)\\
&\geq \limsup_{n \rightarrow \infty} \mathbb{P}_{\mathrm{avoid}}^{a,b, \vec{x}\,^n, \vec{y}\,^n }\left( |\mathcal{Q}_i(t)  | \geq A_n \mbox{ for some } (i,t) \in \llbracket 1,k \rrbracket \times [a,b]   \right)\geq \epsilon,
\end{split} 
\end{equation} 
where the first inequality follows from Lemma \ref{Lem.BridgeEnsemblesCty} and the Portmanteau theorem, the second one from $A_n \uparrow \infty$, and the third from (\ref{Eq.StayInCorridorR2}). By continuity of $\mathcal{Q}_i$, we have 
$$\sup_{i \in \llbracket 1, k \rrbracket, t \in [a,b]} |\mathcal{Q}_i(t)  | < \infty, \quad \mathbb{P}_{\mathrm{avoid}}^{a,b, \vec{x}, \vec{y}}\mbox{-a.s.}$$
By the bounded convergence theorem, we conclude
$$ \lim_{A_0 \rightarrow \infty} \mathbb{P}_{\mathrm{avoid}}^{a,b, \vec{x}, \vec{y}}\left( |\mathcal{Q}_i(t)  | \geq A_0 \mbox{ for some } (i,t) \in \llbracket 1,k \rrbracket \times [a,b]  \right)  = 0,$$
which gives the desired contradiction with (\ref{Eq.StayInCorridorR3}).
\end{proof}

The following result shows that a line ensemble with law $\mathbb{P}_{\mathrm{avoid}}^{a,b, \vec{x}, \vec{y}, \infty, g}$ is unlikely to be very high on $[a,b]$ if the boundary data $\vec{x}, \vec{y}$ are close to zero, and the function $g$ is not too high on $[a,b]$.
\begin{lemma}\label{Lem.NoBigMaxBLE} Fix $k \in \mathbb{N}$, $a,b \in \mathbb{R}$ with $a < b$, and $M^{\mathrm{side}}, M^{\mathrm{bot}} > 0$. For any $\epsilon > 0$, there exists $A> 0$, depending on $k$, $a$, $b$, $M^{\mathrm{side}}$, $M^{\mathrm{bot}}$ and $\epsilon$, such that the following holds. If $\vec{x}, \vec{y} \in \weyl_k$ satisfy $|x_i|, |y_i| \leq M^{\mathrm{side}}$ for $i \in \llbracket 1, k \rrbracket$, and $g: [a, b] \rightarrow [-\infty, \infty)$ is continuous and satisfies $x_k > g(a)$, $y_k > g(b)$, and $g(t) \leq M^{\mathrm{bot}}$ for all $t \in [a, b]$, then 
\begin{equation}\label{Eq.NoBigMaxBLE}
\mathbb{P}_{\mathrm{avoid}}^{a,b, \vec{x}, \vec{y}, \infty, g}\left( \mathcal{Q}_i(t)  \geq A  \mbox{ for some } (i,t) \in \llbracket 1,k \rrbracket \times [a,b]  \right) < \epsilon.
\end{equation}
\end{lemma}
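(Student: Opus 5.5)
The plan is to reduce Lemma~\ref{Lem.NoBigMaxBLE} to Lemma~\ref{Lem.StayInCorridor} by monotone coupling, replacing the bottom boundary $g$ with a higher, simpler boundary that does not depend on $g$. Since raising the bottom boundary only pushes the curves up stochastically, an upper bound for the maximum under the new boundary also bounds the maximum under $g$.

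\textbf{Choice of boundary and coupling.} Put $L = M^{\mathrm{bot}} + M^{\mathrm{side}}$. The replacement boundary should dominate $g$ and lie below $x_k, y_k$ at the endpoints. A constant is not allowed: $x_k$ may be as low as $-M^{\mathrm{side}}$, which is below $M^{\mathrm{bot}}$. I therefore avoid modifying $g$ and instead raise the boundary data.

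Set $\vec{x}\,' = (x_1 + 2L, \dots, x_k + 2L)$ and $\vec{y}\,' = (y_1 + 2L, \dots, y_k + 2L)$. Take $g^{\mathsf{t}} \equiv M^{\mathrm{bot}}$ and $g^{\mathsf{b}} = g$. Then $g^{\mathsf{t}} \ge g^{\mathsf{b}}$, $x_i \le x_i'$, $y_i \le y_i'$, and $x_k' \ge -M^{\mathrm{side}} + 2L > M^{\mathrm{bot}}$, and similarly for $y_k'$. So Lemma~\ref{Lem.MonCoup} applies. It couples $\mathcal{Q}\sim \mathbb{P}_{\mathrm{avoid}}^{a,b,\vec{x},\vec{y},\infty,g}$ below $\mathcal{Q}'\sim\mathbb{P}_{\mathrm{avoid}}^{a,b,\vec{x}\,',\vec{y}\,',\infty,M^{\mathrm{bot}}}$. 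Hence it suffices to bound $\max_{i,t}\mathcal{Q}'_i(t)$ uniformly over boundary data with $|x_i'|,|y_i'|\le M^{\mathrm{side}}+2L$ and $x_k', y_k' > M^{\mathrm{bot}}$.

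\textbf{Removing the constant bottom.} Shift everything down by $M^{\mathrm{bot}}$, so the bottom barrier is $0$. The endpoints then lie in $(0, M']$ with $M' = M^{\mathrm{side}} + 2L + M^{\mathrm{bot}}$.

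I would prove the uniform upper-tail bound by the same compactness argument as in Lemma~\ref{Lem.StayInCorridor}. This needs continuity of $\mathbb{P}_{\mathrm{avoid}}^{a,b,\vec{x}^n,\vec{y}^n,\infty,0}$ in the boundary data when the limiting data may hit the barrier $0$ or have coinciding coordinates. That continuity is the main obstacle, since Lemma~\ref{Lem.BridgeEnsemblesCty} only covers $g=-\infty$.

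To sidestep it, I raise the endpoints once more, by $+1$. Monotonicity of Lemma~\ref{Lem.MonCoup} in the boundary data allows this, and afterwards the endpoints are at distance at least $1$ from the barrier. Coinciding coordinates are still possible, though.

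\textbf{Alternative that avoids barrier continuity entirely.} Compare $k$ avoiding bridges above $0$ with a free ensemble of $k$ non-intersecting bridges. The probability of staying above $0$ is bounded below uniformly once the endpoints lie in $[1, M'+1]$: non-intersecting bridges with bounded endpoints stay above $-A$ with high probability by Lemma~\ref{Lem.StayInCorridor}, and we can shift up by $A+1$ before comparing. With this lower bound $p>0$ on the conditioning event, we get
\[
\mathbb{P}_{\mathrm{avoid}}^{\infty,0}(\max \ge A) \le p^{-1}\,\mathbb{P}_{\mathrm{avoid}}(\max\ge A),
\]
and Lemma~\ref{Lem.StayInCorridor} then makes the right-hand side less than $\epsilon$ for $A$ large. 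This reduction through a uniform lower bound on the acceptance probability is the step I expect to require the most care.
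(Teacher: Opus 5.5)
Your final route (the ``alternative'') is correct and is essentially the paper's proof: push the boundary data up via Lemma~\ref{Lem.MonCoup}, write the barrier law as a ratio of expectations under the barrier-free $\mathbb{P}_{\mathrm{avoid}}$, and bound both the acceptance probability and the numerator with Lemma~\ref{Lem.StayInCorridor}. The paper skips your first coupling of $g$ up to the constant $M^{\mathrm{bot}}$; it raises directly to the fixed data $u_i = v_i = C - i$ with $C = A_1 + k + M^{\mathrm{side}} + M^{\mathrm{bot}}$ and only uses $g \leq M^{\mathrm{bot}}$ to bound the acceptance probability below by $\mathbb{P}_{\mathrm{avoid}}^{a,b,\vec{u},\vec{v}}(\mathcal{Q}_k > M^{\mathrm{bot}} \text{ on } [a,b])$. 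Note that the uniform acceptance lower bound comes from raising the endpoints by more than the corridor constant, not from the window $[1, M'+1]$ alone, and the compactness detour needing continuity with a barrier is unnecessary.
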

\begin{proof} Let $\epsilon_0 \in (0,1)$ be sufficiently small so that $\epsilon_0(1-\epsilon_0)^{-1} < \epsilon$. From Lemma \ref{Lem.StayInCorridor} applied to $k,a,b$ as in the present setup, $M^{\mathrm{side}} = k$ and $\epsilon = \epsilon_0$, we can find $A_1 > 0$, such that if $\vec{x}, \vec{y} \in \weyl_k$, and $|x_i|, |y_i| \leq k$ for $i \in \llbracket 1, k \rrbracket$, then 
\begin{equation}\label{Eq.NBMStayInCorridor}
\mathbb{P}_{\mathrm{avoid}}^{a,b, \vec{x}, \vec{y}}\left( |\mathcal{Q}_i(t)| \geq A_1  \mbox{ for some } (i,t) \in \llbracket 1,k \rrbracket \times [a,b]  \right) < \epsilon_0.
\end{equation}

Define the function $G_{g}: C( \llbracket 1, k \rrbracket \times [a, b] ) \rightarrow [0,1]$ through
$$G_{g}\left( \{h_i\}_{i = 1}^{k} \right) = {\bf 1} \left\{ h_k(t) > g(t)  \mbox{ for all } t \in [a,b] \right\},$$ 
and for $r > 0$ the event
$$F_r = \{\mathcal{Q}_i(t) \geq r  \mbox{ for some } (i,t) \in \llbracket 1,k \rrbracket \times [a,b] \}.$$
Define $C = A_1 + k + M^{\mathrm{side}} + M^{\mathrm{bot}}$, $\vec{u}, \vec{v}, \vec{x}\,', \vec{y}\,' \in \weyl_k$ by $u_i = v_i =  C -i$ and $x_i' = y_i' = -i$ for $i \in \llbracket 1, k \rrbracket$, and note that 
\begin{equation}\label{Eq.NBMBoundaryOrdered}
x_i \leq u_i, \hspace{2mm} y_i \leq v_i \mbox{ for } i \in \llbracket 1, k \rrbracket.
\end{equation}

Observe that we have the following tower of inequalities
\begin{equation}\label{Eq.NBMTower}
\begin{split}
&\mathbb{P}_{\mathrm{avoid}}^{a,b, \vec{x}, \vec{y}, \infty, g}\left(F_{C+A_1} \right) \leq  \mathbb{P}_{\mathrm{avoid}}^{a,b, \vec{u}, \vec{v}, \infty, g}\left(F_{C+A_1} \right) = \frac{\mathbb{E}_{\mathrm{avoid}}^{a,b, \vec{u}, \vec{v}}\left[{\bf 1}_{F_{C+A_1}} \cdot G_{g}(\mathcal{Q})\right]}{\mathbb{E}_{\mathrm{avoid}}^{a,b, \vec{u}, \vec{v}}\left[ G_g(\mathcal{Q}) \right]} \\
& \leq \frac{\mathbb{E}_{\mathrm{avoid}}^{a,b, \vec{u}, \vec{v}}\left[{\bf 1}_{F_{C+A_1}} \right]}{\mathbb{P}_{\mathrm{avoid}}^{a,b, \vec{u}, \vec{v}}\left( \mathcal{Q}_k(t) > M^{\mathrm{bot}} \mbox{ for all } t \in [a,b]  \right)} = \frac{\mathbb{P}_{\mathrm{avoid}}^{a,b, \vec{x}', \vec{y}'}\left(F_{A_1} \right)}{\mathbb{P}_{\mathrm{avoid}}^{a,b, \vec{x}', \vec{y}'}\left( \mathcal{Q}_k(t) > M^{\mathrm{bot}} - C \mbox{ for all } t \in [a,b]  \right)} \\
& \leq \frac{\epsilon_0}{\mathbb{P}_{\mathrm{avoid}}^{a,b, \vec{x}', \vec{y}'}\left( \mathcal{Q}_k(t) > -A_1 \mbox{ for all } t \in [a,b]  \right)} \leq \frac{\epsilon_0}{1 - \epsilon_0} < \epsilon.
\end{split} 
\end{equation}
Indeed, the first inequality follows from the monotone coupling in Lemma \ref{Lem.MonCoup} and (\ref{Eq.NBMBoundaryOrdered}). The equality on the first line follows from Definition \ref{Def.fgAvoidingBE}. In going from the first to the second line, we used that $G_g(\mathcal{Q}) \in [0,1]$ and that $g(t) \leq M^{\mathrm{bot}}$ for all $t \in [a,b]$. The equality on the second line follows by a vertical shift of the boundary data by $C$. In going from the second to the third line, we used the definition of $A_1$ and (\ref{Eq.NBMStayInCorridor}), as well as the fact that $M^{\mathrm{bot}} - C \leq - A_1$. The second inequality on the third line follows again from (\ref{Eq.NBMStayInCorridor}), and the last one by the definition of $\epsilon_0$.

Equation (\ref{Eq.NBMTower}) implies (\ref{Eq.NoBigMaxBLE}) with $A = A_1 + C$.
\end{proof}

We end this section with a lemma, which roughly shows that a line ensemble with law $\mathbb{P}_{\mathrm{avoid}}^{a,b, \vec{x}, \vec{y}, \infty, g}$ is not too low on $[a,b]$ if its boundary data $\vec{x}, \vec{y}$ are not too low.
\begin{lemma}\label{Lem.NotTooLow} Fix $k \in \mathbb{N}$, $a,b \in \mathbb{R}$ with $a < b$, and $P_1, P_2 \in \mathbb{R}$. For any $\epsilon > 0$, there exists $A > 0$, depending on $k$ and $\epsilon$, such that the following holds. If $\vec{x}, \vec{y} \in \weyl_k$, $x_k  \geq P_1 $, $y_k \geq P_2$, and $g: [a, b] \rightarrow [-\infty, \infty)$ is continuous and satisfies $x_k > g(a)$, $y_k > g(b)$, then 
\begin{equation}\label{Eq.NotTooLow}
\mathbb{P}_{\mathrm{avoid}}^{a,b, \vec{x}, \vec{y}, \infty, g}\left( \mathcal{Q}_k(t)  \leq P_1  \cdot \frac{b - t}{b - a} + P_2  \cdot \frac{t - a}{b - a} - A \sqrt{b-a}  \mbox{ for some $t \in [a,b]$} \right) < \epsilon.
\end{equation}
\end{lemma}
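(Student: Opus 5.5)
The plan is to reduce to the worst case by monotone coupling and then use Brownian scaling to obtain a constant depending only on $k$ and $\epsilon$. Lemma \ref{Lem.MonCoup} lets us lower both the boundary data and the bottom curve, since this only pushes $\mathcal{Q}_k$ down stochastically. The target event $\{\mathcal{Q}_k(t) \leq \ell(t) - A\sqrt{b-a} \mbox{ for some } t\}$, with $\ell(t) = P_1 \frac{b-t}{b-a} + P_2 \frac{t-a}{b-a}$, is decreasing in the curves. Therefore its probability can only increase when we replace $(\vec{x},\vec{y},g)$ by smaller data.

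\textbf{Reduction.} Define $x_i' = P_1 - (i-1)\delta$ and $y_i' = P_2 - (i-1)\delta$ for a small $\delta>0$; we will let $\delta \downarrow 0$, or simply take $\delta = \sqrt{b-a}$. Take $g' \equiv -\infty$. Then $x_i' \leq x_i$ and $y_i' \leq y_i$, because $x_i \geq x_k \geq P_1$. This ordering uses $\delta \geq 0$, which is fine. We also have $g' \leq g$. Lemma \ref{Lem.MonCoup} then gives a coupling in which the ensemble with data $(\vec{x},\vec{y},\infty,g)$ dominates the ensemble with data $(\vec{x}\,',\vec{y}\,',\infty,-\infty)$. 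Hence the probability in (\ref{Eq.NotTooLow}) is bounded by
$$\mathbb{P}_{\mathrm{avoid}}^{a,b,\vec{x}\,',\vec{y}\,'}\left( \mathcal{Q}_k(t) \leq \ell(t) - A\sqrt{b-a} \mbox{ for some } t \in [a,b]\right).$$

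\textbf{Scaling.} Subtract the line $\ell(t)$ from every curve; avoidance is preserved under this shift. Then rescale time by $b-a$ and space by $\sqrt{b-a}$, using the invariance of Brownian bridges under affine shifts and Brownian scaling (as in Lemma \ref{Lem.Affine}). Take $\delta = \sqrt{b-a}$. The quantity above becomes
$$\mathbb{P}_{\mathrm{avoid}}^{0,1,\vec{z},\vec{z}}\left(\min_{t\in[0,1]} \mathcal{Q}_k(t) \leq -A\right), \qquad z_i = -(i-1).$$
This no longer depends on $a$, $b$, $P_1$ or $P_2$.

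\textbf{Choice of $A$.} Apply Lemma \ref{Lem.StayInCorridor} with interval $[0,1]$, $M^{\mathrm{side}} = k$ and the given $\epsilon$. It produces $A$ depending only on $k$ and $\epsilon$ such that the last probability is less than $\epsilon$. This is exactly (\ref{Eq.NotTooLow}).

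The main obstacle is verifying the hypotheses of Lemma \ref{Lem.MonCoup}. That lemma requires $\vec{x}\,',\vec{y}\,' \in \weyl_k$, which holds for $\delta>0$, and requires the bottom curves to be ordered, which holds here. There is also a detail when $g$ equals $-\infty$ or is otherwise nonstandard, but the lemma allows $g$ valued in $[-\infty,\infty)$. The scaling identity itself is routine: after subtracting a linear function and rescaling, a Brownian bridge is again a Brownian bridge.
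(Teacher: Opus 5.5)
Your proof is correct. Its skeleton matches the paper's: a monotone coupling via Lemma \ref{Lem.MonCoup} that removes $g$ and lowers the boundary data, subtraction of the line $\ell$, and Brownian rescaling to $[0,1]$. The difference lies in how the final estimate is obtained.

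You lower the data in a single step to $x_i'=P_1-(i-1)\sqrt{b-a}$, $y_i'=P_2-(i-1)\sqrt{b-a}$, $g'\equiv-\infty$. After rescaling, this becomes the single fixed measure $\mathbb{P}_{\mathrm{avoid}}^{0,1,\vec z,\vec z}$ with $z_i=-(i-1)$. The spacing $\sqrt{b-a}$ is exactly what makes $A$ independent of $a,b,P_1,P_2$. You should therefore drop the alternative ``$\delta\downarrow 0$'', which is unnecessary and would require Lemma \ref{Lem.BridgeEnsemblesCty}. Because the resulting measure is fixed, you do not even need the uniformity in Lemma \ref{Lem.StayInCorridor}. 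It suffices that $\min_{[0,1]}\mathcal{Q}_k>-\infty$ almost surely by continuity, so $\mathbb{P}(\min_{[0,1]}\mathcal{Q}_k\le -A)\to 0$ as $A\to\infty$.

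The paper orders the steps differently. It rescales first, keeping general data with $u_k,v_k\ge 0$. It then lowers the data to $-2iA_1$, where $A_1$ is a tail constant for a single free Brownian bridge. Finally it bounds the avoiding probability by a ratio: a free-bridge tail probability divided by $\mathbb{P}_{\mathrm{free}}(E_{\mathrm{avoid}})\ge(1-\epsilon_0)^k$, the latter obtained by confining each bridge to a tube.

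The paper's route is explicit: it gives $A=(2k+1)A_1$ using only one-bridge estimates, with no appeal to properties of $k$-curve avoiding ensembles. Your route is shorter and cleaner but yields no explicit constant.
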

\begin{proof} From Lemma \ref{Lem.MonCoup} it suffices to prove (\ref{Eq.NotTooLow}) when $g = -\infty$, which we assume in the sequel. From Definition \ref{Def.fgAvoidingBE}, we have that if we define
$$\tilde{\mathcal{Q}}_i(s) = (b-a)^{-1/2}\left(\mathcal{Q}_i(a + s(b-a)) - P_1  \cdot (1-s) - P_2  \cdot s \right) \mbox{ for } i \in \llbracket 1, k \rrbracket \mbox{ and } s \in [0,1], $$
then $\tilde{\mathcal{Q}} = \{\tilde{\mathcal{Q}}_i\}_{i = 1}^k$ has law $\mathbb{P}_{\mathrm{avoid}}^{0,1, \vec{u}, \vec{v}}$, where $u_i = (b-a)^{-1/2} (x_i-P_1)$, $v_i = (b-a)^{-1/2}(y_i-P_2)$ for $i \in \llbracket 1, k \rrbracket$. By rephrasing (\ref{Eq.NotTooLow}) in terms of $\tilde{\mathcal{Q}}$, and then dropping the tilde from the notation, we see that it suffices to show that there exists $A > 0$, such that if $u_k, v_k \geq 0$, then 
\begin{equation}\label{Eq.NotTooLowR1}
\mathbb{P}_{\mathrm{avoid}}^{0,1, \vec{u}, \vec{v}}\left( \inf_{t \in [0,1]}\mathcal{Q}_k(t) \leq   - A \right) < \epsilon.
\end{equation}

Let $\epsilon_0 \in (0,1)$ be sufficiently small so that $\epsilon_0 (1- \epsilon_0)^{-k} < \epsilon$. From Lemma \ref{Lem.StayInCorridor}, applied to $k = 1$, $M^{\mathrm{side}} = 1$, $\epsilon = \epsilon_0$, and $a = 0$, $b = 1$, we can find $A_1 > 0$, such that 
\begin{equation}\label{Eq.FreeBridgeInCorridor}
\mathbb{P}_{\mathrm{free}}^{0,1, 0, 0}\left( \sup_{t \in [0,1]}\left|B(t)\right| < A_1 \right) > 1- \epsilon_0.
\end{equation}
Note that when $k = 1$ the law $\mathbb{P}_{\mathrm{avoid}}^{0,1, 0, 0}$ is precisely that of a Brownian bridge $B$ as in (\ref{Eq.BBDef}) from $B(0) = 0$ to $B(1) = 0$. In the rest of the proof we put $A = (2k+1) A_1$ and proceed to prove (\ref{Eq.NotTooLowR1}) for this choice of $A$. \\

We define $\vec{u}\,^{\mathrm{new}}=(u^{\mathrm{new}}_1,\dots,u^{\mathrm{new}}_{k})\in\weyl_{k}$ and $\vec{v}\,^{\mathrm{new}}=(v^{\mathrm{new}}_1,\dots,v^{\mathrm{new}}_{k})\in\weyl_{k}$ through
$$u^{\mathrm{new}}_i =  - 2i A_1, \quad v^{\mathrm{new}}_i =  - 2i A_1, \mbox{ for } i \in \llbracket 1, k \rrbracket.$$
Observe that since $u_k, v_k \geq 0$, we have $u^{\mathrm{new}}_i \leq u_i$, $v^{\mathrm{new}}_i \leq v_i$ for $i \in \llbracket 1, k \rrbracket$, and so by Lemma \ref{Lem.MonCoup}
\begin{equation}\label{Eq.UVNew}
\mathbb{P}_{\mathrm{avoid}}^{0,1, \vec{u}, \vec{v}}\left( \inf_{t \in [0,1]}\mathcal{Q}_k(t) \leq   - A \right)  \leq \mathbb{P}_{\mathrm{avoid}}^{0,1, \vec{u}\,^{\mathrm{new}}, \vec{v}\,^{\mathrm{new}}}\left( \inf_{t \in [0,1]}\mathcal{Q}_k(t) \leq   - A  \right).
\end{equation}
In addition, we note that if $E_{\mathrm{avoid}}$ is as in (\ref{Eq.DefEavoid}) for $f = \infty$ and $g = -\infty$, then by Definition \ref{Def.fgAvoidingBE}
\begin{equation}\label{Eq.AvoidInFree}
\begin{split}
 &\mathbb{P}_{\mathrm{avoid}}^{0,1, \vec{u}\,^{\mathrm{new}}, \vec{v}\,^{\mathrm{new}}}\left( \inf_{t \in [0,1]} \mathcal{Q}_k(t) \leq -A \right) \leq \frac{\mathbb{P}_{\mathrm{free}}^{0,1, \vec{u}\,^{\mathrm{new}}, \vec{v}\,^{\mathrm{new}}} \left( \inf_{t \in [0,1]} B_k(t) \leq   - A \right)}{\mathbb{P}_{\mathrm{free}}^{0,1, \vec{u}\,^{\mathrm{new}}, \vec{v}\,^{\mathrm{new}}}(E_{\mathrm{avoid}})}.
\end{split}
\end{equation}

From the inclusion 
\begin{equation*}
\cap_{i = 1}^k E_i \subseteq E_{\mathrm{avoid}}, \mbox{ where } E_i = \left\{ \sup_{t \in [0,1]}\left|B_i(t) + 2i A_1 \right| < A_1 \right\},
\end{equation*}
and independence, we conclude 
\begin{equation}\label{Eq.EavoidLB}
\begin{split}
&\mathbb{P}_{\mathrm{free}}^{0,1, \vec{u}\,^{\mathrm{new}}, \vec{v}\,^{\mathrm{new}}}(E_{\mathrm{avoid}})  \geq \prod_{i = 1}^k \mathbb{P}_{\mathrm{free}}^{0,1,u^{\mathrm{new}}_i, v^{\mathrm{new}}_i } \left(\sup_{t \in [0,1]}\left|B(t)  + 2i A_1 \right| < A_1 \right) > (1- \epsilon_0)^k.
\end{split}
\end{equation}
In deriving the last inequality in (\ref{Eq.EavoidLB}), we used that each of the probabilities on the last line is equal to the one in (\ref{Eq.FreeBridgeInCorridor}), once we vertically translate by $2iA_1$. 

Lastly, we observe that since $A = (2k+1)A_1$, we have
\begin{equation}\label{Eq.LowFreeUB}
\begin{split}
&\mathbb{P}_{\mathrm{free}}^{0,1, \vec{u}\,^{\mathrm{new}}, \vec{v}\,^{\mathrm{new}}} \left(\inf_{t \in [0,1]}B_k(t) \leq  - A \right)  = \mathbb{P}_{\mathrm{free}}^{0,1,  - 2kA_1,- 2kA_1} \left(\inf_{t \in [0,1]}B(t) \leq   - (2k+1)A_1 \right) \\
& = \mathbb{P}_{\mathrm{free}}^{0,1, 0, 0} \left(\inf_{t \in [0,1]} B(t) \leq   - A_1 \right) < \epsilon_0,
\end{split}
\end{equation}
where in the last inequality we used (\ref{Eq.FreeBridgeInCorridor}). Combining (\ref{Eq.AvoidInFree}), (\ref{Eq.EavoidLB}), and (\ref{Eq.LowFreeUB}), we conclude
$$\mathbb{P}_{\mathrm{avoid}}^{0,1, \vec{u}\,^{\mathrm{new}}, \vec{v}\,^{\mathrm{new}}}\left( \inf_{t \in [0,1]}\mathcal{Q}_k(t) \leq   - A \right) \leq \epsilon_0 (1 - \epsilon_0)^{-k}.$$
Using the latter, (\ref{Eq.UVNew}) and the fact that $\epsilon_0 (1- \epsilon_0)^{-k} < \epsilon$ by construction, we conclude (\ref{Eq.NotTooLowR1}), and hence the lemma.
\end{proof}

%
%
\subsection{Modulus of continuity estimates for Brownian line ensembles}\label{Section4.4}

For $a, b \in \mathbb{R}$ with $a < b$, $f \in C\left([a,b]\right)$, and $\delta > 0$, we define the usual {\em modulus of continuity}
\begin{equation}\label{Eq.MOCDef}
w(f,\delta) = \sup_{\substack{x,y \in [a,b] \\ |x-y| \leq \delta}} |f(x) - f(y)|.
\end{equation}

The following result shows that line ensembles with laws $\mathbb{P}_{\mathrm{avoid}}^{a,b, \vec{x}, \vec{y}}$ have well-behaved moduli of continuity uniformly as the boundary data $\vec{x}, \vec{y}$ vary over a bounded window.
\begin{lemma}\label{Lem.MOCUniform} Fix $k \in \mathbb{N}$, $a,b \in \mathbb{R}$ with $a < b$, and $M^{\mathrm{side}} > 0$. For any $\epsilon, \eta > 0$, there exists $\delta > 0$, depending on $k$, $a$, $b$, $M^{\mathrm{side}}$, $\epsilon$, and $\eta$, such that the following holds. If $\vec{x}, \vec{y} \in \weyl_k$, and $|x_i|, |y_i| \leq M^{\mathrm{side}}$ for $i \in \llbracket 1, k \rrbracket$, then 
\begin{equation}\label{Eq.MOCUniform}
\mathbb{P}_{\mathrm{avoid}}^{a,b, \vec{x}, \vec{y}}\left( \max_{i \in \llbracket 1, k \rrbracket} w(\mathcal{Q}_i, \delta) \geq \eta \right) < \epsilon.
\end{equation}
\end{lemma}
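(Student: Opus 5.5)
We argue by contradiction and compactness, as in the proof of Lemma \ref{Lem.StayInCorridor}. Suppose the statement fails for some $\epsilon, \eta > 0$. Then for every $n$, taking $\delta_n = 1/n$, there are $\vec{x}\,^n, \vec{y}\,^n \in \weyl_k$ with all coordinates bounded by $M^{\mathrm{side}}$ and
$$\mathbb{P}_{\mathrm{avoid}}^{a,b, \vec{x}\,^n, \vec{y}\,^n}\left( \max_{i \in \llbracket 1, k \rrbracket} w(\mathcal{Q}_i, 1/n) \geq \eta \right) \geq \epsilon.$$
The boundary data lie in a compact set, so we pass to a subsequence with $\vec{x}\,^n \to \vec{x}$ and $\vec{y}\,^n \to \vec{y}$, where $\vec{x}, \vec{y} \in \weylc_k$. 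By Lemma \ref{Lem.BridgeEnsemblesCty}, the measures $\mathbb{P}_{\mathrm{avoid}}^{a,b,\vec{x}\,^n,\vec{y}\,^n}$ converge weakly to $\mathbb{P}_{\mathrm{avoid}}^{a,b,\vec{x},\vec{y}}$ on $C(\llbracket 1,k\rrbracket \times [a,b])$.

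Next, fix $\delta_0 > 0$. The map $\mathcal{Q} \mapsto \max_i w(\mathcal{Q}_i, \delta_0)$ is continuous on $C(\llbracket 1,k\rrbracket \times [a,b])$ with the uniform topology, since $|w(f,\delta)-w(g,\delta)| \leq 2\|f-g\|_\infty$. Hence the event $\{\max_i w(\mathcal{Q}_i,\delta_0) \geq \eta\}$ is closed. For $n \geq 1/\delta_0$, monotonicity of $w$ in $\delta$ gives
$$\mathbb{P}_{\mathrm{avoid}}^{a,b,\vec{x}\,^n,\vec{y}\,^n}\Big(\max_i w(\mathcal{Q}_i,\delta_0)\geq \eta\Big) \geq \epsilon.$$
Applying the Portmanteau theorem for closed sets, we get
$$\mathbb{P}_{\mathrm{avoid}}^{a,b,\vec{x},\vec{y}}\Big(\max_i w(\mathcal{Q}_i,\delta_0)\geq \eta\Big) \geq \epsilon \quad \mbox{for every } \delta_0>0.$$

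To reach a contradiction, note that the limiting law is supported on continuous functions on the compact set $[a,b]$. Each curve is therefore uniformly continuous, so $\max_i w(\mathcal{Q}_i,\delta_0) \to 0$ almost surely as $\delta_0 \downarrow 0$. By bounded convergence, the probability above tends to $0$, which contradicts the lower bound $\epsilon$. Consequently some $\delta = 1/n$ works uniformly over the admissible boundary data.

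The only nontrivial input is the weak continuity of $\mathbb{P}_{\mathrm{avoid}}$ as the boundary data degenerate to $\weylc_k$, and this is exactly what Lemma \ref{Lem.BridgeEnsemblesCty} supplies. The remaining steps are routine; the one point to check is the closedness of the modulus-of-continuity event, which follows from the Lipschitz bound on $w$ noted above.
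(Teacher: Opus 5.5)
Your proposal is correct and follows essentially the same argument as the paper's proof. Both argue by contradiction with $\delta_n = 1/n$, extract convergent boundary data in $\weylc_k$, and then combine Lemma \ref{Lem.BridgeEnsemblesCty}, the Portmanteau theorem, monotonicity of $w$ in $\delta$, and bounded convergence. You also state explicitly the closedness of the event via $|w(f,\delta)-w(g,\delta)| \leq 2\|f-g\|_\infty$, which the paper uses without comment.
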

\begin{proof} Suppose, for the sake of contradiction, that no such $\delta > 0$ exists. Then, for each $\delta_n = 1/n$, there exist $\vec{x}\,^n, \vec{y}\,^n \in \weyl_k$ with $|x_i^n|, |y_i^n| \leq M^{\mathrm{side}}$, such that 
\begin{equation}\label{Eq.MOCUniformR1}
\mathbb{P}_{\mathrm{avoid}}^{a,b, \vec{x}\,^n, \vec{y}\,^n}\left( \max_{i \in \llbracket 1, k \rrbracket} w(\mathcal{Q}_i, \delta_n) \geq \eta \right) \geq \epsilon.
\end{equation}

By possibly passing to a subsequence, we may assume that $\vec{x}\,^n \rightarrow \vec{x}$ and $\vec{y}\,^n \rightarrow \vec{y}$ for some $\vec{x}, \vec{y} \in \weylc_k$. Consequently, for any $\delta_0 > 0$
\begin{equation}\label{Eq.MOCUniformR2}
\begin{split}
&\mathbb{P}_{\mathrm{avoid}}^{a,b, \vec{x}, \vec{y}}\left( \max_{i \in \llbracket 1, k \rrbracket} w(\mathcal{Q}_i, \delta_0) \geq \eta \right) \geq \limsup_{n \rightarrow \infty} \mathbb{P}_{\mathrm{avoid}}^{a,b, \vec{x}\,^n, \vec{y}\,^n }\left( \max_{i \in \llbracket 1, k \rrbracket} w(\mathcal{Q}_i, \delta_0) \geq \eta \right)\\
&\geq \limsup_{n \rightarrow \infty} \mathbb{P}_{\mathrm{avoid}}^{a,b, \vec{x}\,^n, \vec{y}\,^n }\left( \max_{i \in \llbracket 1, k \rrbracket} w(\mathcal{Q}_i, \delta_n) \geq \eta \right) \geq \epsilon,
\end{split} 
\end{equation} 
where the first inequality follows from Lemma \ref{Lem.BridgeEnsemblesCty} and the Portmanteau theorem, the second one from $\delta_n \downarrow 0$, and the third from (\ref{Eq.MOCUniformR1}). By the continuity of $\mathcal{Q}_i$, we have 
$$\max_{i \in \llbracket 1, k \rrbracket} w(\mathcal{Q}_i, \delta_0) \rightarrow 0 \mbox{ as } \delta_0 \rightarrow 0, \quad \mathbb{P}_{\mathrm{avoid}}^{a,b, \vec{x}, \vec{y}}\mbox{-a.s.}$$
By the bounded convergence theorem, we conclude
$$ \lim_{\delta_0 \rightarrow 0+} \mathbb{P}_{\mathrm{avoid}}^{a,b, \vec{x}, \vec{y}}\left( \max_{i \in \llbracket 1, k \rrbracket} w(\mathcal{Q}_i, \delta_0) \geq \eta \right) = 0,$$
which gives the desired contradiction with (\ref{Eq.MOCUniformR2}).
\end{proof}

Before we state the second modulus of continuity estimate of the section, Lemma \ref{Lem.MOCInside} below, we establish an auxiliary result that we require for its proof. The following lemma states that a line ensemble with law $\mathbb{P}_{\mathrm{avoid}}^{a,b, \vec{x}, \vec{y}}$, whose boundary data $\vec{x}, \vec{y}$ are well-separated from a Lipschitz continuous function $g$, stays above $g$ on the whole interval $[a,b]$ at least with some small probability.
\begin{lemma}\label{Lem.StayAboveEnvelope} Fix $k \in \mathbb{N}$, $a,b \in \mathbb{R}$ with $a < b$, and $\delta^{\mathrm{sep}}, M^{\mathrm{side}}, L > 0$. We can find $\rho > 0$, depending on $k$, $a$, $b$, $\delta^{\mathrm{sep}}$, $M^{\mathrm{side}}$, and $L$, such that the following holds. If $\vec{x}, \vec{y} \in \weyl_k$, and $g \in C([a,b])$ altogether satisfy
\begin{itemize}
\item $|x_i|, |y_i| \leq M^{\mathrm{side}}$ for $i \in \llbracket 1, k \rrbracket$, 
\item $|g(a)|, |g(b)| \leq M^{\mathrm{side}}$,
\item $g(a) \leq x_k - \delta^{\mathrm{sep}}$, $g(b) \leq y_k - \delta^{\mathrm{sep}}$,
\item $g$ is Lipschitz continuous with parameter $L$,
\end{itemize}  
then
\begin{equation}\label{Eq.StayAboveEnvelope}
\mathbb{P}_{\mathrm{avoid}}^{a,b, \vec{x}, \vec{y}}\left( \mathcal{Q}_k(t) > g(t) \mbox{ for all } t \in [a,b] \right) > \rho.
\end{equation}
\end{lemma}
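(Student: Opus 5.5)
We will argue by contradiction and compactness, as in Lemmas \ref{Lem.StayInCorridor} and \ref{Lem.MOCUniform}. Suppose no such $\rho$ exists. Then there are $\vec{x}\,^n, \vec{y}\,^n \in \weyl_k$ and $g^n \in C([a,b])$ satisfying the four bullet conditions with
$$\mathbb{P}_{\mathrm{avoid}}^{a,b,\vec{x}\,^n,\vec{y}\,^n}\left(\mathcal{Q}_k(t) > g^n(t) \mbox{ for all } t \in [a,b]\right) \leq 1/n.$$
Because $|g^n(a)| \leq M^{\mathrm{side}}$ and $g^n$ is $L$-Lipschitz, the family $\{g^n\}$ is uniformly bounded and equicontinuous. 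By Arzel\`a--Ascoli and Bolzano--Weierstrass we may therefore pass to a subsequence along which $\vec{x}\,^n \to \vec{x}$ and $\vec{y}\,^n \to \vec{y}$, with $\vec{x}, \vec{y} \in \weylc_k$, and $g^n \to g$ uniformly on $[a,b]$. The limit $g$ is $L$-Lipschitz and satisfies $g(a) \leq x_k - \delta^{\mathrm{sep}}$ and $g(b) \leq y_k - \delta^{\mathrm{sep}}$.

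Next we pass to the limit in the probabilities. The event $U_h = \{\mathcal{Q}_k(t) > h(t) \mbox{ for all } t \in [a,b]\}$ is open in $C(\llbracket 1,k \rrbracket \times [a,b])$. Fix $\eta \in (0, \delta^{\mathrm{sep}}/2)$. For $n$ large we have $\|g^n - g\|_\infty < \eta$, so $U_{g+\eta} \subseteq U_{g^n}$. Lemma \ref{Lem.BridgeEnsemblesCty} gives weak convergence of $\mathbb{P}_{\mathrm{avoid}}^{a,b,\vec{x}\,^n,\vec{y}\,^n}$ to $\mathbb{P}_{\mathrm{avoid}}^{a,b,\vec{x},\vec{y}}$, and the Portmanteau theorem for open sets then yields
$$\mathbb{P}_{\mathrm{avoid}}^{a,b,\vec{x},\vec{y}}(U_{g+\eta}) \leq \liminf_n \mathbb{P}_{\mathrm{avoid}}^{a,b,\vec{x}\,^n,\vec{y}\,^n}(U_{g^n}) = 0.$$
To reach a contradiction it therefore suffices to show $\mathbb{P}_{\mathrm{avoid}}^{a,b,\vec{x},\vec{y}}(U_{g+\eta}) > 0$. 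Here $g+\eta$ is still separated from the endpoints by at least $\delta^{\mathrm{sep}}/2$.

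\textbf{Main step.} The crux is proving this positivity for the limiting measure, which may have coinciding endpoints $x_i = x_{i+1}$. We avoid the degenerate endpoints by splitting the interval. Choose $\tau > 0$ small so that, for all $t \in [a, a+\tau]$ and $t \in [b-\tau, b]$, we have $g(t) + \eta < x_k - \delta^{\mathrm{sep}}/4$ and $g(t)+\eta < y_k - \delta^{\mathrm{sep}}/4$ respectively; this uses the Lipschitz bound on $g$. The limit ensemble is the weak limit of ordered ensembles, so it is almost surely ordered and continuous. Its restriction to $[a+\tau, b-\tau]$ consists of non-intersecting curves, and it should inherit the Brownian Gibbs (Markov) property on that inner interval. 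Two things then give positivity:
\begin{enumerate}
\item With positive probability the curves stay within $\delta^{\mathrm{sep}}/8$ of their starting values on $[a,a+\tau']$, for some smaller $\tau' \leq \tau$. The same holds near $b$. This follows from almost sure continuity at the endpoints.
\item Conditioned on positions at times $a+\tau'$ and $b-\tau'$ lying in a compact set with strictly ordered entries, the inner ensemble is $\mathbb{P}_{\mathrm{avoid}}$ with boundary data in $\weyl_k$. Under this law, the event that all curves stay above $g+\eta$ has positive probability. To see this, compare with free bridges: the free law assigns positive probability to the bridges staying in disjoint tubes around a strictly ordered path configuration lying above $g+\eta$. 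Such a configuration exists because $g$ is continuous, and the tube event has positive Wiener measure.
\end{enumerate}

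One point needs care: the positions at $a+\tau'$ must be strictly ordered and above $g+\eta$ with positive probability. This holds because the ordered curves are strictly separated at any interior time almost surely, and because of item 1. Combining the two items via the Markov decomposition gives $\mathbb{P}_{\mathrm{avoid}}^{a,b,\vec{x},\vec{y}}(U_{g+\eta})>0$, which is the desired contradiction.

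The main obstacle is justifying the Gibbs property of the limiting law from Lemma \ref{Lem.BridgeEnsemblesCty} when $\vec{x},\vec{y}$ lie in $\weylc_k\setminus\weyl_k$. If that is awkward, I would instead handle item 2 directly at finite $n$: I would apply the Gibbs property of $\mathbb{P}_{\mathrm{avoid}}^{a,b,\vec{x}\,^n,\vec{y}\,^n}$ on $[a+\tau', b-\tau']$. I would then lower-bound the avoiding probability by free-bridge tube estimates, uniformly over boundary data in a compact subset of $\weyl_k$. This compact subset would be reached with probability bounded below, uniformly in $n$, by weak convergence. That route gives a lower bound uniform in $n$ and contradicts the bound $1/n$ directly.
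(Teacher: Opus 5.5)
Your contradiction, compactness and Portmanteau skeleton is sound, but the step you call the crux is never established, and the argument breaks exactly there. Once the limits $\vec{x},\vec{y}$ lie in $\weylc_k\setminus\weyl_k$, Lemma \ref{Lem.BridgeEnsemblesCty} tells you only that $\mathbb{P}_{\mathrm{avoid}}^{a,b,\vec{x},\vec{y}}$ is some weak limit. Nothing available gives you the following two facts about this limit law:
\begin{itemize}
\item that it satisfies a Brownian Gibbs property on $[a+\tau',b-\tau']$;
\item that its curves are strictly ordered at an interior time with positive probability.
\end{itemize}
A weak limit of strictly ordered ensembles is a priori only weakly ordered. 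Your sentence ``the ordered curves are strictly separated at any interior time almost surely'' is an unproved claim about a degenerate limit.

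Your finite-$n$ fallback does not escape this. To get a uniform-in-$n$ lower bound on the probability that $(\mathcal{Q}(a+\tau'),\mathcal{Q}(b-\tau'))$ lands in a compact subset of $\weyl_k\times\weyl_k$, you would apply Portmanteau to an open set of strictly separated configurations. That again requires the limit law to charge strictly separated configurations at time $a+\tau'$, which is the same missing fact. Both facts are true and could be imported, e.g.\ from Karlin--McGregor-type formulas or from results on avoiding ensembles with boundary data in $\weylc_k$. But that is substantial outside input which your proposal does not supply.

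The missing idea is to use the monotone coupling before taking limits; this makes your whole ``main step'' unnecessary.
\begin{itemize}
\item Set $\delta=\delta^{\mathrm{sep}}/(2k)$, $u^n_i=g^n(a)+(2k-i)\delta$ and $v^n_i=g^n(b)+(2k-i)\delta$.
\item Since $x^n_i\geq x^n_k\geq g^n(a)+2k\delta$, you get $u^n_i\leq x^n_i$, and similarly $v^n_i\leq y^n_i$. Also $|u^n_i|,|v^n_i|\leq M^{\mathrm{side}}$.
\item By Lemma \ref{Lem.MonCoup}, lowering the boundary data lowers $\mathcal{Q}_k$. So the probability of staying above $g^n$ under $\mathbb{P}_{\mathrm{avoid}}^{a,b,\vec{u}\,^n,\vec{v}\,^n}$ is still at most $1/n$.
\item The gaps are now frozen: $u^n_i-u^n_{i+1}=\delta$ and $u^n_k-g^n(a)=k\delta$, and likewise for $\vec{v}\,^n$. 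Hence the subsequential limits satisfy $\vec{u}\,^\infty,\vec{v}\,^\infty\in\weyl_k$, with $u^\infty_k>g^\infty(a)+\delta/2$ and $v^\infty_k>g^\infty(b)+\delta/2$.
\item Positivity of $\mathbb{P}_{\mathrm{avoid}}^{a,b,\vec{u}\,^\infty,\vec{v}\,^\infty}(\mathcal{Q}_k>g^\infty+\delta/2 \mbox{ on } [a,b])$ is then immediate. Free bridges with such data avoid each other and $g^\infty+\delta/2$ with positive probability; this is the well-posedness behind Definition \ref{Def.fgAvoidingBE}.
\end{itemize}
Your open-set Portmanteau argument, using $\|g^n-g^\infty\|_\infty<\delta/2$ for large $n$, then finishes exactly as you wrote it.
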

\begin{proof} Suppose, for the sake of contradiction, that no such $\rho > 0$ exists. Then, for each $\rho_n = 1/n$, there exist $\vec{x}\,^n, \vec{y}\,^n \in \weyl_k$ and functions $g^n \in C([a,b])$ satisfying the four points in the statement of the lemma, and such that 
\begin{equation}\label{Eq.StayAboveEnvelopeR1}
\mathbb{P}_{\mathrm{avoid}}^{a,b, \vec{x}\,^n, \vec{y}\,^n}\left( \mathcal{Q}_k(t) > g^n(t) \mbox{ for all } t \in [a,b] \right) \leq \rho_n.
\end{equation}

Put $\delta = (2k)^{-1} \cdot \delta^{\mathrm{sep}}$, and define $\vec{u}\,^n, \vec{v}\,^n \in \weyl_k$ by $u_i^n = g^n(a) + (2k - i) \delta $ and $v_i^n = g^n(b) +  (2k - i)\delta$. From the first three points above, we conclude that 
\begin{enumerate}
\item $x_i^n \geq u_i^n$ and $y_i^n \geq v_i^n$ for $i \in \llbracket 1, k \rrbracket$,
\item $u_i^n - u_{i+1}^n = \delta$ and $v_i^n - v_{i+1}^n = \delta$ for $i \in \llbracket 1, k-1 \rrbracket$,
\item $u_k^n = k\delta + g^n(a)$ and $v_k^n = k\delta + g^n(b)$,
\item $|u_i^n| \leq M^{\mathrm{side}}$ and $|v_i^n| \leq M^{\mathrm{side}}$ for $i \in \llbracket 1, k \rrbracket$.
\end{enumerate}

Combining condition (1) with (\ref{Eq.StayAboveEnvelopeR1}) and the monotone coupling in Lemma \ref{Lem.MonCoup}, we conclude
\begin{equation}\label{Eq.StayAboveEnvelopeR2}
\mathbb{P}_{\mathrm{avoid}}^{a,b, \vec{u}\,^n, \vec{v}\,^n}\left( \mathcal{Q}_k(t) > g^n(t) \mbox{ for all } t \in [a,b] \right) \leq \rho_n.
\end{equation}

From the second and fourth points in the lemma, we conclude that $\{g^n\}_{n \geq 1}$ satisfies the conditions of the Arzel\`a--Ascoli theorem, and so by possibly passing to a subsequence, we may assume that $g^n$ converges uniformly on $[a,b]$ to a continuous function $g^{\infty}$. Furthermore, by possibly passing to a subsequence, using condition (4) above, we may assume that $\vec{u}\,^n \rightarrow \vec{u}\,^{\infty}$ and $\vec{v}\,^n \rightarrow \vec{v}\,^{\infty}$ for some $\vec{u}\,^{\infty}, \vec{v}\,^{\infty} \in \weylc_k$. In fact, from conditions (2) and (3), we must have $u_i^\infty - u_{i+1}^\infty = \delta$ and $v_i^\infty - v_{i+1}^\infty = \delta$ for $i \in \llbracket 1, k-1 \rrbracket$, as well as $u_k^\infty = k\delta + g^\infty(a)$ and $v_k^\infty = k\delta + g^\infty(b)$, so that $\vec{u}\,^{\infty}, \vec{v}\,^{\infty} \in \weyl_k$ and $u_k^\infty > g^{\infty}(a) + \delta/2$, $v_k^{\infty}  > g^{\infty}(b) + \delta/2$.\\

As mentioned in Definition \ref{Def.fgAvoidingBE}, see \cite[Definition 2.7]{DEA21} for the details, we know that there exists some $\epsilon > 0$, depending on $a$, $b$, $k$, $\vec{u}\,^\infty$, $\vec{v}\,^\infty$, $g^{\infty}$, such that 
$$\mathbb{P}_{\mathrm{free}}^{a,b, \vec{u}\,^\infty, \vec{v}\,^\infty}\left( \mathcal{Q}_{1} (t) > \cdots >  \mathcal{Q}_k(t) > g^{\infty}(t) + \delta/2 \mbox{ for all } t \in [a,b] \right) \geq \epsilon.$$ 
The latter and Definition \ref{Def.fgAvoidingBE} imply
\begin{equation}\label{Eq.StayAboveEnvelopeR3}
\mathbb{P}_{\mathrm{avoid}}^{a,b, \vec{u}\,^\infty, \vec{v}\,^\infty}\left( \mathcal{Q}_k(t) > g^{\infty}(t) + \delta/2 \mbox{ for all } t \in [a,b] \right) \geq \epsilon.
\end{equation}

We finally observe the following tower of inequalities
\begin{equation}\label{Eq.StayAboveEnvelopeTower}
\begin{split}
&\epsilon \leq \mathbb{P}_{\mathrm{avoid}}^{a,b, \vec{u}\,^\infty, \vec{v}\,^\infty}\left( \mathcal{Q}_k(t) > g^{\infty}(t) + \delta/2 \mbox{ for all } t \in [a,b] \right) \\
&\leq \liminf_{n \rightarrow \infty} \mathbb{P}_{\mathrm{avoid}}^{a,b, \vec{u}\,^n, \vec{v}\,^n}\left( \mathcal{Q}_k(t) > g^{\infty}(t) + \delta/2 \mbox{ for all } t \in [a,b] \right) \\
&\leq \liminf_{n \rightarrow \infty} \mathbb{P}_{\mathrm{avoid}}^{a,b, \vec{u}\,^n, \vec{v}\,^n}\left( \mathcal{Q}_k(t) > g^{n}(t) \mbox{ for all } t \in [a,b] \right) \leq \liminf_{n \rightarrow \infty} \rho_n = 0.
\end{split}
\end{equation}
Indeed, the first inequality follows from (\ref{Eq.StayAboveEnvelopeR3}) and the second one follows from Lemma \ref{Lem.BridgeEnsemblesCty} and the Portmanteau theorem. The third inequality follows from the uniform convergence of $g^n$ to $g^{\infty}$, and the fourth from (\ref{Eq.StayAboveEnvelopeR2}). The inequality (\ref{Eq.StayAboveEnvelopeTower}) gives the desired contradiction.
\end{proof}

We end this section with the following result. It shows that a line ensemble with law $\mathbb{P}_{\mathrm{avoid}}^{c,d, \vec{x}, \vec{y}, \infty, g}$, whose boundary data $\vec{x}, \vec{y}$ are close to zero, and with $g$ not too high on $[c,d]$, is likely to have a well-behaved modulus of continuity on a slightly {\em smaller} interval $[a,b] \subset (c,d)$.
\begin{lemma}\label{Lem.MOCInside} Fix $k \in \mathbb{N}$, $a,b,c,d \in \mathbb{R}$ with $c< a < b < d$, $M^{\mathrm{side}} > 0$ and $M^{\mathrm{bot}} > 0$. For any $\epsilon, \eta > 0$, there exists $\delta > 0$, depending on $k$, $a$, $b$, $c$, $d$, $M^{\mathrm{side}}$, $M^{\mathrm{bot}}$, $\epsilon$, and $\eta$, such that the following holds. If $\vec{x}, \vec{y} \in \weyl_k$, and $|x_i|, |y_i| \leq M^{\mathrm{side}}$ for $i \in \llbracket 1, k \rrbracket$, and $g: [c, d] \rightarrow [-\infty, \infty)$ is continuous and satisfies $x_k > g(c)$, $y_k > g(d)$, and $g(t) \leq M^{\mathrm{bot}}$ for all $t \in [c, d]$, then 
\begin{equation}\label{Eq.MOCInside}
\mathbb{P}_{\mathrm{avoid}}^{c,d, \vec{x}, \vec{y}, \infty, g}\left( \max_{i \in \llbracket 1, k \rrbracket} w(\mathcal{Q}_i[a,b], \delta) \geq \eta \right) < \epsilon.
\end{equation}
\end{lemma}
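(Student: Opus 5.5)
The plan is a compactness-plus-resampling argument, in the spirit of Lemmas \ref{Lem.MOCUniform} and \ref{Lem.StayAboveEnvelope}. The difference is that uniformity in $g$ cannot come from Arzel\`a--Ascoli, since $g$ is only bounded above and may be very rough. We therefore work on an intermediate interval $[a',b']$ with $c<a'<a<b<b'<d$, and reduce to a situation where the floor is irrelevant or Lipschitz.

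\textbf{Step 1 (confinement).} By Lemma \ref{Lem.NoBigMaxBLE} there is $A_1$ such that, with probability at least $1-\epsilon/4$, all curves satisfy $\mathcal{Q}_i\le A_1$ on $[c,d]$. By Lemma \ref{Lem.NotTooLow}, with $P_1=P_2=-M^{\mathrm{side}}$, there is $A_2$ such that, with probability at least $1-\epsilon/4$, $\mathcal{Q}_k\ge -M^{\mathrm{side}}-A_2\sqrt{d-c}$ on $[c,d]$. Set $B=\max(A_1, M^{\mathrm{side}}+A_2\sqrt{d-c})+M^{\mathrm{bot}}$. On the intersection $E$ of these events, all $k$ curves lie in $[-B,B]$ on $[c,d]$, and in particular at the times $a',b'$.

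\textbf{Step 2 (Gibbs resampling on $[a',b']$).} The measure $\mathbb{P}_{\mathrm{avoid}}^{c,d,\vec x,\vec y,\infty,g}$ has the Brownian Gibbs property on sub-intervals. Conditionally on $\mathcal{Q}$ outside $(a',b')$, the restriction to $[a',b']$ is $\mathbb{P}_{\mathrm{avoid}}^{a',b',\vec X,\vec Y,\infty,g}$ with $\vec X=\mathcal{Q}(a')$ and $\vec Y=\mathcal{Q}(b')$. Write this law as $\mathbb{P}_{\mathrm{avoid}}^{a',b',\vec X,\vec Y}$ conditioned on $\{\mathcal{Q}_k>g\}$. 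Then
\[
\mathbb{P}\bigl(\max_i w(\mathcal{Q}_i[a,b],\delta)\ge\eta\bigr)\le \frac{\mathbb{P}_{\mathrm{avoid}}^{a',b',\vec X,\vec Y}\bigl(\max_i w(\mathcal{Q}_i,\delta)\ge\eta\bigr)}{\mathbb{P}_{\mathrm{avoid}}^{a',b',\vec X,\vec Y}\bigl(\mathcal{Q}_k>g \text{ on } [a',b']\bigr)}.
\]
On $E$ we have $|X_i|,|Y_i|\le B$, so Lemma \ref{Lem.MOCUniform} makes the numerator as small as we like by choosing $\delta$ small. It therefore suffices to bound the denominator below by some $\rho>0$ with high probability, and then pick $\delta$ so that the numerator is at most $\rho\epsilon/4$.

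\textbf{Step 3 (the main obstacle: lower bound on acceptance).} This is where uniformity in $g$ is delicate. The acceptance probability can be tiny if $g$ is nearly touching $\mathcal{Q}_k(a')$ or $\mathcal{Q}_k(b')$, or if $g$ rises high in the middle of the interval. The upper bound $g\le M^{\mathrm{bot}}$ takes care of the interior: the event $\{\mathcal{Q}_k>g\}$ contains the event that $\mathcal{Q}_k$ stays above $\tilde g$, where $\tilde g$ is a Lipschitz function equal to $M^{\mathrm{bot}}$ away from the endpoints and linearly interpolated down to $g(a')$, $g(b')$ near them. Lemma \ref{Lem.StayAboveEnvelope} then gives $\rho$, provided there is a separation $\mathcal{Q}_k(a')-g(a')\ge\delta^{\mathrm{sep}}$, and likewise at $b'$. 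The catch is that $\tilde g$ must also lie below the endpoint data, which needs the resampling times to be chosen where the curve sits above $M^{\mathrm{bot}}$ or at least $\delta^{\mathrm{sep}}$ above $g$; this selection is the crux of the argument.

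To obtain the separation, I would not fix $a',b'$ deterministically. Instead I would take them to be stopping-domain times, for instance the first time in $[c,(c+a)/2]$ and the last time in $[(b+d)/2,d]$ at which $\mathcal{Q}_k-g\ge\delta^{\mathrm{sep}}$, and use the strong Gibbs property. The task is then to show that, with probability at least $1-\epsilon/4$ uniformly in $g$, such times exist. I would first try to get this from the spike-floor measures (\ref{Eq.Spike}) together with Lemma \ref{Lem.MonCoup}. If $\mathcal{Q}_k-g<\delta^{\mathrm{sep}}$ throughout $[c,(c+a)/2]$, then $\mathcal{Q}_k$ is trapped in a thin band above $g$ on a whole interval. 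Lowering the floor to a single spike and comparing with free Brownian bridges should show that this has small probability as $\delta^{\mathrm{sep}}\to0$, since a Brownian path pinned within $\delta^{\mathrm{sep}}$ of a fixed profile on an interval of fixed length is exponentially unlikely relative to the acceptance probability. This comparison is the step I am least sure of. If it fails, the fallback is to choose $a',b'$ at random inside the outer intervals and bound the expected acceptance from below by averaging.
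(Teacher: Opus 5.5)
Your Steps 1 and 2 are sound and match the paper's reduction. The paper resamples on $[a,b]$ itself, so the intermediate interval $[a',b']$ is harmless but unnecessary, and no strong Gibbs property at random times is needed. The genuine gap is Step 3, the lower bound on the acceptance probability that is uniform in $g$. This is the actual content of the lemma, and you leave it open.

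Moreover, the target you set there is too weak. Separation at a single point, $\mathcal{Q}_k(a')-g(a')\ge\delta^{\mathrm{sep}}$, cannot give a uniform bound, because $g$ is merely continuous. It may climb from $\mathcal{Q}_k(a')-\delta^{\mathrm{sep}}$ to above $\mathcal{Q}_k(a')$ within a time $\epsilon'$ after $a'$, since the realized path went above it there. Then your $\tilde g$, interpolated down to $g(a')$, does not dominate $g$. The resampled acceptance probability also becomes arbitrarily small as $\epsilon'\to0$. Choosing $a'$ as the first time with $\mathcal{Q}_k-g\ge\delta^{\mathrm{sep}}$ has exactly the same defect.

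Incidentally, the event you were least sure about is easy. The event that $\mathcal{Q}_k-g<\delta^{\mathrm{sep}}$ on a whole interval is contained in $\{\mathcal{Q}_k(s)<g(s)+\delta^{\mathrm{sep}}\}$ for one fixed $s$. That event is uniformly small by a spike comparison plus the bounded density of $\mathcal{Q}_k(s)$. But it only buys pointwise separation.

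The missing idea is to require separation from a \emph{windowed maximum} at a fixed time. You need $\delta^{\mathrm{sep}},\Delta^{\mathrm{sep}}$, independent of $g$, such that $\mathbb{P}\bigl(\mathcal{Q}_k(a)<\max_{t\in[a,a+\Delta^{\mathrm{sep}}]}g(t)+\delta^{\mathrm{sep}}\bigr)$ is small, and similarly at $b$. On this event, define an envelope $\hat g$ as follows:
\begin{itemize}
\item $\hat g(a)=\mathcal{Q}_k(a)-\delta^{\mathrm{sep}}$;
\item $\hat g$ rises linearly to the plateau level $B$ (your confinement bound, which is at least $M^{\mathrm{bot}}$) by time $a+\Delta^{\mathrm{sep}}$;
\item $\hat g$ equals $B$ in the middle, and the construction is symmetric near $b$.
\end{itemize}
This $\hat g$ dominates $g$ on $[a,b]$. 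Its Lipschitz constant depends only on $B,\delta^{\mathrm{sep}},\Delta^{\mathrm{sep}}$, so Lemma \ref{Lem.StayAboveEnvelope} gives a uniform $\rho$.

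The windowed separation is proved by contradiction, and this is where your spike idea belongs.
\begin{itemize}
\item Take $\delta_n,\Delta_n\to0$ and $g^n$ violating it, and let $t_n$ be the argmax of $g^n$ on $[a,a+\Delta_n]$.
\item The event is decreasing, so by Lemma \ref{Lem.MonCoup} you may replace $g^n$ by the spike at $t_n$ of height $g^n(t_n)$. You may also replace the boundary data by fixed $u_i=v_i=-M^{\mathrm{side}}-i$.
\item The spike law is $\mathbb{P}_{\mathrm{avoid}}^{c,d,\vec u,\vec v}$ conditioned on $\{\mathcal{Q}_k(t_n)>g^n(t_n)\}$. This event has probability at least some $\rho_q>0$ uniformly, by Lemma \ref{Lem.StayAboveEnvelope} applied to a fixed piecewise-linear $q$ equal to $M^{\mathrm{bot}}$ on $[a,b]$.
\item Hence $\mathbb{P}_{\mathrm{avoid}}^{c,d,\vec u,\vec v}\bigl(\mathcal{Q}_k(a)<g^n(t_n)+\delta_n,\ \mathcal{Q}_k(t_n)>g^n(t_n)\bigr)$ stays bounded below.
\item Pass to a subsequence with $g^n(t_n)\to\alpha$; note $t_n\to a$. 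This gives an atom of $\mathcal{Q}_k(a)$ at $\alpha$, contradicting that $\mathcal{Q}_k(a)$ has a diffuse law.
\end{itemize}
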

\begin{remark} We emphasize that the modulus of continuity in (\ref{Eq.MOCInside}) is for the interval $[a,b]$ as in (\ref{Eq.MOCDef}) and not $[c,d]$.
\end{remark}
\begin{proof} Throughout the proof, all the constants we encounter depend on $k$, $a$, $b$, $c$, $d$, $M^{\mathrm{side}}$, $M^{\mathrm{bot}}$, $\epsilon$, and $\eta$. We do not mention this further. For clarity, we split the proof into three steps. In Step 1, we reduce the proof of the lemma to showing that for some $\epsilon_1 > 0$, we have
$$\mathbb{P}_{\mathrm{avoid}}^{c,d, \vec{x}, \vec{y}, \infty, g}\left( \mathbb{P}_{\mathrm{avoid}}^{a,b, \vec{X}, \vec{Y}}(\tilde{\mathcal{Q}}_k(t) > g(t) \mbox{ for all $t \in [a,b]$}) \geq \epsilon_1 \right) > 1 -\epsilon/4,$$
see (\ref{Eq.InsideAcceptanceProbability}). In the last equation $X_i = \mathcal{Q}_i(a)$, $Y_i = \mathcal{Q}_i(b)$ for $i \in \llbracket 1, k \rrbracket$ and $\tilde{\mathcal{Q}}$ has law $\mathbb{P}_{\mathrm{avoid}}^{a,b, \vec{X}, \vec{Y}}$. The probability $\mathbb{P}_{\mathrm{avoid}}^{a,b, \vec{X}, \vec{Y}}(\tilde{\mathcal{Q}}_k(t) > g(t) \mbox{ for all $t \in [a,b]$})$ is sometimes referred to as an {\em acceptance probability}, and a lower bound on this probability allows us to transfer estimates from ensembles without a lower boundary $g$ to ones with a lower boundary $g$. In particular, with a bit of effort, we will be able to show that the modulus of continuity bound in (\ref{Eq.MOCInside}) follows from the one in Lemma \ref{Lem.MOCUniform} provided the acceptance probability is lower-bounded with high probability. In Step 2, we establish the acceptance probability lower bound in (\ref{Eq.InsideAcceptanceProbability}) by showing that it holds on the event that $\mathcal{Q}_k$ is well-separated from $g$ in a small neighborhood of $a$ and $b$ -- see (\ref{Eq.InsideSeparated}) for a precise formulation of this separation. The separation bound in (\ref{Eq.InsideSeparated}) is proved in Step 3, by replacing $g$ with a spike-function as in (\ref{Eq.Spike}). For such a function the separation is less likely, but we still show its probability is uniformly lower bounded, by appropriately applying the monotone coupling Lemma \ref{Lem.MonCoup} and Lemma \ref{Lem.StayAboveEnvelope}. We now turn to providing the details of the above outline.\\

{\bf \raggedleft Step 1.} For a function $u$ on $[a,b]$, define the function $G_{u}: C( \llbracket 1, k \rrbracket \times [a, b] ) \rightarrow [0,1]$ through
$$G_{u}\left( \{h_i\}_{i = 1}^{k} \right) = {\bf 1} \left\{ h_k(t) > u(t)  \mbox{ for all } t \in [a,b] \right\}.$$ 
In addition, we let $\tilde{g}$ be the restriction of $g$ to $[a,b]$ and define the random $\vec{X}, \vec{Y} \in \weyl_k$ via 
$$X_i = \mathcal{Q}_i(a), \hspace{2mm} Y_i = \mathcal{Q}_i(b) \mbox{ for } i \in \llbracket 1, k \rrbracket.$$
We claim that there exists $\epsilon_1 \in (0,1)$ such that 
\begin{equation}\label{Eq.InsideAcceptanceProbability}
\mathbb{P}_{\mathrm{avoid}}^{c,d, \vec{x}, \vec{y}, \infty, g}\left( E^{\mathrm{ap}} \right) > 1 -\epsilon/4, \mbox{ where } E^{\mathrm{ap}} = \left\{ \mathbb{E}_{\mathrm{avoid}}^{a,b,\vec{X}, \vec{Y}}\left[G_{\tilde{g}}(\tilde{\mathcal{Q}})\right] \geq \epsilon_1 \right\},
\end{equation}
and $\tilde{\mathcal{Q}} = \{\tilde{\mathcal{Q}}_{i}\}_{i = 1}^k \in C(\llbracket 1, k \rrbracket \times [a,b])$ is distributed according to $\mathbb{P}_{\mathrm{avoid}}^{a,b,\vec{X}, \vec{Y}}$. We mention that the (random) expectation $\mathbb{E}_{\mathrm{avoid}}^{a,b,\vec{X}, \vec{Y}}\left[G_{\tilde{g}}(\tilde{\mathcal{Q}})\right]$ is a measurable function of $\vec{X}, \vec{Y}$ by \cite[Lemma 3.4]{DimMat}, and hence itself a random variable, so that $E^{\mathrm{ap}}$ is an event. We prove (\ref{Eq.InsideAcceptanceProbability}) in the steps below. Here, we assume its validity and conclude the proof of the lemma.\\

From Lemma \ref{Lem.NoBigMaxBLE} applied to $k, M^{\mathrm{side}}, M^{\mathrm{bot}}$ as in the present lemma, $a = c$, $b = d$, and $\epsilon$ replaced with $\epsilon/16$, we can find $A_1 > 0$, such that 
\begin{equation}\label{Eq.InsideNBM}
\mathbb{P}_{\mathrm{avoid}}^{c,d, \vec{x}, \vec{y}, \infty, g}\left( \max\left(X_1, Y_1\right) > A_1 \right) < \epsilon/16.
\end{equation}
On the other hand, from Lemma \ref{Lem.StayInCorridor} applied to $k, M^{\mathrm{side}}$, $a = c$, $b = d$ and $\epsilon$ replaced with $\epsilon/16$, we can find $A_2 > 0$, such that 
\begin{equation*}
\mathbb{P}_{\mathrm{avoid}}^{c,d, \vec{x}, \vec{y}}\left( \min\left(X_k, Y_k\right) < -A_2 \right) < \epsilon/16.
\end{equation*}
Combining the latter with the monotone coupling in Lemma \ref{Lem.MonCoup}, we conclude that 
\begin{equation}\label{Eq.InsideNLM}
\mathbb{P}_{\mathrm{avoid}}^{c,d, \vec{x}, \vec{y}, \infty, g}\left( \min\left(X_k, Y_k\right) < -A_2 \right) < \epsilon/16.
\end{equation}
Setting $\hat{A} = \max(A_1, A_2)$, and taking a union bound of (\ref{Eq.InsideNBM}) and (\ref{Eq.InsideNLM}), we conclude that 
\begin{equation}\label{Eq.InsideBounded}
\mathbb{P}_{\mathrm{avoid}}^{c,d, \vec{x}, \vec{y}, \infty, g}\left( E^{\mathrm{bdd}} \right) > 1- \epsilon/8, \mbox{ where } E^{\mathrm{bdd}} = \{|X_i|, |Y_i| \leq \hat{A} \mbox{ for } i \in \llbracket 1, k \rrbracket\}.
\end{equation}

From Lemma \ref{Lem.MOCUniform} applied to $k,a,b, \eta$ as in the present lemma, $M^{\mathrm{side}} = \hat{A}$, and $\epsilon$ replaced with $\epsilon \epsilon_1/2$, we can find $\delta > 0$, such that for $\vec{x}\,', \vec{y}\,' \in \weyl_k$ with $|x_i'|, |y_i'| \leq \hat{A}$ for $i \in \llbracket 1, k \rrbracket$, we have 
\begin{equation}\label{Eq.InsideMOCUniform}
\mathbb{P}_{\mathrm{avoid}}^{a,b, \vec{x}\,', \vec{y}\,'}\left( \max_{i \in \llbracket 1, k \rrbracket} w(\mathcal{Q}_i, \delta) \geq \eta \right) < \epsilon \epsilon_1/2.
\end{equation}
This specifies the $\delta$ in the statement of the lemma, and we proceed to show that (\ref{Eq.MOCInside}) holds.\\

Let us define the functions $H: C(\llbracket 1, k \rrbracket \times [c,d]) \rightarrow \mathbb{R}$, $\tilde{H}: C(\llbracket 1, k \rrbracket \times [a,b]) \rightarrow \mathbb{R}$ via
$$H\left( \{f_i\}_{i = 1}^k \right) = \max_{i \in \llbracket 1, k \rrbracket}\sup_{\substack{x,y \in [a,b] \\ |x-y| \leq \delta}} |f_i(x) - f_i(y)| \mbox{ and } \tilde{H}\left( \{f_i\}_{i = 1}^k \right) = \max_{i \in \llbracket 1, k \rrbracket}\sup_{\substack{x,y \in [a,b] \\ |x-y| \leq \delta}} |f_i(x) - f_i(y)|.$$
We then have the following identities
\begin{equation}\label{Eq.InsideTower1}
\begin{split}
&\mathbb{E}_{\mathrm{avoid}}^{c,d, \vec{x}, \vec{y}, \infty, g}\left[ {\bf 1}_{E^{\mathrm{bdd}}} \cdot {\bf 1}_{E^{\mathrm{ap}}} \cdot {\bf 1}\{H(\mathcal{Q}) \geq \eta \} \right]  \\
&= \mathbb{E}_{\mathrm{avoid}}^{c,d, \vec{x}, \vec{y}, \infty, g}\left[ {\bf 1}_{E^{\mathrm{bdd}}} \cdot {\bf 1}_{E^{\mathrm{ap}}} \cdot \mathbb{E}_{\mathrm{avoid}}^{c,d, \vec{x}, \vec{y}, \infty, g}\left[ {\bf 1}\{H(\mathcal{Q}) \geq \eta \} \vert \mathcal{F}_{\mathrm{ext}} \right] \right] \\
& = \mathbb{E}_{\mathrm{avoid}}^{c,d, \vec{x}, \vec{y}, \infty, g}\left[ {\bf 1}_{E^{\mathrm{bdd}}} \cdot {\bf 1}_{E^{\mathrm{ap}}} \cdot \mathbb{E}_{\mathrm{avoid}}^{a,b, \vec{X}, \vec{Y}, \infty, \tilde{g}}\left[ {\bf 1}\{\tilde{H}(\tilde{\mathcal{Q}}) \geq \eta \} \right] \right]
\end{split}
\end{equation}
where $\mathcal{F}_{\mathrm{ext}} = \sigma \left \{ \mathcal{Q}_i(s): (i,s) \in \llbracket 1, k \rrbracket \times [c,d] \setminus (\llbracket 1,k \rrbracket \times (a,b)) \right\}$. Indeed, the first equality follows from the tower property for conditional expectation, and the fact that $E^{\mathrm{bdd}}, E^{\mathrm{ap}} \in \mathcal{F}_{\mathrm{ext}}$. The second equality is a bit subtle. When $g = -\infty$, it follows from the fact that $\mathcal{Q}$ with law $\mathbb{P}_{\mathrm{avoid}}^{c,d, \vec{x}, \vec{y}}$ satisfies the Brownian Gibbs property as a $\llbracket 1, k \rrbracket$-indexed line ensemble on the interval $[c,d]$ in the sense of \cite[Definition 2.5]{DimMat}, which is true by \cite[Lemma 2.13]{DimMat}. When $g$ is a finite-valued continuous function, the equality follows from the fact that if $\mathcal{Q}$ has law $\mathbb{P}_{\mathrm{avoid}}^{c,d, \vec{x}, \vec{y}, \infty, g}$, and $\mathcal{L}$ is the $\llbracket 1, k+1 \rrbracket$-indexed line ensemble on $[c,d]$ given by $\mathcal{L}_i = \mathcal{Q}_i$ for $i \in \llbracket 1, k \rrbracket$ and $\mathcal{L}_{k+1} = g$, then $\mathcal{L}$ satisfies the partial Brownian Gibbs property of \cite[Definition 2.7]{DimMat}. The last statement follows from \cite[Lemma 2.6]{DS25} with $\vec{A} = 0$. \\

We now observe that on the event $E^{\mathrm{bdd}} \cap E^{\mathrm{ap}}$, we have 
\begin{equation}\label{Eq.InsideMOCAP}
\begin{split}
&\mathbb{E}_{\mathrm{avoid}}^{a,b, \vec{X}, \vec{Y}, \infty, \tilde{g}}\left[ {\bf 1}\{\tilde{H}(\tilde{\mathcal{Q}}) \geq \eta \} \right] = \frac{\mathbb{E}_{\mathrm{avoid}}^{a,b, \vec{X}, \vec{Y}}\left[ {\bf 1}\{\tilde{H}(\tilde{\mathcal{Q}}) \geq \eta \} \cdot G_{\tilde{g}}(\tilde{\mathcal{Q}}) \right]}{\mathbb{E}_{\mathrm{avoid}}^{a,b, \vec{X}, \vec{Y}}\left[  G_{\tilde{g}}(\tilde{\mathcal{Q}}) \right]} \\
& \leq \epsilon_1^{-1}\mathbb{E}_{\mathrm{avoid}}^{a,b, \vec{X}, \vec{Y}}\left[ {\bf 1}\{\tilde{H}(\tilde{\mathcal{Q}}) \geq \eta \} \right] \leq \epsilon/2.
\end{split}
\end{equation}
Indeed, the equality on the first line follows from Definition \ref{Def.fgAvoidingBE}. In going from the first to the second line we used that $G_{\tilde{g}}(\tilde{\mathcal{Q}}) \in [0,1]$ and the definition of $E^{\mathrm{ap}}$. In the last inequality we used (\ref{Eq.InsideMOCUniform}), with $\vec{X}, \vec{Y}$ playing the roles of $\vec{x}\,', \vec{y}\,'$ there, and the definitions of $E^{\mathrm{bdd}}$ and $\tilde{H}$. 

Combining (\ref{Eq.InsideTower1}) and (\ref{Eq.InsideMOCAP}), and recalling the definition of $H$, we conclude 
$$\mathbb{P}_{\mathrm{avoid}}^{c,d, \vec{x}, \vec{y}, \infty, g}\left(E^{\mathrm{bdd}} \cap E^{\mathrm{ap}} \cap \left\{ \max_{i \in \llbracket 1, k \rrbracket} w(\mathcal{Q}_i[a,b], \delta) \geq \eta \right\} \right) \leq \epsilon/2.$$
Combining the last inequality with (\ref{Eq.InsideAcceptanceProbability}) and (\ref{Eq.InsideBounded}), we conclude (\ref{Eq.MOCInside}).\\

{\bf \raggedleft Step 2.} We claim that we can find $\delta^{\mathrm{sep}} \in (0,1)$ and $\Delta^{\mathrm{sep}} \in (0, (b-a)/2)$, such that 
\begin{equation}\label{Eq.InsideSeparated}
\begin{split}
&\mathbb{P}_{\mathrm{avoid}}^{c,d, \vec{x}, \vec{y}, \infty, g} \left( E^{\mathrm{sep}}_1 \cap E^{\mathrm{sep}}_2  \right) > 1 - \epsilon/8, \mbox{ where } \\
&E_1^{\mathrm{sep}} = \left\{\mathcal{Q}_k(a) \geq \max_{t \in [a, a + \Delta^{\mathrm{sep}}]} g(t) + \delta^{\mathrm{sep}} \right\} \mbox{ and } E_2^{\mathrm{sep}} = \left\{\mathcal{Q}_k(b) \geq \max_{t \in [b - \Delta^{\mathrm{sep}}, b]} g(t) + \delta^{\mathrm{sep}} \right\}.
\end{split}
\end{equation}
We prove (\ref{Eq.InsideSeparated}) in the next step. Here, we assume its validity and conclude the proof of (\ref{Eq.InsideAcceptanceProbability}).\\

Let $\hat{A}$ be as in Step 1, see (\ref{Eq.InsideBounded}), and let $\rho$ be as in Lemma \ref{Lem.StayAboveEnvelope} for $k,a,b, \delta^{\mathrm{sep}}$ as in the present setup, $M^{\mathrm{side}} = \hat{A} + 1$ and $L = (2\hat{A} + M^{\mathrm{bot}} +\delta^{\mathrm{sep}})/\Delta^{\mathrm{sep}}$. Below we show (\ref{Eq.InsideAcceptanceProbability}) for $\epsilon_1 = \rho$.

Consider the (random) function $\hat{g} \in C([a,b])$ given by 
$$\hat{g}(a) = X_k - \delta^{\mathrm{sep}}, \hspace{2mm} \hat{g}(b) = Y_k - \delta^{\mathrm{sep}}, \hspace{2mm} \hat{g}(t) = \hat{A}+ M^{\mathrm{bot}} \mbox{ for } t \in [a + \Delta^{\mathrm{sep}}, b - \Delta^{\mathrm{sep}}],$$
and $\hat{g}$ is linear on $[a, a + \Delta^{\mathrm{sep}}]$ and $[b - \Delta^{\mathrm{sep}}, b]$. Note that the following hold on $E^{\mathrm{bdd}} \cap E^{\mathrm{sep}}_1 \cap E^{\mathrm{sep}}_2$
\begin{equation}\label{Eq.InsideProp1}
\hat{g}(t) \geq g(t) \mbox{ for all } t\in [a,b], \mbox{ and $\hat{g}$ is Lipschitz continuous with parameter $L$}. 
\end{equation}
Indeed, since $X_k \in [-\hat{A}, \hat{A}]$ on $E^{\mathrm{bdd}}$, we know 
$$\hat{g}(a + \Delta^{\mathrm{sep}}) - \hat{g}(a) \in [0,2\hat{A} + M^{\mathrm{bot}} +\delta^{\mathrm{sep}}].$$
The last equation shows that the slope of $\hat{g}$ on $[a, a + \Delta^{\mathrm{sep}}]$ is at most $L$. An analogous argument works for $[b - \Delta^{\mathrm{sep}}, b]$ and $\hat{g}$ is constant on $[a + \Delta^{\mathrm{sep}}, b - \Delta^{\mathrm{sep}}]$, which verifies the second statement in (\ref{Eq.InsideProp1}). In addition, the last displayed equation shows that $\hat{g}$ is increasing on $[a, a + \Delta^{\mathrm{sep}}]$, which by the definition of $E_1^{\mathrm{sep}}$ gives for $t \in [a, a + \Delta^{\mathrm{sep}}]$
$$\hat{g}(t) \geq \hat{g}(a) = X_k - \delta^{\mathrm{sep}} \geq \max_{s \in [a, a + \Delta^{\mathrm{sep}}]} g(s) \geq g(t).$$
This verifies the inequality in (\ref{Eq.InsideProp1}) when $t \in [a, a + \Delta^{\mathrm{sep}}]$, and an analogous argument shows it when $t \in [b - \Delta^{\mathrm{sep}}, b]$. For $t \in [a + \Delta^{\mathrm{sep}}, b - \Delta^{\mathrm{sep}}]$ the inequality is immediate from our assumption that $g(t) \leq M^{\mathrm{bot}}$ and $\hat{g}(t) = \hat{A} + M^{\mathrm{bot}}$.\\

The above work shows that on $E =E^{\mathrm{bdd}} \cap E^{\mathrm{sep}}_1 \cap E^{\mathrm{sep}}_2$ the conditions of Lemma \ref{Lem.StayAboveEnvelope} are satisfied by $\vec{x} = \vec{X}$, $\vec{y} = \vec{Y}$ and $g = \hat{g}$ (with our earlier choice of parameters), and so on $E$
\begin{equation*}
\mathbb{E}_{\mathrm{avoid}}^{a,b,\vec{X}, \vec{Y}}\left[G_{\hat{g}}(\tilde{\mathcal{Q}})\right] \geq \epsilon_1. 
\end{equation*}
Since by (\ref{Eq.InsideProp1}), we have $\hat{g}(t) \geq \tilde{g}(t) = g(t)$, and hence $G_{\hat{g}}(\tilde{\mathcal{Q}}) \leq G_{\tilde{g}}(\tilde{\mathcal{Q}})$, we conclude that on $E$
$$\mathbb{E}_{\mathrm{avoid}}^{a,b,\vec{X}, \vec{Y}}\left[G_{\tilde{g}}(\tilde{\mathcal{Q}})\right] \geq \epsilon_1.$$
The last equation implies
$$ \mathbb{P}_{\mathrm{avoid}}^{c,d, \vec{x}, \vec{y}, \infty, g}\left( \mathbb{E}_{\mathrm{avoid}}^{a,b,\vec{X}, \vec{Y}}\left[G_{\tilde{g}}(\tilde{\mathcal{Q}})\right] \geq \epsilon_1\right) \geq \mathbb{P}_{\mathrm{avoid}}^{c,d, \vec{x}, \vec{y}, \infty, g}(E) > 1 - \epsilon/4,$$
where the last inequality used (\ref{Eq.InsideBounded}) and (\ref{Eq.InsideSeparated}). This proves (\ref{Eq.InsideAcceptanceProbability}).\\

{\bf \raggedleft Step 3.} In this step, we prove (\ref{Eq.InsideSeparated}). We will show that there exist $\delta^{\mathrm{sep}}_i \in (0,1)$ and $\Delta^{\mathrm{sep}}_i \in (0, (b-a)/2)$, such that 
\begin{equation}\label{Eq.InsideSeparated2}
\begin{split}
&\mathbb{P}_{\mathrm{avoid}}^{c,d, \vec{x}, \vec{y}, \infty, g} \left( \mathcal{Q}_k(a) < \max_{t \in [a, a + \Delta^{\mathrm{sep}}_1]} g(t) + \delta^{\mathrm{sep}}_1   \right) \leq \epsilon/16,  \\
&\mathbb{P}_{\mathrm{avoid}}^{c,d, \vec{x}, \vec{y}, \infty, g} \left( \mathcal{Q}_k(b) < \max_{t \in [b- \Delta^{\mathrm{sep}}_2, b]} g(t) + \delta^{\mathrm{sep}}_2   \right) \leq \epsilon/16.
\end{split}
\end{equation}
A union bound of the two lines in (\ref{Eq.InsideSeparated2}) gives (\ref{Eq.InsideSeparated}) with $\delta^{\mathrm{sep}} = \min(\delta^{\mathrm{sep}}_1, \delta^{\mathrm{sep}}_2)$ and $\Delta^{\mathrm{sep}} = \min (\Delta^{\mathrm{sep}}_1, \Delta^{\mathrm{sep}}_2)$. As the proofs are analogous, we only show the first line in (\ref{Eq.InsideSeparated2}).\\

Suppose for the sake of contradiction that no such $\delta^{\mathrm{sep}}_1, \Delta^{\mathrm{sep}}_1$ exist. Then, we can find sequences $\delta^{\mathrm{sep}}_{1,n} \in (0,1)$, $\Delta^{\mathrm{sep}}_{1,n} \in (0, (b-a)/2)$, $\vec{x}^n, \vec{y}^n \in \weyl_k$ with $|x_i^n|, |y_i^n| \leq M^{\mathrm{side}}$ and continuous $g^n: [c,d] \rightarrow [-\infty, \infty)$ with $g^n(t) \leq M^{\mathrm{bot}}$ for $t \in [c,d]$ and $g^n(c) < x_k^n$, $g^n(d) < y_k^n$, such that 
\begin{equation}\label{Eq.InsideFS1}
\mathbb{P}_{\mathrm{avoid}}^{c,d, \vec{x}^n, \vec{y}^n, \infty, g^n} \left( \mathcal{Q}_k(a) < \max_{t \in [a, a + \Delta^{\mathrm{sep}}_{1,n}]} g^n(t) + \delta^{\mathrm{sep}}_{1,n}   \right) > \epsilon/16.
\end{equation}
Since $g^n$ are continuous, we can find $t_n \in [a, a + \Delta^{\mathrm{sep}}_{1,n}]$, such that 
$$g^n(t_n) = \max_{t \in [a, a + \Delta^{\mathrm{sep}}_{1,n}]} g^n(t).$$
Let $q^n$ be the function such that $q^n(t_n) = g^n(t_n)$ and $q^n(t) = -\infty$ for $t \in [c,d]\setminus \{t_n\}$. In addition, let $\vec{u}, \vec{v} \in \weyl_k$ be defined by $u_i = v_i = -M^{\mathrm{side}} - i$, and observe that 
$$u_i \leq x^n_i, \hspace{2mm} v_i \leq y_i^n \mbox{ for } i \in \llbracket 1, k \rrbracket \mbox{, and }q^n(t) \leq g^n(t) \mbox{ for }t \in [c,d].$$
By the monotone coupling in Lemma \ref{Lem.MonCoup}, and (\ref{Eq.InsideFS1}), we conclude
\begin{equation}\label{Eq.InsideFS2}
\mathbb{P}_{\mathrm{avoid}}^{c,d, \vec{u}, \vec{v}, \infty, q^n} \left( \mathcal{Q}_k(a) < q^n(t_n) + \delta^{\mathrm{sep}}_{1,n}  \right) > \epsilon/16.
\end{equation}

Let $q \in C([c,d])$ be linear on $[c,a]$, $[a,b]$ and $[b,d]$ with 
$$q(c) = - M^{\mathrm{side}} - k - 1 = q(d), \mbox{ and } q(a) = q(b) = M^{\mathrm{bot}}.$$
We observe that $q$ is Lipschitz continuous with constant 
$$L_q = (M^{\mathrm{bot}} + M^{\mathrm{side}} + k + 1) \cdot \max\left((a-c)^{-1}, (d-b)^{-1}\right).$$
In addition, we notice that $q(t) \geq q^n(t)$ for $t \in [c,d]$, and $q(c) = q(d) = u_k - 1 = v_k - 1$. Consequently,
\begin{equation}\label{Eq.InsideFS3}
\mathbb{P}_{\mathrm{avoid}}^{c,d, \vec{u}, \vec{v}} \left( \mathcal{Q}_k(t_n) > q^n(t_n)  \right) \geq \mathbb{P}_{\mathrm{avoid}}^{c,d, \vec{u}, \vec{v}} \left( \mathcal{Q}_k(t) > q(t) \mbox{ for all $t \in [c,d]$ } \right) \geq \rho_q,
\end{equation}
where $\rho_q$ is the $\rho$ from Lemma \ref{Lem.StayAboveEnvelope} applied to $k$ as in the present setup, $a = c$, $b = d$, $M^{\mathrm{side}}$ set to $M^{\mathrm{side}} + k + 1$, $\delta^{\mathrm{sep}} = 1$, and $L = L_q$. 

Combining (\ref{Eq.InsideFS2}) and (\ref{Eq.InsideFS3}) with Definition \ref{Def.fgAvoidingBE} we conclude
\begin{equation}\label{Eq.InsideFS4}
\begin{split}
&\mathbb{P}_{\mathrm{avoid}}^{c,d, \vec{u}, \vec{v}} \left( \mathcal{Q}_k(a) < q^n(t_n) + \delta^{\mathrm{sep}}_{1,n} \mbox{ and } \mathcal{Q}_k(t_n) > q^n(t_n) \right) \\
&= \mathbb{P}_{\mathrm{avoid}}^{c,d, \vec{u}, \vec{v}, \infty, q^n} \left( \mathcal{Q}_k(a) < q^n(t_n) + \delta^{\mathrm{sep}}_{1,n}  \right) \cdot  \mathbb{P}_{\mathrm{avoid}}^{c,d, \vec{u}, \vec{v}} \left( \mathcal{Q}_k(t_n) > q^n(t_n)  \right) > \rho_q\epsilon/16.
\end{split}
\end{equation}
Since $q^n(t_n) \leq M^{\mathrm{bot}}$, by possibly passing to a subsequence, we may assume $q^n(t_n) \rightarrow \alpha \in [-\infty, M^{\mathrm{bot}}]$. Using that under $\mathbb{P}_{\mathrm{avoid}}^{c,d, \vec{u}, \vec{v}}$ the curve $\mathcal{Q}_k$ is a.s. continuous, $t_n \rightarrow a$, and $\delta^{\mathrm{sep}}_{1,n} \rightarrow 0$, we conclude that
\begin{equation}\label{Eq.InsideFS5}
\mathbb{P}_{\mathrm{avoid}}^{c,d, \vec{u}, \vec{v}} \left( \mathcal{Q}_k(a) =\alpha \right) \geq \limsup_{n \rightarrow \infty}  \mathbb{P}_{\mathrm{avoid}}^{c,d, \vec{u}, \vec{v}} \left( \mathcal{Q}_k(a) \leq q^n(t_n) + \delta^{\mathrm{sep}}_{1,n} \mbox{ and } \mathcal{Q}_k(t_n) \geq q^n(t_n) \right) \geq \frac{\rho_q\epsilon}{16},
\end{equation}
where the first inequality follows by Fatou's lemma and the second by (\ref{Eq.InsideFS4}). 

If $E_{\mathrm{avoid}}$ is as in (\ref{Eq.DefEavoid}) with $f = \infty$, and $g = -\infty$, we have from Definition \ref{Def.fgAvoidingBE} 
$$\mathbb{P}_{\mathrm{avoid}}^{c,d, \vec{u}, \vec{v}} \left( \mathcal{Q}_k(a) =\alpha \right) = \frac{\mathbb{P}_{\mathrm{free}}^{c,d, \vec{u}, \vec{v}}  (\{B_k(a) = \alpha\} \cap E_{\mathrm{avoid}})}{\mathbb{P}_{\mathrm{free}}^{c,d, \vec{u}, \vec{v}}  (E_{\mathrm{avoid}})} = 0,$$
where in the last equality we used that the law of $B_k(a)$ is diffuse, as this is a normal variable with a non-zero variance, due to $a \in (c,d)$. 

The last displayed equation and (\ref{Eq.InsideFS5}) show $0 \geq \rho_q\epsilon/16$, which is our desired contradiction. The contradiction arose from our assumption that there are no $\delta^{\mathrm{sep}}_1 \in (0,1)$ and $\Delta^{\mathrm{sep}}_1 \in (0, (b-a)/2)$ satisfying the first line in (\ref{Eq.InsideSeparated2}). This completes the proof of (\ref{Eq.InsideSeparated2}) and hence the lemma.
\end{proof}

%
%
\section{Convergence to Dyson Brownian motion}\label{Section5} The goal of this section is to prove Theorem \ref{Thm.ConvDBM}. We do so in Section \ref{Section5.2} after establishing two technical propositions in Section \ref{Section5.1}. Throughout this section we continue with the same notation as in Sections \ref{Section1} and \ref{Section4}.

%
%
\subsection{Approximate curve location}\label{Section5.1} In this section, we establish two technical results which we require for the proof of Theorem \ref{Thm.ConvDBM}. Assume the same notation as in Theorem \ref{Thm.ConvDBM}(a), and introduce the auxiliary line ensemble $\tilde{\mathcal{L}}^N = \{\tilde{\mathcal{L}}^N_i\}_{i \geq 1}$ by
\begin{equation}\label{Eq.LNTilde}
\tilde{\mathcal{L}}^N_i(t) = (2T_N)^{-1/2} \cdot \left( \mathcal{A}^{a,b,c}_{i}(tT_N) + 2tT_N/v_k^a - t^2T_N^2 \right), \mbox{ for } (i,t) \in \mathbb{N} \times \mathbb{R}.
\end{equation}
By Proposition \ref{Prop.AWLE}, we know that $\mathcal{L}^{a,b,c}$ satisfies the Brownian Gibbs property on $\mathbb{R}$, and then from Lemma \ref{Lem.Affine} we conclude the same for $\tilde{\mathcal{L}}^N$. In addition, we have by (\ref{Eq.ScaledSlopeA}) that 
\begin{equation}\label{Eq.LNMatch}
\mathcal{L}_i^N(t)  = \tilde{\mathcal{L}}^N_{\mathsf{M}_{k-1}^a+ i}(t) \mbox{ for } (i,t) \in \llbracket 1, \mathsf{m}_k^a \rrbracket \times (0, \infty).
\end{equation}

The following result shows that the curves $\mathcal{L}_i^N$ from Theorem \ref{Thm.ConvDBM} are likely close (on scale $T_N^{1/2}$) to zero uniformly on compact intervals.
\begin{proposition}\label{Prop.CloseToZero} Assume the same notation as in Theorem \ref{Thm.ConvDBM}(a). For any $0 < \alpha < \beta$, and $\epsilon > 0$, there exists $M > 0$, depending on $\alpha, \beta, \epsilon$, the parameters $(a,b,c)$, and the sequence $\{T_N\}_{N \geq 1}$, such that for all $N \in \mathbb{N}$
\begin{equation}\label{Eq.LNStayInCorridor}
\mathbb{P}\left( \sup_{(i,t) \in \llbracket 1, \mathsf{m}_k^a \rrbracket \times [\alpha, \beta]} |\mathcal{L}_i^N(t)| > M \right) < \epsilon.
\end{equation} 
\end{proposition}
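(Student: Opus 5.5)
The plan is to prove separately a lower bound on the bottom curve $\mathcal{L}^N_{\mathsf{m}_k^a}$ and an upper bound on the top curve $\mathcal{L}^N_1$, uniformly on $[\alpha,\beta]$. Both bounds are obtained by combining three ingredients. First, the one-point tightness that follows from Proposition \ref{Prop.FDSlope}. Second, the Brownian Gibbs property of $\tilde{\mathcal{L}}^N$ from (\ref{Eq.LNTilde}), which holds by Lemma \ref{Lem.Affine}. Third, the estimates of Section \ref{Section4.3} together with the monotone coupling Lemma \ref{Lem.MonCoup}. Since the ordering (\ref{Eq.OrdAWLE}) gives $\mathcal{L}^N_1 \geq \mathcal{L}^N_i \geq \mathcal{L}^N_{\mathsf{m}_k^a}$, these two bounds suffice for (\ref{Eq.LNStayInCorridor}).

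We fix $0 < \alpha' < \alpha$ and $\beta' > \beta$. By Proposition \ref{Prop.FDSlope}, the variables $\mathcal{L}^N_i(\alpha'), \mathcal{L}^N_i(\beta')$ are tight for $i \in \llbracket 1, \mathsf{m}_k^a\rrbracket$. Hence there is $P>0$ such that, with probability at least $1-\epsilon/4$, all of them lie in $[-P,P]$.

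\emph{Lower bound.} We apply the Brownian Gibbs property to the indices $\llbracket 1, \mathsf{M}_k^a\rrbracket$ of $\tilde{\mathcal{L}}^N$ on $[\alpha',\beta']$. The upper boundary is then $f=\infty$, and the lower boundary is $g=\tilde{\mathcal{L}}^N_{\mathsf{M}_k^a+1}$. Conditionally on the exterior $\sigma$-algebra, the law of these curves is $\mathbb{P}^{\alpha',\beta',\vec x,\vec y,\infty,g}_{\mathrm{avoid}}$. On the event that the bottom entries satisfy $x_{\mathsf{M}_k^a}, y_{\mathsf{M}_k^a} \geq -P$, Lemma \ref{Lem.NotTooLow} with $P_1=P_2=-P$ gives
\[
\inf_{t\in[\alpha',\beta']} \mathcal{L}^N_{\mathsf{m}_k^a}(t) \geq -P - A\sqrt{\beta'-\alpha'}
\]
with conditional probability at least $1-\epsilon/4$. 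It does not matter that the top $\mathsf{M}_{k-1}^a$ boundary values are huge, since Lemma \ref{Lem.NotTooLow} only requires lower bounds on the bottom entries.

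\emph{Upper bound.} This is the main obstacle. Gibbs resampling cannot be applied to the group alone, because its lower boundary $\tilde{\mathcal{L}}^N_{\mathsf{M}_k^a+1}$ is not known to be bounded above. Removing the upper curves also goes in the wrong direction for upper bounds. I would therefore argue by contradiction using a "jump and stay high" mechanism. We discretize $[\alpha,\beta]$ into a fine grid and consider the event that $\mathcal{L}^N_1$ exceeds a large level $M$ somewhere on $[\alpha,\beta]$. On this event we let $\sigma$ be the first such time, which is a stopping domain for the family of curves $\llbracket 1, \mathsf{M}_{k-1}^a+1\rrbracket$ on $[\sigma,\beta']$. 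We then resample the curves $\llbracket 1, \mathsf{M}_{k-1}^a+1 \rrbracket$ on $[\sigma,\beta']$ with $f=\infty$ and lower boundary $\tilde{\mathcal{L}}^N_{\mathsf{M}_{k-1}^a+2}$. If $\mathsf{m}_k^a=1$, the lower boundary is instead $\tilde{\mathcal{L}}^N_{\mathsf{M}_k^a+1}$. For this resampling I would use the strong Gibbs property, or, to avoid it, a union over the finitely many grid values of $\sigma$ with a small continuity correction. By Lemma \ref{Lem.MonCoup}, the lower boundary can be dropped to $-\infty$, which only lowers the curves. The boundary data are then: at $\sigma$, the entries have index at most $\mathsf{M}_{k-1}^a+1$ and so are at least $M$ by ordering; at $\beta'$, they are at least $-P$ on the good event. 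Lemma \ref{Lem.NotTooLow} then shows that, with conditional probability at least $1/2$, $\mathcal{L}^N_1$ stays above the linear interpolation between $M$ and $-P$, minus $A\sqrt{\beta'-\alpha}$. In particular, at the fixed point $s=(\beta+\beta')/2$ it exceeds
\[
c M - C,
\]
where $c=\frac{\beta'-s}{\beta'-\alpha}>0$. Consequently $\mathbb{P}(\sup_{[\alpha,\beta]}\mathcal{L}^N_1 > M) \leq 2\,\mathbb{P}(\mathcal{L}^N_1(s) > cM-C) + \epsilon/4$. The right side is small uniformly in $N$ for large $M$, by tightness of $\mathcal{L}^N_1(s)$ from Proposition \ref{Prop.FDSlope}.

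Combining the two bounds by a union bound yields (\ref{Eq.LNStayInCorridor}) for all sufficiently large $N$. The finitely many remaining $N$ are handled by continuity of the curves, after enlarging $M$. The only delicate point is the justification of resampling on the random interval $[\sigma,\beta']$, which is where I expect most of the technical work to lie.
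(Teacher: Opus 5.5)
Your proposal is correct and follows essentially the same route as the paper. The paper likewise uses the ordering to reduce to two bounds and takes one-point tightness from Proposition \ref{Prop.FDSlope}. It then lower-bounds the bottom curve of the group via the Brownian Gibbs property on $\llbracket 1, \mathsf{M}_k^a\rrbracket$; there it uses Lemma \ref{Lem.MonCoup} plus Lemma \ref{Lem.StayInCorridor}, where you apply Lemma \ref{Lem.NotTooLow} directly. Finally, it upper-bounds the top curve of the group by a first-exceedance decomposition over a finite grid, resampling $\llbracket 1, \mathsf{M}_{k-1}^a+1\rrbracket$ with Lemma \ref{Lem.NotTooLow} to force a large value at a later fixed time.

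As in the paper, you need neither a strong Gibbs property nor a continuity correction. The grid bound is uniform in the finite set, so you conclude by taking the supremum over finite grids and using continuity of the curves.
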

\begin{proof} Throughout the proof all the constants we encounter depend on $\alpha, \beta, \epsilon$, the parameters $(a,b,c)$, and the sequence $\{T_N\}_{N \geq 1}$. We do not mention this further. For clarity, we split the proof into three steps.\\

{\bf \raggedleft Step 1.} Let $\tilde{\mathcal{L}}^N$ be as in (\ref{Eq.LNTilde}). We claim that we can find $M_1 > 0$, such that  
\begin{equation}\label{Eq.LNNotTooHigh}
\mathbb{P}\left( \sup_{t \in [\alpha, \beta]} \tilde{\mathcal{L}}_{\mathsf{M}^a_{k-1}+ 1}^N(t) > M_1 \right) < \epsilon/2.
\end{equation} 
In addition, we claim that we can find $M_2 > 0$, such that  
\begin{equation}\label{Eq.LNNotTooLow}
\mathbb{P}\left( \inf_{t \in [\alpha, \beta]} \tilde{\mathcal{L}}_{\mathsf{M}_k^a}^N(t) < -M_2 \right) < \epsilon/2.
\end{equation}

As $\tilde{\mathcal{L}}^N$ satisfies the Brownian Gibbs property, we have for all $t \in \mathbb{R}$ 
\begin{equation}\label{Eq.LNTildeNonIntersecting}
\tilde{\mathcal{L}}^N_1(t) > \tilde{\mathcal{L}}^N_2(t) > \cdots > \tilde{\mathcal{L}}^N_{\mathsf{M}_k^a}(t).
\end{equation}
Consequently, a union bound of (\ref{Eq.LNNotTooHigh}) and (\ref{Eq.LNNotTooLow}) implies for $M = \max(M_1, M_2)$ that
$$
\mathbb{P}\left( \sup_{(i,t) \in \llbracket 1, \mathsf{m}_k^a \rrbracket \times [\alpha, \beta]} |\tilde{\mathcal{L}}_{i+ \mathsf{M}_{k-1}^a}^N(t)| > M \right) < \epsilon.$$
The last inequality and (\ref{Eq.LNMatch}) imply (\ref{Eq.LNStayInCorridor}).

We have thus reduced the proof of the proposition to establishing (\ref{Eq.LNNotTooHigh}) and (\ref{Eq.LNNotTooLow}).\\

{\bf \raggedleft Step 2.} In this step, we prove (\ref{Eq.LNNotTooHigh}).

Set for convenience $K_1 = \mathsf{M}_{k-1}^a + 1$, $K_2 = \mathsf{M}_{k}^a$, $\Delta = \beta - \alpha$, $\gamma = \alpha + 4\Delta$. From Proposition \ref{Prop.FDSlope} and (\ref{Eq.LNMatch}), we can find $\hat{M}$, sufficiently large, so that for all $N \in \mathbb{N}$ and $r \in \{\alpha, \beta, \gamma\}$
\begin{equation}\label{Eq.LNGamma}
\mathbb{P}\left(H^r_N\right) < \epsilon/8, \mbox{ where } H^r_N = \left\{|\tilde{\mathcal{L}}^N_i(r)| \geq \hat{M} \mbox{ for some } i \in \llbracket K_1, K_2 \rrbracket \right\}.
\end{equation} 
We also define
\begin{equation}\label{Eq.LNGamma2}
\tilde{H}^{\beta}_N = \left\{\tilde{\mathcal{L}}^N_{K_1}(\beta) \geq \hat{M} \right\}, \mbox{ and note } \mathbb{P}\left(\tilde{H}^{\beta}_N\right) < \epsilon/8, \mbox{ as } \tilde{H}^{\beta}_N \subseteq H^\beta_N.
\end{equation}

Let $A$ be as in Lemma \ref{Lem.NotTooLow} for $k = K_1$ and $\epsilon$ replaced with $\epsilon/4$. From (\ref{Eq.NotTooLow}), we conclude that if $\vec{x}, \vec{y} \in \weyl_{K_1}$ satisfy $x_{K_1} > \max(P_1, g(a))$, $y_{K_1} > \max(P_2,g(b))$ for some $P_1, P_2 \in \mathbb{R}$, and continuous $g$, then for each $t \in [a,b]$
\begin{equation}\label{Eq.CorrDBM}
\mathbb{P}_{\mathrm{avoid}}^{a,b, \vec{x}, \vec{y}, \infty, g}\left( \mathcal{Q}_{K_1}(t)  \leq P_1  \cdot \frac{b - t}{b - a} + P_2 \cdot \frac{t - a}{b - a} - A \sqrt{b-a}   \right) < \epsilon/4.
\end{equation}
Pick $M_1 > 0$ sufficiently large, so that $3M_1/4 \geq 2 \hat{M} + A \sqrt{\gamma - \alpha}$, and observe that for $s \in [\alpha,\beta]$
\begin{equation}\label{Eq.LNNotTooHighM1}
M_1 \cdot \frac{\gamma-\beta}{\gamma-s} - \hat{M} \cdot \frac{\beta-s}{\gamma-s} - A \sqrt{\gamma-s} \geq \hat{M}.
\end{equation}
Below, we proceed to prove (\ref{Eq.LNNotTooHigh}) for this choice of $M_1$.\\

Fix a finite set $\mathsf{S} = \{s_1, \dots, s_m\} \subset [\alpha, \beta]$ with $s_1 < s_2  < \cdots < s_m$, and define the events 
$$E_N^v = \left\{ \tilde{\mathcal{L}}^N_{K_1}(s_v) > M_1 \mbox{ and } \tilde{\mathcal{L}}^N_{K_1}(s_i) \leq M_1 \mbox{ for } i \in \llbracket 1, v-1 \rrbracket \right\}.$$
We also set $E_N^{\mathsf{S}} = \bigsqcup_{v = 1}^m E_N^v$. We further define $\vec{x}^v, \vec{y} \in \weyl_{K_1}$ via
$$x_i^v = \tilde{\mathcal{L}}^N_{i}(s_v), \hspace{2mm} y_i = \tilde{\mathcal{L}}^N_{i}(\gamma) \mbox{ for } i\in \llbracket 1, K_1 \rrbracket,$$
and observe that on $E_N^v \cap (H^{\gamma}_N)^c$, we have 
\begin{equation*}
x_{K_1}^v > M_1, \hspace{2mm} y_{K_1} > - \hat{M}.
\end{equation*}
Combining the latter with (\ref{Eq.CorrDBM}), applied to $a = s_v$, $b = \gamma$, $t = \beta$, $P_1 = M_1$, $P_2 = - \hat{M}$, $g = \tilde{\mathcal{L}}_{K_1+1}[s_v, \gamma]$, and (\ref{Eq.LNNotTooHighM1}), we conclude that on $E_N^v \cap (H^{\gamma}_N)^c$, we have
\begin{equation}\label{Eq.IntermDBM}
\mathbb{P}^{s_v, \gamma, \vec{x}^v, \vec{y}, \infty, \tilde{\mathcal{L}}_{K_1+1}[s_v, \gamma]}_{\mathrm{avoid}}\left(  \mathcal{Q}_{K_1}(\beta) < \hat{M} \right) < \epsilon/4.
\end{equation}

We observe that we have the following tower of inequalities
\begin{equation}\label{Eq.LNNotTooHighTower}
\begin{split}
&\mathbb{P}\left( E_N^{v} \cap (\tilde{H}^{\beta}_N)^c \cap (H^{\gamma}_N)^c   \right) = \mathbb{E} \left[ {\bf 1}_{E_N^v \cap (H^{\gamma}_N)^c} \cdot \mathbb{E}\left[ {\bf 1}_{(\tilde{H}^{\beta}_N)^c} {\big \vert} \mathcal{F}_{\mathrm{ext}} ( \llbracket 1, K_1 \rrbracket \times (s_v,\gamma))\right] \right] \\
& = \mathbb{E} \left[ {\bf 1}_{E_N^v \cap (H^{\gamma}_N)^c} \cdot \mathbb{P}^{s_v, \gamma, \vec{x}^v, \vec{y}, \infty, \tilde{\mathcal{L}}_{K_1+1}[s_v, \gamma]}_{\mathrm{avoid}}\left(  \mathcal{Q}_{K_1}(\beta) < \hat{M} \right) \right]\\
& \leq \mathbb{E} \left[ {\bf 1}_{E_N^v \cap (H^{\gamma}_N)^c} \cdot \epsilon/4 \right] = \mathbb{P}\left(E_N^v \cap (H^{\gamma}_N)^c \right) \cdot \epsilon/4.
\end{split}
\end{equation}
Indeed, the first equality follows from the tower property for conditional expectation and the fact that $\tilde{H}^{\beta}_N, E_N^v \in \mathcal{F}_{\mathrm{ext}} ( \llbracket 1, K_1 \rrbracket \times (s_v,\gamma))$, where the latter $\sigma$-algebra is as in Definition \ref{Def.BGPVanilla}. The second equality follows from the Brownian Gibbs property of $\tilde{\mathcal{L}}^N$. The inequality on the third line follows from (\ref{Eq.IntermDBM}).

Summing (\ref{Eq.LNNotTooHighTower}) over $v \in \llbracket 1, m \rrbracket$ and using that $E_N^v$ are pairwise disjoint, we conclude
$$\mathbb{P}\left(E_N^{\mathsf{S}} \cap (\tilde{H}^{\beta}_N)^c \cap (H^{\gamma}_N)^c   \right) \leq \mathbb{P}\left(E_N^{\mathsf{S}} \cap (H^{\gamma}_N)^c \right) \cdot \epsilon/4 \leq \epsilon/4.$$
Taking suprema over $\mathsf{S}$ and using that $\tilde{\mathcal{L}}^N_{K_1}$ is continuous, we conclude 
\begin{equation*}
\mathbb{P}\left(\left\{ \sup_{s \in [\alpha, \beta]} \tilde{\mathcal{L}}^N_{K_1}(s) > M_1   \right\} \cap (\tilde{H}^{\beta}_N)^c \cap (H^{\gamma}_N)^c   \right) \leq \epsilon/4,
\end{equation*}
which together with (\ref{Eq.LNGamma}) and (\ref{Eq.LNGamma2}) implies (\ref{Eq.LNNotTooHigh}).\\

{\bf \raggedleft Step 3.} In this step, we prove (\ref{Eq.LNNotTooLow}).

Let $H^r_N$ be as in (\ref{Eq.LNGamma}), and note that we have
\begin{equation}\label{Eq.LNSidesOK}
\mathbb{P}\left(E_N^{\mathrm{side}}\right) < \epsilon/4, \mbox{ where } E_N^{\mathrm{side}} = H^{\alpha}_N \cup H^{\beta}_N.
\end{equation} 
Let $\vec{x}, \vec{y}, \vec{u}, \vec{v} \in \weyl_{K_2}$ be defined by
$$x_i = \tilde{\mathcal{L}}^N_{i}(\alpha), \hspace{2mm} y_i = \tilde{\mathcal{L}}^N_{i}(\beta), \hspace{2mm} u_i = v_i = - \hat{M} - i \mbox{, for } i\in \llbracket 1, K_2 \rrbracket.$$
From (\ref{Eq.LNTildeNonIntersecting}), we have on $(E_N^{\mathrm{side}})^c$ that 
\begin{equation}\label{Eq.LNSideOrdered}
x_i \geq u_i \mbox{ and } y_i \geq v_i \mbox{ for } i \in \llbracket 1, K_2 \rrbracket.
\end{equation}

Let $A$ be as in Lemma \ref{Lem.StayInCorridor} for $a = \alpha$, $b = \beta$, $k = K_2 (=\mathsf{M}_{k}^a)$, $M^{\mathrm{side}} = \hat{M} + K_2$, $\epsilon$ replaced with $\epsilon/4$, and introduce the event 
$$F_N = \left\{\inf_{t \in [\alpha, \beta]} \tilde{\mathcal{L}}^N_{K_2}(t) < -A \right\}.$$
We observe that we have the following tower of inequalities
\begin{equation}\label{Eq.LNNotTooLowTower}
\begin{split}
&\mathbb{P}\left(F_N \cap (E_N^{\mathrm{side}})^c \right) = \mathbb{E} \left[ {\bf 1}_{(E_N^{\mathrm{side}})^c} \cdot \mathbb{E}\left[ {\bf 1}_{F_N} {\big \vert} \mathcal{F}_{\mathrm{ext}} ( \llbracket 1, K_2 \rrbracket \times (\alpha,\beta))\right] \right] \\
& = \mathbb{E} \left[ {\bf 1}_{(E_N^{\mathrm{side}})^c} \cdot \mathbb{P}^{\alpha, \beta, \vec{x}, \vec{y}, \infty, \tilde{\mathcal{L}}_{K_2+1}[\alpha, \beta]}_{\mathrm{avoid}}\left( \inf_{t \in [\alpha, \beta]} \mathcal{Q}_{K_2}(t) < -A \right) \right]\\
& \leq \mathbb{E} \left[ {\bf 1}_{(E_N^{\mathrm{side}})^c} \cdot \mathbb{P}^{\alpha, \beta, \vec{u}, \vec{v}}_{\mathrm{avoid}}\left( \inf_{t \in [\alpha, \beta]} \mathcal{Q}_{K_2}(t) < -A \right) \right] \leq \mathbb{E} \left[ {\bf 1}_{(E_N^{\mathrm{side}})^c} \cdot \epsilon/4 \right] \leq \epsilon/4.  
\end{split}
\end{equation}
Indeed, the first equality follows from the tower property of conditional expectation and the fact that $E_N^{\mathrm{side}} \in \mathcal{F}_{\mathrm{ext}} ( \llbracket 1, K_2 \rrbracket \times (\alpha,\beta))$ (this $\sigma$-algebra is as in Definition \ref{Def.BGPVanilla}). The second equality follows from the Brownian Gibbs property satisfied by $\tilde{\mathcal{L}}^N$. In going from the second to the third line we used (\ref{Eq.LNSideOrdered}) and the monotone coupling in Lemma \ref{Lem.MonCoup}. The middle inequality on the third line used the definition of $A$, and Lemma \ref{Lem.StayInCorridor} with the same parameters as above (we note that $\vec{x}, \vec{y}$ in that lemma are given by $\vec{u}, \vec{v}$, which satisfy the conditions within that lemma by construction). The last inequality is trivial.

From (\ref{Eq.LNSidesOK}) and (\ref{Eq.LNNotTooLowTower}), we conclude
$$\mathbb{P}(F_N) = \mathbb{P}\left(F_N \cap (E_N^{\mathrm{side}})^c \right) + \mathbb{P}\left(F_N \cap E_N^{\mathrm{side}} \right) \leq \epsilon/4 + \mathbb{P}\left( E_N^{\mathrm{side}} \right) < \epsilon/2,$$
which establishes (\ref{Eq.LNNotTooLow}) with $M_2 = A$.
\end{proof}

The following result shows that if $J_a < \infty$, the curve $\mathcal{L}^{a,b,c}_{J_a + 1}$ is with high probability under a slightly elevated inverted parabola $-2^{-1/2} t^2$ on a large interval.  
\begin{proposition}\label{Prop.UnderParabola} Assume the same notation as in Definitions \ref{Def.DLP}, \ref{Def.ParMultiplicities}, and Proposition \ref{Prop.AWLE}, and suppose $J_a < \infty$. Fix $0< \alpha < \beta$, $\epsilon, \delta > 0$, and a sequence $T_N > 0$ with $T_N \uparrow \infty$. We can find $N_0 > 0$, depending on $\alpha, \beta, \epsilon, \delta$, the parameters $(a,b,c)$, and the sequence $\{T_N\}_{N \geq 1}$, such that for $N \geq N_0$, we have
\begin{equation}\label{Eq.UnderParabola}
\mathbb{P}\left( \sup_{t \in [\alpha, \beta]} \left( \mathcal{L}^{a,b,c}_{J_a + 1}(tT_N) + 2^{-1/2} t^2 T_N^2 \right) > \delta T_N^2    \right) < \epsilon.
\end{equation} 
\end{proposition}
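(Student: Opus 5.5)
The plan is to reduce the statement to a one-point tightness statement via the Brownian Gibbs property. The argument follows Step 2 of the proof of Proposition \ref{Prop.CloseToZero}, but in the unscaled parabolic coordinates of $\mathcal{L}^{a,b,c}$. By (\ref{S1FDE}) we have $\mathcal{L}^{a,b,c}_{J_a+1}(s) + 2^{-1/2}s^2 = 2^{-1/2}\mathcal{A}^{a,b,c}_{J_a+1}(s)$. Proposition \ref{Prop.Slopes}(c) therefore says that, for any fixed $r>0$, $\mathcal{L}^{a,b,c}_{J_a+1}(rT_N) + 2^{-1/2}r^2T_N^2$ is tight in $N$. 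The difficulty is to upgrade this pointwise tightness to a supremum bound over the growing interval $[\alpha T_N, \beta T_N]$, at the coarse scale $\delta T_N^2$.

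Set $K = J_a+1$, $\gamma = \beta+1$ and $r = \beta + 1/2$. By Proposition \ref{Prop.Slopes}(c) at the two times $rT_N$ and $\gamma T_N$, we can find $\hat{M}$ such that the following event has probability at most $\epsilon/4$ for all $N$:
$$H_N = \{|\mathcal{L}^{a,b,c}_K(uT_N) + 2^{-1/2}u^2T_N^2| \geq \hat M \mbox{ for some } u \in \{r,\gamma\}\}.$$
Fix a finite set $\mathsf{S}=\{s_1<\cdots<s_m\}\subset[\alpha,\beta]$. Let $E_N^v$ be the event that $s_v$ is the first element of $\mathsf{S}$ at which $\mathcal{L}^{a,b,c}_K(s_vT_N)+2^{-1/2}s_v^2T_N^2 > \delta T_N^2$. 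On $E_N^v\cap H_N^c$ we condition on $\mathcal{F}_{\mathrm{ext}}(\llbracket 1,K\rrbracket\times(s_vT_N,\gamma T_N))$. By the Brownian Gibbs property (Proposition \ref{Prop.AWLE}), the top $K$ curves on $[s_vT_N,\gamma T_N]$ then have law $\mathbb{P}_{\mathrm{avoid}}$ with top boundary $\infty$ and bottom boundary $\mathcal{L}^{a,b,c}_{K+1}$. The lowest of these curves has left endpoint value at least $P_1 = \delta T_N^2 - 2^{-1/2}s_v^2T_N^2$ and right endpoint value at least $P_2 = -\hat M - 2^{-1/2}\gamma^2T_N^2$.

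We apply Lemma \ref{Lem.NotTooLow} with $k=K$ and $\epsilon/4$ in place of $\epsilon$. Its constant $A$ depends only on $K$ and $\epsilon$, which is crucial since the interval length is of order $T_N$. With conditional probability at least $1-\epsilon/4$, the value $\mathcal{Q}_K(rT_N)$ is bounded below by the chord value minus $A\sqrt{(\gamma-s_v)T_N}$. The chord of the concave parabola $-2^{-1/2}u^2T_N^2$ between $s_v$ and $\gamma$ lies above the parabola at $u = r$ by $2^{-1/2}(r-s_v)(\gamma-r)T_N^2 \geq 2^{-3/2}\cdot\tfrac12 T_N^2$. The elevation $\delta T_N^2$ at $s_v$ contributes an additional nonnegative amount $\delta T_N^2(\gamma-r)/(\gamma-s_v)$. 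Hence, for $N$ large, we obtain
$$\mathcal{L}^{a,b,c}_K(rT_N) + 2^{-1/2}r^2T_N^2 \geq c_0T_N^2 - \hat M - A\sqrt{(\gamma-\alpha)T_N} > \hat M$$
for some $c_0>0$. This contradicts $H_N^c$. In fact, the parabola's concavity alone gives the gap, so the argument is robust to small $\delta$.

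As in (\ref{Eq.LNNotTooHighTower}), this gives $\mathbb{P}(E_N^v\cap H_N^c)\le (\epsilon/4)\,\mathbb{P}(E_N^v)$. Summing over the disjoint events $E_N^v$ and adding $\mathbb{P}(H_N)$ yields a bound $\epsilon/2$, uniform in $\mathsf{S}$ for $N\ge N_0$. Letting $\mathsf{S}$ increase to a dense subset of $[\alpha,\beta]$ and using continuity gives (\ref{Eq.UnderParabola}). The main point to check carefully is that Lemma \ref{Lem.NotTooLow} applies with the random bottom boundary and with $\vec{x},\vec{y}$ only known to satisfy the lower bounds on their $K$-th coordinates. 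The lemma is stated exactly in this generality, so the only real work is the elementary chord computation and choosing $N_0$ so that $c_0T_N^2$ dominates $2\hat M + A\sqrt{(\gamma-\alpha)T_N}$.
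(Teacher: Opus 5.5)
You have the right overall structure: a first-exceedance decomposition over a finite set $\mathsf{S}$, Gibbs resampling of the top $K=J_a+1$ curves, and Lemma \ref{Lem.NotTooLow} with an $A$ independent of the interval length, so that $A\sqrt{T_N}\ll T_N^2$. The gap is in the key chord computation, which has the wrong sign. The function $u\mapsto -2^{-1/2}u^2$ is concave, so its chords lie \emph{below} its graph. With your $P_1=\delta T_N^2-2^{-1/2}s_v^2T_N^2$ and $P_2=-\hat M-2^{-1/2}\gamma^2T_N^2$, the interpolated value at $rT_N$ is
\[
P_1\frac{\gamma-r}{\gamma-s_v}+P_2\frac{r-s_v}{\gamma-s_v}+2^{-1/2}r^2T_N^2
= T_N^2\Bigl[\delta\,\frac{\gamma-r}{\gamma-s_v}-2^{-1/2}(r-s_v)(\gamma-r)\Bigr]-\hat M\,\frac{r-s_v}{\gamma-s_v}.
\]
So concavity costs you $2^{-1/2}(r-s_v)(\gamma-r)T_N^2$ rather than gaining it. The bracket is positive only when $\delta>2^{-1/2}(r-s_v)(\gamma-s_v)$. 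With your fixed choices $r=\beta+\tfrac12$, $\gamma=\beta+1$ and $s_v=\alpha$, this requires $\delta>2^{-1/2}(\beta-\alpha+\tfrac12)(\beta-\alpha+1)\ge 2^{-3/2}$. Hence the claimed lower bound $c_0T_N^2$ is false for small $\delta$. That is exactly the regime used later: the proof of Theorem \ref{Thm.ConvDBM} needs $\delta_0<2^{-1/2}\alpha^2$. Your remark that the argument is robust to small $\delta$ is therefore backwards.

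The missing idea is localization. The checkpoint and the right endpoint of the resampling interval must lie within distance $O(\sqrt{\delta})$ of $s_v$, so that the elevation $\delta$ beats the concavity loss. The paper does this as follows:
\begin{itemize}
\item It partitions $[\alpha,\beta+1]$ into $n$ equal pieces $t_0<\cdots<t_n$ of length $\rho_n$ with $2^{-1/2}\rho_n^2<\delta/4$.
\item For $s_v\in[t_{i_0-1},t_{i_0}]$ it resamples on $[s_vT_N,t_{i_0+1}T_N]$ and checks at $t_{i_0}T_N$.
\item The concavity loss is then at most $2^{-1/2}\rho_n^2<\delta/4$, while the elevation contributes $\delta\rho_n/(t_{i_0+1}-s_v)\ge\delta/2$; this is (\ref{Eq.ConvexHigh}).
\end{itemize}
Since $n$ depends only on $\alpha,\beta,\delta$, you need tightness only at finitely many grid times, which Proposition \ref{Prop.Slopes}(c) provides. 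You then allow error $\epsilon/n$ on each subinterval and take a union bound. With this change, the rest of your argument goes through unchanged.
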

\begin{proof} Throughout the proof all the constants we encounter depend on $\alpha, \beta, \epsilon, \delta$, the parameters $(a,b,c)$, and the sequence $\{T_N\}_{N \geq 1}$, and all inequalities hold provided that $N$ is sufficiently large, depending on the same set of parameters. We do not mention this further. For clarity, we split the proof into two steps.\\

{\bf \raggedleft Step 1.} Put $\gamma = \beta + 1$, and split the interval $[\alpha, \gamma]$ into $n \geq 1$ intervals $ [t_{i-1}, t_i]$ of equal length $\rho_n = (\gamma - \alpha)/n$, where $ \alpha = t_0 < t_1 < \cdots < t_n = \gamma$. Set $f(t) = -2^{-1/2} t^2$, and let $n \geq 1$ be sufficiently large, depending on $\alpha, \beta$, so that $\rho_n < 1$ and for each $i \in \llbracket 1, n-1 \rrbracket$, $t \in [t_{i-1}, t_i]$, we have 
\begin{equation}\label{Eq.ConvexHigh}
\left(f(t) + \delta\right) \cdot \frac{t_{i+1} - t_i}{t_{i+1} - t} + f(t_{i+1}) \cdot \frac{t_i - t}{t_{i+1} - t} \geq f(t_i) + \frac{\delta}{4}.   
\end{equation}
Let us briefly explain why such a choice of $n$ is possible. Put $\rho = t_i - t \in [0, \rho_n]$ and note that (\ref{Eq.ConvexHigh}) is equivalent to
$$ -2^{-1/2} (t_i - \rho)^2 \cdot \frac{\rho_n}{\rho + \rho_n} + \delta \cdot \frac{\rho_n}{\rho + \rho_n} - 2^{-1/2} (t_i + \rho_n)^2 \cdot \frac{\rho}{\rho + \rho_n} \geq - 2^{-1/2} t_i^2 + \frac{\delta}{4},$$
which upon rearranging is equivalent to
$$ -2^{-1/2} \rho \rho_n + \delta \cdot \frac{\rho_n}{\rho + \rho_n} \geq \frac{\delta}{4}.$$
The latter clearly holds if $n$ is large enough so that $2^{-1/2} \rho_n^2 < \delta/4$, as $\rho \in [0, \rho_n]$.

We claim that for each $i_0 \in \llbracket 1, n-1 \rrbracket$, we can find $W_{i_0} \in \mathbb{N}$, such that for $N \geq W_{i_0}$, we have
\begin{equation}\label{Eq.UnderParabolaI}
\mathbb{P}\left( \sup_{t \in [t_{i_0-1}, t_{i_0}]} \left( \mathcal{L}^{a,b,c}_{J_a + 1}(tT_N) + 2^{-1/2} t^2 T_N^2 \right) > \delta T_N^2 \right) < \frac{\epsilon}{n}.
\end{equation} 
Since by construction $[\alpha, \beta] \subseteq \cup_{i_0 = 1}^{n-1} [t_{i_0-1}, t_{i_0}]$, we see that (\ref{Eq.UnderParabola}) holds with $N_0 = \max_{1 \leq i_0 \leq n-1} W_{i_0}$, by taking a union bound of (\ref{Eq.UnderParabolaI}). We have thus reduced the proof to establishing (\ref{Eq.UnderParabolaI}).\\

In the remainder of the proof we fix $i_0 \in \llbracket 1, n-1 \rrbracket$. By Proposition \ref{Prop.Slopes}(c), we can find $M_1 > 0$ sufficiently large, so that for $i \in \{i_0, i_0 + 1 \}$, we have
\begin{equation}\label{Eq.NotTooHighGrid}
\mathbb{P}(F_N^i) < (4n)^{-1} \cdot \epsilon, \mbox{ where } F_N^i = \left\{ \left|\mathcal{L}^{a,b,c}_{J_a + 1}(t_iT_N) + 2^{-1/2} t_i^2 T_N^2 \right| >  M_1 \right\}.
\end{equation}

Let $A$ be as in Lemma \ref{Lem.NotTooLow} for $k = J_{a} + 1$ and $\epsilon$ replaced with $(2n)^{-1} \cdot \epsilon$. From (\ref{Eq.NotTooLow}), we conclude that if $\vec{x}, \vec{y} \in \weyl_{J_a+1}$ satisfy $x_{J_a+ 1} > \max(P_1, g(a))$, $y_{J_a+1} > \max(P_2,g(b))$ for some $P_1, P_2 \in \mathbb{R}$, and continuous $g$, then for each $t \in [a,b]$
\begin{equation}\label{Eq.GridBrownianEnsemble}
\mathbb{P}_{\mathrm{avoid}}^{a,b, \vec{x}, \vec{y}, \infty, g}\left( \mathcal{Q}_{J_a+1}(t)  \leq P_1  \cdot \frac{b - t}{b - a} + P_2 \cdot \frac{t - a}{b - a} - A \sqrt{b-a}   \right) < \frac{\epsilon}{2n}.
\end{equation}
We suppose that $W_{i_0}$ is sufficiently large, so that for $N \geq W_{i_0}$ and all $s\in [t_{i_0-1}, t_{i_0}]$, we have 
\begin{equation}\label{Eq.GridLargeN}
A\sqrt{T_N(t_{i_0+1}-s)} + M_1 \leq (\delta/4)T_N^2. 
\end{equation}
In the next step, we prove (\ref{Eq.UnderParabolaI}) for this choice of $W_{i_0}$.\\

{\bf \raggedleft Step 2.} The argument we present here is similar to Step 2 of the proof of Proposition \ref{Prop.CloseToZero}.

Fix $\mathsf{S} = \{s_1, \dots, s_m\} \subset [t_{i_0-1}, t_{i_0}]$ with $s_1 < s_2  < \cdots < s_m$, and define the events 
$$E_N^v = \left\{ \mathcal{L}^{a,b,c}_{J_a + 1}(s_vT_N) + 2^{-1/2} s_v^2 T_N^2 > \delta T_N^2 \mbox{ and } \mathcal{L}^{a,b,c}_{J_a + 1}(s_iT_N) + 2^{-1/2} s_i^2 T_N^2 \leq \delta T_N^2 \mbox{ for } i \in \llbracket 1, v-1 \rrbracket \right\}.$$
We also set $E_N^{\mathsf{S}} = \bigsqcup_{v = 1}^m E_N^v$. We further define $\vec{x}^v, \vec{y} \in \weyl_{J_a + 1}$ via
$$x_i^v = \mathcal{L}^{a,b,c}_{i}(s_vT_N), \hspace{2mm} y_i = \mathcal{L}^{a,b,c}_{i}(t_{i_0+1} T_N) \mbox{ for } i\in \llbracket 1, J_a + 1 \rrbracket,$$
and observe that on $E_N^v \cap (F_N^{i_0+1})^c $, we have 
\begin{equation}\label{Eq.GridBoundaryLB}
x_{J_a+1}^v > - 2^{-1/2} s_v^2 T_N^2 + \delta T_N^2, \hspace{2mm} y_{J_a+1} \geq - 2^{-1/2} t_{i_0+1}^2 T_N^2 -  M_1.
\end{equation}

If we set 
$$P_1 = - 2^{-1/2} s_v^2 T_N^2 + \delta T_N^2, \hspace{2mm} P_2 = - 2^{-1/2} t_{i_0+1}^2 T_N^2 -  M_1,$$
$$a_v = s_v T_N, \hspace{2mm} b = t_{i_0 + 1} T_N, \hspace{2mm} t = t_{i_0} T_N, \hspace{2mm} g_v(s) = \mathcal{L}^{a,b,c}_{J_a + 2}(s) \mbox{ for } s \in [s_v T_N, t_{i_0+1} T_N],$$
we conclude that on $E_N^v \cap (F_N^{i_0+1})^c$, we have for $N \geq W_{i_0}$ the following tower of inequalities
\begin{equation}\label{Eq.GridMiniTower}
\begin{split}
&\mathbb{P}_{\mathrm{avoid}}^{a_v,b, \vec{x}^v, \vec{y}, \infty, g_v}\left( \mathcal{Q}_{J_a+1}(t)  \leq -2^{-1/2}t_{i_0}^2T_N^2 + M_1\right) \\
& = \mathbb{P}_{\mathrm{avoid}}^{a_v,b, \vec{x}^v, \vec{y}, \infty, g_v}\left( \mathcal{Q}_{J_a+1}(t)  \leq f(t_{i_0}) T_N^2 + M_1\right) \\
& \leq \mathbb{P}_{\mathrm{avoid}}^{a_v,b, \vec{x}^v, \vec{y}, \infty, g_v}\left( \mathcal{Q}_{J_a+1}(t)  \leq \left[\left(f(s_v) + \delta\right)  \frac{t_{i_0+1} - t_{i_0}}{t_{i_0+1} - s_v} + f(t_{i_0+1}) \frac{t_{i_0} - s_v}{t_{i_0+1} - s_v} - \frac{\delta}{4}\right]\hspace{-1mm}T_N^2 +  M_1\right)\\
& = \mathbb{P}_{\mathrm{avoid}}^{a_v,b, \vec{x}^v, \vec{y}, \infty, g_v}\left( \mathcal{Q}_{J_a+1}(t)  \leq P_1  \frac{t_{i_0+1} - t_{i_0}}{t_{i_0+1} - s_v} + P_2 \frac{t_{i_0} - s_v}{t_{i_0+1} - s_v} - \frac{\delta T_N^2}{4} +M_1 \right) \\
& \leq \mathbb{P}_{\mathrm{avoid}}^{a_v,b, \vec{x}^v, \vec{y}, \infty, g_v}\left( \mathcal{Q}_{J_a+1}(t)  \leq P_1  \frac{t_{i_0+1} - t_{i_0}}{t_{i_0+1} - s_v} + P_2 \frac{t_{i_0} - s_v}{t_{i_0+1} - s_v} - A\sqrt{T_N(t_{i_0+1}-s_v)} \right)< \frac{\epsilon}{2n}.
\end{split}
\end{equation}
In going from the first to the second line we used that $f(t) = -2^{-1/2}t^2$, and in going from the second to the third line we used (\ref{Eq.ConvexHigh}). In going from the third to the fourth line, we used the definitions of $P_1, P_2$, and in going from the fourth line to the fifth line, we used that $N \geq W_{i_0}$ and (\ref{Eq.GridLargeN}). In the last inequality we used (\ref{Eq.GridBrownianEnsemble}) and (\ref{Eq.GridBoundaryLB}).  \\

With the same notation as above, we have the following tower of inequalities for $N \geq W_{i_0}$ 
\begin{equation}\label{Eq.GridMainTower}
\begin{split}
&\mathbb{P}\left( E_N^{v} \cap (F_N^{i_0})^c \cap (F_N^{i_0+1})^c   \right) = \mathbb{E} \left[ {\bf 1}_{E_N^{v} \cap (F_N^{i_0+1})^c} \cdot \mathbb{E}\left[ {\bf 1}_{(F_N^{i_0})^c} {\Big \vert} \mathcal{F}_{\mathrm{ext}} ( \llbracket 1, J_a+1 \rrbracket \times (a_v,b))\right] \right] \\
& = \mathbb{E} \left[ {\bf 1}_{E_N^{v} \cap (F_N^{i_0+1})^c} \cdot \mathbb{P}^{a_v, b, \vec{x}^v, \vec{y}, \infty, g_v}_{\mathrm{avoid}}\left(  \left|\mathcal{Q}_{J_a+1}(t_{i_0}T_N) + 2^{-1/2}t_{i_0}^2T_N^2 \right| \leq  M_1  \right) \right]\\
& \leq \mathbb{E} \left[ {\bf 1}_{E_N^{v} \cap (F_N^{i_0+1})^c} \cdot \frac{\epsilon}{2n} \right] \leq \frac{\epsilon}{2n} \cdot \mathbb{P}\left( E_N^{v} \cap (F_N^{i_0+1})^c \right). 
\end{split}
\end{equation}
The first equality follows from the tower property of conditional expectation and the fact that $E_N^{v}, F_N^{i_0+1} \in \mathcal{F}_{\mathrm{ext}} ( \llbracket 1, J_a+1 \rrbracket \times (a_v,b))$ (this $\sigma$-algebra is as in Definition \ref{Def.BGPVanilla}). The second equality follows from the Brownian Gibbs property for $\mathcal{L}^{a,b,c}$, see Proposition \ref{Prop.AWLE}. In going from the second to the third line, we used (\ref{Eq.GridMiniTower}).

Summing (\ref{Eq.GridMainTower}) over $v \in \llbracket 1, m \rrbracket$ and using that $E_N^v$ are pairwise disjoint, we conclude
$$\mathbb{P}\left( E_N^{\mathsf{S}} \cap (F_N^{i_0})^c \cap (F_N^{i_0+1})^c   \right) \leq \mathbb{P}\left( E_N^{\mathsf{S}}\cap (F_N^{i_0+1})^c \right) \cdot \frac{\epsilon}{2n} \leq \frac{\epsilon}{2n}.$$
Taking suprema over $\mathsf{S}$ and using that $\mathcal{L}^{a,b,c}_{J_a+1}$ is continuous, we conclude 
\begin{equation*}
\mathbb{P}\left(\left\{ \sup_{t \in [t_{i_0-1}, t_{i_0}]} \left( \mathcal{L}^{a,b,c}_{J_a + 1}(tT_N) + 2^{-1/2} t^2 T_N^2 \right) > \delta T_N^2  \right\} \cap (F_N^{i_0})^c \cap (F_N^{i_0+1})^c   \right) \leq \frac{\epsilon}{2n},
\end{equation*}
which together with (\ref{Eq.NotTooHighGrid}) implies (\ref{Eq.UnderParabolaI}).
\end{proof}

%
%
\subsection{Proof of Theorem \ref{Thm.ConvDBM}}\label{Section5.2} We continue with the same notation as in the statement of the theorem. For clarity, we split the proof into three steps. In Step 1, we reduce the proof of the theorem to showing that the modulus of continuity of $\mathcal{L}_i^N$ is well-behaved with high probability for each $i \in \llbracket 1, \mathsf{m}_k^a \rrbracket$, see (\ref{Eq.MOCDBMRed}). In Step 2, we summarize several key consequences of our results in Sections \ref{Section4} and \ref{Section5.1}. In particular, we show that if $\tilde{\mathcal{L}}^N = \{\tilde{\mathcal{L}}^N_i\}_{i \geq 1}$ is as in (\ref{Eq.LNTilde}), then on any compact interval $\tilde{\mathcal{L}}^N_{\mathsf{M}_{k-1}^a}$ is very high and $\tilde{\mathcal{L}}^N_{\mathsf{M}_{k}^a+1}$ is very low with high probability, see (\ref{Eq.TopBotDBM}). The latter implies that the curves $\{\tilde{\mathcal{L}}^N_i: i \in \llbracket \mathsf{M}_{k-1}^a + 1, \mathsf{M}_{k}^a\rrbracket \}$ are too far away from the other curves of the ensemble $\tilde{\mathcal{L}}^N$ and behave like a finite Brownian ensemble. As such ensembles have a well-behaved modulus of continuity by Lemma \ref{Lem.MOCUniform}, we are able to infer the same for $\{\tilde{\mathcal{L}}^N_i: i \in \llbracket \mathsf{M}_{k-1}^a + 1, \mathsf{M}_{k}^a\rrbracket \}$, and hence for $\{\mathcal{L}^N_i : i \in \llbracket 1, \mathsf{m}_{k}^a\rrbracket\}$ in view of (\ref{Eq.LNMatch}). The details behind the argument we outlined in the last sentence are provided in Step 3, where we ultimately prove (\ref{Eq.MOCDBMRed}).\\

{\bf \raggedleft Step 1.} We observe that part (b) of the theorem follows from part (a) and Proposition \ref{Prop.BasicProperties}(b). Consequently, we only need to establish part (a). In what follows we fix $k \in \mathbb{N}$ and $(a,b,c) \in \parP$, such that $|\mathsf{V}_a| \geq k$, and proceed to prove (\ref{Eq.ConvDBMA}).

We claim that for each $\epsilon, \eta > 0$, and $\beta > \alpha > 0$, we can find $N_0 \in \mathbb{N}$ and $\delta > 0$, depending on $\epsilon, \eta, \beta, \alpha$, the parameters $(a,b,c)$, and the sequence $\{T_N\}_{N \geq 1}$, such that for $N \geq N_0$ and $i \in \llbracket 1, \mathsf{m}_k^a \rrbracket$
\begin{equation}\label{Eq.MOCDBMRed}
\mathbb{P}\left(  w(\mathcal{L}^N_i[\alpha,\beta], \delta) \geq \eta \right) < \epsilon,
\end{equation}
where we recall that the modulus of continuity $w(\mathcal{L}^N_i[\alpha,\beta], \delta)$ is as in (\ref{Eq.MOCDef}) with $a = \alpha$ and $b = \beta$. We prove (\ref{Eq.MOCDBMRed}) in the steps below. Here, we assume its validity and finish the proof of part (a).

From Proposition \ref{Prop.FDSlope}, we know
\begin{equation}\label{Eq.FDConvDBMRed}
\left(\mathcal{L}^N_i(t): t > 0 \mbox{, }i = 1, \dots, \mathsf{m}_k^a \right) \overset{f.d.}{\rightarrow}\left(\lambda^{\mathsf{m}_k^a}_i(t): t > 0 \mbox{, }i = 1, \dots, \mathsf{m}_k^a\right).
\end{equation}
Equation (\ref{Eq.FDConvDBMRed}) in particular shows that the sequences $\{\mathcal{L}^N_i(1)\}_{N \geq 1}$ for each $i \in \llbracket 1, \mathsf{m}_k^a \rrbracket$ are tight, which together with (\ref{Eq.MOCDBMRed}) shows that the tightness criteria of \cite[Lemma 2.4]{DEA21} are satisfied by $\mathcal{L}^N = \{\mathcal{L}_i^N\}_{i = 1}^{\mathsf{m}_k^a}$. 

From \cite[Lemma 2.4]{DEA21}, we conclude that $\{\mathcal{L}^N\}_{N \geq 1}$ is a tight sequence of random elements in $C(\llbracket 1,\mathsf{m}_k^a\rrbracket \times (0, \infty) )$. In addition, (\ref{Eq.FDConvDBMRed}) shows that any subsequential limit has the same finite-dimensional distributions as $\{\lambda^{\mathsf{m}_k^a}_i\}_{i = 1}^{\mathsf{m}_k^a}$. As finite-dimensional sets form a separating class, see \cite[Lemma 3.1]{DimMat}, we conclude that $\mathcal{L}^N \Rightarrow \{\lambda^{\mathsf{m}_k^a}_i\}_{i = 1}^{\mathsf{m}_k^a}$, which is precisely (\ref{Eq.ConvDBMA}).\\

{\bf \raggedleft Step 2.} In this step, we specify the choices of $N_0$ and $\delta$, for which we will prove (\ref{Eq.MOCDBMRed}). 

From (\ref{Eq.FDConvDBMRed}), we can find $M_1 > 0$, depending on $\epsilon$, such that for all $N \geq 1$, 
\begin{equation}\label{Eq.EsideDBM}
\mathbb{P}\left(E^{\mathrm{side}}_N\right) < \epsilon/4, \mbox{ where } E^{\mathrm{side}}_N = \{|\mathcal{L}_i^N(\gamma)| > M_1 \mbox{ for some } (i,\gamma) \in \llbracket 1, \mathsf{m}_k^a \rrbracket \times \{\alpha, \beta\}\}.
\end{equation}
From Lemma \ref{Lem.StayInCorridor} with $a = \alpha$, $b = \beta$, $k = \mathsf{m}_k^a$, $M^{\mathsf{side}} = M_1$ and $\epsilon = 1/2$, we can find $A > 0$, such that for $\vec{x}, \vec{y} \in \weyl_{\mathsf{m}_k^a}$ with $|x_i|, |y_i| \leq M^{\mathsf{side}}$ for $i \in \llbracket 1, \mathsf{m}_k^a\rrbracket$, we have 
\begin{equation}\label{Eq.StayInCorridorDBMRed}
\mathbb{P}_{\mathrm{avoid}}^{\alpha,\beta, \vec{x}, \vec{y}}\left( |\mathcal{Q}_i(t) | \geq A \mbox{ for some } (i,t) \in \llbracket 1,\mathsf{m}_k^a \rrbracket \times [\alpha,\beta]  \right) < 1/2.
\end{equation}  

From Lemma \ref{Lem.MOCUniform} with $a = \alpha$, $b = \beta$, $k = \mathsf{m}_k^a$, $M^{\mathsf{side}} = M_1$, $\eta$ as above and $\epsilon$ replaced with $\epsilon/4$, we can find $\delta > 0$, such that for $\vec{x}, \vec{y} \in \weyl_{\mathsf{m}_k^a}$ with $|x_i|, |y_i| \leq M^{\mathsf{side}}$ for $i \in \llbracket 1, \mathsf{m}_k^a\rrbracket$, we have
\begin{equation}\label{Eq.MOCUniformDBM}
\mathbb{P}_{\mathrm{avoid}}^{\alpha,\beta, \vec{x}, \vec{y}}\left( \max_{i \in \llbracket 1, \mathsf{m}_k^a \rrbracket} w(\mathcal{Q}_i[\alpha, \beta], \delta) \geq \eta \right) < \epsilon/4.
\end{equation}
This specifies our choice of $\delta$. \\

We next let $\tilde{\mathcal{L}}^N = \{\tilde{\mathcal{L}}^N_i\}_{i \geq 1}$ be as in (\ref{Eq.LNTilde}), and we adopt the convention $\tilde{\mathcal{L}}^N_0 = \infty$. As we explain below, we have from the results in Section \ref{Section5.1}, that
\begin{equation}\label{Eq.NoCeilingDBM}
\lim_{N \rightarrow \infty} \mathbb{P} \left( \inf_{t \in [\alpha, \beta]}\tilde{\mathcal{L}}^N_{\mathsf{M}_{k-1}^a}(t) > A  \right) = 1.
\end{equation}
and
\begin{equation}\label{Eq.NoFloorDBM}
\lim_{N \rightarrow \infty} \mathbb{P} \left( \sup_{t \in [\alpha, \beta]}\tilde{\mathcal{L}}^N_{\mathsf{M}_{k}^a+1}(t) < - A  \right) = 1.
\end{equation}

Indeed, if $k = 1$ and hence $\mathsf{M}_{k-1}^a = 0$, we have $\tilde{\mathcal{L}}_0^N = \infty$ and (\ref{Eq.NoCeilingDBM}) is automatic. If $k \geq 2$, then from Proposition \ref{Prop.CloseToZero} applied to $k-1$, we have that the sequence 
$$\sup_{t \in [\alpha, \beta]}\left|\tilde{\mathcal{L}}^N_{\mathsf{M}_{k-1}^a}(t) + t (2T_N)^{1/2} \cdot (1/v_{k-1}^a - 1/v_{k}^a) \right|$$
is tight. Since $v_{k-1}^a > v_k^a$, and $T_N \uparrow \infty$, the latter implies (\ref{Eq.NoCeilingDBM}).

To see why (\ref{Eq.NoFloorDBM}) holds, we again consider two cases. If $|\mathsf{V}_a| > k$, then from Proposition \ref{Prop.CloseToZero} applied to $k+1$, we have that the sequence 
$$\sup_{t \in [\alpha, \beta]}\left|\tilde{\mathcal{L}}^N_{\mathsf{M}_{k}^a + 1}(t) + t (2T_N)^{1/2} \cdot (1/v_{k+1}^a - 1/v_{k}^a) \right|,$$
is tight. Since $v_{k+1}^a < v_k^a$, and $T_N \uparrow \infty$, the latter implies (\ref{Eq.NoFloorDBM}). If $|\mathsf{V}_a| = k$, then $\mathsf{M}_{k}^a = J_a$, and by Proposition \ref{Prop.UnderParabola}, we have for any $\delta_0 > 0$
$$ \lim_{N \rightarrow \infty}\mathbb{P}\left( \sup_{t \in [\alpha, \beta]} \left( \mathcal{L}^{a,b,c}_{J_a + 1}(tT_N) + 2^{-1/2} t^2 T_N^2 \right) > \delta_0 T_N^2    \right) = 0.
$$
Combining the latter with (\ref{Eq.LNTilde}), we obtain for any $\delta_0 > 0$
$$ \lim_{N \rightarrow \infty}\mathbb{P}\left( \sup_{t \in [\alpha, \beta]} \left(  \tilde{\mathcal{L}}^N_{\mathsf{M}_{k}^a+1}(t)-2^{1/2}tT_N^{1/2}/v_k^a  + 2^{-1/2} t^2 T_N^{3/2} \right) > \delta_0 T_N^{3/2}    \right) = 0.
$$
Picking $\delta_0 < 2^{-1/2} \alpha^2$, we see that the last equation implies (\ref{Eq.NoFloorDBM}).

By combining (\ref{Eq.NoCeilingDBM}) and (\ref{Eq.NoFloorDBM}), we see that we can find $N_0 \in \mathbb{N}$, such that for all $N \geq N_0$, 
\begin{equation}\label{Eq.TopBotDBM}
\mathbb{P}\left(F_N^{\mathrm{tb}}\right) < \epsilon/4, \mbox{ where }F_N^{\mathrm{tb}} = \left\{ \inf_{t \in [\alpha, \beta]}\tilde{\mathcal{L}}^N_{\mathsf{M}_{k-1}^a}(t) \leq A \right\} \cup \left\{ \sup_{t \in [\alpha, \beta]}\tilde{\mathcal{L}}^N_{\mathsf{M}_{k}^a+1}(t) \geq - A\right\}.
\end{equation}
This specifies our choice of $N_0$.\\

{\bf \raggedleft Step 3.} In this final step, we prove (\ref{Eq.MOCDBMRed}) for $N_0, \delta$ as in Step 2.

We define $\vec{x}^N, \vec{y}^N \in \weyl_{\mathsf{m}_k^a}$, $f^N: [\alpha, \beta] \rightarrow (-\infty, \infty]$ and $g^N: [\alpha, \beta] \rightarrow [-\infty, \infty)$ through
$$x_i^N = \tilde{\mathcal{L}}_{\mathsf{M}_{k-1}^a + i}^N(\alpha), \hspace{2mm} y_i^N = \tilde{\mathcal{L}}_{\mathsf{M}_{k-1}^a + i}^N(\beta), \mbox{ for $i \in \llbracket 1, \mathsf{m}_k^a \rrbracket$}, \hspace{2mm} f^N = \tilde{\mathcal{L}}_{\mathsf{M}_{k-1}^a}^N[\alpha, \beta], \hspace{2mm} g^N = \tilde{\mathcal{L}}_{\mathsf{M}_{k}^a + 1}^N[\alpha, \beta],$$
and observe that on $(E_N^{\mathrm{side}})^c \cap (F_N^{\mathrm{tb}})^c$, we have 
\begin{equation}\label{Eq.BoundaryDBM}
\left|x_i^N\right|, \left|y_i^N\right| \leq M_1, \mbox{ for $i \in \llbracket 1, \mathsf{m}_k^a \rrbracket$}, \hspace{2mm} f^N(t) > A, \hspace{2mm} g^N(t) < - A, \mbox{ for $t \in [\alpha, \beta]$}.  
\end{equation}

We also define for any $f: [\alpha, \beta] \rightarrow (-\infty, \infty]$ and $g: [\alpha, \beta] \rightarrow [-\infty, \infty)$, the function $G_{f,g}: C( \llbracket 1, \mathsf{m}_k^a \rrbracket \times [\alpha, \beta] ) \rightarrow [0,1]$ via
$$G_{f,g}\left( \{h_i\}_{i = 1}^{\mathsf{m}_k^a} \right) = {\bf 1} \left\{f(t) > h_1(t) \mbox{ and } h_{\mathsf{m}_k^a}(t) > g(t) \mbox{ for all } t \in [\alpha, \beta] \right\}.$$

We observe that we have the following tower of inequalities on $(E_N^{\mathrm{side}})^c \cap (F_N^{\mathrm{tb}})^c$ for $N \geq N_0$
\begin{equation}\label{Eq.DBMConvMiniTower}
\begin{split}
&\mathbb{P}_{\mathrm{avoid}}^{\alpha,\beta, \vec{x}^N, \vec{y}^N, f^N, g^N}\left( \max_{i \in \llbracket 1, \mathsf{m}_k^a \rrbracket} w(\mathcal{Q}_i[\alpha, \beta], \delta) \geq \eta \right) \\
& = \frac{\mathbb{E}_{\mathrm{avoid}}^{\alpha,\beta, \vec{x}^N, \vec{y}^N}\left[ {\bf 1}\left\{\max_{i \in \llbracket 1, \mathsf{m}_k^a \rrbracket} w(\mathcal{Q}_i[\alpha, \beta], \delta) \geq \eta \right\} \cdot G_{f^N, g^N} (\mathcal{Q}) \right]}{\mathbb{E}_{\mathrm{avoid}}^{\alpha,\beta, \vec{x}^N, \vec{y}^N}\left[ G_{f^N, g^N} (\mathcal{Q}) \right]} \\
& \leq 2 \mathbb{E}_{\mathrm{avoid}}^{\alpha,\beta, \vec{x}^N, \vec{y}^N}\left[ {\bf 1}\left\{\max_{i \in \llbracket 1, \mathsf{m}_k^a \rrbracket} w(\mathcal{Q}_i[\alpha, \beta], \delta) \geq \eta \right\} \cdot G_{f^N, g^N} (\mathcal{Q})  \right] < \epsilon/2.
\end{split}
\end{equation}
Indeed, in going from the first to the second line, we used Definition \ref{Def.fgAvoidingBE}, and in going from the second to the third line, we used 
$$\mathbb{E}_{\mathrm{avoid}}^{\alpha,\beta, \vec{x}^N, \vec{y}^N}\left[ G_{f^N, g^N} (\mathcal{Q}) \right] \geq \mathbb{E}_{\mathrm{avoid}}^{\alpha,\beta, \vec{x}^N, \vec{y}^N}\left[ G_{A, -A} (\mathcal{Q}) \right] > 1/2,$$
where the first inequality follows from (\ref{Eq.BoundaryDBM}), while the second from (\ref{Eq.StayInCorridorDBMRed}). The last inequality in (\ref{Eq.DBMConvMiniTower}) follows from (\ref{Eq.MOCUniformDBM}) and the fact that $G_{f^N, g^N}(\mathcal{Q}) \in [0,1]$.

With the same notation as above, and $E_N = E_N^{\mathrm{side}} \cup F_N^{\mathrm{tb}}$, we have the following tower of inequalities for $N \geq N_0$ 
\begin{equation}\label{Eq.DBMConvTower}
\begin{split}
&\mathbb{P}\left( E_N^c \cap \left\{ \max_{i \in \llbracket 1, \mathsf{m}_k^a \rrbracket} w(\tilde{\mathcal{L}}^N_{i + \mathsf{M}_{k-1}^a}[\alpha,\beta], \delta) \geq \eta \right\} \right)  \\
& = \mathbb{E}\left[{\bf 1}_{E_N^c }\cdot \mathbb{E}\left[ \left\{ \max_{i \in \llbracket 1, \mathsf{m}_k^a \rrbracket} w(\tilde{\mathcal{L}}^N_{i+ \mathsf{M}_{k-1}^a}[\alpha,\beta], \delta) \geq \eta \right\} \bigg{\vert} \mathcal{F}_{\mathrm{ext}} ( \llbracket \mathsf{M}^a_{k-1}+1, \mathsf{M}^a_{k} \rrbracket \times (\alpha, \beta)) \right] \right] \\
& = \mathbb{E}\left[{\bf 1}_{E_N^c} \cdot \mathbb{P}_{\mathrm{avoid}}^{\alpha,\beta, \vec{x}^N, \vec{y}^N, f^N, g^N}\left( \max_{i \in \llbracket 1, \mathsf{m}_k^a \rrbracket} w(\mathcal{Q}_i[\alpha, \beta], \delta) \geq \eta \right) \right] \leq \mathbb{E}\left[{\bf 1}_{E_N^c} \cdot (\epsilon/2)\right] \leq \epsilon/2.
\end{split}
\end{equation}
Indeed, the first equality follows from the tower property for conditional expectation and the fact that $E_N \in \mathcal{F}_{\mathrm{ext}} ( \llbracket \mathsf{M}^a_{k-1}+1, \mathsf{M}^a_{k} \rrbracket \times (\alpha, \beta)) $ (this $\sigma$-algebra is as in Definition \ref{Def.BGPVanilla}). The second equality follows from the Brownian Gibbs property for $\tilde{\mathcal{L}}^N$, and the first inequality on the third line follows from (\ref{Eq.DBMConvMiniTower}).

Combining (\ref{Eq.EsideDBM}), (\ref{Eq.TopBotDBM}) and (\ref{Eq.DBMConvTower}), we conclude for $N \geq N_0$ that 
$$ \mathbb{P}\left( \max_{i \in \llbracket 1, \mathsf{m}_k^a \rrbracket} w(\tilde{\mathcal{L}}^N_{i + \mathsf{M}_{k-1}^a}[\alpha,\beta], \delta) \geq \eta \right) \leq \epsilon/2 + \mathbb{P}(E_N^{\mathrm{side}}) + \mathbb{P}(F_N^{\mathrm{tb}}) < \epsilon.$$
The last inequality and $\mathcal{L}_i^N(t)  = \tilde{\mathcal{L}}^N_{\mathsf{M}_{k-1}^a+ i}(t) \mbox{ for } (i,t) \in \llbracket 1, \mathsf{m}_k^a \rrbracket \times (0, \infty)$, see (\ref{Eq.LNMatch}), imply (\ref{Eq.MOCDBMRed}).

%
%
\section{Convergence to the Airy line ensemble}\label{Section6} The goal of this section is to prove Theorem \ref{Thm.ConvAiry}. We do so in Section \ref{Section6.2} after establishing one technical proposition in Section \ref{Section6.1}. Throughout this section we continue with the same notation as in Sections \ref{Section1} and \ref{Section4}.

%
%
\subsection{No big max}\label{Section6.1} In this section, we establish one technical result which we require for the proof of Theorem \ref{Thm.ConvAiry}. Assume the same notation as in Theorem \ref{Thm.ConvAiry}(a), and introduce the auxiliary line ensemble $\hat{\mathcal{L}}^N = \{\hat{\mathcal{L}}^N_i\}_{i \geq 1}$ by
\begin{equation}\label{Eq.LNHatDef}
\hat{\mathcal{L}}^N_i(t) = 2^{-1/2} \left(\mathcal{A}^{a,b,c}_{i}(t + T_N) - t^2\right) \mbox{ for } (i,t) \in \mathbb{N} \times \mathbb{R}.
\end{equation}
By Proposition \ref{Prop.AWLE}, we know that $\mathcal{L}^{a,b,c}$ satisfies the Brownian Gibbs property on $\mathbb{R}$, and then from Lemma \ref{Lem.Affine} we conclude the same for $\hat{\mathcal{L}}^N$. In addition, we have from (\ref{Eq.ScaledFlatA}) that 
\begin{equation}\label{Eq.LNHatMatch}
\mathcal{A}_i^N(t) = 2^{1/2} \cdot \hat{\mathcal{L}}^N_{J_a+ i}(t) + t^2  \mbox{ for } (i,t) \in \mathbb{N} \times \mathbb{R}.
\end{equation}

The following result shows that the curves $\hat{\mathcal{L}}_{J_a + k}^N$ are likely not too high on any fixed interval.
\begin{proposition}\label{Prop.NoBigMax} Assume the same notation as in Theorem \ref{Thm.ConvAiry}(a) and let $\hat{\mathcal{L}}^N = \{\hat{\mathcal{L}}^N_i\}_{i \geq 1}$  be as in (\ref{Eq.LNHatDef}). Fix $ \beta, \epsilon > 0$, and $k \in \mathbb{N}$. We can find $M > 0$, depending on $\beta$, $\epsilon$, the parameters $(a,b,c)$, and the sequence $\{T_N\}_{N \geq 1}$, such that for all $N \in \mathbb{N}$, we have
\begin{equation}\label{Eq.NoBigMax}
\mathbb{P}\left( \sup_{t \in [-\beta, \beta]} \hat{\mathcal{L}}_{J_a + k}^N(t) > M \right) < \epsilon.
\end{equation} 
\end{proposition}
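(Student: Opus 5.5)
Since $\hat{\mathcal{L}}^N_{J_a+k} < \hat{\mathcal{L}}^N_{J_a+1}$ by the non-intersecting property, it suffices to prove (\ref{Eq.NoBigMax}) for $k=1$. The plan is to copy Step 2 of the proof of Proposition \ref{Prop.CloseToZero}: a sup-bound on an interval follows from one-point tightness at a point outside the interval, together with the Brownian Gibbs property and Lemma \ref{Lem.NotTooLow}. The one-point input will be Proposition \ref{Prop.FDFlat} (or Proposition \ref{Prop.Slopes}(c)) combined with (\ref{Eq.LNHatMatch}). These give that $\hat{\mathcal{L}}^N_{J_a+1}(r)$ and $\hat{\mathcal{L}}^N_{J_a+1}(-r)$, for fixed $r$, are tight, i.e. bounded in absolute value by some $\hat M$ with probability at least $1-\epsilon/8$ uniformly in $N$. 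Only finitely many $N$ could fail the limiting statement, and each single $N$ is trivially tight, so the bound can be made uniform over all $N$.

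The key steps are as follows. Set $\gamma = 2\beta$. For a finite set $\mathsf{S}\subset[-\beta,\beta]$, define the first-passage events
\[
E_N^v = \{\hat{\mathcal{L}}^N_{J_a+1}(s_v) > M,\ \hat{\mathcal{L}}^N_{J_a+1}(s_i)\le M \text{ for } i<v\}.
\]
On $E_N^v\cap\{\hat{\mathcal{L}}^N_{J_a+1}(\gamma) > -\hat M\}$, apply the Brownian Gibbs property to the curves $\llbracket 1, J_a+1\rrbracket$ on $(s_v,\gamma)$, with bottom boundary $\hat{\mathcal{L}}^N_{J_a+2}$. The boundary data then satisfy $x_{J_a+1}>M$ and $y_{J_a+1}>-\hat M$.

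By Lemma \ref{Lem.NotTooLow} with $k=J_a+1$, the conditional probability that $\mathcal{Q}_{J_a+1}(\beta)$ lies below
\[
M\frac{\gamma-\beta}{\gamma-s_v} - \hat M\frac{\beta - s_v}{\gamma-s_v} - A\sqrt{\gamma - s_v}
\]
is less than $\epsilon/4$. We have $(\gamma-\beta)/(\gamma-s_v)\ge 1/3$, so choosing $M$ with $M/3 \ge 2\hat M + A\sqrt{3\beta}$ makes this threshold at least $\hat M$. Hence, on $\{\hat{\mathcal{L}}^N_{J_a+1}(\beta)<\hat M\}$, the events $E_N^v$ have small total probability. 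Summing over $v$, taking the supremum over $\mathsf{S}$, and using continuity of the curve gives $\mathbb{P}(\sup_{[-\beta,\beta]}\hat{\mathcal{L}}^N_{J_a+1} > M)<\epsilon$.

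There is one subtlety: the parabolic term $-t^2$ in $\hat{\mathcal{L}}^N$. This is harmless, because the Gibbs property of $\hat{\mathcal{L}}^N$ holds by Lemma \ref{Lem.Affine} applied to $\mathcal{L}^{a,b,c}$ shifted by $T_N$. Under that lemma, $\hat{\mathcal{L}}^N$ is the Gibbsian ensemble $2^{-1/2}\mathcal{A}^{a,b,c}(\cdot+T_N)$ minus $2^{-1/2}t^2$, and this equals $\mathcal{L}^{a,b,c}(\cdot+T_N)$ plus an affine function of $t$. All thresholds involved are fixed numbers, so nothing depends on $N$.

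The main obstacle is uniformity in $N$ of the one-point bounds at $\pm\gamma$ and $\beta$, together with ensuring the Gibbs resampling includes the top $J_a$ curves. Those curves are very high (they grow like $T_N^2$), but they only sit above; Lemma \ref{Lem.NotTooLow} concerns ensembles with no top boundary and requires only a lower bound on $x_{J_a+1}, y_{J_a+1}$, which the ordering provides. So resampling the whole block $\llbracket 1, J_a+1\rrbracket$ with $f=\infty$ avoids needing any control of the top curves.
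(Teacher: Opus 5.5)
Your proposal is correct and follows essentially the same argument as the paper. Both proofs reduce to $k=1$, take one-point tightness from Proposition \ref{Prop.FDFlat}, use first-passage events, and resample $\llbracket 1, J_a+1\rrbracket$ with $f=\infty$ via Lemma \ref{Lem.NotTooLow}; the only difference is that the paper places both auxiliary points outside the interval (evaluation at $2\beta$, window endpoint $4\beta$), whereas you evaluate at $\beta$ with window endpoint $2\beta$, as in Step 2 of Proposition \ref{Prop.CloseToZero}, and your condition $M/3 \geq 2\hat M + A\sqrt{3\beta}$ indeed makes the threshold at least $\hat M$.
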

\begin{proof} Throughout the proof, all the constants we encounter depend on $ \beta, \epsilon$, the parameters $(a,b,c)$, and the sequence $\{T_N\}_{N \geq 1}$. For clarity, we split the proof into two steps.\\

{\bf \raggedleft Step 1.} As $\hat{\mathcal{L}}^N$ satisfies the Brownian Gibbs property, we have for all $t \in \mathbb{R}$ 
$$\hat{\mathcal{L}}^N_1(t) > \hat{\mathcal{L}}^N_2(t) > \cdots.$$
Consequently, it suffices to find $M > 0$, such that 
\begin{equation}\label{Eq.NBMRed1}
\mathbb{P}\left( \sup_{t \in [-\beta, \beta]} \hat{\mathcal{L}}_{J_a + 1}^N(t) > M \right) < \epsilon.
\end{equation} 

Put $\gamma_1 = 2\beta$, $\gamma_2 = 4 \beta$ and $K = J_a+1$. From Proposition \ref{Prop.FDFlat} and (\ref{Eq.LNHatMatch}), we can find $M_1 > 0$, sufficiently large, so that for all $N \in \mathbb{N}$ and $r \in \{\gamma_1, \gamma_2\}$
\begin{equation}\label{Eq.LNHatGamma}
\mathbb{P}\left(H^r_N\right) < \epsilon/4, \mbox{ where } H^r_N = \left\{|\hat{\mathcal{L}}^N_K(r)| \geq M_1 \right\}.
\end{equation} 
We also define
\begin{equation}\label{Eq.LNHatGamma2}
\tilde{H}^{\gamma_1}_N = \left\{\hat{\mathcal{L}}^N_{K}(\gamma_1) \geq M_1 \right\}, \mbox{ and note } \mathbb{P}\left(\tilde{H}^{\gamma_1}_N\right) < \epsilon/4, \mbox{ as } \tilde{H}^{\gamma_1}_N \subseteq H^{\gamma_1}_N.
\end{equation}

Let $A$ be as in Lemma \ref{Lem.NotTooLow} for $k = J_{a} + 1$ and $\epsilon$ replaced with $\epsilon/4$. From (\ref{Eq.NotTooLow}), we conclude that if $\vec{x}, \vec{y} \in \weyl_{J_a+1}$ satisfy $x_{J_a+ 1} > \max(P_1, g(a))$, $y_{J_a+1} > \max(P_2,g(b))$ for some $P_1, P_2 \in \mathbb{R}$, and continuous $g$, then for each $t \in [a,b]$
\begin{equation}\label{Eq.GridBrownianEnsembleNBM}
\mathbb{P}_{\mathrm{avoid}}^{a,b, \vec{x}, \vec{y}, \infty, g}\left( \mathcal{Q}_{J_a+1}(t)  \leq P_1  \cdot \frac{b - t}{b - a} + P_2 \cdot \frac{t - a}{b - a} - A \sqrt{b-a}   \right) < \frac{\epsilon}{4}.
\end{equation}
We suppose that $M$ is sufficiently large, so that $2M/5 -  A \sqrt{5 \beta} \geq 2M_1$. In particular, we note that if $s \in [-\beta, \beta]$, we have 
\begin{equation}\label{Eq.GridLargeNNBM}
M \cdot \frac{\gamma_2 - \gamma_1}{\gamma_2 - s} - M_1 \cdot \frac{\gamma_1 - s}{\gamma_2 - s} -  A \sqrt{\gamma_2 - s} \geq M_1.  
\end{equation}
We proceed to prove (\ref{Eq.NBMRed1}) for this choice of $M$.\\

{\bf \raggedleft Step 2.} The argument we present here is similar to Step 2 of the proof of Proposition \ref{Prop.CloseToZero}.

Fix a finite set $\mathsf{S} = \{s_1, \dots, s_m\} \subset [-\beta, \beta]$ with $s_1 < s_2  < \cdots < s_m$, and define the events 
$$E_N^v = \left\{ \hat{\mathcal{L}}^{N}_{J_a + 1}(s_v) > M \mbox{ and } \hat{\mathcal{L}}^{N}_{J_a + 1}(s_i) \leq M \mbox{ for } i \in \llbracket 1, v-1 \rrbracket \right\}.$$
We also set $E_N^{\mathsf{S}} = \bigsqcup_{v = 1}^m E_N^v$. We further define $\vec{x}^v, \vec{y} \in \weyl_{J_a + 1}$ via
$$x_i^v = \hat{\mathcal{L}}^{N}_{i}(s_v), \hspace{2mm} y_i = \hat{\mathcal{L}}^{N}_{i}(\gamma_2) \mbox{ for } i\in \llbracket 1, J_a + 1 \rrbracket,$$
and observe that on $(H^{\gamma_2}_N)^c \cap E_N^v$, we have 
\begin{equation}\label{Eq.GridBoundaryLBNBM}
x_{J_a+1}^v > M, \hspace{2mm} y_{J_a+1} > -  M_1.
\end{equation}

Notice that on $(H^{\gamma_2}_N)^c \cap E_N^v$, we have
\begin{equation}\label{Eq.GridMiniTowerNBM}
\begin{split}
&\mathbb{P}_{\mathrm{avoid}}^{s_v,\gamma_2, \vec{x}^v, \vec{y}, \infty, g_v}\left( \mathcal{Q}_{J_a+1}(\gamma_1)  \leq  M_1\right)  \\
& \leq \mathbb{P}_{\mathrm{avoid}}^{s_v,\gamma_2, \vec{x}^v, \vec{y}, \infty, g_v}\left( \mathcal{Q}_{J_a+1}(\gamma_1)  \leq M \cdot \frac{\gamma_2 - \gamma_1}{\gamma_2 - s_v} - M_1 \cdot \frac{\gamma_1 - s_v}{\gamma_2 - s_v} -  A \sqrt{\gamma_2 - s_v}\right) < \frac{\epsilon}{4}.
\end{split}
\end{equation}
The first inequality follows from (\ref{Eq.GridLargeNNBM}), and the second from (\ref{Eq.GridBoundaryLBNBM}) and (\ref{Eq.GridBrownianEnsembleNBM}) with 
$$P_1 = M, \hspace{2mm} P_2 = -  M_1, \hspace{2mm} a = s_v, \hspace{2mm} b = \gamma_2, \hspace{2mm} t = \gamma_1, \hspace{2mm} g_v(s) = \hat{\mathcal{L}}^N_{J_a+2}(s) \mbox{ for } s \in [s_v, \gamma_2].$$

With the same notation as above, we have the following tower of inequalities  
\begin{equation}\label{Eq.GridMainTowerNBM}
\begin{split}
&\mathbb{P}\left( E_N^v \cap (\tilde{H}^{\gamma_1}_N)^c \cap (H^{\gamma_2}_N)^c  \right) = \mathbb{E} \left[ {\bf 1}_{E_N^v \cap (H^{\gamma_2}_N)^c } \cdot \mathbb{E}\left[ {\bf 1}_{(\tilde{H}^{\gamma_1}_N)^c} {\big \vert} \mathcal{F}_{\mathrm{ext}} ( \llbracket 1, J_a+1 \rrbracket \times (s_v,\gamma_2))\right] \right] \\
& = \mathbb{E} \left[ {\bf 1}_{E_N^v \cap (H^{\gamma_2}_N)^c } \cdot \mathbb{P}^{s_v, \gamma_2, \vec{x}^v, \vec{y}, \infty, g_v}_{\mathrm{avoid}}\left(  \left|\mathcal{Q}_{J_a+1}(\gamma_1) \right| \leq  M_1  \right) \right]\\
& \leq \mathbb{E} \left[ {\bf 1}_{E_N^v \cap (H^{\gamma_2}_N)^c} \cdot \frac{\epsilon}{4} \right] = \frac{\epsilon}{4} \cdot \mathbb{P}\left(E_N^v \cap (H^{\gamma_2}_N)^c \right). 
\end{split}
\end{equation}
Indeed, the first equality follows from the tower property of conditional expectation and the fact that $E_N^v,  H^{\gamma_2}_N \in \mathcal{F}_{\mathrm{ext}} ( \llbracket 1, J_a+1 \rrbracket \times (s_v,\gamma_2))$ (this $\sigma$-algebra is as in Definition \ref{Def.BGPVanilla}). The second equality follows from the Brownian Gibbs property for $\hat{\mathcal{L}}^{N}$. In going from the second to the third line, we used (\ref{Eq.GridMiniTowerNBM}).

Summing (\ref{Eq.GridMainTowerNBM}) over $v \in \llbracket 1, m \rrbracket$ and using that $E_N^v$ are pairwise disjoint, we conclude
$$\mathbb{P}\left( E_N^{\mathsf{S}} \cap (\tilde{H}^{\gamma_1}_N)^c  \cap (H^{\gamma_2}_N)^c    \right) \leq \mathbb{P}\left( E_N^{\mathsf{S}}\cap (H^{\gamma_2}_N)^c \right) \cdot \frac{\epsilon}{4} \leq \frac{\epsilon}{4}.$$
Taking suprema over $\mathsf{S}$ and using that $\hat{\mathcal{L}}^{N}_{J_a+1}$ is continuous, we conclude 
\begin{equation*}
\mathbb{P}\left(\left\{ \sup_{t \in [-\beta, \beta]} \hat{\mathcal{L}}_{J_a + 1}^N(t) > M \right\} \cap (\tilde{H}^{\gamma_1}_N)^c \cap (H^{\gamma_2}_N)^c \right) \leq \frac{\epsilon}{4},
\end{equation*}
which together with (\ref{Eq.LNHatGamma}) and (\ref{Eq.LNHatGamma2}) imply (\ref{Eq.NBMRed1}).
\end{proof}

%
%
\subsection{Proof of Theorem \ref{Thm.ConvAiry}}\label{Section6.2} We continue with the same notation as in the statement of the theorem. For clarity, we split the proof into three steps. In Step 1, we reduce the proof of the theorem to showing that the modulus of continuity of $\hat{\mathcal{L}}^N_{J_a + k}$, where $\hat{\mathcal{L}}^N = \{\hat{\mathcal{L}}^N_i\}_{i \geq 1}$ is as in (\ref{Eq.LNHatDef}), is well-behaved with high probability for each $k \in \mathbb{N}$, see (\ref{Eq.MOCAiryRed}). In Step 2, we summarize several key consequences of our results in Sections \ref{Section4}, \ref{Section5.1} and \ref{Section6.1}. In particular, we show that on any compact interval $\hat{\mathcal{L}}^N_{J_a}$ is very high, see (\ref{Eq.TopAiry}), and $\hat{\mathcal{L}}^N_{J_a + k+1}$ is not too high with high probability, see (\ref{Eq.NoBigMaxLNHat}). The latter implies that the curves $\{\hat{\mathcal{L}}^N_{J_a + i}: i \in \llbracket 1, k \rrbracket \}$ are too far away from $\hat{\mathcal{L}}^N_{J_a}$ and behave like a finite Brownian ensemble with a well-behaved bottom boundary $g = \hat{\mathcal{L}}^N_{J_a + k+1}$. As such ensembles have a well-behaved modulus of continuity by Lemma \ref{Lem.MOCInside}, we are able to infer the same for $\{\hat{\mathcal{L}}^N_{J_a +i}: i \in \llbracket 1, k \rrbracket \}$, and hence for $\hat{\mathcal{L}}^N_{J_a +k}$. The details behind the argument we outlined in the last sentence are provided in Step 3, where we ultimately prove (\ref{Eq.MOCAiryRed}).\\

{\bf \raggedleft Step 1.} We observe that part (b) of the theorem follows from part (a) and Proposition \ref{Prop.BasicProperties}(b). Consequently, we only need to establish part (a). In what follows we fix $(a,b,c) \in \parP$, such that $J_a < \infty$, and proceed to prove (\ref{Eq.ConvAiryA}).

We claim that for each $k \in \mathbb{N}$ and $\epsilon, \eta, \alpha > 0$, we can find $N_0 \in \mathbb{N}$ and $\delta > 0$, depending on $k, \epsilon, \eta, \alpha$, the parameters $(a,b,c)$, and the sequence $\{T_N\}_{N \geq 1}$, such that for $N \geq N_0$
\begin{equation}\label{Eq.MOCAiryRed}
\mathbb{P}\left(  w(\hat{\mathcal{L}}^N_{J_a + k}[-\alpha, \alpha], \delta) \geq \eta \right) < \epsilon,
\end{equation}
where the modulus of continuity $w(\hat{\mathcal{L}}^N_{J_a + k}[-\alpha, \alpha], \delta)$ is as in (\ref{Eq.MOCDef}) with $a = -\alpha$ and $b = \alpha$. We prove (\ref{Eq.MOCAiryRed}) in the steps below. Here, we assume its validity and finish the proof of part (a). \\

From (\ref{Eq.LNHatMatch}) and (\ref{Eq.MOCAiryRed}), we conclude for any $\alpha, \eta > 0$ and $k \in \mathbb{N}$ that 
\begin{equation}\label{Eq.MOCAiryRed2}
\lim_{\delta \rightarrow 0+} \limsup_{N \rightarrow \infty}\mathbb{P}\left(  w(\mathcal{A}^N_{k}[-\alpha, \alpha], \delta) \geq \eta \right) = 0.
\end{equation}
From Proposition \ref{Prop.FDFlat}, we know
\begin{equation}\label{Eq.FDConvAiryRed}
\left(\mathcal{A}^N_i(t): t \in \mathbb{R} \mbox{, }i  \in \mathbb{N} \right) \overset{f.d.}{\rightarrow}\left( \mathcal{A}^{0,0,0}_i(t): t \in \mathbb{R} \mbox{, }i \in \mathbb{N} \right).
\end{equation}
Equation (\ref{Eq.FDConvAiryRed}) in particular shows that the sequences $\{\mathcal{A}^N_k(0)\}_{N \geq 1}$ for each $k \in \mathbb{N}$ are tight, which together with (\ref{Eq.MOCAiryRed2}) shows that the tightness criteria of \cite[Lemma 2.4]{DEA21} are satisfied by $\mathcal{A}^N$.

From \cite[Lemma 2.4]{DEA21}, we conclude that $\{\mathcal{A}^N\}_{N \geq 1}$ is a tight sequence of random elements in $C(\mathbb{N} \times \mathbb{R} )$. In addition, (\ref{Eq.FDConvAiryRed}) shows that any subsequential limit has the same finite-dimensional distributions as $\mathcal{A}^{0,0,0}$. As finite-dimensional sets form a separating class, see \cite[Lemma 3.1]{DimMat}, we conclude that $\mathcal{A}^N \Rightarrow \mathcal{A}^{0,0,0}$, which is precisely (\ref{Eq.ConvAiryA}).\\

{\bf \raggedleft Step 2.} In this step, we specify the choices of $N_0$ and $\delta$, for which we will prove (\ref{Eq.MOCAiryRed}). In what follows we fix $\beta = \alpha + 1$ and adopt the convention $\hat{\mathcal{L}}^N_0 = \infty$.

From Proposition \ref{Prop.NoBigMax}, with $k$ replaced with $k+1$ and $\epsilon$ replaced with $\epsilon/4$, we can find $M > 0$, such that 
\begin{equation}\label{Eq.NoBigMaxLNHat}
\mathbb{P}\left( F_N^{\mathrm{bot}}\right) < \epsilon/4, \mbox{ where }F_N^{\mathrm{bot}} = \left\{\sup_{t \in [-\beta, \beta]} \hat{\mathcal{L}}^N_{J_a + k + 1}(t) > M  \right\}.
\end{equation}
From (\ref{Eq.LNHatMatch}) and (\ref{Eq.FDConvAiryRed}), we can find $M_1 > 0$, such that
\begin{equation}\label{Eq.EsideAiry}
\mathbb{P}\left(E^{\mathrm{side}}_N\right) < \epsilon/4, \mbox{ where } E^{\mathrm{side}}_N = \left\{\left|\hat{\mathcal{L}}_{J_a + i}^N(\gamma)\right| > M_1 \mbox{ for some } (i,\gamma) \in \llbracket 1, k \rrbracket \times \{-\beta, \beta\} \right\}.
\end{equation}

From Lemma \ref{Lem.NoBigMaxBLE} applied to $k$ as in the present setup, $a = -\beta$, $b = \beta$, $M^{\mathrm{side}} = M_1$, $M^{\mathrm{bot}} = M$, and $\epsilon = 1/2$, we can find $A > 0$, such that the following holds. If 
\begin{enumerate}
\item[(i)] $\vec{x}, \vec{y} \in \weyl_k$ with $|x_i|, |y_i| \leq M_1$ for $i \in \llbracket 1, k \rrbracket$,
\item[(ii)] $g: [-\beta, \beta] \rightarrow [-\infty, \infty)$ is continuous and satisfies $x_k > g(a)$, $y_k > g(b)$, and $g(t) \leq M$ for all $t \in [-\beta, \beta]$,
\end{enumerate}
then
\begin{equation}\label{Eq.NoBigMaxBLEAiry}
\mathbb{P}_{\mathrm{avoid}}^{-\beta,\beta, \vec{x}, \vec{y}, \infty, g}\left( \mathcal{Q}_i(t)  \geq A  \mbox{ for some } (i,t) \in \llbracket 1,k \rrbracket \times [-\beta, \beta]  \right) < 1/2.
\end{equation}

From Lemma \ref{Lem.MOCInside} applied to $k, \eta$ as in the present setup, $c = -\beta$, $a = -\alpha$, $b = \alpha$, $d = \beta$, $M^{\mathrm{side}} = M_1$, $M^{\mathrm{bot}} = M$, and $\epsilon$ replaced with $\epsilon/8$, we can find $\delta > 0$, such that if $\vec{x}, \vec{y}, g$ satisfy conditions (i) and (ii) above, then
\begin{equation}\label{Eq.MOCInsideAiry}
\mathbb{P}_{\mathrm{avoid}}^{-\beta,\beta, \vec{x}, \vec{y}, \infty, g}\left( \max_{i \in \llbracket 1, k \rrbracket} w(\mathcal{Q}_i[-\alpha,\alpha], \delta) \geq \eta \right) < \epsilon/8. 
\end{equation}
This specifies our choice of $\delta$. 

Finally, we observe that we have 
\begin{equation}\label{Eq.NoCeilingAiry}
\lim_{N \rightarrow \infty} \mathbb{P} \left( \inf_{t \in [-\beta, \beta]}\hat{\mathcal{L}}^N_{J_a}(t) > A  \right) = 1.
\end{equation}
Indeed, if $J_a =0$, we have $\hat{\mathcal{L}}_0^N = \infty$ and (\ref{Eq.NoCeilingAiry}) is automatic. If $J_a > 0$, then $J_a = \mathsf{M}_p^a$ for some $p \in \mathbb{N}$ and from Proposition \ref{Prop.CloseToZero} applied to $\alpha = 1/2$, $\beta = 2$ and $k = p$, we have that the sequence 
$$\sup_{s \in [1/2, 2]}\left|\mathcal{L}^N_{\mathsf{m}_p^a}(s) \right| =  \sup_{s \in [1/2, 2]} (2T_N)^{-1/2} \cdot \left| \mathcal{A}^{a,b,c}_{J_a}(sT_N) + 2sT_N/v_p^a - s^2T_N^2 \right|$$
is tight. The above equality follows from (\ref{Eq.LNTilde}) and (\ref{Eq.LNMatch}). Combining the last statement with (\ref{Eq.LNHatDef}) and the fact that $T_N + t \in [T_N/2, 2T_N]$ for all $t \in [-\beta, \beta]$ and large $N$, we conclude that the sequence
$$ \sup_{t \in [-\beta, \beta]} (2T_N)^{-1/2} \cdot \left| \left(2^{1/2}\hat{\mathcal{L}}^N_{J_a}(t) + t^2 \right) + 2(t+T_N)/v_p^a - (t +T_N)^2 \right|$$
is tight, which implies (\ref{Eq.NoCeilingAiry}). 

In view of (\ref{Eq.NoCeilingAiry}), we can find $N_0 \in \mathbb{N}$, such that for all $N \geq N_0$
\begin{equation}\label{Eq.TopAiry}
\mathbb{P} \left(F_N^{\mathrm{top}} \right) < \epsilon/4, \mbox{ where } F_N^{\mathrm{top}} = \left\{ \inf_{t \in [-\beta, \beta]}\hat{\mathcal{L}}^N_{J_a}(t) \leq A  \right\}.
\end{equation}
This specifies our choice of $N_0$.\\

{\bf \raggedleft Step 3.} In this final step, we prove (\ref{Eq.MOCAiryRed}) for $N_0, \delta$ as in Step 2.

We define $\vec{x}^N, \vec{y}^N \in \weyl_{k}$, $f^N: [-\beta, \beta] \rightarrow (-\infty, \infty]$ and $g^N: [-\beta, \beta] \rightarrow [-\infty, \infty)$ through
$$x_i^N = \hat{\mathcal{L}}_{J_a + i}^N(-\beta), \hspace{2mm} y_i^N = \hat{\mathcal{L}}_{J_a + i}^N(\beta), \mbox{ for $i \in \llbracket 1, k \rrbracket$}, \hspace{2mm} f^N = \hat{\mathcal{L}}_{J_a}^N[-\beta, \beta], \hspace{2mm} g^N = \hat{\mathcal{L}}_{J_a + k + 1}^N[-\beta, \beta],$$
and observe that on $(E_N^{\mathrm{side}})^c \cap (F_N^{\mathrm{top}})^c \cap (F_N^{\mathrm{bot}})^c$, we have 
\begin{equation}\label{Eq.BoundaryAiry}
\left|x_i^N\right|, \left|y_i^N\right| \leq M_1, \mbox{ for $i \in \llbracket 1, k \rrbracket$}, \hspace{2mm} f^N(t) > A, \hspace{2mm} g^N(t) \leq M, \mbox{ for $t \in [-\beta, \beta]$}.  
\end{equation}

We also define for any $f: [-\beta, \beta] \rightarrow (-\infty, \infty]$, the function $G_{f}: C( \llbracket 1, k \rrbracket \times [-\beta, \beta] ) \rightarrow [0,1]$ via
$$G_{f}\left( \{h_i\}_{i = 1}^{k} \right) = {\bf 1} \left\{f(t) > h_1(t) \mbox{ for all } t \in [-\beta, \beta] \right\}.$$

We have the following tower of inequalities on $(E_N^{\mathrm{side}})^c \cap (F_N^{\mathrm{top}})^c \cap (F_N^{\mathrm{bot}})^c$ for $N \geq N_0$:
\begin{equation}\label{Eq.AiryConvMiniTower}
\begin{split}
&\mathbb{P}_{\mathrm{avoid}}^{-\beta,\beta, \vec{x}^N, \vec{y}^N, f^N, g^N}\left( \max_{i \in \llbracket 1,k \rrbracket} w(\mathcal{Q}_i[-\alpha, \alpha], \delta) \geq \eta \right) \\
& = \frac{\mathbb{E}_{\mathrm{avoid}}^{-\beta,\beta, \vec{x}^N, \vec{y}^N, \infty, g^N}\left[ {\bf 1}\left\{\max_{i \in \llbracket 1, k \rrbracket} w(\mathcal{Q}_i[-\alpha, \alpha], \delta) \geq \eta \right\} \cdot G_{f^N} (\mathcal{Q}) \right]}{\mathbb{E}_{\mathrm{avoid}}^{-\beta,\beta, \vec{x}^N, \vec{y}^N, \infty, g^N}\left[ G_{f^N} (\mathcal{Q}) \right]} \\
& \leq 2 \mathbb{E}_{\mathrm{avoid}}^{-\beta,\beta, \vec{x}^N, \vec{y}^N, \infty, g^N}\left[ {\bf 1}\left\{\max_{i \in \llbracket 1, k \rrbracket} w(\mathcal{Q}_i[-\alpha, \alpha], \delta) \geq \eta \right\} \cdot G_{f^N} (\mathcal{Q})  \right] < \epsilon/4.
\end{split}
\end{equation}
Indeed, in going from the first to the second line, we used Definition \ref{Def.fgAvoidingBE}, and in going from the second to the third line, we used 
$$\mathbb{E}_{\mathrm{avoid}}^{-\beta,\beta, \vec{x}^N, \vec{y}^N, \infty, g^N}\left[ G_{f^N} (\mathcal{Q}) \right] \geq \mathbb{E}_{\mathrm{avoid}}^{-\beta,\beta, \vec{x}^N, \vec{y}^N, \infty, g^N}\left[ G_{A} (\mathcal{Q}) \right] > 1/2,$$
where the first inequality follows from (\ref{Eq.BoundaryAiry}), while the second from (\ref{Eq.NoBigMaxBLEAiry}). The last inequality in (\ref{Eq.AiryConvMiniTower}) follows from (\ref{Eq.MOCInsideAiry}) and the fact that $G_{f^N}(\mathcal{Q}) \in [0,1]$.

With the same notation as above, and $E_N = E_N^{\mathrm{side}} \cup F_N^{\mathrm{top}} \cup F_N^{\mathrm{bot}}$, we have the following tower of inequalities for $N \geq N_0$ 
\begin{equation}\label{Eq.AiryConvTower}
\begin{split}
&\mathbb{P}\left( E_N^c  \cap \left\{ \max_{i \in \llbracket 1, k \rrbracket} w(\hat{\mathcal{L}}^N_{J_a + i}[-\alpha,\alpha], \delta) \geq \eta \right\} \right)  \\
& = \mathbb{E}\left[ {\bf 1}_{E_N^c} \cdot \mathbb{E}\left[ {\bf 1}\left\{ \max_{i \in \llbracket 1, k \rrbracket} w(\hat{\mathcal{L}}^N_{J_a + i}[-\alpha,\alpha], \delta) \geq \eta \right\} \bigg{\vert} \mathcal{F}_{\mathrm{ext}} ( \llbracket J_a + 1, J_a + k \rrbracket \times (-\beta, \beta)) \right] \right] \\
& = \mathbb{E}\left[{\bf 1}_{E_N^c} \cdot \mathbb{P}_{\mathrm{avoid}}^{-\beta,\beta, \vec{x}^N, \vec{y}^N, f^N, g^N}\left( \max_{i \in \llbracket 1, k \rrbracket} w(\mathcal{Q}_i[-\alpha, \alpha], \delta) \geq \eta \right) \right] \leq \mathbb{E}\left[{\bf 1}_{E_N^c} \cdot (\epsilon/4)\right] \leq \epsilon/4.
\end{split}
\end{equation}
Indeed, the first equality follows by the tower property for conditional expectation and the fact that $E_N \in \mathcal{F}_{\mathrm{ext}} ( \llbracket J_a + 1, J_a + k \rrbracket \times (-\beta, \beta))$ (this $\sigma$-algebra is as in Definition \ref{Def.BGPVanilla}). The second equality follows from the Brownian Gibbs property for $\hat{\mathcal{L}}^{N}$, and the first inequality on the third line follows by (\ref{Eq.AiryConvMiniTower}).

Combining (\ref{Eq.NoBigMaxLNHat}), (\ref{Eq.EsideAiry}), (\ref{Eq.TopAiry}), and (\ref{Eq.AiryConvTower}), we conclude for $N \geq N_0$ that 
$$ \mathbb{P}\left( \max_{i \in \llbracket 1, k \rrbracket} w(\hat{\mathcal{L}}^N_{J_a + i}[-\alpha,\alpha], \delta) \geq \eta \right) \leq \epsilon/4 + \mathbb{P}(E_N^{\mathrm{side}}) + \mathbb{P}(F_N^{\mathrm{top}}) + \mathbb{P}(F_N^{\mathrm{bot}}) < \epsilon.$$
The last inequality implies (\ref{Eq.MOCAiryRed}).

\bibliographystyle{amsplain}
\bibliography{PD}

\end{document}